\theoremstyle{plain}
\newtheorem{theorem}{\bf Theorem}[section]
\newtheorem{proposition}[theorem]{\bf Proposition}
\newtheorem{lemma}[theorem]{\bf Lemma}
\theoremstyle{definition}
\newtheorem{example}[theorem]{\bf Example}
\newtheorem{remark}[theorem]{\bf Remark}
\numberwithin{equation}{section}
\begin{document}

\title{On monoids of ideals of orders in quadratic number fields}

\author{Johannes Brantner, Alfred Geroldinger,}
\address{Institut f\"ur Mathematik und wissenschaftliches Rechnen, Karl-Franzens-Universit\"at Graz, NAWI Graz, Heinrichstra{\ss}e 36, 8010 Graz, Austria}
\email{j.brantner@gmx.at, alfred.geroldinger@uni-graz.at}
\urladdr{https://imsc.uni-graz.at/geroldinger}

\author{Andreas Reinhart}
\address{Department of Mathematical Sciences, New Mexico State University, Las Cruces, NM 88003, United States}
\email{andreas.reinhart@uni-graz.at}

\keywords{orders, quadratic number fields, sets of lengths, sets of distances, catenary degree}

\subjclass[2010]{11R11, 11R27, 13A15, 13F15, 20M12, 20M13}

\thanks{This work was supported by the Austrian Science Fund FWF, Project Numbers J4023-N35 and P28864-N35}

\begin{abstract}
We determine the set of catenary degrees, the set of distances, and the unions of sets of lengths of the monoid of nonzero ideals and of the monoid of invertible ideals of orders in quadratic number fields.
\end{abstract}

\maketitle
\section{Introduction}\label{1}
\smallskip

Factorization theory for Mori domains and their semigroups of ideals splits into two cases. The first and best understood case is that of Krull domains (i.e., of completely integrally closed Mori domains). The arithmetic of a Krull domain depends only on the class group and on the distribution of prime divisors in the classes, and it can be studied -- at least to a large extent -- with methods from additive combinatorics. The link to additive combinatorics is most powerful when the Krull domain has a finite class group and when each class contains at least one prime divisor (this holds true, among others, for rings of integers in number fields). Then sets of lengths, sets of distances and of catenary degrees of the domain can be studied in terms of zero-sum problems over the class group. Moreover, we obtain a variety of explicit results for arithmetical invariants in terms of classical combinatorial invariants (such as the Davenport constant of the class group) or even in terms of the group invariants of the class group. We refer to \cite{Ge09a} for a description of the link to additive combinatorics and to the recent survey \cite{Sc16a} discussing explicit results for arithmetical invariants.

Let us consider Mori domains that are not completely integrally closed but have a nonzero conductor towards their complete integral closure. The best investigated classes of such domains are weakly Krull Mori domains with finite $v$-class group and C-domains. For them there is a variety of abstract arithmetical finiteness results but in general there are no precise results. For example, it is well-known that sets of distances and of catenary degrees are finite but there are no reasonable bounds for their size. The simplest not completely integrally closed Mori domains are orders in number fields. They are one-dimensional noetherian with nonzero conductor, finite Picard group, and all factor rings modulo nonzero ideals are finite. Thus they are weakly Krull domains and C-domains. Although there is recent progress for seminormal orders, for general orders in number fields there is no characterization of half-factoriality (for progress in the local case see \cite{Ka05b}) and there is no information on the structure of their sets of distances or catenary degrees (neither for orders nor for their monoids of ideals).

In the present paper we focus on monoids of ideals of orders in quadratic number fields and establish precise results for their set of distances $\Delta (\cdot)$ and their set of catenary degrees ${\rm Ca}(\cdot)$. Orders in quadratic number fields are intimately related with quadratic irrationals, continued fractions, and binary quadratic forms and all these areas provide a wealth of number theoretic tools for the investigation of orders. We refer to \cite{HK13a} for a modern presentation of these connections and to \cite{Co-Ma-Ok17a,Pe-Za16a} for recent progress on the arithmetic and ideal theoretic structure of quadratic orders.

Let $\mathcal{O}$ be an order in a quadratic number field, $\mathcal{I}^*(\mathcal{O})$ be the monoid of invertible ideals, and $\mathcal{I}(\mathcal{O})$ be the monoid of nonzero ideals (note that $\mathcal{I}(\mathcal{O})$ is not cancellative if $\mathcal{O}$ is not maximal). Since $\mathcal{I}^*(\mathcal{O})$ is a divisor-closed submonoid of $\mathcal{I}(\mathcal{O})$, the set of catenary degrees and the set of distances of $\mathcal{I}^*(\mathcal{O})$ are contained in the respective sets of $\mathcal{I}(\mathcal{O})$.
We formulate a main result of this paper and then we compare it with related results in the literature.

\smallskip
\begin{theorem}\label{theorem 1.1}
Let $\mathcal{O}$ be an order in a quadratic number field $K$ with discriminant $d_K$ and conductor $\mathfrak{f}=f\mathcal{O}_K$ for some $f\in\mathbb{N}_{\geq 2}$.
\begin{enumerate}
\item[\textnormal{1.}] The following statements are equivalent{\rm \,:}
\begin{enumerate}
\item[\textnormal{(a)}] $\mathcal{I}(\mathcal{O})$ is half-factorial.
\item[\textnormal{(b)}] $\mathsf{c}\big(\mathcal{I}(\mathcal{O})\big)=2$.
\item[\textnormal{(c)}] $\mathsf{c}\big(\mathcal{I}^*(\mathcal{O})\big)=2$.
\item[\textnormal{(d)}] $\mathcal{I}^*(\mathcal{O})$ is half-factorial.
\item[\textnormal{(e)}] $f$ is squarefree and all prime divisors of $f$ are inert.
\end{enumerate}

\item[\textnormal{2.}] Suppose that $\mathcal{I}^*(\mathcal{O})$ is not half-factorial.
\begin{enumerate}
\item[\textnormal{(a)}] If $f$ is squarefree, then ${\rm Ca}\big(\mathcal{I}(\mathcal{O})\big)=[1,3]$, ${\rm Ca}\big(\mathcal{I}^*(\mathcal{O})\big)=[2,3]$,

    $\Delta\big(\mathcal{I}(\mathcal{O})\big)=\Delta\big(\mathcal{I}^*(\mathcal{O})\big)=\{1\}$.

\item[\textnormal{(b)}] Suppose that $f$ is not squarefree.
\begin{enumerate}
\item[\textnormal{(i)}] If ${\rm v}_2\left(f\right)\not\in\{2,3\}$ or $d_K\not\equiv 1\mod 8$, then ${\rm Ca}\big(\mathcal{I}(\mathcal{O})\big)=[1,4]$,

     ${\rm Ca}\big(\mathcal{I}^*(\mathcal{O})\big)=[2,4]$, and $\Delta\big(\mathcal{I}(\mathcal{O})\big)=\Delta\big(\mathcal{I}^*(\mathcal{O})\big)=[1,2]$.

\item[\textnormal{(ii)}] If ${\rm v}_2\left(f\right)\in\{2,3\}$ and $d_K\equiv 1\mod 8$, then ${\rm Ca}\big(\mathcal{I}(\mathcal{O})\big)=[1,5]$,

     ${\rm Ca}\big(\mathcal{I}^*(\mathcal{O})\big)=[2,5]$, and $\Delta\big(\mathcal{I}(\mathcal{O})\big)=\Delta\big(\mathcal{I}^*(\mathcal{O})\big)=[1,3]$.
\end{enumerate}
\end{enumerate}
\end{enumerate}
\end{theorem}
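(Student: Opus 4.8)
My plan is to reduce the whole statement to a short list of purely local computations. Since $\mathcal{O}$ is one-dimensional noetherian, every nonzero ideal is the product of its localizations, and $I\mapsto (I\mathcal{O}_{\mathfrak p})_{\mathfrak p}$ induces monoid isomorphisms $\mathcal{I}(\mathcal{O})\cong\coprod_{\mathfrak p}\mathcal{I}(\mathcal{O}_{\mathfrak p})$ and $\mathcal{I}^*(\mathcal{O})\cong\coprod_{\mathfrak p}\mathcal{I}^*(\mathcal{O}_{\mathfrak p})$, where $\mathfrak p$ runs over the maximal ideals of $\mathcal{O}$. If $\mathfrak p\nmid\mathfrak f$ then $\mathcal{O}_{\mathfrak p}$ is a discrete valuation ring and both local monoids are free of rank one, and for $p\mid f$ there is exactly one maximal ideal of $\mathcal{O}$ above $p$; so only the finitely many primes above the prime divisors of $f$ contribute. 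For a coproduct $H=\coprod_i H_i$ of reduced monoids a projection argument gives $\mathsf c\big(\coprod_i a_i\big)=\sup_i\mathsf c(a_i)$, hence $\mathsf c(H)=\sup_i\mathsf c(H_i)$ and ${\rm Ca}(H)=\bigcup_i{\rm Ca}(H_i)$; for the set of distances a little more is needed, since $\Delta(L+L')$ may exceed $\Delta(L)\cup\Delta(L')$, but once one knows that every non-half-factorial factor has minimal distance $1$, and that the length sets arising at the conductor primes are intervals up to finitely many controlled extra points, one checks that these combine to $\Delta(H)=[1,\mathsf c(H)-2]$ or $\varnothing$. Thus Theorem 1.1 reduces to determining ${\rm Ca}$ and $\Delta$ of $\mathcal{I}(\mathcal{O}_{\mathfrak p})$ and $\mathcal{I}^*(\mathcal{O}_{\mathfrak p})$ for each prime $\mathfrak p$ above a rational prime $p\mid f$.

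Next I would describe these local rings. With $n:={\rm v}_p(f)$ one has $\mathcal{O}_{\mathfrak p}=\mathbb{Z}_{(p)}+p^nR$, where $R$, the integral closure of $\mathcal{O}_{\mathfrak p}$, is a discrete valuation ring with residue field $\mathbb{F}_{p^2}$ (if $p$ is inert), a ramified discrete valuation ring with residue field $\mathbb{F}_p$ (if $p$ is ramified), or a semilocal principal ideal domain with two maximal ideals each of residue field $\mathbb{F}_p$ (if $p$ splits, which for $p=2$ happens exactly when $d_K\equiv 1\bmod 8$). Using the norm one shows that every invertible ideal of $\mathcal{O}_{\mathfrak p}$ has the form $p^ky\mathcal{O}_{\mathfrak p}$ for a suitable $k$ and a unit $y$ of one of the intermediate orders $\mathbb{Z}_{(p)}+p^jR$, and from this the atoms of $\mathcal{I}^*(\mathcal{O}_{\mathfrak p})$ can be read off: their norm-valuations lie in $\{2\}$ when $p$ is inert and $n=1$, in $\{2,3\}$ when $p$ is ramified or split and $n=1$, and in $\{2,4,\dots,2n\}$ (together with odd values $3,5,\dots$ in the ramified and split cases) when $n\ge 2$; the finer multiplicative relations are controlled by the chain of subgroups $(\mathbb{Z}_{(p)}+p^jR)^\times/\mathcal{O}_{\mathfrak p}^\times$ inside $R^\times/\mathcal{O}_{\mathfrak p}^\times$. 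Part 1 of the theorem is then immediate: $\mathcal{I}^*(\mathcal{O}_{\mathfrak p})$ is half-factorial precisely when all its atoms share one norm-valuation, i.e.\ when $p$ is inert and $n=1$; taking the coproduct over all $\mathfrak p\mid\mathfrak f$ gives (a)$\Leftrightarrow$(e), and the equivalences with (b), (c), (d) follow from this and the characterization of half-factoriality by the catenary degree valid for these ideal monoids.

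For part 2 one computes, case by case, the invariants of these (up to units finitely generated) local monoids, splitting the analysis of $\mathcal{I}^*(\mathcal{O}_{\mathfrak p})$ according to the norm-valuation of an ideal and its class in $R^\times/\mathcal{O}_{\mathfrak p}^\times$. When $n=1$ and $p$ is ramified or split one shows $\Delta(\mathcal{I}^*(\mathcal{O}_{\mathfrak p}))=\{1\}$, that the value $3$ occurs as a catenary degree (e.g.\ $p^3\mathcal{O}_{\mathfrak p}$ has a length-$2$ and a length-$3$ factorization at distance $3$, with no cheaper bridge), and that no element has catenary degree larger than $3$. When $n\ge 2$ one shows $\Delta=[1,2]$ and that $4$ is the largest catenary degree (attained e.g.\ at $p^4\mathcal{O}_{\mathfrak p}$, where the length-$4$ factorization $(p\mathcal{O}_{\mathfrak p})^4$ and a length-$2$ factorization by two norm-$4$ atoms are at distance $4$ and cannot be joined more cheaply), except for $p=2$ split with $n\in\{2,3\}$, where an additional atom of larger norm-valuation appears and forces $\Delta=[1,3]$ and largest catenary degree $5$. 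The passage from $\mathcal{I}^*(\mathcal{O}_{\mathfrak p})$ to $\mathcal{I}(\mathcal{O}_{\mathfrak p})$ uses that the former is a divisor-closed submonoid (a factor of an invertible ideal is invertible), so $\Delta$ and all catenary degrees $\ge 2$ survive; one then verifies that the non-invertible ideals contribute only one new catenary degree, namely $1$ (realized by a suitable power of the maximal ideal, whose two shortest factorizations differ by one absorbed non-invertible atom), and do not raise the maximal catenary degree. Feeding everything back through the coproduct decomposition yields exactly the sets in the statement and shows that the answers depend only on $f$, on ${\rm v}_2(f)$, and on $d_K\bmod 8$, not on $\mathrm{Pic}(\mathcal{O})$.

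The main obstacle is the local case analysis for $n\ge 2$: one needs sharp lower bounds (elements whose minimal factorization chains are forced to make a jump of size $4$, respectively $5$ in the exceptional split-$2$ case) together with matching upper bounds valid for every element of $\mathcal{I}^*(\mathcal{O}_{\mathfrak p})$ and of $\mathcal{I}(\mathcal{O}_{\mathfrak p})$; and the exceptional split-$2$ case with ${\rm v}_2(f)\in\{2,3\}$ must be isolated carefully, since it is exactly there that the atom set of the local monoid is larger and pushes both the catenary degree and the set of distances up by one.
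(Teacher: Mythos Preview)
Your overall architecture---reduce to the local monoids $\mathcal{I}_p(\mathcal{O}_f)$ and $\mathcal{I}^*_p(\mathcal{O}_f)$ via the coproduct decomposition, then compute ${\rm Ca}$ and $\Delta$ locally---is exactly the paper's strategy (this is Lemma~4.13 together with the local Propositions~4.4, 4.6, 4.8, 4.11, 4.12). The genuine problem is in your local analysis of the split case, and it is precisely the point where the paper has to work hardest.

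Your description of the atom norm-valuations is wrong for split primes. In the split case the invertible ideal atoms have \emph{unbounded} norm-valuation: from Theorem~3.6, for every $m\ge 2{\rm v}_p(f)+1$ there are $2\varphi(p^{{\rm v}_p(f)})$ invertible atoms of norm $p^m$. More importantly, for $p=2$ split with $n:={\rm v}_2(f)\ge 2$ one finds that the set $\mathcal{B}$ of norm-valuations of atoms is $\{2,4,\ldots,2n-2\}\cup\{2n+1,2n+2,\ldots\}$: the values $2n-1$ and $2n$ are \emph{missing} (at $m=2n$ the count is $2^{n-1}(2-2)=0$). Hence $\sup\Delta(\mathcal{B})=3$ for \emph{every} $n\ge 2$, and the cheap upper bound ``$\mathsf{c}\le 2+\sup\Delta(\mathcal{B})$'' only gives $\mathsf{c}\le 5$ across the board. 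Your proposal gives no mechanism to get below $5$ when $n\ge 4$.

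The paper resolves this with a separate argument (Lemma~4.10.2 and Proposition~4.11.2): for $n\ge 4$ one can write $(2\mathcal{O}_f)^{n-1}$ (respectively $(2\mathcal{O}_f)^n$) as a product of \emph{three} explicit atoms of smaller norm, and this supplies the missing bridge to build $4$-chains. Only for $n\in\{2,3\}$ is the atom set too thin for such a bridge to exist; this is what Proposition~4.12 verifies by hand. So the exceptional case is caused by a \emph{scarcity} of atoms, not by ``an additional atom of larger norm-valuation'' as you wrote---the diagnosis is inverted, and without the three-atom bridge your upper bound for $n\ge 4$ would be off by one.
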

We say that a cancellative monoid $H$ is {\it weakly Krull} if $\bigcap_{P\in\mathfrak{X}(H)} H_P=H$ and $\{P\in\mathfrak{X}(H)\mid a\in P\}$ is finite for each $a\in H$ (where $\mathfrak{X}(H)$ denotes the set of height-one prime ideals of $H$). Moreover, a cancellative monoid $H$ is called {\it weakly factorial} if every nonunit of $H$ is a finite product of primary elements of $H$. Let all notation be as in Theorem~\ref{theorem 1.1}, and recall that $\mathcal{I}^*(\mathcal{O})$ is a weakly factorial C-monoid, and that for every atomic monoid $H$ with $\Delta(H)\ne\emptyset$ we have $\min\Delta(H)=\gcd\Delta(H)$.

There is a characterization (due to Halter-Koch) when the order $\mathcal{O}$ is half-factorial (\cite[Theorem 3.7.15]{Ge-HK06a}). This characterization and Theorem~\ref{theorem 1.1} or \cite[Corollary 4.6]{Ph12b} show that the half-factoriality of $\mathcal{O}$ implies the half-factoriality of $\mathcal{I}^*(\mathcal{O})$.
Consider the case of seminormal orders whence suppose that $\mathcal{O}$ is seminormal. Then $f$ is squarefree (this follows from an explicit characterization of seminormal orders given by Dobbs and Fontana in \cite[Corollary 4.5]{Do-Fo87}). Moreover, $\mathcal{I}^*(\mathcal{O})$ is seminormal and if $\mathcal{I}^*(\mathcal{O})$ is not half-factorial, then its catenary degree equals three by \cite[Theorems 5.5 and 5.8]{Ge-Ka-Re15a}. Clearly, this coincides with 2.(a) of the above theorem.
Among others, Theorem~\ref{theorem 1.1} shows that the sets of distances and of catenary degrees are intervals and that the minimum of the set of distances equals $1$. We discuss some analogous results and some results which are in sharp contrast to this.
If $H$ is a Krull monoid with finite class group, then $H$ is a weakly Krull C-monoid and if there are prime divisors in all classes, then the sets ${\rm Ca}(H)$ and $\Delta (H)$ are intervals (\cite[Theorem 4.1]{Ge-Zh19a}). On the other hand, for every finite set $S\subset\mathbb{N}$ with $\min S=\gcd S$ (resp. every finite set $S\subset\mathbb{N}_{\ge 2}$) there is a finitely generated Krull monoid $H$ such that $\Delta (H)=S$ (resp. ${\rm Ca}(H)= S$) (\cite{Ge-Sc17a} resp. \cite[Proposition 3.2]{Fa-Ge17a}). Just as the monoids of ideals under discussion, every numerical monoid is a weakly factorial C-monoid. However, in contrast to them, the set of distances need not be an interval (\cite{Co-Ka17a}), its minimum need not be $1$ (\cite[Proposition 2.9]{B-C-K-R06}), and a recent result of O'Neill and Pelayo (\cite{ON-Pe18a}) shows that for every finite set $S\subset\mathbb{N}_{\ge 2}$ there is a numerical monoid $H$ such that ${\rm Ca}(H)=S$.

We proceed as follows. In Section~\ref{2} we summarize the required background on the arithmetic of monoids. In Section~\ref{3} we do the same for orders in quadratic number fields and we provide an explicit description of (invertible) irreducible ideals in orders of quadratic number fields (Theorem~\ref{theorem 3.6}). In Section~\ref{4} we give the proof of Theorem~\ref{theorem 1.1}. Based on this result we establish a characterization of those orders $\mathcal{O}$ with $\min\Delta(\mathcal{O})>1$ (Theorem~\ref{theorem 4.14}) which allows us to give the first explicit examples of orders $\mathcal{O}$ with $\min\Delta(\mathcal{O})>1$.
Our third main result (given in Theorem~\ref{theorem 5.2}) states that unions of sets of lengths of $\mathcal{I}(\mathcal{O})$ and of $\mathcal{I}^*(\mathcal{O})$ are intervals.

\smallskip
\section{Preliminaries on the arithmetic of monoids}\label{2}
\smallskip

Let $\mathbb{N}$ be the set of positive integers, $\mathbb P\subset\mathbb{N}$ the set of prime numbers, and for every $m\in\mathbb{N}$, we denote by
\[
\varphi (m)=\big| (\mathbb{Z}/m\mathbb{Z})^{\times}\big|\quad\text{\it Euler's $\varphi$-function}\,.
\]
For $a,b\in\mathbb{Q}\cup\{-\infty,\infty\}$, $[a,b]=\{x\in\mathbb{Z}\mid a\le x\le b\}$ denotes the discrete interval between $a$ and $b$. Let $L,L'\subset
\mathbb{Z}$. We denote by $L+L'=\{a+b\mid a\in L,\,b\in L'\}$ their
{\it sumset}. A positive
integer $d\in\mathbb{N}$ is called a\ {\it distance}\ of $L$\ if there
exists a $k\in L$ such that $L\cap [k,k+d]=\{k,k+d\}$, and we denote by $\Delta (L)$ the {\it set of distances} of $L$. If $\emptyset\not=L\subset\mathbb{N}$, we denote by $\rho (L)=\sup L/\min L\in\mathbb{Q}_{\ge 1}\cup\{\infty\}$ the {\it elasticity} of $L$. We set $\rho (\{0\})=1$ and $\max\emptyset=\min\emptyset=\sup\emptyset=0$.
All rings and semigroups are commutative and have an identity element.

\smallskip
\noindent
\subsection{\bf Monoids.}\label{Monoids}
Let $H$ be a multiplicatively written commutative semigroup. We denote by $H^{\times}$ the group of invertible elements of $H$. We say that $H$ is reduced if $H^{\times}=\{1\}$ and we denote by $H_{\text{\rm red}}=\{aH^{\times}\mid a\in H\}$ the associated reduced semigroup of $H$. An element $u \in H$ is said to be cancellative if $au=bu$ implies that $a=b$ for all $a, b \in H$. The semigroup $H$ is said to be
\begin{itemize}
\item {\it cancellative} if every element of $H$ is cancellative.
\item {\it unit-cancellative} if $a,u\in H$ and $a=au$ implies that $u\in H^{\times}$.
\end{itemize}
By definition, every cancellative semigroup is unit-cancellative. All semigroups of ideals, that are studied in this paper, are unit-cancellative but not necessarily cancellative.

\medskip
\centerline{\it Throughout this paper, a monoid means a }
\centerline{\it commutative unit-cancellative semigroup with identity element.}
\medskip

\noindent
Let $H$ be a monoid. A submonoid $S\subset H$ is said to be
{\it divisor-closed} if $a\in S$ and $b\in H$ with $b\mid a$ implies that $b\in S$.
An element $u\in H$ is said to be
\begin{itemize}
\item {\it prime} if $u\notin H^{\times}$ and, for all $a,b\in H$, $u\mid ab$ and $u\nmid a$ implies $u\mid b$.
\item {\it primary} if $u\notin H^{\times}$ and, for all $a,b\in H$, $u\mid ab$ and $u\nmid a$ implies $u\mid b^n$ for some $n\in\mathbb{N}$.
\item {\it irreducible} (or an {\it atom}) if $u\notin H^{\times}$ and, for all $a,b\in H$, $u=ab$ implies that $a\in H^{\times}$ or $b\in H^{\times}$.
\end{itemize}
The monoid $H$ is said to be {\it atomic} if every $a\in H\setminus H^{\times}$ is a product of finitely many atoms. If $H$ satisfies the ACC (ascending chain condition) on principal ideals, then $H$ is atomic (\cite[Lemma 3.1]{F-G-K-T17}).

\smallskip
\noindent
\subsection{\bf Sets of lengths.}\label{Sets of lengths}
For a set $P$, we denote by $\mathcal{F}(P)$ the free abelian monoid with basis $P$. Every $a\in\mathcal{F}(P)$ is written in the form
\[
a=\prod_{p\in P} p^{\mathsf v_p (a)}\text{ with }\mathsf v_p (a)\in\mathbb{N}_0\quad\text{and}\quad\mathsf v_p (a)= 0\text{ for almost all $p\in P$}\,.
\]
We call $|a|=\sum_{p\in P}\mathsf v_p (a)$ the length of $a$ and ${\rm supp}(a)=\{p\in P\mid\mathsf v_p(a)> 0\}\subset P$ the support of $a$. Let $H$ be an atomic monoid. The free abelian monoid $\mathsf{Z}(H)=\mathcal{F} (\mathcal{A}(H_{\text{\rm red}}))$ denotes the {\it factorization monoid} of $H$ and
\[
\pi\colon\mathsf{Z}(H)\to H_{\text{\rm red}}\quad\text{satisfying}\quad\pi(u)=u\text{ for all } u\in\mathcal{A}(H_{\text{\rm red}})
\]
denotes the {\it factorization homomorphism} of $H$. For every $a\in H$,
\[
\begin{aligned}
\mathsf{Z}_H(a)=\mathsf{Z}(a)&=\pi^{-1}(aH^{\times})\quad\text{is the {\it set of factorizations} of $a$ and }\\
\mathsf{L}_H(a)=\mathsf{L}(a)&=\{|z|\mid z\in\mathsf{Z}(a)\}\quad\text{is the {\it set of lengths} of $a$}\,.
\end{aligned}
\]
For a divisor-closed submonoid $S\subset H$ and an element $a\in S$, we have $\mathsf{Z}(S)\subset\mathsf{Z}(H)$ whence $\mathsf{Z}_S (a)=\mathsf{Z}_H (a)$, and $\mathsf{L}_S(a)=\mathsf{L}_H(a)$. We denote by
\begin{itemize}
\item $\mathcal{L}(H)=\{\mathsf{L}(a)\mid a\in H\}$ the {\it system of sets of lengths} of $H$ and by
\item $\Delta(H)=\bigcup_{L\in\mathcal{L}(H)}\Delta(L)\subset\mathbb{N}$ the {\it set of distances} of $H$.
\end{itemize}
The monoid $H$ is said to be {\it half-factorial} if $\Delta(H)=\emptyset$ and if $H$ is not half-factorial, then $\min\Delta(H)=\gcd\Delta(H)$.

\smallskip
\noindent
\subsection{\bf Distances and chains of factorizations.}\label{Distances}
Let two factorizations $z,z'\in\mathsf{Z}(H)$ be given, say
\[
z=u_1\cdot\ldots\cdot u_{\ell}v_1\cdot\ldots\cdot v_m\quad\text{and}\quad z'=u_1\cdot\ldots\cdot u_{\ell} w_1\cdot\ldots\cdot w_n\,,
\]
where $\ell,m,n\in\mathbb{N}_0$ and all $u_i,v_j,w_k\in\mathcal{A}(H_{\text{\rm red}})$ such that $v_j\ne w_k$ for all $j\in [1,m]$ and all $k\in [1,n]$. Then $\mathsf{d}(z,z')=\max\{m,n\}$ is the {\it distance} between $z$ and $z'$. If $\pi(z)=\pi(z')$ and $z\ne z'$, then
\begin{equation}\label{equation 1}
1+\bigl||z |-|z'|\bigr|\le\mathsf{d}(z,z')\text{ resp. } 2+\bigl||z |-|z'|\bigr|\le\mathsf{d}(z,z')\text{ if $H$ is cancellative}
\end{equation}
(see \cite[Proposition 3.2]{F-G-K-T17} and \cite[Lemma 1.6.2]{Ge-HK06a}). Let $a\in H$ and $N\in\mathbb{N}_0$. A finite sequence $z_0,\ldots,z_k\in\mathsf{Z}(a)$ is called an $N$-chain of factorizations (concatenating $z_0$ and $z_k$) if $\mathsf{d}(z_{i-1},z_i)\le N$ for all $i\in [1,k]$. For $z ,z'\in\mathsf{Z}(H)$ with $\pi(z)=\pi(z')$, we set $\mathsf{c}(z,z')=\min\{N\in\mathbb{N}_0\mid z$ and $z'$ can be concatenated by an $N$-chain of factorizations from $\mathsf{Z}\big(\pi(z)\big)\}$. Then, for every $a\in H$,
\[
\mathsf{c}(a)=\sup\{\mathsf{c}(z,z')\mid z,z'\in\mathsf{Z}(a)\}\in\mathbb{N}_0\cup\{\infty\}\quad\text{is the {\it catenary degree} of $a$}.
\]
Clearly, $a$ has unique factorization (i.e., $|\mathsf{Z}(a)|=1$) if and only if $\mathsf{c}(a)=0$. We denote by
\[
{\rm Ca}(H)=\{\mathsf{c}(a)\mid a\in H,\mathsf{c}(a)>0\}\subset\mathbb{N}\quad\text{the {\it set of catenary degrees} of $H$},
\]
and then
\[
\mathsf{c}(H)=\sup{\rm Ca}(H)\in\mathbb{N}_0\cup\{\infty\}\quad\text{is the {\it catenary degree} of $H$}.
\]
We use the convention that $\sup\emptyset=0$ whence $H$ is factorial if and only if $\mathsf{c}(H)=0$.
Note that $\mathsf{c}(a)=0$ for all atoms $a\in H$. The restriction to positive catenary degrees in the definition of ${\rm Ca}(H)$ simplifies the statement of some results whence it is usual to restrict to elements with positive catenary degrees. If $H$ is cancellative, then Equation~\eqref{equation 1} implies that min ${\rm Ca}(H)\ge 2$ and
\[
2+\sup\Delta (H)\le\mathsf{c}(H)\quad\text{if $H$ is not factorial}\,.
\]
If $H=\coprod_{i\in I}H_i$, then a straightforward argument shows that
\begin{equation}\label{equation 2}
{\rm Ca}(H)=\bigcup_{i\in I}{\rm Ca}(H_i)\quad\text{whence}\quad\mathsf{c}(H)=\sup\{\mathsf{c}(H_i)\mid i\in I\}\,.
\end{equation}

\smallskip
\noindent
\subsection{\bf Semigroups of ideals.} \label{Semigroups of ideals}
Let $R$ be a domain. We denote by $\mathsf{q}(R)$ its quotient field, by $\mathfrak{X}(R)$ the set of minimal nonzero prime ideals of $R$, and by $\overline R$ its integral closure. Then $R \setminus \{0\}$ is a cancellative monoid,
\begin{itemize}
\item $\mathcal{I}(R)$ is the semigroup of nonzero ideals of $R$ (with usual ideal multiplication),
\item $\mathcal{I}^*(R)$ is the subsemigroup of invertible ideals of $R$, and
\item ${\rm Pic}(R)$ is the Picard group of $R$.
\end{itemize}
For every $I\in\mathcal{I}(R)$, we denote by $\sqrt{I}$ its radical and by $\mathcal{N}(I)=(R\negthinspace :\negthinspace I)=|R/I|\in\mathbb{N}\cup\{\infty\}$ its norm.

Let $S$ be a Dedekind domain and $R\subset S$ a subring. Then $R$ is called an {\it order} in $S$ if one of the following two equivalent conditions hold:
\begin{itemize}
\item $\mathsf{q}(R)=\mathsf{q}(S)$ and $S$ is a finitely generated $R$-module.
\item $R$ is one-dimensional noetherian and $\overline R=S$ is a finitely generated $R$-module.
\end{itemize}
Let $R$ be an order in a Dedekind domain $S=\overline R$. We analyze the structure of $\mathcal{I}^*(R)$ and of $\mathcal{I}(R)$.

Since $R$ is noetherian, Krull's Intersection Theorem holds for $R$ whence $\mathcal{I}(R)$ is unit-cancellative (\cite[Lemma 4.1]{Ge-Re18d}). Thus $\mathcal{I}(R)$ is a reduced atomic monoid with identity $R$ and $\mathcal{I}^*(R)$ is a reduced cancellative atomic divisor-closed submonoid. For the sake of clarity, we will say that an ideal of $R$ is an ideal atom if it is an atom of the monoid $\mathcal{I}(R)$. If $I,J\in\mathcal{I}^*(R)$, then $I\mid J$ if and only if $J\subset I$. The prime elements of $\mathcal{I}^*(R)$ are precisely the invertible prime ideals of $R$.
Every ideal is a product of primary ideals belonging to distinct prime ideals (in particular, $\mathcal{I}^*(R)$ is a weakly factorial monoid). Thus every ideal atom (i.e., every $I\in\mathcal{A}(\mathcal{I}(R)$) is primary, and if $\sqrt{I}=\mathfrak{p}\in\mathfrak{X}(R)$, then $I$ is $\mathfrak p$-primary.
Since $\overline R$ is a finitely generated $R$-module, the conductor $\mathfrak{f}=(R\negthinspace :\negthinspace\overline R)$ is nonzero, and we
set
\[
\mathcal{P}=\{\mathfrak p\in\mathfrak{X}(R)\mid\mathfrak{p}\not\supset\mathfrak{f}\} \quad \text{and} \quad \mathcal{P}^*=\mathfrak{X}(R)\setminus\mathcal{P} \,.
\]
Let $\mathfrak{p}\in\mathfrak{X}(R)$. We denote by
\[
\mathcal{I}^*_{\mathfrak{p}}(R)=\{I\in\mathcal{I}^*(R)\mid\sqrt{I}\supset\mathfrak{p}\} \quad \textnormal{ and } \quad \mathcal{I}_{\mathfrak{p}}(R)=\{I\in\mathcal{I}(R)\mid\sqrt{I}\supset\mathfrak{p}\}
\]
the set of invertible $\mathfrak{p}$-primary ideals of $R$ and the set of $\mathfrak{p}$-primary ideals of $R$. Clearly, these are monoids and, moreover,
\[
\mathcal{I}_{\mathfrak{p}}(R) \subset \mathcal{I}(R), \quad \mathcal{I}^*_{\mathfrak{p}}(R) \subset \mathcal{I}_{\mathfrak{p}}(R), \quad \text{and} \quad \mathcal{I}^*_{\mathfrak{p}}(R) \subset \mathcal I^* (R)
\]
are divisor-closed submonoids. Thus $\mathcal{I}^*_{\mathfrak{p}}(R)$ is a reduced cancellative atomic monoid, $\mathcal{I}_{\mathfrak{p}}(R)$ is a reduced atomic monoid, and
if $\mathfrak{p}\in\mathcal{P}$, then $\mathcal{I}^*_{\mathfrak{p}}(R)=\mathcal{I}_{\mathfrak{p}}(R)$ is free abelian. Since $R$ is noetherian and one-dimensional,
\begin{equation}\label{equation 3}
\alpha:\mathcal{I}(R)\rightarrow\coprod_{\mathfrak{p}\in\mathfrak{X}(R)}\mathcal{I}_{\mathfrak{p}}(R), \quad \text{ defined by} \quad \alpha(I)=(I_{\mathfrak{p}}\cap R)_{\mathfrak{p}\in\mathfrak{X}(R)}
\end{equation}
is a monoid isomorphism which induces a monoid isomorphism
\begin{equation}\label{equation 4}
\alpha_{\mid\mathcal{I}^*(R)}:\mathcal{I}^*(R)\rightarrow\coprod_{\mathfrak{p}\in\mathfrak{X}(R)}\mathcal{I}^*_{\mathfrak{p}}(R) \,.
\end{equation}

\smallskip
\section{Orders in quadratic number fields}\label{3}
\smallskip

The goal of this section is to prove Theorem~\ref{theorem 3.6} which provides an explicit description of (invertible) ideal atoms of an order in a quadratic number field. These results are essentially due to Butts and Pall (see \cite{Bu-Pa72} where they are given in a different style), and they were summarized without proof by Geroldinger and Lettl in \cite{Ge-Le90}. Unfortunately, that presentation is misleading in one case (namely, in case $p=2$ and $d_K\equiv 5\mod 8$). Thus we restate the results and provide a full proof.

First we put together some facts on orders in quadratic number fields and fix our notation which remains valid throughout the rest of this paper. For proofs, details, and any undefined notions we refer to \cite{HK13a}.
Let $d\in\mathbb{Z}\setminus\{0,1\}$ be squarefree, $K=\mathbb{Q}(\sqrt{d})$ be a quadratic number field,
\[
\omega=\begin{cases}
\sqrt{d},&\text{if $d\equiv 2,3\mod 4$;}\\
\frac{1+\sqrt{d}}{2},&\text{if $d\equiv 1\mod 4$.}
\end{cases}
\quad\text{and}\quad
d_K=\begin{cases}
4d,&\text{if $d\equiv 2,3\mod 4$;}\\
d,&\text{if $d\equiv 1\mod 4$.}
\end{cases}
\]
Then $\mathcal{O}_K=\mathbb{Z}[\omega]$ is the ring of integers and $d_K$ is the discriminant of $K$. For every $f\in\mathbb{N}$, we define
\[
\varepsilon\in\{0,1\}\text{ with }\varepsilon\equiv f d_K\mod 2\,,\quad\eta=\frac{\varepsilon - f^2d_K}{4}\,,\quad\text{and}\quad\tau=\frac{\varepsilon+f\sqrt{d_K}}{2}\,.
\]
Then
\[
\mathcal{O}_f=\mathbb{Z}\oplus f\omega\mathbb{Z}=\mathbb{Z}\oplus\tau\mathbb{Z}
\]
is an order in $\mathcal{O}_K$ with conductor $\mathfrak{f}=f\mathcal{O}_K$, and every order in $\mathcal{O}_K$ has this form. With the notation of Subsection~\ref{Semigroups of ideals} we have
\[
\mathcal{P}^*=\{\mathfrak p\in\mathfrak X (\mathcal{O}_f)\mid\mathfrak p\supset\mathfrak f \}= \{p\mathbb{Z}+f\omega\mathbb{Z}\mid p\in\mathbb{P},p\mid f\}\,.
\]
If $\alpha=a+b\sqrt{d}\in K$, then $\overline{\alpha}=a-b\sqrt{d}$ is its conjugate, $\mathcal{N}_{K/\mathbb{Q}}(\alpha)=\alpha\overline{\alpha}=a^2-b^2d$ is its norm, and ${\rm tr}(\alpha)=\alpha+\overline{\alpha}=2a$ is its trace. For an $I \in \mathcal I ( \mathcal{O}_f)$, $\overline{I} = \{\overline{\alpha} \mid \alpha \in I\}$ denotes the conjugate ideal. A simple calculation shows that
\[
\mathcal{N}_{K/\mathbb{Q}}(r+\tau)=r^2+\varepsilon r+\eta\quad\text{for each}\ r\in\mathbb{Z}\,.
\]
If $\mathcal{O}$ is an order and $I\in\mathcal{I}^*(\mathcal{O})$, then $(\mathcal{O}_K\negthinspace :\negthinspace I\mathcal{O}_K)=(\mathcal{O}\negthinspace :\negthinspace I)$ and if $a\in\mathcal{O}\setminus\{0\}$, then
\[
(\mathcal{O}\negthinspace :\negthinspace a\mathcal{O})=(\mathcal{O}_K\negthinspace :\negthinspace a O_K)=|\mathcal{N}_{K/\mathbb{Q}}(a)|
\]
(see \cite[Pages 99 and 100]{Ge-HK-Ka95} and note that the factor rings $\mathcal{O}_K/I\mathcal{O}_K$ and $\mathcal{O}/I$ need not be isomorphic).
For $p\in\mathbb{P}$ and for $a\in\mathbb{Z}$ we denote by $\left(\frac{a}{p}\right)\in\{-1,0,1\}$ the {\it Kronecker symbol} of $a$ modulo $p$.
A prime number $p\in\mathbb{Z}$ is called
\begin{itemize}
\item {\it inert} if $p\mathcal{O}_K\in {\rm spec}(\mathcal{O}_K)$.
\item {\it split} if $p\mathcal{O}_K$ is a product of two distinct prime ideals of $\mathcal{O}_K$.
\item {\it ramified} if $p\mathcal{O}_K$ is the square of a prime ideal of $\mathcal{O}_K$.
\end{itemize}
An odd prime
\[
p\text{ is }\begin{cases}
\text{inert}\ &\text{ if }\left(\frac{d_K}{p}\right)=-1;\\
\text{split}\ &\text{ if }\left(\frac{d_K}{p}\right)=1;\\
\text{ramified}\ &\text{ if }\left(\frac{d_K}{p}\right)=0\,.
\end{cases}
\,\text{and}\ 2\text{ is }
\begin{cases}
\text{inert}\ &\text{ if}\ d_K\equiv 5\mod 8;\\
\text{split}\ &\text{ if}\ d_K\equiv 1\mod 8;\\
\text{ramified}\ &\text{ if}\ d_K\equiv 0\mod 2\,.
\end{cases}
\]

\smallskip
\begin{proposition}\label{proposition 3.1}
Let $p$ be a prime divisor of $f$, $\mathcal{O}=\mathcal{O}_f$, and $\mathfrak{p}=p\mathbb{Z}+f\omega\mathbb{Z}$.
\begin{enumerate}
\item[\textnormal{1.}] The primary ideals with radical $\mathfrak{p}$ are exactly the ideals of the form
\[
\mathfrak{q}=p^\ell(p^m\mathbb{Z}+(r+\tau)\mathbb{Z})
\]
with $\ell,m\in\mathbb{N}_0$, $\ell+m\geq1$, $0\leq r<p^m$ and $\mathcal N_{K/\mathbb{Q}}(r+\tau)\equiv 0\mod p^m$. Moreover, $\mathcal{N}(\mathfrak{q})=p^{2\ell+m}$.

\item[\textnormal{2.}] A primary ideal $\mathfrak{q}=p^\ell(p^m\mathbb{Z}+(r+\tau)\mathbb{Z})$ is invertible if and only if
\[
\mathcal N_{K/\mathbb{Q}}(r+\tau)\not\equiv 0\mod p^{m+1}.
\]
\end{enumerate}
\end{proposition}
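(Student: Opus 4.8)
The plan is to work with the explicit $\mathbb{Z}$-module description of ideals in the order $\mathcal{O}=\mathcal{O}_f=\mathbb{Z}\oplus\tau\mathbb{Z}$ and to use the standard fact that every nonzero ideal of a two-dimensional lattice order has a Hermite-type basis $a\mathbb{Z}+(b+c\tau)\mathbb{Z}$ with $a,c\in\mathbb{N}$, $c\mid a$, $c\mid b$, and $0\le b<a$; writing $c=p^\ell$ one is reduced (after factoring out $p^\ell$ and absorbing it) to the stated normal form $\mathfrak{q}=p^{\ell}(p^{m}\mathbb{Z}+(r+\tau)\mathbb{Z})$. First I would verify that such a set is an $\mathcal{O}$-ideal: since $\tau^2=\varepsilon\tau-\eta$ (equivalently $\mathcal{N}_{K/\mathbb{Q}}(r+\tau)=r^2+\varepsilon r+\eta$), multiplying $r+\tau$ by $\tau$ lands back in the module precisely when $p^m\mid\mathcal{N}_{K/\mathbb{Q}}(r+\tau)$, which is the divisibility condition in part 1; closure under multiplication by $1$ and by $p^m$ is automatic. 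For the converse direction, given any $\mathfrak p$-primary ideal I would pick such a Hermite basis and argue that $\sqrt{\,\cdot\,}=\mathfrak p=p\mathbb{Z}+f\omega\mathbb{Z}$ forces the leading coefficient to be a power of $p$ and forces $c$ to be a power of $p$ as well (any prime $q\ne p$ dividing the relevant invariants would put $\mathfrak q$ into a different prime, contradicting primariness). The norm computation $\mathcal{N}(\mathfrak q)=|\mathcal{O}/\mathfrak q|=p^{2\ell+m}$ follows by counting: $\mathcal{O}/(p^m\mathbb{Z}+(r+\tau)\mathbb{Z})$ has order $p^m$ (the quotient is cyclic generated by $1$ of order $p^m$), and scaling by $p^\ell$ multiplies the index by $p^{2\ell}$ since $\mathcal{O}$ has rank $2$.

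For part 2, I would use the criterion that a $\mathfrak p$-primary ideal $\mathfrak q$ in the one-dimensional noetherian local ring $\mathcal{O}_{\mathfrak p}$ is invertible iff it is principal there, equivalently iff $\mathfrak q\overline{\mathfrak q}=\mathcal{N}_{K/\mathbb{Q}}(\text{generator})\,\mathcal{O}$ has the right norm; concretely, the cleanest route is: $\mathfrak q$ is invertible iff $(\mathcal{O}:\mathfrak q)\mathfrak q=\mathcal{O}$, and one computes $(\mathcal{O}:\mathfrak q)$ directly from the basis. Since the factor $p^\ell$ is a unit times an invertible ideal, invertibility of $\mathfrak q$ is equivalent to invertibility of $\mathfrak q_0:=p^m\mathbb{Z}+(r+\tau)\mathbb{Z}$, so I reduce to the case $\ell=0$. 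Then I would show $(\mathcal{O}:\mathfrak q_0)=\mathbb{Z}\oplus\frac{r+\tau}{p^m}\mathbb{Z}$ when $p^m\mid\mathcal{N}_{K/\mathbb{Q}}(r+\tau)$, compute the product $\mathfrak q_0\cdot(\mathcal{O}:\mathfrak q_0)$ as the $\mathbb{Z}$-module generated by $p^m,\ r+\tau,\ \frac{(r+\tau)^2}{p^m}=\frac{\mathcal{N}_{K/\mathbb{Q}}(r+\tau)+(\varepsilon - 2r)(r+\tau)\cdot(-1)}{p^m}$ suitably rewritten using $(r+\tau)+(\,\overline{r+\tau}\,)=2r+\varepsilon$ and $(r+\tau)(\overline{r+\tau})=\mathcal{N}_{K/\mathbb{Q}}(r+\tau)$, and observe that this module equals $\mathcal{O}$ iff $1$ is in it iff $\gcd(p^m,\ \mathcal{N}_{K/\mathbb{Q}}(r+\tau)/p^m,\ 2r+\varepsilon)$ contributes a unit at $p$ — which unwinds exactly to $p^{m+1}\nmid\mathcal{N}_{K/\mathbb{Q}}(r+\tau)$.

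The main obstacle I anticipate is the careful bookkeeping in the $p=2$ case, where the parity of $\varepsilon$ (and hence whether $2\mid 2r+\varepsilon$) interacts with the congruence on $\mathcal{N}_{K/\mathbb{Q}}(r+\tau)=r^2+\varepsilon r+\eta$ modulo powers of $2$; this is precisely the case flagged in the text as having been misstated in \cite{Ge-Le90}, so the computation of $(\mathcal{O}:\mathfrak q_0)$ and of the product ideal must be done without tacitly assuming $p$ is odd. A secondary subtlety is justifying that the parameters $(\ell,m,r)$ in the claimed normal form are essentially unique (or at least that every $\mathfrak p$-primary ideal has \emph{some} such representation with $0\le r<p^m$), which I would handle by a short reduction of the Hermite basis modulo $p^m$ together with the observation that changing $r$ by a multiple of $p^m$ does not change the ideal. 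Everything else is linear algebra over $\mathbb{Z}$ and index counting.
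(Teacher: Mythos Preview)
Your overall strategy is sound and more self-contained than the paper's, which is quite short: it simply cites \cite[Theorem 5.4.2]{HK13a} for the Hermite-type normal form, for the norm formula, and for the invertibility criterion $\gcd\bigl(p^m,\,2r+\varepsilon,\,\mathcal{N}_{K/\mathbb{Q}}(r+\tau)/p^m\bigr)=1$, and then only adds (i) the argument that $\mathfrak p$-primariness forces all the moduli to be powers of $p$ (exactly your argument), and (ii) the reduction of the gcd condition to $p^{m+1}\nmid\mathcal{N}_{K/\mathbb{Q}}(r+\tau)$ using $p\mid f$. So you are essentially reproving part of \cite[Theorem 5.4.2]{HK13a} by hand; that is fine, but be aware that the published proof outsources this.

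There is one slip in your part~2. The colon ideal is
\[
(\mathcal{O}:\mathfrak q_0)=\mathbb{Z}+\tfrac{\,\overline{r+\tau}\,}{p^m}\,\mathbb{Z},
\]
not $\mathbb{Z}+\frac{r+\tau}{p^m}\mathbb{Z}$: writing $x=\frac{a+b\tau}{p^m}$ and imposing $x(r+\tau)\in\mathcal{O}$ gives $a\equiv -b(r+\varepsilon)\pmod{p^m}$, so the second generator is $\frac{-(r+\varepsilon)+\tau}{p^m}=-\frac{\overline{r+\tau}}{p^m}$. With the correct generator the product $\mathfrak q_0\cdot(\mathcal{O}:\mathfrak q_0)$ contains $p^m$, $r+\tau$, $\overline{r+\tau}$ (hence $2r+\varepsilon$), and $\frac{(r+\tau)(\overline{r+\tau})}{p^m}=\frac{\mathcal{N}_{K/\mathbb{Q}}(r+\tau)}{p^m}$, and you land directly on the gcd you wrote, without the detour through $(r+\tau)^2$.

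Finally, your anticipated $p=2$ obstacle evaporates. The step ``$\gcd(p^m,2r+\varepsilon,\mathcal{N}/p^m)=1\iff p\nmid\mathcal{N}/p^m$'' only needs the observation that $p\mid 2r+\varepsilon$ whenever $p\mid f$ and $m\ge1$. For odd $p$ this follows from $4\mathcal{N}_{K/\mathbb{Q}}(r+\tau)=(2r+\varepsilon)^2-f^2d_K$ together with $p^2\mid f^2$ and $p\mid\mathcal{N}$. For $p=2$ it is even easier: $2\mid f$ forces $\varepsilon\equiv fd_K\equiv 0\pmod 2$, so $\varepsilon=0$ and $2r+\varepsilon=2r$ is automatically even. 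The passage flagged in the paper as ``misleading in \cite{Ge-Le90}'' concerns the atom count in Theorem~3.6 for $p=2$, $d_K\equiv 5\pmod 8$, not Proposition~3.1; no special care is needed here.
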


\begin{proof}
1. Let $\mathfrak{q}$ be a $\mathfrak{p}$-primary ideal in $\mathcal{O}$. By \cite[Theorem 5.4.2]{HK13a} there exist nonnegative integers $\ell,m,r$ such that
$\mathfrak{q}=\ell(m\mathbb{Z}+(r+\tau)\mathbb{Z})$, $r<m$ and $\mathcal N_{K/\mathbb{Q}}(r+\tau)\equiv 0\mod m$. Since $\mathfrak{q}$ is nonzero and proper, we have $\ell m>1$. We prove, that $\ell m$ is a power of $p$. First observe that $\mathfrak{q}\subset\sqrt{\mathfrak{q}}=\mathfrak{p}$ implies that $p\mid\ell m$. Assume to the contrary that there exists another rational prime $p'\not=p$ dividing $\ell m$, say $\ell m=p's$.
But then $p's\in\mathfrak{q}$, $s\not\in\mathfrak{q}$ and $p'\not\in\mathfrak{p}=\sqrt{\mathfrak{q}}$. A contradiction to $\mathfrak{q}$ being primary. Conversely, assume that $\mathfrak{q}=p^\ell(p^m\mathbb{Z}+(r+\tau)\mathbb{Z})$ for integers $\ell,m\in\mathbb{N}_0,\ell+m\geq 1,0\leq r<p^m$ and $\mathcal N_{K/\mathbb{Q}}(r+\tau)\equiv 0\mod p^m.$ By \cite[Theorem 5.4.2]{HK13a}, $\mathfrak{q}$ is an ideal of $\mathcal{O}$. Since $p\in\sqrt{\mathfrak{q}}$ and $\mathfrak{p}$ is the only prime ideal in $\mathcal{O}$ containing $p$ we obtain that $\sqrt{\mathfrak{q}}=\bigcap_{\substack{\mathfrak{a}\in {\rm spec}(\mathcal{O}),\mathfrak{a}\supset\mathfrak{q}}}\mathfrak{a}=\mathfrak{p}$. The nonzero prime ideal $\mathfrak{p}$ is maximal, since $\mathcal{O}$ is one-dimensional. Therefore, $\mathfrak{q}$ is $\mathfrak{p}$-primary. It follows from \cite[Theorem 5.4.2]{HK13a} that $\mathcal{N}(\mathfrak{q})=p^{2\ell+m}$.

\smallskip
2. By \cite[Theorem 5.4.2]{HK13a}, $\mathfrak{q}=p^\ell(p^m\mathbb{Z}+(r+\tau)\mathbb{Z})$ is invertible if and only if $\gcd(p^m,2r+\varepsilon,\frac{\mathcal N_{K/\mathbb{Q}}(r+\tau)}{p^m})=1$. Since $p\mid f$ and $\mathcal{N}_{K/\mathbb{Q}}(r+\tau)=\frac{1}{4}((2r+\varepsilon)^2-f^2d_K)$, this is the case if and only if $p\nmid\frac{\mathcal N_{K/\mathbb{Q}}(r+\tau)}{p^m}$, that is $\mathcal N_{K/\mathbb{Q}}(r+\tau)\not\equiv 0\mod p^{m+1}$.
\end{proof}

If $x\in\mathbb{Z}$ and $y\in\mathbb{N}$, then let ${\rm rem}(x,y)$ be the unique $z\in [0,y-1]$ such that $y\mid x-z$. Let $p$ be a prime divisor of $f$. Note that ${\rm v}_p(0)=\infty$, and if $\emptyset\not=A\subseteq\mathbb{N}_0$, then $\min(A\cup\{\infty\})=\min A$. We set
\[
\begin{aligned}
P_{f,p}=p\mathbb{Z}+f\omega\mathbb{Z},\quad\mathcal{I}^*_p(\mathcal{O}_f) &=\mathcal{I}^*_{P_{f,p}}(\mathcal{O}_f), \mathcal{I}_p(\mathcal{O}_f)=\mathcal{I}_{P_{f,p}}(\mathcal{O}_f),\quad\text{and}\\
\mathcal{M}_{f,p} &=\{(x,y,z)\in\mathbb{N}_0^3\mid z<p^y,{\rm v}_p(z^2+\varepsilon z+\eta)\geq y\}
\end{aligned}
\]
Let $\ast:\mathcal{M}_{f,p}\times\mathcal{M}_{f,p}\rightarrow\mathcal{M}_{f,p}$ be defined by $(u,v,w)\ast (x,y,z)=(a,b,c)$, where
\begin{align*}
&a=u+x+g,\textnormal{ }b=v+y+e-2g,\\
&c={\rm rem}\left(h-t\frac{h^2+\varepsilon h+\eta}{p^g},p^b\right),\textnormal{ }g=\min\{v,y,{\rm v}_p(w+z+\varepsilon)\},\\
&e=\min\{g,{\rm v}_p(w-z),{\rm v}_p(w^2+\varepsilon w+\eta)-v,{\rm v}_p(z^2+\varepsilon z+\eta)-y\},\\
&t\in\mathbb{Z}\textnormal{ is such that }t\frac{w+z+\varepsilon}{p^g}\equiv 1\mod\textnormal{ }p^{\min\{v,y\}-g},\textnormal{ and }h=\begin{cases} z &\textnormal{ if }y\geq v\\ w &\textnormal{ if }v>y\end{cases}.
\end{align*}
Let $\xi_{f,p}:\mathcal{M}_{f,p}\rightarrow\mathcal{I}_p(\mathcal{O}_f)$ be defined by $\xi_{f,p}(x,y,z)=p^x(p^y\mathbb{Z}+(z+\tau)\mathbb{Z})$.

\begin{proposition}\label{proposition 3.2}
Let $p$ be a prime divisor of $f$ and $I,J\in\mathcal{I}_p(\mathcal{O}_f)$.
\begin{enumerate}
\item[\textnormal{1.}] $(\mathcal{M}_{f,p},\ast)$ is a reduced monoid and $\xi_{f,p}$ is a monoid isomorphism.

\item[\textnormal{2.}] If $w,z\in\mathbb{Z}$ are such that ${\rm v}_p(w^2+\varepsilon w+\eta)>0$ and ${\rm v}_p(z^2+\varepsilon z+\eta)>0$, then ${\rm v}_p(w+z+\varepsilon)>0$ and ${\rm v}_p(w-z)>0$.

\item[\textnormal{3.}] $\mathcal{N}(I)\mathcal{N}(J)\mid\mathcal{N}(IJ)$ and $\mathcal{N}(IJ)=\mathcal{N}(I)\mathcal{N}(J)$ if and only if $I$ is invertible or $J$ is invertible. If $I$ and $J$ are proper, then $IJ\subset p\mathcal{O}_f$.

\item[\textnormal{4.}] If $I\in\mathcal{A}(\mathcal{I}_p(\mathcal{O}_f))$, then there is some $I^{\prime}\in\mathcal{A}(\mathcal{I}^*_p(\mathcal{O}_f))$ such that $\mathcal{N}(IJ)\mid\mathcal{N}(I^{\prime}J)$. If $I\in\mathcal{A}(\mathcal{I}_p(\mathcal{O}_f))$ is not invertible, then $\mathcal{N}(I)\mid\mathcal{N}(I^{\prime})$ and $\mathcal{N}(I)<\mathcal{N}(I^{\prime})$ for some $I^{\prime}\in\mathcal{A}(\mathcal{I}^*_p(\mathcal{O}_f))$.

\item[\textnormal{5.}] If $I\in\mathcal{A}(\mathcal{I}^*_p(\mathcal{O}_f))$, then $\overline{I}\in\mathcal{A}(\mathcal{I}^*_p(\mathcal{O}_f))$ and $I\overline{I}=\mathcal{N}(I)\mathcal{O}_f$.
\end{enumerate}
\end{proposition}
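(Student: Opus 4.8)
The plan is to deduce all five assertions from Proposition~\ref{proposition 3.1} and the explicit multiplication formula for $\ast$, proving assertion~1 first.

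For assertion~1, Proposition~\ref{proposition 3.1}.1 already shows that $\xi_{f,p}$ is a bijection from $\mathcal{M}_{f,p}$ onto $\mathcal{I}_p(\mathcal{O}_f)$, with $(0,0,0)\mapsto\mathcal{O}_f$ and the other triples hitting the $P_{f,p}$-primary ideals. So it suffices to verify the single identity $\xi_{f,p}(u,v,w)\,\xi_{f,p}(x,y,z)=\xi_{f,p}\big((u,v,w)\ast(x,y,z)\big)$; this simultaneously shows that $\ast$ is well defined, and then associativity, commutativity, the identity element $(0,0,0)$ and reducedness are inherited from $\mathcal{I}_p(\mathcal{O}_f)\subseteq\mathcal{I}(\mathcal{O}_f)$. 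The identity is a lattice computation: after pulling out the common factor $p^{u+x}$, one expands the product of $p^v\mathbb{Z}+(w+\tau)\mathbb{Z}$ and $p^y\mathbb{Z}+(z+\tau)\mathbb{Z}$ using $\tau^2=\varepsilon\tau-\eta$ and $(w+\tau)(z+\tau)=(wz-\eta)+(w+z+\varepsilon)\tau$, and puts the resulting rank-two lattice into the normal form of Proposition~\ref{proposition 3.1}.1. The content of the product lattice is the $\gcd$ of the $\tau$-coefficients $0,p^v,p^y,w+z+\varepsilon$, equal to $p^g$, which explains $g$ and $a=u+x+g$; after dividing by $p^g$, the rational part is generated by four elements whose $p$-valuations are exactly the entries of the minimum defining $e$, giving the factor $p^{v+y+e-2g}$; and $c$ is obtained by reducing a representative of the non-rational generator modulo the content (this is where $t$ inverts $\tfrac{w+z+\varepsilon}{p^g}$) and then modulo $p^b$, with the alternative $h=z$ if $y\ge v$, $h=w$ if $v>y$ choosing which of the two ideals supplies the surviving generator. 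I expect this verification -- keeping track of which generator wins each $\gcd$ and justifying the normal form in all cases -- to be the main obstacle; the remaining assertions follow relatively quickly.

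Assertion~2 is elementary: $p\mid f$ forces $p^2\mid f^2d_K=\varepsilon^2-4\eta$, the discriminant of $X^2+\varepsilon X+\eta$. For odd $p$, $4(r^2+\varepsilon r+\eta)=(2r+\varepsilon)^2-f^2d_K$ shows that ${\rm v}_p(w^2+\varepsilon w+\eta)>0$ forces $p\mid 2w+\varepsilon$, likewise $p\mid 2z+\varepsilon$, and then $p\mid(2w+\varepsilon)\pm(2z+\varepsilon)$ gives $p\mid w-z$ and $p\mid w+z+\varepsilon$. For $p=2$ one has $\varepsilon=0$, so $w^2\equiv z^2\pmod 2$, hence $w\equiv z\pmod 2$, and $w-z$ and $w+z=w+z+\varepsilon$ are even. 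For assertion~3, Proposition~\ref{proposition 3.1}.1 gives $\mathcal{N}(\xi_{f,p}(x,y,z))=p^{2x+y}$, so assertion~1 yields $\mathcal{N}(IJ)=\mathcal{N}(I)\mathcal{N}(J)\,p^{e}$, where $e$ is the quantity appearing in the formula for $\ast$ applied to the triples representing $I$ and $J$; since the last two entries of the minimum defining $e$ are $\ge 0$ by membership in $\mathcal{M}_{f,p}$, this gives $\mathcal{N}(I)\mathcal{N}(J)\mid\mathcal{N}(IJ)$, with equality iff $e=0$. If the middle component of $I$ or of $J$ is $0$, then $g=0$, hence $e=0$; if both middle components are positive and, say, $I$ is invertible, then ${\rm v}_p(w^2+\varepsilon w+\eta)=v$ by Proposition~\ref{proposition 3.1}.2, so again $e=0$. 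Conversely, if $I$ and $J$ are both non-invertible, then their middle components $v,y$ are $\ge 1$ and, by Proposition~\ref{proposition 3.1}.2, ${\rm v}_p(w^2+\varepsilon w+\eta)\ge v+1$ and ${\rm v}_p(z^2+\varepsilon z+\eta)\ge y+1$; both are positive, so assertion~2 gives ${\rm v}_p(w-z)\ge 1$ and ${\rm v}_p(w+z+\varepsilon)\ge 1$, whence $g\ge 1$ and every entry of the minimum is $\ge 1$, so $e\ge 1$ and there is no equality. Finally, if $I$ and $J$ are proper, then $u+x\ge 1$, or else $u=x=0$ with $v,y\ge 1$ so that $g\ge 1$ by assertion~2; in either case $a=u+x+g\ge 1$, and since $b\ge 0$ this gives $IJ=p^a(p^b\mathbb{Z}+(c+\tau)\mathbb{Z})\subseteq p\mathcal{O}_f$.

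For assertions~4 and~5 I would first record, using Proposition~\ref{proposition 3.1} together with assertion~3 (a product of two proper ideals lies in $p\mathcal{O}_f$), that the ideal atoms of $\mathcal{I}_p(\mathcal{O}_f)$ are precisely $p\mathcal{O}_f=\xi_{f,p}(1,0,0)$ and the $\xi_{f,p}(0,v,w)$ with $v\ge 1$, and that $\xi_{f,p}(0,v,w)$ is invertible exactly when ${\rm v}_p(w^2+\varepsilon w+\eta)=v$. For assertion~4, take $I'=I$ if $I$ is invertible; otherwise $I=\xi_{f,p}(0,v,w)$ with $\ell:={\rm v}_p(w^2+\varepsilon w+\eta)\ge v+1$, and I would set $w'={\rm rem}(w,p^\ell)$ and $I'=\xi_{f,p}(0,\ell,w')$. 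Since $\mathcal{N}_{K/\mathbb{Q}}(w'+\tau)-\mathcal{N}_{K/\mathbb{Q}}(w+\tau)=(w'-w)(w'+w+\varepsilon)$, $p^\ell\mid w'-w$, and $p\mid 2w+\varepsilon$ by the discriminant argument of assertion~2, this difference has $p$-valuation $\ge\ell+1$, so ${\rm v}_p\big(\mathcal{N}_{K/\mathbb{Q}}(w'+\tau)\big)=\ell$ and $I'$ is an invertible ideal atom with $\mathcal{N}(I')=p^\ell$. Because $I'$ is invertible, assertion~3 gives $\mathcal{N}(I'J)=p^\ell\mathcal{N}(J)$, whereas $\mathcal{N}(IJ)=p^{v+e}\mathcal{N}(J)$ with $e\le\ell-v$ (one of the entries of the minimum defining $e$), so $\mathcal{N}(IJ)\mid\mathcal{N}(I'J)$; moreover $\mathcal{N}(I)=p^v<p^\ell=\mathcal{N}(I')$ with $\mathcal{N}(I)\mid\mathcal{N}(I')$. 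For assertion~5, conjugation is an involutive automorphism of the monoid $\mathcal{I}(\mathcal{O}_f)$ that fixes $P_{f,p}$ (the unique prime of $\mathcal{O}_f$ above $p$), hence restricts to an automorphism of $\mathcal{I}^*_p(\mathcal{O}_f)$ that permutes its atoms; thus $\overline{I}\in\mathcal{A}(\mathcal{I}^*_p(\mathcal{O}_f))$. For the norm identity, writing $I=\xi_{f,p}(u,v,w)$ and using $\overline\tau=\varepsilon-\tau$ together with $(w+\tau)(w+\varepsilon-\tau)=w^2+\varepsilon w+\eta$, one finds that $I\overline{I}$ is $p^{2u}$ times the $\mathbb{Z}$-module generated by $p^{2v}$, $w^2+\varepsilon w+\eta$, $p^v(2w+\varepsilon)$ and $p^v(w+\tau)$, hence $I\overline{I}\subseteq p^{2u+v}\mathcal{O}_f=\mathcal{N}(I)\mathcal{O}_f$ because ${\rm v}_p(w^2+\varepsilon w+\eta)\ge v$; since $\mathcal{N}(I\overline{I})=\mathcal{N}(I)\mathcal{N}(\overline{I})=\mathcal{N}(I)^2=\mathcal{N}(\mathcal{N}(I)\mathcal{O}_f)$ by assertion~3 (as $I$ is invertible), this inclusion is an equality.
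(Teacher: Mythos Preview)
Your proposal is correct and, for assertions~1--4, follows essentially the paper's approach: both reduce everything to Proposition~\ref{proposition 3.1} and the explicit product formula, with the lattice computation in assertion~1 being the main technical step (which you rightly flag). One small imprecision: your description of the rational part as ``generated by four elements whose $p$-valuations are exactly the entries of the minimum defining $e$'' does not quite match what happens---the paper reduces to three rational generators $p^{v+y}$, $p^y(w-z)$, $p^{v-g}(z^2+\varepsilon z+\eta)$ and needs a short case distinction to recognize their $\gcd$ as $p^{v+y+e-g}$---but this does not affect the validity of your plan.

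For assertion~5 you take a genuinely different and somewhat cleaner route. The paper writes $\overline{I}$ explicitly as $p^m\mathbb{Z}+(s+\tau)\mathbb{Z}$ with $s=p^m-r-\varepsilon$, checks by hand that ${\rm v}_p(s^2+\varepsilon s+\eta)=m$ so that $\overline{I}$ is an invertible atom, and then applies the $\ast$-formula from part~1 to get $I\overline{I}=p^m\mathcal{O}_f$. You instead observe that conjugation is a monoid automorphism of $\mathcal{I}(\mathcal{O}_f)$ fixing the unique prime above $p$, hence permutes the atoms of $\mathcal{I}^*_p(\mathcal{O}_f)$; and you obtain $I\overline{I}=\mathcal{N}(I)\mathcal{O}_f$ from the containment $I\overline{I}\subseteq\mathcal{N}(I)\mathcal{O}_f$ (a direct generator check) combined with $\mathcal{N}(I\overline{I})=\mathcal{N}(I)^2=\mathcal{N}(\mathcal{N}(I)\mathcal{O}_f)$ via assertion~3. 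Your argument avoids re-invoking the $\ast$-formula and works uniformly without separating the case $I=p\mathcal{O}_f$.
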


\begin{proof}
1. Let $(u,v,w),(x,y,z)\in\mathcal{M}_{f,p}$. Set $g=\min\{v,y,{\rm v}_p(w+z+\varepsilon)\}$ and $e=\min\{g,{\rm v}_p(w-z),{\rm v}_p(w^2+\varepsilon w+\eta)-v,{\rm v}_p(z^2+\varepsilon z+\eta)-y\}$. Note that ${\rm gcd}(p^{\min\{v,y\}},w+z+\varepsilon)=p^g$, and hence there are some $s,t\in\mathbb{Z}$ such that $sp^{\min\{v,y\}}+t(w+z+\varepsilon)=p^g$. This implies that $t\frac{w+z+\varepsilon}{p^g}\equiv 1\mod\textnormal{ }p^{\min\{v,y\}-g}$. Set $a=u+x+g$, $b=v+y+e-2g$ and let $h=z$ if $y\geq v$ and $h=w$ if $v>y$. Finally, set $c={\rm rem}(h-t\frac{h^2+\varepsilon h+\eta}{p^g},p^b)$. First we show that $c$ does not depend on the choice of $t$. Let $t^{\prime}\in\mathbb{Z}$ be such that $t^{\prime}\frac{w+z+\varepsilon}{p^g}\equiv 1\mod\textnormal{ }p^{\min\{v,y\}-g}$. Then $p^{\min\{v,y\}-g}\mid t-t^{\prime}$. Note that $\min\{v,y\}+{\rm v}_p(h^2+\varepsilon h+\eta)\geq v+y+e$, and hence $p^b\mid (t-t^{\prime})\frac{h^2+\varepsilon h+\eta}{p^g}$. Consequently, $c={\rm rem}(h-t^{\prime}\frac{h^2+\varepsilon h+\eta}{p^g},p^b)$.

Next we show that $(a,b,c)\in\mathcal{M}_{f,p}$. It is clear that $(a,b,c)\in\mathbb{N}_0^3$ and $c<p^b$. It remains to show that ${\rm v}_p(c^2+\varepsilon c+\eta)\geq b$. Without restriction we can assume that $v\leq y$. Then $h=z$. Set $k=z-t\frac{z^2+\varepsilon z+\eta}{p^g}$. There is some $r\in\mathbb{Z}$ such that $c=k+rp^b$. Since $c^2+\varepsilon c+\eta=k^2+\varepsilon k+\eta+mp^b$ for some $m\in\mathbb{Z}$, it is sufficient to show that ${\rm v}_p(k^2+\varepsilon k+\eta)\geq b$.

Observe that $k^2+\varepsilon k+\eta=\frac{z^2+\varepsilon z+\eta}{p^{2g}}(p^{2g}-tp^g(2z+\varepsilon)+t^2(z^2+\varepsilon z+\eta))=\frac{z^2+\varepsilon z+\eta}{p^{2g}}(sp^{v+g}+tp^g(w-z)+t^2(z^2+\varepsilon z+\eta))$. Note that $g+{\rm v}_p(w-z)=\min\{v+{\rm v}_p(w-z),{\rm v}_p(w+z+\varepsilon)+{\rm v}_p(w-z)\}=\min\{v+{\rm v}_p(w-z),{\rm v}_p(w^2+\varepsilon w+\eta-(z^2+\varepsilon z+\eta))\}\geq\min\{v+{\rm v}_p(w-z),{\rm v}_p(z^2+\varepsilon z+\eta),{\rm v}_p(w^2+\varepsilon w+\eta)\}\geq v$. Moreover, we have ${\rm v}_p(z^2+\varepsilon z+\eta)\geq y+e$. Therefore, ${\rm v}_p(k^2+\varepsilon k+\eta)\geq {\rm v}_p(z^2+\varepsilon z+\eta)-2g+\min\{v+g,g+{\rm v}_p(w-z),{\rm v}_p(z^2+\varepsilon z+\eta)\}\geq y+e-2g+v=b$.

Now we prove that $p^u(p^v\mathbb{Z}+(w+\tau)\mathbb{Z})p^x(p^y\mathbb{Z}+(z+\tau)\mathbb{Z})=p^a(p^b\mathbb{Z}+(c+\tau)\mathbb{Z})$. (Note that this can be shown by using \cite[Theorem 5.4.6]{HK13a}.) Set $I=p^u(p^v\mathbb{Z}+(w+\tau)\mathbb{Z})p^x(p^y\mathbb{Z}+(z+\tau)\mathbb{Z})$. Without restriction let $v\leq y$. Note that $(w+\tau)(z+\tau)=wz-\eta+(w+z+\varepsilon)\tau$. Set $\alpha=p^v(z+\tau)$ and $\beta=wz-\eta+(w+z+\varepsilon)\tau$. We infer that $I=p^{u+x}(p^{v+y}\mathbb{Z}+p^y(w+\tau)\mathbb{Z}+\alpha\mathbb{Z}+\beta\mathbb{Z})$.

Moreover, $p^y(w+\tau)\mathbb{Z}+\alpha\mathbb{Z}=p^y(w-z)\mathbb{Z}+\alpha\mathbb{Z}$. Observe that $s\alpha+t\beta=p^gz-t(z^2+\varepsilon z+\eta)+p^g\tau$. Set $k=z-t\frac{z^2+\varepsilon z+\eta}{p^g}$. Then $s\alpha+t\beta=p^g(k+\tau)$. We have $\alpha-p^v(k+\tau)=tp^{v-g}(z^2+\varepsilon z+\eta)$ and $(w+z+\varepsilon)(k+\tau)-\beta=sp^{v-g}(z^2+\varepsilon z+\eta)$. Set $r=p^{v-g}(z^2+\varepsilon z+\eta)$. Consequently, $\alpha\mathbb{Z}+\beta\mathbb{Z}=sr\mathbb{Z}+tr\mathbb{Z}+p^g(k+\tau)\mathbb{Z}=r\mathbb{Z}+p^g(k+\tau)\mathbb{Z}$,
since ${\rm gcd}(s,t)=1$. Putting these facts together gives us
$I=p^{u+x}(p^{v+y}\mathbb{Z}+p^y(w-z)\mathbb{Z}+r\mathbb{Z}+p^g(k+\tau)\mathbb{Z})$.

We have ${\rm gcd}(p^{v+y},p^y(w-z),r)=p^{\ell}$ with $\ell=\min\{v+y,y+{\rm v}_p(w-z),v-g+{\rm v}_p(z^2+\varepsilon z+\eta)\}$ and $p^{v+y}\mathbb{Z}+p^y(w-z)\mathbb{Z}+r\mathbb{Z}=p^{\ell}\mathbb{Z}$. Note that $\ell=v+y-g+\min\{g,{\rm v}_p(w-z)-v+g,{\rm v}_p(z^2+\varepsilon z+\eta)-y\}$ and ${\rm v}_p(w-z)-v+g=\min\{{\rm v}_p(w-z),{\rm v}_p(w-z)+{\rm v}_p(w+z+\varepsilon)-v\}=\min\{{\rm v}_p(w-z),{\rm v}_p(w^2+\varepsilon w+\eta-(z^2+\varepsilon z+\eta))-v\}$, and hence $\ell=v+y-g+\min\{g,{\rm v}_p(w-z),{\rm v}_p(w^2+\varepsilon w+\eta-(z^2+\varepsilon z+\eta))-v,{\rm v}_p(z^2+\varepsilon z+\eta)-y\}$.

\smallskip
CASE 1: ${\rm v}_p(w^2+\varepsilon w+\eta)\geq {\rm v}_p(z^2+\varepsilon z+\eta)$. Then ${\rm v}_p(w^2+\varepsilon w+\eta)-v\geq {\rm v}_p(z^2+\varepsilon z+\eta)-y$ and ${\rm v}_p(w^2+\varepsilon w+\eta-(z^2+\varepsilon z+\eta))-v\geq {\rm v}_p(z^2+\varepsilon z+\eta)-y$.

\smallskip
CASE 2: ${\rm v}_p(z^2+\varepsilon z+\eta)>{\rm v}_p(w^2+\varepsilon w+\eta)$. Then ${\rm v}_p(w^2+\varepsilon w+\eta-(z^2+\varepsilon z+\eta))-v={\rm v}_p(w^2+\varepsilon w+\eta)-v$.

\smallskip
In any case we have $\min\{{\rm v}_p(w^2+\varepsilon w+\eta-(z^2+\varepsilon z+\eta))-v,{\rm v}_p(z^2+\varepsilon z+\eta)-y\}=\min\{{\rm v}_p(w^2+\varepsilon w+\eta)-v,{\rm v}_p(z^2+\varepsilon z+\eta)-y\}$. Obviously, $\ell=v+y+e-g$ and $I=p^{u+x+g}(p^{v+y+e-2g}\mathbb{Z}+(z-t\frac{z^2+\varepsilon z+\eta}{p^g}+\tau)\mathbb{Z})$. Consequently, $I=p^a(p^b\mathbb{Z}+(c+\tau)\mathbb{Z})$.

So far we know that $\ast$ is an inner binary operation on $\mathcal{M}_{f,p}$. It follows from Proposition~\ref{proposition 3.1}.1 that $\xi_{f,p}$ is surjective. It follows from \cite[Theorem 5.4.2]{HK13a} that $\xi_{f,p}$ is injective. It is clear that $(\mathcal{I}_p(O_f),\cdot)$ is a reduced monoid. We have shown that $\xi_{f,p}$ maps products of elements of $\mathcal{M}_{f,p}$ to products of elements of $\mathcal{I}_p(O_f)$. It is clear that $(0,0,0)$ is an identity element of $\mathcal{M}_{f,p}$ and $\xi_{f,p}(0,0,0)=\mathcal{O}_f$. Therefore, $(\mathcal{M}_{f,p},\ast)$ is a reduced monoid and $\xi_{f,p}$ is a monoid isomorphism.

\smallskip
2. Let $w,z\in\mathbb{Z}$ be such that ${\rm v}_p(w^2+\varepsilon w+\eta)>0$ and ${\rm v}_p(z^2+\varepsilon z+\eta)>0$. Then $p\mid z^2+\varepsilon z+\eta=\frac{1}{4}((2z+\varepsilon)^2-f^2d_K)$, and hence $p\mid 2z+\varepsilon$. Moreover $p\mid w^2+\varepsilon w+\eta-(z^2+\varepsilon z+\eta)=(w+z+\varepsilon)(w-z)$, and thus $p\mid w+z+\varepsilon$ or $p\mid w-z$. Since $p\mid 2z+\varepsilon$, we infer that $p\mid w+z+\varepsilon$ if and only if $p\mid w-z$. Consequently, $\min\{{\rm v}_p(w+z+\varepsilon),{\rm v}_p(w-z)\}>0$.

\smallskip
3. By 1., there are $(u,v,w),(x,y,z),(a,b,c)\in\mathcal{M}_{f,p}$ such that $I=p^u(p^v\mathbb{Z}+(w+\tau)\mathbb{Z})$, $J=p^x(p^y\mathbb{Z}+(z+\tau)\mathbb{Z})$ and $IJ=p^a(p^b\mathbb{Z}+(c+\tau)\mathbb{Z})$ with $a=u+x+g$, $b=v+y+e-2g$, $g=\min\{v,y,{\rm v}_p(w+z+\varepsilon)\}$ and $e=\min\{g,{\rm v}_p(w-z),{\rm v}_p(w^2+\varepsilon w+\eta)-v,{\rm v}_p(z^2+\varepsilon z+\eta)-y\}$. It follows by Proposition~\ref{proposition 3.1}.1 that $\mathcal{N}(I)=p^{2u+v}$, $\mathcal{N}(J)=p^{2x+y}$ and $\mathcal{N}(IJ)=p^{2a+b}=p^{2(u+x)+v+y+e}$. It is obvious that $\mathcal{N}(I)\mathcal{N}(J)\mid\mathcal{N}(IJ)$. Moreover, $\mathcal{N}(IJ)=\mathcal{N}(I)\mathcal{N}(J)$ if and only if $e=0$. We infer by 2. that $e=0$ if and only if $v=0$ or $y=0$ or ${\rm v}_p(w^2+\varepsilon w+\eta)=v$ or ${\rm v}_p(z^2+\varepsilon z+\eta)=y$, which is the case if and only if $I$ is invertible or $J$ is invertible by Proposition~\ref{proposition 3.1}.2. If $I$ and $J$ are proper, then $u+v>0$ and $x+y>0$, and hence $a>0$ by 2. This implies that $IJ\subset p(p^b\mathbb{Z}+(c+\tau)\mathbb{Z})\subset p\mathcal{O}_f$.

\smallskip
4. Let $I\in\mathcal{A}(\mathcal{I}_p(\mathcal{O}_f))$. Without restriction let $I$ be not invertible. We have $I=p^b\mathbb{Z}+(r+\tau)\mathbb{Z}$ for some $(0,b,r)\in\mathcal{M}_{f,p}$ and $b<{\rm v}_p(r^2+\varepsilon r+\eta)$. Set $c={\rm v}_p(r^2+\varepsilon r+\eta)$ and $I^{\prime}=p^c\mathbb{Z}+(r+\tau)\mathbb{Z}$. Then $I^{\prime}\in\mathcal{A}(\mathcal{I}^*_p(\mathcal{O}_f))$, $\mathcal{N}(I)\mid\mathcal{N}(I^{\prime})$, and $\mathcal{N}(I)<\mathcal{N}(I^{\prime})$ by Proposition~\ref{proposition 3.1}. There is some $(x,y,z)\in\mathcal{M}_{f,p}$ such that $J=p^x(p^y\mathbb{Z}+(z+\tau)\mathbb{Z})$. Then $\mathcal{N}(I^{\prime}J)=p^{c+2x+y}$ and $\mathcal{N}(IJ)=p^{b+2x+y+e}$ with $e=\min\{b,y,{\rm v}_p(r+z+\varepsilon),{\rm v}_p(r-z),c-b,{\rm v}_p(z^2+\varepsilon z+\eta)-y\}\leq c-b$. Therefore, $\mathcal{N}(IJ)\mid\mathcal{N}(I^{\prime}J)$.

\smallskip
5. Let $I\in\mathcal{A}(\mathcal{I}^*_p(\mathcal{O}_f))$. If $I=p\mathcal{O}_f$, then $\overline{I}=p\mathcal{O}_f$ and $\mathcal{N}(I)=p^2$ by Proposition~\ref{proposition 3.1}.1. Therefore, $I\overline{I}=\mathcal{N}(I)\mathcal{O}_f$. Now let $I\not=p\mathcal{O}_f$. There is some $(0,m,r)\in\mathcal{M}_{f,p}$ such that $I=p^m\mathbb{Z}+(r+\tau)\mathbb{Z}$. Set $s=p^m-r-\varepsilon$. It follows that $\overline{I}=p^m\mathbb{Z}+(r+\overline{\tau})\mathbb{Z}=p^m\mathbb{Z}+(r+\varepsilon-\tau)\mathbb{Z}=p^m\mathbb{Z}+(s+\tau)\mathbb{Z}$. Observe that $s^2+\varepsilon s+\eta=r^2+\varepsilon r+\eta+p^m(p^m-(2r+\varepsilon))$. Since $p\mid r^2+\varepsilon r+\eta=\frac{1}{4}((2r+\varepsilon)^2-f^2d_K)$, we have ${\rm v}_p(2r+\varepsilon)>0$, and hence ${\rm v}_p(p^m(p^m-(2r+\varepsilon)))>m$. Since ${\rm v}_p(r^2+\varepsilon r+\eta)=m$, we infer that ${\rm v}_p(s^2+\varepsilon s+\eta)=m$, and thus $(0,m,s)\in\mathcal{M}_{f,p}$. Therefore, $\overline{I}\in\mathcal{A}(\mathcal{I}^*_p(\mathcal{O}_f))$. Note that $\min\{m,{\rm v}_p(r+s+\varepsilon)\}=m$, and thus $I\overline{I}=p^m\mathcal{O}_f=\mathcal{N}(I)\mathcal{O}_f$ by 1. and Proposition~\ref{proposition 3.1}.1.
\end{proof}

\begin{proposition}\label{proposition 3.3}
Let $p$ be a prime divisor of $f$ and $f^{\prime}=p^{{\rm v}_p(f)}$. Set $\mathcal{O}=\mathcal{O}_f$, $\mathcal{O}^{\prime}=\mathcal{O}_{f^{\prime}}$, $P=P_{f,p}$ and $P^{\prime}=P_{f^{\prime},p}$. For $g\in\mathbb{N}$ let $\varphi_{g,p}:\mathcal{I}_p(\mathcal{O}_g)\rightarrow\mathcal{I}(({\mathcal{O}_g})_{P_{g,p}})$ be defined by $\varphi_{g,p}(I)=I_{P_{g,p}}$ and $\zeta_{g,p}:\mathcal{I}(({\mathcal{O}_g})_{P_{g,p}})\rightarrow\mathcal{I}_p(\mathcal{O}_g)$ be defined by $\zeta_{g,p}(J)=J\cap\mathcal{O}_g$.
\begin{enumerate}
\item[\textnormal{1.}] $\mathcal{O}_P=\mathcal{O}^{\prime}_{P^{\prime}}$.

\item[\textnormal{2.}] $\varphi_{f,p}$ and $\zeta_{f,p}$ are mutually inverse monoid isomorphisms.

\item[\textnormal{3.}] There is a monoid isomorphism $\delta:\mathcal{I}_p(\mathcal{O})\rightarrow\mathcal{I}_p(\mathcal{O}^{\prime})$ such that $\delta(p\mathcal{O})=p\mathcal{O}^{\prime}$ and $\delta_{\mid\mathcal{I}^*_p(\mathcal{O})}:\mathcal{I}^*_p(\mathcal{O})\rightarrow\mathcal{I}^*_p(\mathcal{O}^{\prime})$ is a monoid isomorphism.
\end{enumerate}
\end{proposition}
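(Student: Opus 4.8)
The plan is to prove the three assertions in order, deriving part~3 formally from parts~1 and~2.

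\emph{Part 1.} Write $f=f'm$ with $p\nmid m$, so that $\mathcal{O}=\mathbb{Z}\oplus f\omega\mathbb{Z}\subseteq\mathbb{Z}\oplus f'\omega\mathbb{Z}=\mathcal{O}'$. Since $m$ becomes a unit after inverting the multiplicative set $\mathbb{Z}\setminus p\mathbb{Z}$, I get $(\mathbb{Z}\setminus p\mathbb{Z})^{-1}\mathcal{O}=\mathbb{Z}_{(p)}\oplus f'\omega\mathbb{Z}_{(p)}=(\mathbb{Z}\setminus p\mathbb{Z})^{-1}\mathcal{O}'$ as subrings of $K$. Because $P\cap\mathbb{Z}=p\mathbb{Z}$ we have $\mathbb{Z}\setminus p\mathbb{Z}\subseteq\mathcal{O}\setminus P$, and since $p\mid f$ the ideal $P$ is the unique prime of $\mathcal{O}$ containing $p$ (as used in the proof of Proposition~\ref{proposition 3.1}); hence the ring of fractions $(\mathbb{Z}\setminus p\mathbb{Z})^{-1}\mathcal{O}$ has a unique maximal ideal, namely the one generated by $P$, so it equals $\mathcal{O}_P$. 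The same argument applied to $\mathcal{O}'$ and $P'$ yields $\mathcal{O}'_{P'}=(\mathbb{Z}\setminus p\mathbb{Z})^{-1}\mathcal{O}'$, and part~1 follows.

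\emph{Part 2.} This is the classical correspondence between $P$-primary ideals of the one-dimensional Noetherian domain $\mathcal{O}$ and nonzero ideals of the localization $\mathcal{O}_P$, extended by $\mathcal{O}\leftrightarrow\mathcal{O}_P$. The map $\varphi_{f,p}$ lands in $\mathcal{I}(\mathcal{O}_P)$ (as $\mathcal{O}$ is a domain) and is a monoid homomorphism, since localization commutes with products; conversely, for a $P$-primary ideal $\mathfrak{q}$ one has $\mathfrak{q}_P\cap\mathcal{O}=\mathfrak{q}$, and for a nonzero proper ideal $J$ of the one-dimensional local ring $\mathcal{O}_P$ one has $(J\cap\mathcal{O})_P=J$ and $\sqrt{J\cap\mathcal{O}}=P$, so $\zeta_{f,p}$ takes values in $\mathcal{I}_p(\mathcal{O})$ and is a two-sided inverse of $\varphi_{f,p}$. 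The one point requiring genuine work is that $\zeta_{f,p}$ is multiplicative: for ideals $J_1,J_2$ of $\mathcal{O}_P$ the product $(J_1\cap\mathcal{O})(J_2\cap\mathcal{O})$ is again $P$-primary (its radical is the maximal ideal $P$) or equals $\mathcal{O}$, hence is recovered from its own localization, which is $J_1J_2$; thus $(J_1\cap\mathcal{O})(J_2\cap\mathcal{O})=(J_1J_2)\cap\mathcal{O}$. This is exactly where one-dimensionality is essential — contraction of ideals is not multiplicative in general — and I expect it to be the main obstacle.

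\emph{Part 3.} Put $\delta=\zeta_{f',p}\circ\varphi_{f,p}$. By part~1 we have $\mathcal{I}(\mathcal{O}_P)=\mathcal{I}(\mathcal{O}'_{P'})$, so part~2 (applied to $g=f$ and to $g=f'$) exhibits $\delta$ as a composite $\mathcal{I}_p(\mathcal{O})\to\mathcal{I}(\mathcal{O}_P)=\mathcal{I}(\mathcal{O}'_{P'})\to\mathcal{I}_p(\mathcal{O}')$ of monoid isomorphisms. Since $p\mathcal{O}$ is $P$-primary and $p\mathcal{O}'$ is $P'$-primary, $\delta(p\mathcal{O})=p\mathcal{O}_P\cap\mathcal{O}'=p\mathcal{O}'_{P'}\cap\mathcal{O}'=p\mathcal{O}'$. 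Finally, an ideal $I\in\mathcal{I}_p(\mathcal{O})$ is invertible if and only if $I\mathcal{O}_P$ is principal, since $I$ is automatically locally principal at every prime $\neq P$; hence $\varphi_{f,p}$ identifies $\mathcal{I}^*_p(\mathcal{O})$ with the monoid of nonzero principal ideals of $\mathcal{O}_P$. The analogous statement holds for $\mathcal{O}'$ and $P'$, and as $\mathcal{O}_P=\mathcal{O}'_{P'}$ these two monoids coincide, so $\delta$ restricts to a monoid isomorphism $\mathcal{I}^*_p(\mathcal{O})\to\mathcal{I}^*_p(\mathcal{O}')$.
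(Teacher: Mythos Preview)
Your proof is correct and follows the same overall architecture as the paper's: establish $\mathcal{O}_P=\mathcal{O}'_{P'}$, prove that extension/contraction give mutually inverse monoid isomorphisms between $\mathcal{I}_p(\mathcal{O}_g)$ and $\mathcal{I}((\mathcal{O}_g)_{P_{g,p}})$, and then set $\delta=\zeta_{f',p}\circ\varphi_{f,p}$.

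Two of your sub-arguments differ from the paper's and are arguably cleaner. In part~1 the paper checks both inclusions by hand, in particular showing $f'\omega\in\mathcal{O}_P$ and using conjugates to invert elements of $\mathcal{O}'\setminus P'$ inside $\mathcal{O}_P$; you instead localize both orders at the single multiplicative set $\mathbb{Z}\setminus p\mathbb{Z}$ and observe that the resulting ring is already local (since $P$ is the unique prime over $p$), hence coincides with $\mathcal{O}_P$ and with $\mathcal{O}'_{P'}$. In part~3 the paper identifies $\mathcal{I}^*_p$ as the set of cancellative elements of $\mathcal{I}_p$ (which any monoid isomorphism preserves), whereas you use the characterization ``invertible $\Leftrightarrow$ locally principal'' to identify $\mathcal{I}^*_p(\mathcal{O})$ with the principal ideals of $\mathcal{O}_P$. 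Both routes are standard; yours avoids the explicit computation with conjugates and ties the invertible case more directly to the localization picture, while the paper's cancellativity argument has the virtue of being purely monoid-theoretic.
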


\begin{proof}
1. It is clear that $\mathcal{O}\subset\mathcal{O}^{\prime}$ and $P^{\prime}\cap\mathcal{O}=P$. Therefore, $\mathcal{O}_P\subset\mathcal{O}^{\prime}_{P^{\prime}}$. Observe that $\mathcal{O}\setminus P=(\mathbb{Z}\setminus p\mathbb{Z})+f\omega\mathbb{Z}$ and $\mathcal{O}^{\prime}\setminus P^{\prime}=(\mathbb{Z}\setminus p\mathbb{Z})+f^{\prime}\omega\mathbb{Z}$. It remains to show that $\{f^{\prime}\omega\}\cup\{x^{-1}\mid x\in (\mathbb{Z}\setminus p\mathbb{Z})+f^{\prime}\omega\mathbb{Z}\}\subset\mathcal{O}_P$. Since $\frac{f}{f^{\prime}}f^{\prime}\omega=f\omega\in\mathcal{O}$ and $\frac{f}{f^{\prime}}\in\mathbb{Z}\setminus p\mathbb{Z}\subset\mathcal{O}\setminus P$, we have $f^{\prime}\omega\in\mathcal{O}_P$. Therefore, $\mathcal{O}^{\prime}\subset\mathcal{O}_P$. Now let $a\in\mathbb{Z}\setminus p\mathbb{Z}$ and $b\in\mathbb{Z}$. Observe that $a+bf^{\prime}\overline{\omega}\in\mathcal{O}^{\prime}\subset\mathcal{O}_P$. Since $\omega+\overline{\omega},\omega\overline{\omega}\in\mathbb{Z}$, we have $(a+bf^{\prime}\omega)(a+bf^{\prime}\overline{\omega})=a^2+abf^{\prime}(\omega+\overline{\omega})+b^2(f^{\prime})^2\omega\overline{\omega}\in\mathbb{Z}\setminus p\mathbb{Z}\subset\mathcal{O}\setminus P$. Therefore, $\frac{1}{a+bf^{\prime}\omega}=\frac{a+bf^{\prime}\overline{\omega}}{(a+bf^{\prime}\omega)(a+bf^{\prime}\overline{\omega})}\in\mathcal{O}_P$.

\smallskip
2. It is clear that $\varphi_{f,p}$ is a well-defined monoid homomorphism. Note that $\zeta_{f,p}$ is a well-defined map (since every nonzero proper ideal $J$ of $\mathcal{O}_P$ is $P_P$-primary, and hence $J\cap\mathcal{O}$ is $P$-primary). Moreover, $\zeta_{f,p}(\mathcal{O}_P)=\mathcal{O}$. Now let $J_1,J_2\in\mathcal{I}(\mathcal{O}_P)$. Observe that $J_1J_2\cap\mathcal{O}$ and $(J_1\cap\mathcal{O})(J_2\cap\mathcal{O})$ coincide locally (note that both are either $P$-primary or not proper). Therefore, $J_1J_2\cap\mathcal{O}=(J_1\cap\mathcal{O})(J_2\cap\mathcal{O})$, and hence $\zeta_{f,p}$ is a monoid homomorphism. If $J\in\mathcal{I}(\mathcal{O}_P)$, then $(J\cap\mathcal{O})_P=J$. Therefore, $\varphi_{f,p}\circ\zeta_{f,p}={\rm id}_{\mathcal{I}(\mathcal{O}_P)}$. If $I$ is a $P$-primary ideal of $\mathcal{O}$, then $I_P\cap\mathcal{O}=I$. This implies that $\zeta_{f,p}\circ\varphi_{f,p}={\rm id}_{\mathcal{I}_p(\mathcal{O})}$.

\smallskip
3. Set $\delta=\zeta_{f^{\prime},p}\circ\varphi_{f,p}$. Then $\delta:\mathcal{I}_p(\mathcal{O})\rightarrow\mathcal{I}_p(\mathcal{O}^{\prime})$ is a monoid isomorphism by 1. and 2. Furthermore, we have by 1. that $\delta(p\mathcal{O})=\zeta_{f^{\prime},p}(\varphi_{f,p}(p\mathcal{O}))=\zeta_{f^{\prime},p}(p\mathcal{O}_P)=\zeta_{f^{\prime},p}(p\mathcal{O^{\prime}}_{P^{\prime}})=p\mathcal{O}^{\prime}_{P^{\prime}}\cap\mathcal{O}^{\prime}=p\mathcal{O}^{\prime}$.

Since $\mathcal{O}$ is noetherian, we have $\mathcal{I}^*_p(\mathcal{O})$ is the set of cancellative elements of $\mathcal{I}_p(\mathcal{O})$. It follows by analogy that $\mathcal{I}^*_p(\mathcal{O}^{\prime})$ is the set of cancellative elements of $\mathcal{I}_p(\mathcal{O}^{\prime})$. Therefore, $\delta(\mathcal{I}^*_p(\mathcal{O}))=\mathcal{I}^*_p(\mathcal{O}^{\prime})$, and hence $\delta_{\mid\mathcal{I}^*_p(\mathcal{O})}$ is a monoid isomorphism.
\end{proof}

\begin{lemma}\label{lemma 3.4} Let $p$ be a prime number, let $k\in\mathbb{N}_0$, let $c,n\in\mathbb{N}$ be such that ${\rm gcd}(c,p)=1$ and for each $\ell\in\mathbb{N}$ let $g_{\ell}=|\{y\in [0,p^{\ell}-1]\mid y^2\equiv c\mod p^{\ell}\}|$.
\begin{enumerate}
\item[\textnormal{1.}] If $p\not=2$, then $p^kc$ is a square modulo $p^n$ if and only if $k\geq n$ or $(k<n$, $k$ is even and $(\frac{c}{p})=1)$.

\item[\textnormal{2.}] $2^kc$ is a square modulo $2^n$ if and only if one of the following conditions holds.
\begin{enumerate}
\item[\textnormal{(a)}] $k\geq n$.
\item[\textnormal{(b)}] $k$ is even and $n=k+1$.
\item[\textnormal{(c)}] $k$ is even, $n=k+2$ and $c\equiv 1\mod 4$.
\item[\textnormal{(d)}] $k$ is even, $n\geq k+3$ and $c\equiv 1\mod 8$.
\end{enumerate}

\item[\textnormal{3.}] If $\ell\in\mathbb{N}$, then $g_{\ell}=\begin{cases} 4 &{\it if}\textnormal{ }p=2,\ell\geq 3,c\equiv 1\mod 8\\ 2 &{\it if}\textnormal{ }(p\not=2,(\frac{c}{p})=1)\textnormal{ }{\it or}\textnormal{ }(p=2,\ell=2,c\equiv 1\mod 4)\\ 1 &{\it if}\textnormal{ }p=2,\ell=1\\ 0 &{\it else}\end{cases}$.
\end{enumerate}
\end{lemma}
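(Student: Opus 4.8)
The plan is to reduce all three parts to the structure of the unit group $G_\ell := (\mathbb{Z}/p^\ell\mathbb{Z})^{\times}$, which is cyclic of order $p^{\ell-1}(p-1)$ when $p$ is odd, trivial when $p=2$ and $\ell=1$, cyclic of order $2$ when $p=2$ and $\ell=2$, and isomorphic to $\mathbb{Z}/2\mathbb{Z}\times\mathbb{Z}/2^{\ell-2}\mathbb{Z}$ when $p=2$ and $\ell\geq 3$. Consequently the squaring endomorphism of $G_\ell$ has kernel $K_\ell=\{x\in G_\ell\mid x^2=1\}$ of order $2$ for odd $p$ (the classes of $\pm 1$), and of order $1$, $2$, $4$ for $p=2$ and $\ell=1,2,\geq 3$ respectively, and its image (the subgroup of squares) has index $|K_\ell|$ in $G_\ell$.

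First I would prove part~3. Since $\gcd(c,p)=1$, every $y$ counted by $g_\ell$ is a unit modulo $p^\ell$, so $g_\ell$ is the number of solutions of $y^2=\overline{c}$ in $G_\ell$: this is $0$ if $\overline c$ is not a square, and equals $|K_\ell|$ if it is (the solution set being a coset of $K_\ell$). It thus remains to decide when $\overline c$ is a square in $G_\ell$. For odd $p$ this holds if and only if $c$ is a square modulo $p$, i.e. $\left(\frac{c}{p}\right)=1$, by Hensel's lemma (the kernel of $G_\ell\twoheadrightarrow G_1$ has odd order, so squaring is bijective on it and hence a unit is a square modulo $p^\ell$ iff it is one modulo $p$). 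For $p=2$ it holds always if $\ell=1$, iff $c\equiv 1\bmod 4$ if $\ell=2$, and iff $c\equiv 1\bmod 8$ if $\ell\geq 3$, the subgroup of squares of $G_\ell$ being $G_1$, the trivial group, and the classes $\equiv 1\bmod 8$ respectively. Matching these conditions against the values of $|K_\ell|$ yields exactly the stated case distinction for $g_\ell$; note this also records precisely \emph{when} $c$ is a square modulo $p^\ell$, which is all that parts~1 and~2 will need.

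For parts~1 and~2 I would argue by $p$-adic valuations. If $k\geq n$, then $p^kc\equiv 0\pmod{p^n}$ is the square of $0$; this gives the first alternative of part~1 and case~(a) of part~2. Assume now $k<n$ and $p^kc\equiv y^2\pmod{p^n}$. Every integer congruent to $p^kc$ modulo $p^n$ has $p$-adic valuation exactly $k$ (as ${\rm v}_p(p^kc)=k<n$), so ${\rm v}_p(y^2)=k$; hence $k$ is even, say $k=2j$, and $y=p^ju$ with $\gcd(u,p)=1$, and division by $p^k$ gives $u^2\equiv c\pmod{p^{n-k}}$. Conversely, if $k$ is even and $c\equiv u^2\pmod{p^{n-k}}$, then $p^kc\equiv (p^{k/2}u)^2\pmod{p^n}$. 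Hence, for $k<n$, the element $p^kc$ is a square modulo $p^n$ if and only if $k$ is even and $c$ is a square modulo $p^{n-k}$.

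Inserting the square criterion obtained in part~3 with $\ell=n-k\geq 1$ finishes the proof. For odd $p$: for $k<n$, $p^kc$ is a square modulo $p^n$ iff $k$ is even and $\left(\frac{c}{p}\right)=1$, which together with the case $k\geq n$ is exactly part~1. For $p=2$: the extra condition ``$c$ is a square modulo $2^{n-k}$'' is vacuous when $n-k=1$, equals $c\equiv 1\bmod 4$ when $n-k=2$, and equals $c\equiv 1\bmod 8$ when $n-k\geq 3$, yielding cases~(b), (c), (d) respectively; together with case~(a) this is part~2. All computations here are elementary; the only point requiring care is the bookkeeping of the exponent $n-k$ together with the separate treatment of $\ell\in\{1,2\}$ for $p=2$, so no serious obstacle arises.
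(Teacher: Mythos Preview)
Your proof is correct and follows essentially the same route as the paper's: both reduce parts~1 and~2 to the statement that, for $k<n$, $p^kc$ is a square modulo $p^n$ iff $k$ is even and $c$ is a square modulo $p^{n-k}$, and both settle the latter (and part~3) via the structure of $(\mathbb{Z}/p^\ell\mathbb{Z})^\times$. The only cosmetic differences are that the paper proves parts~1 and~2 before part~3 and uses an explicit Hensel-type lifting step in place of your index/kernel argument, whereas you reverse the order and appeal directly to the group structure; neither change affects the substance.
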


\begin{proof}
Note that $p^kc$ is a square modulo $p^n$ iff $k\geq n$ or $(k<n$, $k$ is even and $c$ is a square modulo $p^{n-k}$).

\smallskip
1. Let $p\not=2$. It remains to show that if $\ell\in\mathbb{N}$, then $c$ is a square modulo $p^{\ell}$ if and only if $(\frac{c}{p})=1$. If $\ell\in\mathbb{N}$ and $c$ is a square modulo $p^{\ell}$, then $c$ is a square modulo $p$, and hence $(\frac{c}{p})=1$. Now let $(\frac{c}{p})=1$. It suffices to show by induction that $c$ is a square modulo $p^{\ell}$ for all $\ell\in\mathbb{N}$. The statement is clearly true for $\ell=1$. Now let $\ell\in\mathbb{N}$ and let $x\in\mathbb{Z}$ be such that $x^2\equiv c\mod p^{\ell}$. Without restriction let ${\rm v}_p(x^2-c)=\ell$. Note that $p\nmid x$, and hence $2bx\equiv -1\mod p$ for some $b\in\mathbb{Z}$. Set $y=x+b(x^2-c)$. Then $y^2\equiv c\mod p^{\ell+1}$.

\smallskip
2. It remains to show that if $\ell\in\mathbb{N}$, then $c$ is a square modulo $2^{\ell}$ if and only if $\ell=1$ or $(\ell=2$ and $c\equiv 1\mod 4)$ or $(\ell\geq 3$ and $c\equiv 1\mod 8)$. Let $\ell\in\mathbb{N}$ and let $c$ be a square modulo $2^{\ell}$. If $\ell=2$, then $c$ is a square modulo $4$ and $c\equiv 1\mod 4$. Moreover, if $\ell\geq 3$, then $c$ is a square modulo $8$ and $c\equiv 1\mod 8$.

Clearly, if $\ell=1$ or ($\ell=2$ and $c\equiv 1\mod 4$), then $c$ is a square modulo $2^{\ell}$. Now let $c\equiv 1\mod 8$. It is sufficient to show by induction that $c$ is a square modulo $2^{\ell}$ for each $\ell\in\mathbb{N}_{\geq 3}$. The statement is obviously true for $\ell=3$. Now let $\ell\in\mathbb{N}_{\geq 3}$ and let $x\in\mathbb{Z}$ be such that $x^2\equiv c\mod 2^{\ell}$. Without restriction let ${\rm v}_2(x^2-c)=\ell$. Set $y=x+2^{\ell-1}$. Then $y^2\equiv c\mod 2^{\ell+1}$.

\smallskip
3. Let $\ell\in\mathbb{N}$. By 1. and 2., it is sufficient to consider the case $g_{\ell}>0$. Let $g_{\ell}>0$. Observe that $g_{\ell}=|\{y\in [0,p^{\ell}-1]\mid y^2\equiv 1\mod p^{\ell}\}|=|\{y\in (\mathbb{Z}/p^{\ell}\mathbb{Z})^{\times}\mid {\rm ord}(y)\leq 2\}|$. If $p=2$ and $\ell=1$, then $(\mathbb{Z}/p^{\ell}\mathbb{Z})^{\times}$ is trivial, and hence $g_{\ell}=1$. If ($p=2$, $\ell=2$ and $c\equiv 1\mod 4$) or ($p\not=2$ and $(\frac{c}{p})=1$), then $(\mathbb{Z}/p^{\ell}\mathbb{Z})^{\times}$ is a cyclic group of even order, and thus $g_{\ell}=2$. Finally, if $p=2$, $\ell\geq 3$ and $c\equiv 1\mod 8$, then $(\mathbb{Z}/2^{\ell}\mathbb{Z})^{\times}\cong\mathbb{Z}/2\mathbb{Z}\times\mathcal{C}_{2^{\ell-2}}$ is the product of two cyclic groups of even order. Consequently, $g_{\ell}=4$.
\end{proof}

\begin{lemma}\label{lemma 3.5} Let $p$ be a prime number, $a,m\in\mathbb{N}$, $c=\frac{a}{p^{{\rm v}_p(a)}}$, $M=\{x\in [0,p^m-1]\mid {\rm v}_p(x^2-a)=m\}$, $N=|M|$ and for each $\ell\in\mathbb{N}$ let $g_{\ell}=|\{y\in [0,p^{\ell}-1]\mid y^2\equiv c\mod p^{\ell}\}|$.
\begin{enumerate}
\item[\textnormal{1.}] If $m<{\rm v}_p(a)$, then $N=\begin{cases}\varphi(p^{m/2}) &{\it if}\textnormal{ }m\textnormal{ }{\it is}\textnormal{ }{\it even}\\ 0 &{\it if}\textnormal{ }m\textnormal{ }{\it is}\textnormal{ }{\it odd}\end{cases}$.

\item[\textnormal{2.}] Let $m={\rm v}_p(a)$.

\begin{enumerate}
\item[\textnormal{(a)}] If $a$ is a square modulo $p^{m+1}$, then $N=\begin{cases}p^{m/2-1}(p-2) &{\it if}\textnormal{ }p\not=2\\ 2^{{m/2}-1} &{\it if}\textnormal{ }p=2\end{cases}$.

\item[\textnormal{(b)}] If $a$ is not a square modulo $p^{m+1}$, then $N=p^{\lfloor m/2\rfloor}$.
\end{enumerate}

\item[\textnormal{3.}] If $m>{\rm v}_p(a)$ and $a$ is not a square modulo $p^m$, then $N=0$.

\item[\textnormal{4.}] If $k\in\mathbb{N}$ is such that $m=k+{\rm v}_p(a)$ and $a$ is a square modulo $p^m$, then $N=p^{{\rm v}_p(a)/2-1}(pg_k-g_{k+1})$.
\end{enumerate}
\end{lemma}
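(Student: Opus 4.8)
The plan is to reduce all four assertions to a single elementary computation keyed to the $p$-adic valuation of the variable. Write $v={\rm v}_p(a)$, so $a=p^vc$ with ${\rm gcd}(c,p)=1$, and for $x\in[0,p^m-1]$ with $x\ne0$ put $j={\rm v}_p(x)$ and $u=x/p^j$, so that $p\nmid u$ and $x^2-a=p^{2j}u^2-p^vc$. Then ${\rm v}_p(x^2-a)=\min\{2j,v\}$ whenever $2j\ne v$ (here $p\nmid c$ is used), while ${\rm v}_p(x^2-a)=v+{\rm v}_p(u^2-c)$ when $2j=v$ (which can occur only if $v$ is even); moreover ${\rm v}_p(0^2-a)=v$. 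I will also use repeatedly that a condition of the form ``$y^2\equiv c\bmod p^\ell$'' depends only on $y\bmod p^\ell$, so that for $s\ge\ell$ the number of $y\in[0,p^s-1]$ satisfying it equals $p^{s-\ell}$ times the number of solutions in $[0,p^\ell-1]$.

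\textbf{Parts 1 and 3.} For part~1 assume $m<v$. By the above, ${\rm v}_p(x^2-a)=m$ forces $x\ne0$, $2j<v$, and $2j=m$; hence $N=0$ when $m$ is odd, while when $m$ is even the contributing $x$ are exactly the $x=p^{m/2}u$ with $p\nmid u$ and $0\le x<p^m$, i.e.\ $u\in[0,p^{m/2}-1]$ with $p\nmid u$, numbering $\varphi(p^{m/2})$. For part~3 assume $m>v$ and that $a$ is not a square modulo $p^m$. A contributing $x$ would satisfy $x\ne0$ (otherwise ${\rm v}_p(x^2-a)=v<m$) and $2j=v$ (otherwise ${\rm v}_p(x^2-a)=\min\{2j,v\}\le v<m$); hence $v$ is even, $u=x/p^{v/2}$ satisfies $u^2\equiv c\bmod p^{m-v}$, and therefore $a=p^vc\equiv(p^{v/2}u)^2=x^2\bmod p^m$, contradicting the hypothesis. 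Thus $N=0$. (If $v$ is odd the same valuation bound shows $a$ cannot be a square modulo $p^m$ at all, so the two cases are consistent.)

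\textbf{Part 2.} Now $m=v$, so $v=m\ge1$. Here ${\rm v}_p(x^2-a)=v$ holds for $x=0$ and, for $x\ne0$, exactly when $2j>v$, or else $2j=v$ and ${\rm v}_p(u^2-c)=0$. If $v$ is odd these are precisely the $x$ with ${\rm v}_p(x)\ge(v+1)/2$, that is $x=p^{(v+1)/2}t$ with $t\in[0,p^{(v-1)/2}-1]$, so $N=p^{(v-1)/2}=p^{\lfloor m/2\rfloor}$; moreover (same argument as in part~3, with $m+1$ in place of $m$) $a$ is not a square modulo $p^{m+1}$, so case~(b) applies. If $v$ is even the contributing $x$ are the $x=p^{v/2}t$ with $t\in[0,p^{v/2}-1]$ and $t^2\not\equiv c\bmod p$ (those $t$ divisible by $p$ being included, since then $t^2\equiv0\not\equiv c\bmod p$), so $N=p^{v/2-1}\,\big|\{t\in[0,p-1]:t^2\not\equiv c\bmod p\}\big|$. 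For odd $p$ this cardinality is $p-1-\big(\tfrac{c}{p}\big)$, and by Lemma~\ref{lemma 3.4}.1 the integer $a=p^vc$ is a square modulo $p^{v+1}$ exactly when $\big(\tfrac{c}{p}\big)=1$; since $p\nmid c$ this gives $N=p^{v/2-1}(p-2)$ in case~(a) and $N=p^{v/2}=p^{\lfloor m/2\rfloor}$ in case~(b). For $p=2$ the cardinality equals $1$ ($c$ being odd), giving $N=2^{v/2-1}$, while Lemma~\ref{lemma 3.4}.2(b) shows $a=2^vc$ is always a square modulo $2^{v+1}$; hence only case~(a) occurs, with value $2^{m/2-1}$.

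\textbf{Part 4.} Finally $m=k+v$ with $k\ge1$ and $a$ a square modulo $p^m$. As in part~3 any contributing $x$ has $2j=v$, so $v$ is even; write $v=2w$. The contributing $x$ are exactly the $x=p^wu$ with ${\rm v}_p(u^2-c)=k$ and $0\le x<p^m$, i.e.\ $u\in[0,p^{k+w}-1]$ with ${\rm v}_p(u^2-c)=k$ (automatically $p\nmid u$, as $u^2\equiv c\bmod p^k$ and $p\nmid c$). The condition ``${\rm v}_p(u^2-c)=k$'' depends only on $u\bmod p^{k+1}$, and among residues modulo $p^{k+1}$ there are $pg_k$ with $u^2\equiv c\bmod p^k$ (each of the $g_k$ square roots of $c$ modulo $p^k$ lifting to $p$ residues modulo $p^{k+1}$) and $g_{k+1}$ with $u^2\equiv c\bmod p^{k+1}$, hence $pg_k-g_{k+1}$ with ${\rm v}_p(u^2-c)=k$. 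Partitioning $[0,p^{k+w}-1]$ into $p^{w-1}$ blocks of length $p^{k+1}$ then yields $N=p^{w-1}(pg_k-g_{k+1})=p^{{\rm v}_p(a)/2-1}(pg_k-g_{k+1})$ (the decomposition presupposes $w\ge1$, i.e.\ ${\rm v}_p(a)\ge2$, which holds in all applications of the lemma). Here the counting itself is routine; the one delicate point is to make the case split over the regimes of $\big({\rm v}_p(x),{\rm v}_p(a),m\big)$ exhaustive and to keep track of the parity of ${\rm v}_p(a)$ — forced to be even whenever $a$ is a square to sufficiently high $p$-adic precision — which is precisely what separates the four statements; the remaining ingredient, used only in part~2, is the translation of ``$a$ is (not) a square modulo $p^{m+1}$'' into Kronecker-symbol and congruence conditions via Lemma~\ref{lemma 3.4}.
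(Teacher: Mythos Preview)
Your proof follows the same route as the paper's: both split on $j={\rm v}_p(x)$ versus $v={\rm v}_p(a)$ and reduce part~4 to the difference of square-root counts $g_k$ and $g_{k+1}$; the organisation and the individual computations are essentially identical.

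One small correction to your parenthetical in part~4: the hypothesis $w\ge1$ does \emph{not} hold in every application---in Theorem~\ref{theorem 3.6} with $p=2$, ${\rm v}_2(f)=1$, and $d\equiv1\bmod4$ one has ${\rm v}_2(a)=0$---but the formula is still valid for $p=2$ even then, since $(u+2^k)^2\equiv u^2\bmod2^{k+1}$ shows that exactly $g_{k+1}/2$ of the $u\in[0,2^k-1]$ with $u^2\equiv c\bmod2^k$ also satisfy $u^2\equiv c\bmod2^{k+1}$, giving $N=g_k-g_{k+1}/2=2^{-1}(2g_k-g_{k+1})$ directly. For odd $p$ the applications always have ${\rm v}_p(a)\ge2{\rm v}_p(f)\ge2$, so no issue arises there. (The paper's own proof carries the same tacit restriction $r\ge1$ in its step $|M_1|=p^{r-1}g_{k+1}$ and does not comment on it, so you have in fact been more careful than the original here.)
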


\begin{proof}
1. Let $m<{\rm v}_p(a)$. Observe that $M=\{x\in [0,p^m-1]\mid 2{\rm v}_p(x)=m\}$. Clearly, if $m$ is odd, then $N=0$. Now let $m$ be even. We have $M=\{p^{m/2}y\mid y\in [0,p^{m/2}-1],{\rm gcd}(y,p)=1\}$, and thus $N=|\{y\in [0,p^{m/2}-1]\mid {\rm gcd}(y,p)=1\}|=\varphi(p^{m/2})$.

\smallskip
2. Note that $M=\{x\in [0,p^m-1]\mid 2{\rm v}_p(x)\geq m,x^2\not\equiv a\mod p^{m+1}\}$ and $|\{x\in [0,p^m-1]\mid 2{\rm v}_p(x)\geq m\}|=p^{\lfloor m/2\rfloor}$. Set $M^{\prime}=\{x\in [0,p^m-1]\mid x^2\equiv a\mod p^{m+1}\}$. Then $M^{\prime}=\{x\in [0,p^m-1]\mid 2{\rm v}_p(x)\geq m,x^2\equiv a\mod p^{m+1}\}$ and $N=p^{\lfloor m/2\rfloor}-|M^{\prime}|$. If $a$ is not a square modulo $p^{m+1}$, then $M^{\prime}=\emptyset$, and hence $N=p^{\lfloor m/2\rfloor}$. Now let $a$ be a square modulo $p^{m+1}$. Then $M^{\prime}\not=\emptyset$, and thus $m$ is even. Observe that $M^{\prime}=\{x\in [0,p^m-1]\mid 2{\rm v}_p(x)=m,x^2\equiv a\mod p^{m+1}\}=\{p^{m/2}y\mid y\in [0,p^{m/2}-1],y^2\equiv c\mod p\}$. Therefore, $|M^{\prime}|=|\{y\in [0,p^{m/2}-1]\mid y^2\equiv c\mod p\}|=p^{m/2-1}|\{y\in [0,p-1]\mid y^2\equiv c\mod p\}|$.

If $p\not=2$, then $N=p^{\lfloor m/2\rfloor}-|M^{\prime}|=p^{m/2}-2p^{m/2-1}=p^{m/2-1}(p-2)$ by Lemma~\ref{lemma 3.4}.3. Moreover, if $p=2$, then $N=2^{\lfloor m/2\rfloor}-|M^{\prime}|=2^{m/2}-2^{m/2-1}=2^{m/2-1}$ by Lemma~\ref{lemma 3.4}.3.

\smallskip
3. This is obvious.

4. Let $k\in\mathbb{N}$ be such that $m=k+{\rm v}_p(a)$ and let $a$ be a square modulo $p^m$. It follows by Lemma~\ref{lemma 3.4} that ${\rm v}_p(a)$ is even. Set $r={\rm v}_p(a)/2$ and for $\theta\in\{0,1\}$ set $M_{\theta}=\{x\in [0,p^m-1]\mid 2{\rm v}_p(x)={\rm v}_p(a),x^2\equiv a\mod p^{m+\theta}\}$. Then $M=\{x\in [0,p^m-1]\mid {\rm v}_p(x)=r,{\rm v}_p(x^2-a)=m\}=M_0\setminus M_1$. Since $\{x\in [0,p^m-1]\mid {\rm v}_p(x)=r\}=\{p^ry\mid y\in [0,p^{k+r}-1],{\rm gcd}(y,p)=1\}$, we infer that $M_{\theta}=\{p^ry\mid y\in [0,p^{k+r}-1],y^2\equiv c\mod p^{k+\theta}\}$. Therefore, $|M_{\theta}|=|\{y\in [0,p^{k+r}-1]\mid y^2\equiv c\mod p^{k+\theta}\}|=p^{r-\theta}|\{y\in [0,p^{k+\theta}-1]\mid y^2\equiv c\mod p^{k+\theta}\}|=p^{r-\theta}g_{k+\theta}$. This implies that $N=|M_0|-|M_1|=p^rg_k-p^{r-1}g_{k+1}=p^{r-1}(pg_k-g_{k+1})$.
\end{proof}

\begin{theorem}\label{theorem 3.6}
Let $\mathcal{O}$ be an order in a quadratic number field $K$ with conductor $\mathfrak{f}=f\mathcal{O}_K$ for some $f\in\mathbb{N}_{\geq 2}$, $p$ be a prime divisor of $f$, and $\mathfrak{p}=P_{f,p}$.
\begin{enumerate}
\item[\textnormal{1.}] The primary ideals with radical $\mathfrak{p}$ are exactly the ideals of the form
\[
\mathfrak{q}=p^\ell(p^m\mathbb{Z}+(r+\tau)\mathbb{Z})
\]
with $\ell,m\in\mathbb{N}_0$, $\ell+m\geq1$, $0\leq r<p^m$, and $\mathcal N_{K/\mathbb{Q}}(r+\tau)\equiv 0\mod p^m$. Moreover, $\mathcal{N}(\mathfrak{q})=p^{2\ell+m}$.

\item[\textnormal{2.}] A primary ideal $\mathfrak{q}=p^\ell(p^m\mathbb{Z}+(r+\tau)\mathbb{Z})$ is invertible if and only if
\[
\mathcal N_{K/\mathbb{Q}}(r+\tau)\not\equiv 0\mod p^{m+1}.
\]

\item[\textnormal{3.}] A primary ideal $\mathfrak{q}$ with radical $\mathfrak{p}$ is an ideal atom if and only if $\mathfrak{q}=p\mathcal{O}$ or $\mathfrak{q}=p^m\mathbb{Z}+(r+\tau)\mathbb{Z}$ with
$m\in\mathbb{N}$ and $p^m\mid \mathcal N_{K/\mathbb{Q}}(r+\tau)$.

\item[\textnormal{4.}] Table~\ref{table1} gives the number of invertible ideal atoms of the form $p^m\mathbb{Z}+(r+\tau)\mathbb{Z}$ with norm $p^m$; this number is $0$ if $m$ is not listed in the table.
\begin{table}[htbp]
\centering
\begin{tabular}{c|c|c|c|c|}
$m$ & $2h$ & $2{\rm v}_p\left(f\right)$ & $2{\rm v}_p\left(f\right)+1$ & $>2{\rm v}_p\left(f\right)+1$\\
& $1\leq h<{\rm v}_p\left(f\right)$ & & &\\
\hline
$p$\textnormal{ is inert} &\multirow{3}{*}{$\varphi\left(p^{m/2}\right)$}& $p^{{\rm v}_p\left(f\right)}$ &\multicolumn{2}{|c|}{$0$}\\
\cline{1-1}\cline{3-4}
$p$\textnormal{ is ramified} &\multicolumn{2}{c|}{} & $p^{{\rm v}_p\left(f\right)}$ &\\
\cline{1-1}\cline{3-5}
$p$\textnormal{ splits} & & $p^{{\rm v}_p\left(f\right)-1}\left(p-2\right)$ &\multicolumn{2}{|c|}{$2\varphi\left(p^{{\rm v}_p\left(f\right)}\right)$} \\
\hline
\end{tabular}
\caption{Number of nontrivial invertible $\mathfrak{p}$-primary ideal atoms}\label{table1}
\end{table}

\item[\textnormal{5.}] The number of ideal atoms with radical $\mathfrak{p}$ is finite if and only if the number of invertible ideal atoms with radical $\mathfrak{p}$ is finite if and only if $p$ does not split.
\end{enumerate}
\end{theorem}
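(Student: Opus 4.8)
Parts 1 and 2 are verbatim Proposition~\ref{proposition 3.1}, so only parts 3, 4 and 5 call for an argument. For part 3 the plan is as follows. Since every ideal atom of $\mathcal{I}(\mathcal{O})$ is primary, part~1 shows that an ideal atom with radical $\mathfrak{p}$ has the form $p^\ell(p^m\mathbb{Z}+(r+\tau)\mathbb{Z})$. If $\ell\ge 1$ and $m\ge 1$, or if $\ell\ge 2$, this ideal is a product of $p\mathcal{O}$ with a further proper $\mathfrak{p}$-primary ideal, hence not an atom; so only $p\mathcal{O}$ and the ideals $p^m\mathbb{Z}+(r+\tau)\mathbb{Z}$ with $m\ge 1$ survive. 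For the latter one notes that $r+\tau\notin p\mathcal{O}=p\mathbb{Z}\oplus p\tau\mathbb{Z}$, whence $p^m\mathbb{Z}+(r+\tau)\mathbb{Z}\not\subseteq p\mathcal{O}$, so by Proposition~\ref{proposition 3.2}.3 it is not a product of two proper ideals and is therefore an atom precisely when it is an ideal at all, i.e.\ (part~1) when $p^m\mid\mathcal{N}_{K/\mathbb{Q}}(r+\tau)$. That $p\mathcal{O}$ is an atom is a norm count: a nontrivial factorization would force both factors to have norm $p$ and, by Proposition~\ref{proposition 3.2}.3, one of them to be invertible, i.e.\ an invertible ideal $p\mathbb{Z}+(r+\tau)\mathbb{Z}$ with ${\rm v}_p(\mathcal{N}_{K/\mathbb{Q}}(r+\tau))=1$; but as $p\mid f$ one checks that $p\mid\mathcal{N}_{K/\mathbb{Q}}(r+\tau)$ already forces $p^2\mid\mathcal{N}_{K/\mathbb{Q}}(r+\tau)$ (for odd $p$ because then $p\mid 2r+\varepsilon$ and $p^2\mid f^2d_K$; for $p=2$ by a brief case distinction on ${\rm v}_2(f)$ and $d_K\bmod 8$), so no such factor exists.

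For part 4, parts 2 and 3 identify the invertible ideal atoms of the form $p^m\mathbb{Z}+(r+\tau)\mathbb{Z}$ of norm $p^m$ with the residues $r\in[0,p^m-1]$ for which ${\rm v}_p(\mathcal{N}_{K/\mathbb{Q}}(r+\tau))=m$. Writing $\mathcal{N}_{K/\mathbb{Q}}(r+\tau)=\tfrac14\big((2r+\varepsilon)^2-f^2d_K\big)$, I would substitute $x=2r+\varepsilon$ (a bijection on residues modulo $p^m$ when $p$ is odd) to turn this into $|\{x\bmod p^m:{\rm v}_p(x^2-f^2d_K)=m\}|$, and for $p=2$ (where $\varepsilon=0$) divide out the $4$ to get $|\{r\bmod 2^m:{\rm v}_2(r^2-(f/2)^2d_K)=m\}|$; in each case this is exactly the quantity $N$ of Lemma~\ref{lemma 3.5}. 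The inputs to that lemma — the $p$-adic valuation of the relevant constant and the class modulo $p$ (resp.\ modulo $8$) of its prime-to-$p$ part — are dictated by whether $p$ is inert, split or ramified: for odd $p$ via $\left(\frac{d_K}{p}\right)$ and ${\rm v}_p(d_K)\in\{0,1\}$, and for $p=2$ via $d_K\bmod 8$ and ${\rm v}_2(d_K)\in\{0,2,3\}$. Running Lemma~\ref{lemma 3.5}, and through it Lemma~\ref{lemma 3.4}, over the four ranges $m=2h$ with $h<{\rm v}_p(f)$, $m=2{\rm v}_p(f)$, $m=2{\rm v}_p(f)+1$, $m>2{\rm v}_p(f)+1$ then produces Table~\ref{table1}.

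For part 5, the equivalence ``finitely many invertible ideal atoms with radical $\mathfrak{p}$ $\iff$ finitely many ideal atoms with radical $\mathfrak{p}$'' comes from Proposition~\ref{proposition 3.2}.4: every non-invertible $\mathfrak{p}$-primary ideal atom has norm strictly below the norm of some invertible $\mathfrak{p}$-primary ideal atom, so finiteness of the invertible family bounds the norms of all $\mathfrak{p}$-primary ideal atoms, and since a noetherian ring has only finitely many ideals of each norm this yields finiteness of the full family (the converse being trivial). Finally, Table~\ref{table1} shows that the number of invertible ideal atoms of norm $p^m$ is $0$ for every $m>2{\rm v}_p(f)+1$ when $p$ is inert or ramified, but equals $2\varphi(p^{{\rm v}_p(f)})>0$ for all such $m$ when $p$ splits (and $p\mathcal{O}$ is the only additional invertible $\mathfrak{p}$-primary ideal atom); hence these norms are bounded precisely when $p$ does not split, which completes the chain of equivalences.

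The step I expect to be the main obstacle is part 4: packaging the two separate faces of Lemma~\ref{lemma 3.5} into the single uniform Table~\ref{table1}. The delicate point is reconciling the odd-prime normalisation (constant $f^2d_K$, valuation $2{\rm v}_p(f)+{\rm v}_p(d_K)$) with the $p=2$ normalisation (constant $(f/2)^2d_K$, valuation $2{\rm v}_2(f)-2+{\rm v}_2(d_K)$) so that both match columns expressed through ${\rm v}_p(f)$ alone, together with tracking how far past its valuation the constant remains a square — one power of $p$ in the ramified odd case, but one, two or three powers of $2$ for $p=2$ depending on $d_K\bmod 8$ — since it is exactly this that decides in which column the count last fails to vanish.
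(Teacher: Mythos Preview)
Your proposal is correct and, for parts 1--4, follows essentially the paper's route: the same reduction of part~4 to the count $N=|\{r\in[0,p^m-1]:{\rm v}_p(r^2-a)=m\}|$ with $a=f^2d_K$ (odd $p$) resp.\ $a=(f/2)^2d_K$ ($p=2$), and then the case-by-case evaluation via Lemmas~\ref{lemma 3.4} and~\ref{lemma 3.5}. Two points are handled differently. For the atomicity of $p\mathcal{O}$ in part~3, the paper argues directly that a factorization would give $p\mathcal{O}\subset\mathfrak{p}^2=p\mathfrak{p}$, a contradiction; your norm-count route works too but requires the auxiliary claim that ${\rm v}_p(\mathcal{N}_{K/\mathbb{Q}}(r+\tau))\ne 1$, which the paper never needs. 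For part~5, the paper invokes the Cohen--Kaplansky result \cite[Theorem~4.3]{An-Mo92} (through Proposition~\ref{proposition 3.3}.2) to transfer finite generation between $\mathcal{I}_p(\mathcal{O})$ and $\mathcal{I}^*_p(\mathcal{O})$, whereas your argument via Proposition~\ref{proposition 3.2}.4 and the finiteness of ideals of bounded norm is more elementary and self-contained---a genuine improvement if you prefer to avoid the external citation.
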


\begin{proof}
1. and 2. are an immediate consequence of Proposition~\ref{proposition 3.1}.

\smallskip
3. In 1. we have seen, that all $\mathfrak{p}$-primary ideals of $\mathcal{O}$ are of the form
$\mathfrak{q}=p^\ell(p^m\mathbb{Z}+(r+\tau)\mathbb{Z})$. If both $\ell$ and $m$ are greater than $0$, then $\mathfrak{q}$ is not an ideal atom. Indeed, $\mathfrak{q}=(p\mathcal{O})^\ell(p^m\mathbb{Z}+(r+\tau)\mathbb{Z})$ is a nontrivial factorization. It remains to be proven, that $p\mathcal{O}$ and $p^m\mathbb{Z}+(r+\tau)\mathbb{Z}$ are ideal atoms.

Assume that there exist proper ideals $\mathfrak{a}_1,\mathfrak{a}_2$ of $\mathcal{O}$ such that $p\mathcal{O}=\mathfrak{a}_1\mathfrak{a}_2$. Since $p\mathcal{O}$ is $\mathfrak{p}$-primary, we have $\mathfrak{a}_1$ and $\mathfrak{a}_2$ are $\mathfrak{p}$-primary. Using this information, we deduce, that $p\mathcal{O}\subset\mathfrak{p}^2$, implying
\[
p\in p\mathcal{O}\subset\mathfrak{p}^2=(p^2,pf\omega,f^2\omega^2)=p(p,f\omega,\frac{f}{p}\omega f\omega)=p(p,f\omega)=p\mathfrak{p}.
\]
Therefore, $1\in\mathfrak{p}$, a contradiction.

Assume that there exist proper ideals $\mathfrak{a}_1,\mathfrak{a}_2$ of $\mathcal{O}$ such that $p^m\mathbb{Z}+(r+\tau)\mathbb{Z}=\mathfrak{a}_1\mathfrak{a}_2$. Note that $\mathfrak{a}_1$ and $\mathfrak{a}_2$ are $\mathfrak{p}$-primary. By Proposition~\ref{proposition 3.2}.3, it follows that $p^m\mathbb{Z}+(r+\tau)\mathbb{Z}\subset p\mathcal{O}$, a contradiction to $r+\tau\not\in p\mathcal{O}$.

\smallskip
4. By 1. and 3., the nontrivial $\mathfrak{p}$-primary ideal atoms of norm $p^m$ are all $\mathfrak{q}=p^m\mathbb{Z}+(r+\tau)\mathbb{Z}$ with $m\in\mathbb{N}$, $0\leq r<p^m$ and $\mathcal N_{K/\mathbb{Q}}(r+\tau)\equiv 0\mod p^m$. By 2., an ideal of this form is invertible if and only if $\mathcal N_{K/\mathbb{Q}}(r+\tau)\not\equiv 0\mod p^{m+1}$.

Thus if we want to count the number of invertible $\mathfrak{p}$-primary ideal atoms of the form $\mathfrak{q}=p^m\mathbb{Z}+(r+\tau)\mathbb{Z}$ we have to count the number of solutions $r\in[0,p^m-1]$ of the equation
\begin{equation}\label{equation 5}
{\rm v}_p(\mathcal N_{K/\mathbb{Q}}(r+\tau))=m.
\end{equation}
Set $N=|\{r\in[0,p^m-1]\mid {\rm v}_p(\mathcal N_{K/\mathbb{Q}}(r+\tau))=m\}|$ and $a=\begin{cases} (\frac{f}{2})^2d_K &\textnormal{if }p=2\\ f^2d_K &\textnormal{if }p\not=2\end{cases}$. Next we show that $N=|\{r\in[0,p^m-1]\mid {\rm v}_p(r^2-a)=m\}|$. Note that $\mathcal N_{K/\mathbb{Q}}(r+\tau)=\frac{(2r+\varepsilon)^2-f^2d_K}{4}$ for each $r\in [0,p^m-1]$. If $p=2$, then $\varepsilon=0$, and hence $\mathcal N_{K/\mathbb{Q}}(r+\tau)=r^2-a$. Now let $p\not=2$. Then ${\rm v}_p(\mathcal N_{K/\mathbb{Q}}(r+\tau))={\rm v}_p((2r+\varepsilon)^2-a)$ for each $r\in [0,p^m-1]$. Let $f:\{r\in[0,p^m-1]\mid {\rm v}_p(r^2-a)=m\}\rightarrow\{r\in[0,p^m-1]\mid {\rm v}_p((2r+\varepsilon)^2-a)=m\}$ and $g:\{r\in[0,p^m-1]\mid {\rm v}_p((2r+\varepsilon)^2-a)=m\}\rightarrow\{r\in[0,p^m-1]\mid {\rm v}_p(r^2-a)=m\}$ be defined by $f(r)=\begin{cases}\frac{r-\varepsilon}{2} &\textnormal{if }r-\varepsilon\textnormal{ is even}\\\frac{r+p^m-\varepsilon}{2} &\textnormal{if }r-\varepsilon\textnormal{ is odd}\end{cases}$ and $g(r)={\rm rem}(2r+\varepsilon,p^m)$ for each $r\in [0,p^m-1]$. Observe that $f$ and $g$ are well-defined injective maps. Therefore, $N=|\{r\in[0,p^m-1]\mid {\rm v}_p(r^2-a)=m\}|$ in any case. Set $c=\frac{a}{p^{{\rm v}_p(a)}}$ and for $\ell\in\mathbb{N}$ set $g_{\ell}=|\{y\in [0,p^{\ell}-1]\mid y^2\equiv c\mod p^{\ell}\}|$. If $m<{\rm v}_p(a)$, then the statement follows immediately by Lemma~\ref{lemma 3.5}.1. Therefore, let $m\geq {\rm v}_p(a)$. In what follows we use Lemmas~\ref{lemma 3.4} and~\ref{lemma 3.5} without further citation.

\smallskip
CASE 1: $p=2$ and $2$ is inert. We have ${\rm v}_2(a)=2{\rm v}_2(f)-2$, $c\equiv d_K\equiv 5\mod 8$, $g_1=1$, $g_2=2$ and $g_3=0$. If $m={\rm v}_2(a)$, then $a$ is a square modulo $2^{m+1}$, and hence $N=2^{m/2-1}=\varphi(2^{m/2})$. If $m={\rm v}_2(a)+1$, then $a$ is a square modulo $2^m$, and thus $N=2^{{\rm v}_2(a)/2-1}(2g_1-g_2)=0$. If $m={\rm v}_2(a)+2$, then $a$ is a square modulo $2^m$, whence $N=2^{{\rm v}_2(a)/2-1}(2g_2-g_3)=2^{{\rm v}_2(a)/2+1}=2^{{\rm v}_2(f)}$. Finally, let $m\geq {\rm v}_2(a)+3$. Then $a$ is not a square modulo $2^m$, and hence $N=0$.

\smallskip
CASE 2: $p=2$ and $2$ is ramified. Note that ${\rm v}_2(a)\in\{2{\rm v}_2(f),2{\rm v}_2(f)+1\}$. First let ${\rm v}_2(a)=2{\rm v}_2(f)$. Then $a=f^2d$ with $c\equiv d\equiv 3\mod 4$, $g_1=1$ and $g_{\ell}=0$ for each $\ell\in\mathbb{N}_{\geq 2}$. If $m={\rm v}_2(a)$, then $a$ is a square modulo $2^{m+1}$, and thus $N=2^{m/2-1}=2^{{\rm v}_2(f)-1}=\varphi(2^{{\rm v}_2(f)})$. If $m={\rm v}_2(a)+1$, then $a$ is a square modulo $2^m$, and hence $N=2^{{\rm v}_2(a)/2-1}(2g_1-g_2)=2^{{\rm v}_2(f)}$. Finally, let $m\geq {\rm v}_2(a)+2$. Then $a$ is not a square modulo $2^m$, and thus $N=0$.

Now let ${\rm v}_2(a)=2{\rm v}_2(f)+1$. If $m={\rm v}_2(a)$, then $a$ is not a square modulo $2^{m+1}$, and hence $N=2^{\lfloor m/2\rfloor}=2^{{\rm v}_2(f)}$. If $m>{\rm v}_2(a)$, then $a$ is not a square modulo $2^m$, and thus $N=0$.

\smallskip
CASE 3: $p=2$ and $2$ splits. Observe that ${\rm v}_2(a)=2{\rm v}_2(f)-2$, $c\equiv d_K\equiv 1\mod 8$, $g_1=1$, $g_2=2$ and $g_{\ell}=4$ for each $\ell\in\mathbb{N}_{\geq 3}$. If $m={\rm v}_2(a)$, then $a$ is a square modulo $2^{m+1}$, and hence $N=2^{m/2-1}=\varphi(2^{m/2})$. Now let $m>{\rm v}_2(a)$ and set $k=m-{\rm v}_2(a)$. Note that $a$ is a square modulo $2^m$, and hence $N=2^{{\rm v}_2(a)/2-1}(2g_k-g_{k+1})$. If $m<{\rm v}_2(a)+3$, then $N=0$. Finally, let $m\geq {\rm v}_2(a)+3$. Then $N=2^{{\rm v}_2(a)/2+1}=2^{{\rm v}_2(f)}=2\varphi(2^{{\rm v}_2(f)})$.

\smallskip
CASE 4: $p\not=2$ and $p$ is inert. We have ${\rm v}_p(a)=2{\rm v}_p(f)$, $(\frac{c}{p})=(\frac{d_K}{p})=-1$ and $g_{\ell}=0$ for each $\ell\in\mathbb{N}$. If $m={\rm v}_p(a)$, then $a$ is not a square modulo $p^{m+1}$, and hence $N=p^{\lfloor m/2\rfloor}=p^{{\rm v}_p(f)}$. If $m>{\rm v}_p(a)$, then $a$ is not a square modulo $p^m$, and thus $N=0$.

\smallskip
CASE 5: $p\not=2$ and $p$ is ramified. It follows that ${\rm v}_p(a)=2{\rm v}_p(f)+1$. If $m={\rm v}_p(a)$, then $a$ is not a square modulo $p^{m+1}$, and thus $N=p^{\lfloor m/2\rfloor}=p^{{\rm v}_p(f)}$. If $m>{\rm v}_p(a)$, then $a$ is not a square modulo $p^m$, and thus $N=0$.

\smallskip
CASE 6: $p\not=2$ and $p$ splits. Note that ${\rm v}_p(a)=2{\rm v}_p(f)$, $(\frac{c}{p})=(\frac{d_K}{p})=1$ and $g_{\ell}=2$ for each $\ell\in\mathbb{N}$. If $m={\rm v}_p(a)$, then $a$ is a square modulo $p^{m+1}$, and hence $N=p^{m/2-1}(p-2)=p^{{\rm v}_p(f)-1}(p-2)$. If $m>{\rm v}_p(a)$, then $a$ is a square modulo $p^m$, and thus $N=p^{{\rm v}_p(a)/2-1}(pg_k-g_{k+1})=2p^{{\rm v}_p(f)-1}(p-1)=2\varphi(p^{{\rm v}_p(f)})$.

\smallskip
5. It is an immediate consequence of 4. that the number of invertible ideal atoms with radical $\mathfrak{p}$ is finite if and only if $p$ does not split. It remains to show that $\mathcal{A}(\mathcal{I}_p(\mathcal{O}))$ is finite if and only if $\mathcal{A}(\mathcal{I}^*_p(\mathcal{O}))$ is finite. It follows from \cite[Theorem 4.3]{An-Mo92} that $\mathcal{I}(\mathcal{O}_{\mathfrak{p}})$ is a finitely generated monoid if and only if $\mathcal{I}^*(\mathcal{O}_{\mathfrak{p}})$ is a finitely generated monoid. Therefore, Proposition~\ref{proposition 3.3}.2 implies that $\mathcal{I}_p(\mathcal{O})$ is a finitely generated monoid if and only if $\mathcal{I}^*_p(\mathcal{O})$ is a finitely generated monoid. Observe that $\mathcal{I}_p(\mathcal{O})$ and $\mathcal{I}^*_p(\mathcal{O})$ are atomic monoids. Therefore, $\mathcal{A}(\mathcal{I}_p(\mathcal{O}))$ is finite if and only if $\mathcal{I}_p(\mathcal{O})$ is a finitely generated monoid if and only if $\mathcal{I}^*_p(\mathcal{O})$ is a finitely generated monoid if and only if $\mathcal{A}(\mathcal{I}^*_p(\mathcal{O}))$ is finite.
\end{proof}

\smallskip
\section{Sets of distances and sets of catenary degrees}\label{4}
\smallskip

The goal in this section is to prove Theorem~\ref{theorem 1.1}. The proof is based on the precise description of ideals given in Theorem~\ref{theorem 3.6}. We proceed in a series of lemmas and propositions and use all notation on orders as introduced at the beginning of Section~\ref{3}. In particular, $\mathcal{O} =\mathcal O_f$ is an order in a quadratic number with conductor $f\mathcal O_K$ for some $f\in\mathbb{N}_{\ge 2}$.

\begin{proposition}\label{proposition 4.1}
Let $H$ be a reduced atomic monoid and suppose there is a cancellative atom $u\in\mathcal{A}(H)$ such that for each $a\in H\setminus H^{\times}$ there are $n\in\mathbb{N}_0$ and $v\in\mathcal{A}(H)$ such that $a=u^nv$.
\begin{enumerate}
\item[\textnormal{1.}] For all $n,m\in\mathbb{N}_0$ and $v,w\in\mathcal{A}(H)$ such that $u^nv=u^mw$, it follows that $n=m$ and $v=w$.

\item[\textnormal{2.}] For all $n\in\mathbb{N}_0$ and $v\in\mathcal{A}(H)$, it follows that $\max\mathsf{L}(u^nv)=n+1$.

\item[\textnormal{3.}] $\mathsf{c}(H)=\sup\{\mathsf{c}(w\cdot y,u^n\cdot v)\mid n\in\mathbb{N}$ and $v,w,y\in\mathcal{A}(H)$ such that $wy=u^nv\}$.

\item[\textnormal{4.}] If $H$ is half-factorial, then $\mathsf{c}(H)\leq 2$.

\item[\textnormal{5.}] $\sup\Delta(H)=\sup\{\ell-2\mid\ell\in\mathbb{N}_{\geq 3}$ such that $\mathsf{L}(vw)\cap [2,\ell]=\{2,\ell\}$ for some $v,w\in\mathcal{A}(H)\}$.
\end{enumerate}
\end{proposition}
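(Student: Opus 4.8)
The plan is to first establish part 1, which is the structural backbone for everything else, and then deduce parts 2--5 from it more or less mechanically. For part 1, suppose $u^nv=u^mw$ with $n\le m$ without loss of generality. Since $u$ is a cancellative atom, I can cancel $u^n$ from both sides to get $v=u^{m-n}w$. If $m>n$, then $v$ has a factorization of length at least $2$ (namely $u^{m-n}w$ written out as atoms, using that $w\notin H^\times$ since $w$ is an atom), contradicting that $v$ is an atom. Hence $m=n$, and cancelling $u^n$ gives $v=w$. The only subtlety is to make sure that ``$v=u^{m-n}w$ with $m-n\ge1$'' genuinely forces a nontrivial factorization of $v$: this holds because $H$ is reduced, so $u,w\notin H^\times=\{1\}$, and a product of an atom with another non-unit is not an atom.

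For part 2: by hypothesis every non-unit of $H$ has the form $u^k v'$ with $v'\in\mathcal A(H)$, and by part 1 this representation is unique; so every factorization of $u^nv$ is obtained by factoring the single atom $v$ trivially and distributing powers of $u$, giving the unique factorization $u^n v$ into atoms, of length $n+1$. Wait --- I need to be slightly careful: part 1 shows the representation $u^kv'$ is unique, but a priori $u^nv$ could have factorizations into atoms that are not of the form $u^k\cdot(\text{atom})$. However any factorization $z$ of $a:=u^nv$ into atoms, say $z=u^{n_0}w_1\cdots w_r$ where the $w_i$ are atoms different from $u$, can be rewritten: if $r\ge2$ then $w_1w_2$ is a non-unit, hence equals $u^{k}v'$ for an atom $v'$ and some $k\ge0$; by part 1 applied after cancelling, one shows $k=0$ is forced... actually the cleanest route is: $r\le1$ because if $r\ge 2$ then $w_1\cdots w_r = u^{n-n_0}v$, and comparing via the hypothesis (write $w_1\cdots w_r$'s value as $u^j v'$) together with part~1 yields $r=1$. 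So $\mathsf Z(a)=\{u^nv\}$ up to the power distribution, $\mathsf L(a)=\{n+1\}$, and in particular $\max\mathsf L(a)=n+1$. This also shows $H$ need not be half-factorial in general (the lengths of elements not of the form $u^nv$ are unconstrained), which is consistent with the later parts.

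Hmm, let me reconsider --- the above would make $H$ half-factorial, which contradicts the setup. The resolution: the hypothesis only says each non-unit is $u^n v$ for SOME atom $v$; it does NOT say the power of $u$ splitting off is maximal, so an atom $v$ may itself be divisible by $u$-related factorizations... no. Let me just say: part 2 follows because any factorization of $u^nv$ can be reorganized, via repeated application of the hypothesis to pairs of atoms other than $u$ and part 1, to the standard one $u^n\cdot v$; I expect this reorganization argument, i.e.\ pinning down exactly why no ``longer'' factorization of $u^nv$ exists, to be the main obstacle and the place where the paper's authors do the careful bookkeeping. Granting part 2: part 3 follows because $\mathsf c(H)=\sup\{\mathsf c(a)\mid a\in H\}$ and $\mathsf c(a)$ is realized by a pair of factorizations; using part 2 one reduces any extremal pair to one where one side is the standard factorization $u^nv$ and the other is $w\cdot y\cdot(\text{common }u\text{-part})$ with $w,y$ atoms, since the catenary degree is governed by ``worst mismatches'' and common atoms can be cancelled in $\mathsf d$. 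Part 4: if $H$ is half-factorial, then in part 3 the competing factorization $wy$ has length $2=\max\mathsf L(u^nv)=n+1$, forcing $n=1$, so $\mathsf d(wy,uv)\le 2$ and $\mathsf c(H)\le2$. Part 5: $\sup\Delta(H)=\sup\{\max\mathsf L(a)-\text{(next length down)}\}$; by part 2 the maximal length of $u^nv$ is $n+1$ attained only there, so a gap of size $\ell-2$ in $\mathsf L$ corresponds to an $a$ with $2,\ell\in\mathsf L(a)$ and nothing strictly between, and by divisor-closedness / cancelling $u$-powers we may take $a=vw$ with $v,w$ atoms; conversely such $vw$ produces exactly such a gap, giving the stated formula. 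The only genuine work is the reorganization lemma underlying part 2; the rest is formal manipulation of $\mathsf Z$, $\mathsf L$, $\mathsf d$, $\mathsf c$.
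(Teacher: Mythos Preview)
Your Part~1 is fine and matches the paper. Part~4 is also correct, once Part~3 is in hand.

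The real gap is in Part~2. Your first pass concluded $\mathsf L(u^nv)=\{n+1\}$, which would force $H$ to be half-factorial; you caught this, but your fallback (``reorganize any factorization to $u^n\cdot v$'') is exactly the same wrong claim in different words. What Part~2 asserts is only an upper bound on lengths, not uniqueness of the factorization. The paper's argument is a clean induction on $n$: take any factorization $z$ of $u^nv$ with $|z|\ge 2$, split it as $z=z'\cdot z''$ with $\pi(z')=u^{m'}w'$ and $\pi(z'')=u^{m''}w''$; the key point is that $w'w''$ is a non-unit, hence $w'w''=u^{\ell}y$ with $\ell\ge 1$ (not $\ell\ge 0$: a product of two atoms in a reduced monoid cannot itself be an atom). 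Then Part~1 gives $n=m'+m''+\ell$, and by induction $|z|=|z'|+|z''|\le (m'+1)+(m''+1)\le m'+m''+\ell+1=n+1$. You never produce this $\ell\ge 1$ observation, which is exactly the missing step.

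Part~5 also has a genuine gap. Your claim that $\sup\Delta(H)$ is realized as ``$\max\mathsf L(a)$ minus the next length down'' is false in general (take $\mathsf L(a)=\{2,10,11\}$). The paper instead argues as follows: given adjacent lengths $r<s$ in some $\mathsf L(a)$, choose $z\in\mathsf Z(a)$ with $|z|=r$ and with \emph{maximal} $u$-multiplicity among length-$r$ factorizations; since $r<\max\mathsf L(a)$, Part~2 forces two atoms $v,w\ne u$ to occur in $z$; writing $vw=u^ny$, the maximality of the $u$-multiplicity rules out $n=1$ (else swap $v\cdot w$ for $u\cdot y$ and raise the $u$-multiplicity without changing the length), so $n\ge 2$ and $\mathsf L(vw)$ has a smallest element $\ell>2$; then $r+\ell-2\in\mathsf L(a)$, whence $s-r\le \ell-2$. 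Your sketch jumps directly to ``take $a=vw$'' without any mechanism for transferring the gap from an arbitrary $a$ down to a product of two atoms; the maximal-$u$-multiplicity trick is what makes this work.

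Your Part~3 sketch is in the right spirit but too vague; the paper again runs an induction on $n$, peeling off a pair $w\cdot y$ of non-$u$ atoms from an arbitrary factorization and using the inductive bound on the residual element.
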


\begin{proof}
1. Let $n,m\in\mathbb{N}_0$ and $v,w\in\mathcal{A}(H)$ be such that $u^nv=u^mw$. Without restriction let $n\leq m$. Since $u$ is cancellative, we infer that $v=u^{m-n}w$. Since $v\in\mathcal{A}(H)$, we have $n=m$, and thus $v=w$.

\smallskip
2. It is clear that $n+1\in\mathsf{L}(u^nv)$ for all $n\in\mathbb{N}_0$ and $v\in\mathcal{A}(H)$. Therefore, it is sufficient to show by induction that for all $n\in\mathbb{N}_0$ and $v\in\mathcal{A}(H)$, $\max\mathsf{L}(u^nv)\leq n+1$. Let $n\in\mathbb{N}_0$ and $v\in\mathcal{A}(H)$. If $n=0$, then the assertion is obviously true. Now let $n>0$ and $z\in\mathsf{Z}(u^nv)$. Then there are some $z^{\prime},z^{\prime\prime}\in\mathsf{Z}(H)\setminus\{1\}$ such that $z=z^{\prime}\cdot z^{\prime\prime}$. There are some $m^{\prime},m^{\prime\prime}\in\mathbb{N}_0$ and $w^{\prime},w^{\prime\prime}\in\mathcal{A}(H)$ such that $\pi(z^{\prime})=u^{m^{\prime}}w^{\prime}$ and $\pi(z^{\prime\prime})=u^{m^{\prime\prime}}w^{\prime\prime}$. There are some $\ell\in\mathbb{N}$ and $y\in\mathcal{A}(H)$ such that $w^{\prime}w^{\prime\prime}=u^{\ell}y$. We infer that $u^nv=u^{m^{\prime}+m^{\prime\prime}+\ell}y$, and thus $n=m^{\prime}+m^{\prime\prime}+\ell$ by 1. Since $m^{\prime},m^{\prime\prime}<n$, it follows by the induction hypothesis that $|z^{\prime}|\leq m^{\prime}+1$ and $|z^{\prime\prime}|\leq m^{\prime\prime}+1$. Consequently, $|z|\leq m^{\prime}+m^{\prime\prime}+2\leq m^{\prime}+m^{\prime\prime}+\ell+1=n+1$.

\smallskip
3. Set $k=\sup\{\mathsf{c}(w\cdot y,u^n\cdot v)\mid n\in\mathbb{N}_0$ and $v,w,y\in\mathcal{A}(H)$ such that $wy=u^nv\}$. Since $\mathsf{c}(H)=\sup\{\mathsf{c}(z,z^{\prime})\mid a\in H, z,z^{\prime}\in\mathsf{Z}(a)\}$, it is obvious that $k\leq\mathsf{c}(H)$. It remains to show by induction that for all $n\in\mathbb{N}_0$ and $v\in\mathcal{A}(H)$, it follows that $\mathsf{c}(u^nv)\leq k$. Let $n\in\mathbb{N}_0$ and $v\in\mathcal{A}(H)$. Since $\mathsf{c}(v)=0$, we can assume without restriction that $n>0$. Since $\mathsf{c}(u^nv)=\sup\{\mathsf{c}(z,u^n\cdot v)\mid z\in\mathsf{Z}(u^nv)\}$, it remains to show that $\mathsf{c}(z,u^n\cdot v)\leq k$ for all $z\in\mathsf{Z}(u^nv)$. Let $z\in\mathsf{Z}(u^nv)$.

\smallskip
CASE 1: For all $w,y\in\mathcal{A}(H)\setminus\{u\}$, we have $w\cdot y\nmid z$. There are some $m\in\mathbb{N}$ and $w\in\mathcal{A}(H)$ such that $z=u^m\cdot w$. We infer by 1. that $z=u^n\cdot v$, and thus $\mathsf{c}(z,u^n\cdot v)=0\leq k$.

\smallskip
CASE 2: There are some $w,y\in\mathcal{A}(H)\setminus\{u\}$ such that $w\cdot y\mid z$. Set $z^{\prime}=\frac{z}{w\cdot y}$. There exist $m\in\mathbb{N}$ and $a\in\mathcal{A}(H)$ such that $wy=u^ma$. We infer that $m\leq n$ and $u^nv=\pi(z)=\pi(w\cdot y)\pi(z^{\prime})=u^ma\pi(z^{\prime})$, and thus $a\pi(z^{\prime})=u^{n-m}v$. Observe that $\mathsf{c}(z,u^m\cdot a\cdot z^{\prime})\leq\mathsf{c}(w\cdot y,u^m\cdot a)\leq k$. Since $n-m<n$, it follows by the induction hypothesis that $\mathsf{c}(u^m\cdot a\cdot z^{\prime},u^n\cdot v)\leq\mathsf{c}(a\cdot z^{\prime},u^{n-m}\cdot v)\leq k$, and hence $\mathsf{c}(z,u^n\cdot v)\leq k$.

\smallskip
4. Let $H$ be half-factorial, $n\in\mathbb{N}$ and $v,w,y\in\mathcal{A}(H)$ be such that $wy=u^nv$. We infer that $n=1$, and thus $\mathsf{c}(w\cdot y,u^n\cdot v)\leq\mathsf{d}(w\cdot y,u\cdot v)\leq 2$. Therefore, $\mathsf{c}(H)\leq 2$ by 3.

\smallskip
5. Set $N=\sup\{\ell-2\mid\ell\in\mathbb{N}_{\geq 3}$ such that $\mathsf{L}(vw)\cap [2,\ell]=\{2,\ell\}$ for some $v,w\in\mathcal{A}(H)\}$. It is obvious that $N\leq\sup\Delta(H)$. It remains to show that $k\leq N$ for each $k\in\Delta(H)$. Let $k\in\Delta(H)$. Then there are some $a\in H$ and $r,s\in\mathsf{L}(a)$ such that $r<s$, $\mathsf{L}(a)\cap [r,s]=\{r,s\}$, and $k=s-r$. Let $z\in\mathsf{Z}(a)$ with $|z|=r$ be such that ${\rm v}_u(z)=\max\{{\rm v}_u(z^{\prime})\mid z^{\prime}\in\mathsf{Z}(a) \ \text{with} \ |z^{\prime}|=r\}$. Since $r<\max\mathsf{L}(a)$, it follows by 2., that there are some $v,w\in\mathcal{A}(H)\setminus\{u\}$ such that $v\cdot w\mid z$. There are some $n\in\mathbb{N}$ and $y\in\mathcal{A}(H)$ such that $vw=u^ny$. Since ${\rm v}_u(z)$ is maximal amongst all factorizations of $a$ of length $r$, we have $n\geq 2$. Consequently, there is some $\ell\in\mathsf{L}(vw)$ such that $2<\ell\leq n+1$ and $\mathsf{L}(vw)\cap [2,\ell]=\{2,\ell\}$. Note that $r+\ell-2\in\mathsf{L}(a)$, and thus $s\leq r+\ell-2$. This implies that $k\leq\ell-2\leq N$.
\end{proof}

Theorem~\ref{theorem 3.6} implies that, for all prime divisors $p$ of $f$, $\mathcal{I}^*_p(\mathcal{O}_f)$ and $\mathcal{I}_p(\mathcal{O}_f)$ are reduced atomic monoids satisfying the assumption in Proposition~\ref{proposition 4.1}.

\begin{lemma}\label{lemma 4.2}
Let $p$ be a prime divisor of $f$.
\begin{enumerate}
\item[\textnormal{1.}] $\mathsf{Z}(pP_{f,p})=\{A\cdot P_{f,p}\mid A=P_{f,p}$ or $A\in\mathcal{A}(\mathcal{I}^*_p(\mathcal{O}_f))$ such that $\mathcal{N}(A)=p^2\}$ and $1\in {\rm Ca}(\mathcal{I}_p(\mathcal{O}_f))$.

\item[\textnormal{2.}] If $I,J\in\mathcal{A}(\mathcal{I}^*_p(\mathcal{O}_f))$ are such that $\mathcal{N}(I)=p^2$ and $\mathcal{N}(J)>p^2$, then $IJ=pL$ for some $L\in\mathcal{A}(\mathcal{I}^*_p(\mathcal{O}_f))$.

\item[\textnormal{3.}] $2\in {\rm Ca}(\mathcal{I}^*_p(\mathcal{O}_f))$.
\end{enumerate}
\end{lemma}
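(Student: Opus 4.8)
Write $u=p\mathcal{O}$ and $\mathfrak{p}=P_{f,p}$. By Theorem~\ref{theorem 3.6} these are ideal atoms, $u$ is cancellative with $\mathcal{N}(u)=p^{2}$, while $\mathfrak{p}$ is not invertible with $\mathcal{N}(\mathfrak{p})=p$; also $\mathfrak{p}^{2}=p\mathfrak{p}$ (shown inside the proof of Theorem~\ref{theorem 3.6}.3). Both $\mathcal{I}_p(\mathcal{O})$ and $\mathcal{I}^{*}_p(\mathcal{O})$ satisfy the hypotheses of Proposition~\ref{proposition 4.1} with respect to $u$. I shall use repeatedly: norms are multiplicative on a product involving an invertible $\mathfrak{p}$-primary ideal, and two proper $\mathfrak{p}$-primary ideals always multiply into $u$ (Proposition~\ref{proposition 3.2}.3); $I\overline{I}=\mathcal{N}(I)\mathcal{O}$ for invertible ideal atoms $I$ (Proposition~\ref{proposition 3.2}.5); for invertible ideals containment is divisibility; and the only $\mathfrak{p}$-primary ideal of norm $p$ is $\mathfrak{p}$ itself (in the shape $p^{\ell}(p^{m}\mathbb{Z}+(r+\tau)\mathbb{Z})$ the equation $2\ell+m=1$ forces $\ell=0$, $m=1$, and $r+\tau\in\sqrt{\,\cdot\,}=\mathfrak{p}$ pins down $r$; in particular every invertible ideal atom has norm $\geq p^{2}$). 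The decisive soft point is: if $x$ lies in a cancellative atomic monoid with $\mathsf{L}(x)=\{2\}$ and $|\mathsf{Z}(x)|\geq 2$, then $\mathsf{c}(x)=2$, since two factorizations of length $2$ are at distance $\leq 2$ while $\mathsf{c}(x)\geq 2$ by Equation~\eqref{equation 1} (the monoid being cancellative).

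\emph{Part 1.} Since $pP_{f,p}=u\mathfrak{p}$ with $\mathfrak{p}$ an atom, Proposition~\ref{proposition 4.1}.2 gives $\max\mathsf{L}(pP_{f,p})=2$, hence $\mathsf{L}(pP_{f,p})=\{2\}$ and every $z\in\mathsf{Z}(pP_{f,p})$ is a product $A\cdot B$ of two ideal atoms. By Proposition~\ref{proposition 3.2}.3, $\mathcal{N}(A)\mathcal{N}(B)$ divides $p^{3}$ and is $\geq p^{2}$, so $\{\mathcal{N}(A),\mathcal{N}(B)\}$ is $\{p,p\}$ or $\{p,p^{2}\}$; in the first case $A=B=\mathfrak{p}$, and in the second the norm-$p$ factor equals $\mathfrak{p}$, which is not invertible, so equality of norms in Proposition~\ref{proposition 3.2}.3 forces the norm-$p^{2}$ factor to be invertible. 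This proves one inclusion. Conversely $\mathfrak{p}^{2}=pP_{f,p}$, and for an invertible ideal atom $A$ with $\mathcal{N}(A)=p^{2}$ the ideal $A\mathfrak{p}$ is $\mathfrak{p}$-primary of norm $p^{3}$ and, both factors being proper, $A\mathfrak{p}\subseteq u$; writing $A\mathfrak{p}=uI$ with $I$ a $\mathfrak{p}$-primary ideal of norm $p$ forces $I=\mathfrak{p}$, so $A\mathfrak{p}=u\mathfrak{p}=pP_{f,p}$. Hence $\mathsf{Z}(pP_{f,p})\supseteq\{\mathfrak{p}\cdot\mathfrak{p},\,u\cdot\mathfrak{p}\}$ has at least two elements, and since every factorization of $pP_{f,p}$ contains the atom $\mathfrak{p}$, any two of them are at distance $1$; therefore $\mathsf{c}(pP_{f,p})=1$ and $1\in{\rm Ca}(\mathcal{I}_p(\mathcal{O}))$.

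\emph{Part 2.} If $I=u$, put $L=J$. If $I\neq u$, then $\overline{I}$ is an ideal atom with $I\overline{I}=p^{2}\mathcal{O}$ (Proposition~\ref{proposition 3.2}.5); since $I,J$ are proper, $IJ\subseteq u$, so $IJ=uL$ with $L$ invertible and $\mathcal{N}(L)=\mathcal{N}(J)$. If $L$ were not an atom, write $L=L_{1}L_{2}$ with $L_{1},L_{2}$ proper invertible; then $L\subseteq u$ (Proposition~\ref{proposition 3.2}.3), so $L=uL'$ with $L'$ proper invertible, and $IJ=u^{2}L'=(I\overline{I})L'$. Cancelling $I$ in the cancellative monoid $\mathcal{I}^{*}(\mathcal{O})$ yields $J=\overline{I}L'$ with both factors proper, contradicting that $J$ is an atom. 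Hence $L\in\mathcal{A}(\mathcal{I}^{*}_p(\mathcal{O}))$ and $IJ=pL$.

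\emph{Part 3.} First observe that $\mathcal{I}^{*}_p(\mathcal{O})$ is never factorial: it is reduced, cancellative and atomic but has no prime elements, because the primes of $\mathcal{I}^{*}(\mathcal{O})$ are the invertible prime ideals and $\mathfrak{p}$ is the only prime ideal of $\mathcal{O}$ lying in $\mathcal{I}^{*}_p(\mathcal{O})$, yet $\mathfrak{p}$ is not invertible. Thus ${\rm Ca}(\mathcal{I}^{*}_p(\mathcal{O}))\neq\emptyset$ and $\min{\rm Ca}(\mathcal{I}^{*}_p(\mathcal{O}))\geq 2$ by Equation~\eqref{equation 1}, so it suffices to produce $x$ with $\mathsf{c}(x)=2$. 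If some invertible ideal atom $A$ satisfies $\mathcal{N}(A)=p^{2}$ and $A\neq u$ --- which by Theorem~\ref{theorem 3.6}.4 happens unless $p=2$, ${\rm v}_{2}(f)=1$ and $2$ splits --- take $x=p^{2}\mathcal{O}=u\cdot u$: then $\mathsf{L}(x)=\{2\}$ by Proposition~\ref{proposition 4.1}.2, while $u\cdot u$ and $A\cdot\overline{A}$ (Proposition~\ref{proposition 3.2}.5) are distinct factorizations of $x$, so $\mathsf{c}(x)=2$ by the soft point. In the remaining (exceptional) case $p=2$, ${\rm v}_{2}(f)=1$, $2$ split, I would fix an invertible ideal atom $C$ with $\mathcal{N}(C)=p^{3}$ (they exist by Theorem~\ref{theorem 3.6}.4); using $I=I\mathcal{O}_K\cap\mathcal{O}$ for invertible $I$ and $p\mathcal{O}_K=\mathfrak{P}_1\mathfrak{P}_2$, an invertible $\mathfrak{p}$-primary ideal extends to $\mathfrak{P}_1^{a}\mathfrak{P}_2^{b}$ of norm $p^{a+b}$ and conjugation swaps $(a,b)$, so $C$ (odd norm-exponent) is not self-conjugate, i.e.\ $\overline{C}\neq C$; and Theorem~\ref{theorem 3.6}.4 shows the only invertible ideal of norm $p^{2}$ is $u$. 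Then $C^{2}\subseteq u$, so $C^{2}=uB$ with $B$ invertible of norm $p^{4}$; were $B$ not an atom, then (as in Part 2) $B=uB'$ with $B'$ invertible of norm $p^{2}$, hence $B'=u$ and $C^{2}=u^{3}=C\overline{C}$, forcing $C=\overline{C}$ --- a contradiction. So $B$ is an atom, $x=C^{2}$ has $\mathsf{L}(x)=\{2\}$ and the distinct factorizations $C\cdot C$ and $u\cdot B$, and $\mathsf{c}(x)=2$. The main obstacle is exactly this exceptional split case: there $p^{2}\mathcal{O}$ is uniquely factorable and $p^{3}\mathcal{O}$ (likewise $p\mathcal{O}\cdot J$) has catenary degree $3$, so one is forced up to norm $p^{6}$ and must exploit the precise structure encoded in Table~\ref{table1}; everything else is norm bookkeeping combined with Propositions~\ref{proposition 3.2} and~\ref{proposition 4.1}.
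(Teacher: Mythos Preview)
Your proof is correct, and in Parts~2 and~3 it takes a genuinely different route from the paper. In Part~1 both arguments coincide in spirit (length bound via Proposition~\ref{proposition 4.1}.2, then identify the two factors), though you replace the paper's coordinate computation in $\mathcal{M}_{f,p}$ by the norm divisibility of Proposition~\ref{proposition 3.2}.3, which is cleaner. In Part~2 the paper computes ${\rm v}_p(r+s+\varepsilon)=1$ directly from ${\rm v}_p(r^2+\varepsilon r+\eta)=2$ and ${\rm v}_p(s^2+\varepsilon s+\eta)>2$, then invokes Proposition~\ref{proposition 3.2}.1; your argument instead uses $I\overline{I}=u^{2}$ and cancels $I$ to derive a contradiction, avoiding any coordinate work. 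In Part~3's exceptional case ($p=2$, ${\rm v}_2(f)=1$, $2$ split) the paper verifies via the explicit parametrization that $I^{2}=2J$ with $J$ an atom; you instead reduce to showing $C\neq\overline{C}$ by looking at the extension $C\mathcal{O}_K=\mathfrak{P}_1^{a}\mathfrak{P}_2^{b}$ with $a+b=3$ odd. Your approach is more structural and transfers to other situations where the conjugate is guaranteed to differ, while the paper's approach has the advantage of staying entirely within the machinery of Proposition~\ref{proposition 3.2}.1 already set up (and not appealing to the contraction identity $I=I\mathcal{O}_K\cap\mathcal{O}$, which, though standard, is not proved in the paper --- note, however, that for your purposes you only need the trivial implication $C=\overline{C}\Rightarrow C\mathcal{O}_K=\overline{C}\mathcal{O}_K$, so this is not a real dependence).
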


\begin{proof}
1. Note that $\{I\in\mathcal{I}_p(\mathcal{O}_f)\mid\mathcal{N}(I)=p\}=\{P_{f,p}\}$. First we show that $\mathsf{Z}(pP_{f,p})=\{A\cdot P_{f,p}\mid A=P_{f,p}$ or $A\in\mathcal{A}(\mathcal{I}^*_p(\mathcal{O}_f))$ such that $\mathcal{N}(A)=p^2\}$.

Let $z\in\mathsf{Z}(pP_{f,p})$. It follows from Proposition~\ref{proposition 4.1}.2 that $|z|\leq 2$, and hence $|z|=2$. Consequently, $z=A\cdot B$ for some $A,B\in\mathcal{A}(\mathcal{I}_p(\mathcal{O}_f))$. By Proposition~\ref{proposition 3.2}.1 there are some $(u,v,w),(x,y,t)\in\mathcal{M}_{f,p}$ such that $A=p^u(p^v\mathbb{Z}+(w+\tau)\mathbb{Z})$ and $B=p^x(p^y\mathbb{Z}+(t+\tau)\mathbb{Z})$. Set $g=\min\{v,y,{\rm v}_p(w+t+\varepsilon)\}$ and $e=\min\{g,{\rm v}_p(w-t),{\rm v}_p(w^2+\varepsilon w+\eta)-v,{\rm v}_p(t^2+\varepsilon t+\eta)-y\}$. We infer by Proposition~\ref{proposition 3.2}.1 that $u+x+g=1$ and $v+y+e-2g=1$. Note that $g\in\{0,1\}$. If $g=0$, then $u+x=v+y=1$, and thus ($A=p\mathcal{O}_f$ and $B=P_{f,p}$) or ($A=P_{f,p}$ and $B=p\mathcal{O}_f$). Now let $g=1$. Then $u=x=0$, $v,y\geq 1$, $v+y+e=3$, and $e\in\{0,1\}$. If $e=1$, then $v=y=1$, and thus $A=B=P_{f,p}$. Now let $e=0$. Then ($v=1$ and $y=2$) or ($v=2$ and $y=1$). Without restriction let $v=2$ and $y=1$. Then $B=P_{f,p}$, $\mathcal{N}(A)=p^v=p^2$, and $\mathcal{N}(A)\mathcal{N}(B)=p^3=\mathcal{N}(pP_{f,p})=\mathcal{N}(AB)$. Since $B$ is not invertible, it follows by Proposition~\ref{proposition 3.2}.3 that $A$ is invertible.

To prove the converse inclusion note that $P_{f,p}=p\mathbb{Z}+(r+\tau)\mathbb{Z}$ for some $(0,1,r)\in\mathcal{M}_{f,p}$. By Proposition~\ref{proposition 3.2}.1 we have $P_{f,p}^2=p^a(p^b\mathbb{Z}+(c+\tau)\mathbb{Z}$ with $(a,b,c)\in\mathcal{M}_{f,p}$, $a=\min\{1,{\rm v}_p(2r+\varepsilon)\}$ and $b=2+e-2a$ with $e=\min\{a,{\rm v}_p(r^2+\varepsilon r+\eta)-1\}$. By Proposition~\ref{proposition 3.2}.3 we have $a>0$, and thus $a=b=e=1$. Consequently, $P_{f,p}^2=pP_{f,p}$. Now let $A\in\mathcal{A}(\mathcal{I}^*_p(\mathcal{O}_f))$ be such that $\mathcal{N}(A)=p^2$. It follows by Proposition~\ref{proposition 3.2}.3 that $\mathcal{N}(AP_{f,p})=\mathcal{N}(A)\mathcal{N}(P_{f,p})=p^3$ and $AP_{f,p}=pI$ for some $I\in\mathcal{I}_p(\mathcal{O}_f)$. We infer that $\mathcal{N}(I)=p$, and hence $I=P_{f,p}$.

Observe that $\mathsf{d}(z^{\prime},z^{\prime\prime})\leq 1$ for all $z^{\prime},z^{\prime\prime}\in\mathsf{Z}(pP_{f,p})$ and $(p\mathcal{O}_f)\cdot P_{f,p}$ and $P_{f,p}^2$ are distinct factorizations of $pP_{f,p}$. Therefore, $1=\mathsf{c}(pP_{f,p})\in {\rm Ca}(\mathcal{I}_p(\mathcal{O}_f))$.

\smallskip
2. Let $I,J\in\mathcal{A}(\mathcal{I}^*_p(\mathcal{O}_f))$ be such that $\mathcal{N}(I)=p^2$ and $\mathcal{N}(J)>p^2$. Without restriction we can assume that $I\not=p\mathcal{O}_f$. There are some $(0,2,r),(0,k,s)\in\mathcal{M}_{f,p}$ such that $I=p^2\mathbb{Z}+(r+\tau)\mathbb{Z}$ and $J=p^k\mathbb{Z}+(s+\tau)\mathbb{Z}$. Since $I$ and $J$ are invertible, we have ${\rm v}_p(r^2+\varepsilon r+\eta)=2$ and ${\rm v}_p(s^2+\varepsilon s+\eta)=k>2$. Therefore, ${\rm v}_p(r+s+\varepsilon)+{\rm v}_p(r-s)={\rm v}_p(r^2+\varepsilon r+\eta-(s^2+\varepsilon s+\eta))=2$, and thus ${\rm v}_p(r+s+\varepsilon)=1$, by Proposition~\ref{proposition 3.2}.2. Therefore, $\min\{2,k,{\rm v}_p(r+s+\varepsilon)\}=1$, and hence $IJ=pL$ for some $L\in\mathcal{A}(\mathcal{I}^*_p(\mathcal{O}_f))$ by Proposition~\ref{proposition 3.2}.1.

\smallskip
3. We distinguish two cases.

CASE 1: $p\not=2$ or ${\rm v}_p(f)\geq 2$ or $d\not\equiv 1\mod 8$. It follows from Theorem~\ref{theorem 3.6} that there is some $I\in\mathcal{A}(\mathcal{I}^*_p(\mathcal{O}_f))$ such that $\mathcal{N}(I)=p^2$ and $I\not=p\mathcal{O}_f$. We have $I\overline{I}=(p\mathcal{O}_f)^2$, and hence $\mathsf{L}(I\overline{I})=\{2\}$. Since $I\cdot\overline{I}$ and $(p\mathcal{O}_f)\cdot (p\mathcal{O}_f)$ are distinct factorizations of $I\overline{I}$, we have $2=\mathsf{c}(I\overline{I})\in {\rm Ca}(\mathcal{I}^*_p(\mathcal{O}_f))$.

\smallskip
CASE 2: $p=2$, ${\rm v}_p(f)=1$ and $d\equiv 1\mod 8$. By Proposition~\ref{proposition 3.3}.3 we can assume without restriction that $f=2$. By Theorem~\ref{theorem 3.6} there is some $I\in\mathcal{A}(\mathcal{I}^*_2(\mathcal{O}_f))$ such that $\mathcal{N}(I)=8$. There is some $(0,3,r)\in\mathcal{M}_{f,2}$ such that $I=8\mathbb{Z}+(r+\tau)\mathbb{Z}$. We have ${\rm v}_2(r^2-d)=3$, and hence ${\rm v}_2(r)=0$. Therefore, $\min\{3,{\rm v}_2(2r)\}=1$, and thus $I^2=2J$ for some $J\in\mathcal{A}(\mathcal{I}^*_2(\mathcal{O}_f))$. Consequently, $\mathsf{L}(I^2)=\{2\}$. Since $I\cdot I$ and $(2\mathcal{O}_f)\cdot J$ are distinct factorizations of $I^2$, it follows that $2=\mathsf{c}(I^2)\in {\rm Ca}(\mathcal{I}^*_p(\mathcal{O}_f))$.
\end{proof}

\begin{proposition}\label{proposition 4.3}
Let $p$ be an odd prime divisor of $f$ such that ${\rm v}_p(f)\geq 2$.
\begin{enumerate}
\item[\textnormal{1.}] There is a $C\in\mathcal{A}(\mathcal{I}^*_p(\mathcal{O}_f))$ such that $\mathsf{L}(C^2)=\{2,3\}$ whence $1\in\Delta(\mathcal{I}^*_p(\mathcal{O}_f))$ and $3\in {\rm Ca}(\mathcal{I}^*_p(\mathcal{O}_f))$. Moreover, if $(p\not=3$ or $d\not\equiv 2\mod 3$ or ${\rm v}_p(f)>2)$, then there are $I,J,L\in\mathcal{A}(\mathcal{I}^*_p(\mathcal{O}_f))$ such that $I^2=p^2J$ and $J^2=p^2L$.

\item[\textnormal{2.}] If $|{\rm Pic}(\mathcal{O}_f)|\leq 2$ and $(p\not=3$ or $d\not\equiv 2\mod 3$ or ${\rm v}_p(f)>2)$, then there is a nonzero primary $a\in\mathcal{O}_f$ such that $2,3\in\mathsf{L}(a)$ whence $1\in\Delta(\mathcal{O}_f)$.
\end{enumerate}
\end{proposition}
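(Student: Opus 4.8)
The plan is to build on Proposition~\ref{proposition 4.3}.1, transferring the local factorization information into $\mathcal{O}_f$ itself. First I would invoke Proposition~\ref{proposition 4.3}.1 to get atoms $I,J,L\in\mathcal{A}(\mathcal{I}^*_p(\mathcal{O}_f))$ with $I^2=p^2J$ and $J^2=p^2L$ (the hypothesis $p\neq 3$ or $d\not\equiv 2\bmod 3$ or ${\rm v}_p(f)>2$ is exactly what this half of the proposition requires). I would also record $C\in\mathcal{A}(\mathcal{I}^*_p(\mathcal{O}_f))$ with $\mathsf{L}(C^2)=\{2,3\}$. The point is that these are invertible ideals of $\mathcal{O}_f$, so they represent classes in ${\rm Pic}(\mathcal{O}_f)$; under the hypothesis $|{\rm Pic}(\mathcal{O}_f)|\le 2$ we have strong control over when an invertible ideal is principal. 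Since $I\overline{I}=\mathcal{N}(I)\mathcal{O}_f$ is principal (Proposition~\ref{proposition 3.2}.5), the class $[I]$ has order dividing $2$, so $[I]^2$ is trivial, i.e.\ $I^2$ is a principal ideal; write $I^2=a\mathcal{O}_f$ for some $a\in\mathcal{O}_f\setminus\{0\}$. Because $I$ is $\mathfrak p$-primary, $a$ is a $\mathfrak p$-primary element of $\mathcal{O}_f$ (its only minimal prime divisor in $\mathcal{O}_f$ is $\mathfrak p$, using that $\mathcal{I}^*_p(\mathcal{O}_f)$ is a divisor-closed submonoid of $\mathcal{I}^*(\mathcal{O}_f)$), hence primary in the monoid sense.

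Next I would compute $\mathsf{L}_{\mathcal{O}_f}(a)$ and exhibit the lengths $2$ and $3$. On the one hand $a\mathcal{O}_f=I^2$, a product of two atoms of $\mathcal{I}^*(\mathcal{O}_f)$; passing from ideals back to elements, I need that a factorization of the ideal $a\mathcal{O}_f$ into invertible principal ideals of the right "shape" lifts to a factorization of $a$ into atoms of $\mathcal{O}_f$ of the same length. Here the key is again $|{\rm Pic}(\mathcal{O}_f)|\le 2$: the relevant ideal atoms appearing, namely $I$ and $p\mathcal{O}_f$ and $J$, square to principal ideals, so one can pair them up. From $I^2=p^2J$ we get $a\mathcal{O}_f=I^2=p^2J$, and from $J^2=p^2L$ together with $I^4=p^4J^2=p^6L$ one can arrange a factorization of a suitable power/product realizing a longer length. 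More precisely, I would look at the element whose ideal is $I^2$ versus the element whose ideal is $(p\mathcal{O}_f)^2\cdot J$: these give a length-$2$ and a length-$3$ factorization of the same principal ideal, provided each of $p\mathcal{O}_f$ and $J$ is principal — which fails in general, so instead one works with $I^2\cdot\overline{I}^2=p^4J\overline{J}\cdot(\text{units})$ type identities, or better, directly mimics the pattern $C^2$ has lengths $\{2,3\}$ inside $\mathcal{I}^*_p$ and pulls $C^2$ down to a principal element. Since $[C]^2$ is trivial, $C^2=b\mathcal{O}_f$ for some primary $b$, and a length-$2$ and a length-$3$ factorization of $C^2$ in $\mathcal{I}^*_p(\mathcal{O}_f)$ descend to factorizations of $b$ of lengths $2$ and $3$ in $\mathcal{O}_f$ — each atom in those factorizations is either $C$, or $p\mathcal{O}_f$, or the third atom $D$ with $C^2=p^2D$, and each of these has trivial square in ${\rm Pic}$, hence is "almost principal"; with $|{\rm Pic}|\le 2$ the product of any two of them is principal, which is what makes the descent to elements work for factorizations of even length and, after one adjustment, for the length-$3$ one as well.

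The step I expect to be the genuine obstacle is exactly this descent from ideals to elements: $\mathsf{L}_{\mathcal{O}_f}(a)$ need not equal the set of lengths of $a\mathcal{O}_f$ in $\mathcal{I}^*(\mathcal{O}_f)$, because a product of invertible ideals being principal does not force the individual factors to be principal. The resolution is to use the hypothesis $|{\rm Pic}(\mathcal{O}_f)|\le 2$ quantitatively: every atom occurring (being $\mathfrak p$-primary and, via Proposition~\ref{proposition 3.2}.5, having a principal product with its conjugate) lies in a class of order $\le 2$, so any two of them multiply to something principal, and a factorization of $a\mathcal{O}_f$ of length $\ell$ can be grouped into $\lfloor \ell/2\rfloor$ principal pairs plus possibly one leftover ideal which must then itself be principal (its class is the class of $a\mathcal{O}_f$, namely trivial). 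This shows every factorization length of $a\mathcal{O}_f$ in $\mathcal{I}^*(\mathcal{O}_f)$ of even length is realized by $a$ in $\mathcal{O}_f$, and with a small parity argument (using that we have both $I^2=p^2J$ and $J^2=p^2L$, giving factorizations of the same principal ideal of lengths differing by $1$) the odd length is realized too; concretely $2,3\in\mathsf{L}_{\mathcal{O}_f}(a)$ for the appropriate primary $a$. Finally $1\in\Delta(\mathcal{O}_f)$ is immediate from $\{2,3\}\subseteq\mathsf{L}(a)$, since consecutive integers in a set of lengths force a distance $1$.
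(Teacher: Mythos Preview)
Your descent strategy is more complicated than it needs to be, and the ``grouping into principal pairs'' idea does not actually recover factorizations of the correct length. The key observation you are missing is that $p\mathcal{O}_f$ is \emph{always} principal (it is generated by $p\in\mathbb{Z}\subset\mathcal{O}_f$), and therefore, once you know $I^2$ is principal (from $|{\rm Pic}(\mathcal{O}_f)|\le 2$), the relation $I^2=p^2J$ forces $J$ itself to be principal. Likewise $J^2=p^2L$ then forces $L$ to be principal. This is exactly why part~1 of the proposition supplies the \emph{second} relation $J^2=p^2L$: it is not there to produce longer chains of ideal factorizations, but to guarantee that the element you descend to, namely a generator $u$ of $J$, again satisfies a relation of the same shape at the element level.

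Concretely, write $J=u\mathcal{O}_f$ and $L=v\mathcal{O}_f$; since $J,L$ are ideal atoms, $u,v\in\mathcal{A}(\mathcal{O}_f)$. From $J^2=p^2L$ one gets $u^2=\varepsilon p^2v$ for some unit $\varepsilon$, hence $u^2=u\cdot u=p\cdot p\cdot(\varepsilon v)$ exhibits $2,3\in\mathsf{L}_{\mathcal{O}_f}(u^2)$ directly, with $u^2$ primary. By contrast, your element $a$ with $a\mathcal{O}_f=I^2$ need not admit a length-$2$ factorization in $\mathcal{O}_f$ at all when $I$ is non-principal: the ideal factorization $I\cdot I$ does not descend, and your pairing argument would collapse two ideal atoms into a single principal generator, changing the length rather than preserving it. The paper's proof is a one-line computation once you notice $J$ and $L$ are principal; no parity or grouping argument is needed.
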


\begin{proof}
1. By Proposition~\ref{proposition 3.3}.3 there is a monoid isomorphism $\delta:\mathcal{I}^*_p(\mathcal{O}_f)\rightarrow\mathcal{I}^*_p(\mathcal{O}_{\frac{f}{2^{{\rm v}_2(f)}}})$ such that $\delta(p\mathcal{O}_f)=p\mathcal{O}_{\frac{f}{2^{{\rm v}_2(f)}}}$. Therefore, we can assume without restriction that $f$ is odd.

\smallskip
CLAIM: $\mathsf{L}(I^2)=\{2,3\}$ for some $I\in\mathcal{A}(\mathcal{I}^*_p(\mathcal{O}_f))$, $1\in\Delta(\mathcal{I}^*_p(\mathcal{O}_f))$, $3\in {\rm Ca}(\mathcal{I}^*_p(\mathcal{O}_f))$ and if ${\rm v}_p(p^4+f^2d)=4$, then $I^2=p^2J$ and $J^2=p^2L$ for some $I,J,L\in\mathcal{A}(\mathcal{I}^*_p(\mathcal{O}_f))$.

\smallskip
For $r\in\mathbb{N}_0$ set $k={\rm v}_p(\mathcal{N}_{K/\mathbb{Q}}(r+\tau))$ and $I=p^k\mathbb{Z}+(r+\tau)\mathbb{Z}$. Let $k>0$ and $r<p^k$. Then $I\in\mathcal{A}(\mathcal{I}^*_p(\mathcal{O}_f))$. Moreover, $I^2=p^a(p^b\mathbb{Z}+(c+\tau)\mathbb{Z})$ with $a=\min\{k,{\rm v}_p(2r+\varepsilon)\}$, $b=2(k-a)$ and $c={\rm rem}(r-t\frac{\mathcal{N}_{K/\mathbb{Q}}(r+\tau)}{p^a},p^b)$ for each $t\in\mathbb{Z}$ with $t\frac{2r+\varepsilon}{p^a}\equiv 1\mod p^{k-a}$. Set $J=p^b\mathbb{Z}+(c+\tau)\mathbb{Z}$. Then $I^2=p^aJ$ and if $b>0$, then $J\in\mathcal{A}(\mathcal{I}^*_p(\mathcal{O}_f))$. In particular, if $a=2$ and $b>0$, then $I,J\in\mathcal{A}(\mathcal{I}^*_p(\mathcal{O}_f))$ and $\mathsf{L}(I^2)=\{2,3\}$, and hence $1\in\Delta(I^2)\subseteq\Delta(\mathcal{I}^*_p(\mathcal{O}_f))$ and $3=\mathsf{c}(I^2)\in {\rm Ca}(\mathcal{I}^*_p(\mathcal{O}_f))$. Observe that $J^2=p^{a^{\prime}}(p^{b^{\prime}}\mathbb{Z}+(c^{\prime}+\tau)\mathbb{Z})$ with $a^{\prime}=\min\{b,{\rm v}_p(2c+\varepsilon)\}$, $b^{\prime}=2(b-a^{\prime})$ and $c^{\prime}\in\mathbb{N}_0$ such that $c^{\prime}<p^{b^{\prime}}$. Set $L=p^{b^{\prime}}\mathbb{Z}+(c^{\prime}+\tau)\mathbb{Z}$. Then $J^2=p^{a^{\prime}}L$ and if $b^{\prime}>0$, then $L\in\mathcal{A}(\mathcal{I}^*_p(\mathcal{O}_f))$.

\smallskip
CASE 1: $d\not\equiv 1\mod 4$. Set $r=p^2$. We have $\mathcal{N}_{K/\mathbb{Q}}(r+\tau)=p^4-f^2d$, $k\geq 4$, $a=2$, $b=2(k-2)>0$, $r<p^k$, and $t=\frac{p^{k-2}+1}{2}$ satisfies the congruence. Therefore, $c={\rm rem}(p^2-\frac{(p^{k-2}+1)(p^4-f^2d)}{2p^2},p^{2(k-2)})=\frac{p^4+f^2d+p^{k-2}f^2d-p^{k+2}+2\ell p^{2(k-1)}}{2p^2}$ for some $\ell\in\mathbb{Z}$. For the rest of this case let ${\rm v}_p(p^4+f^2d)=4$. It follows that ${\rm v}_p(c)=2$, and hence $a^{\prime}=\min\{2(k-2),{\rm v}_p(2c)\}=2$ and $b^{\prime}=4(k-3)>0$.

\smallskip
CASE 2: $d\equiv 1\mod 4$. Set $r=\frac{p^2-1}{2}$. Observe that $\mathcal{N}_{K/\mathbb{Q}}(r+\tau)=\frac{p^4-f^2d}{4}$, $k\geq 4$, $a=2$, $b=2(k-2)>0$, $r<p^k$, and $t=1$ satisfies the congruence. Consequently, $2c+\varepsilon=2{\rm rem}(\frac{p^2-1}{2}-\frac{p^4-f^2d}{4p^2},p^{2(k-2)})+1=\frac{p^4+f^2d+4\ell p^{2(k-1)}}{2p^2}$ for some $\ell\in\mathbb{Z}$. For the rest of this case let ${\rm v}_p(p^4+f^2d)=4$. We infer that $a^{\prime}=\min\{2(k-2),{\rm v}_p(2c+\varepsilon)\}=2$. Moreover, $b^{\prime}=4(k-3)>0$. This proves the claim.

\smallskip
Note that if $g\in\mathbb{N}$ with ${\rm v}_p(g)={\rm v}_p(f)$, then there is a monoid isomorphism $\alpha:\mathcal{I}^*_p(\mathcal{O}_f)\rightarrow\mathcal{I}^*_p(\mathcal{O}_g)$ such that $\alpha(p\mathcal{O}_f)=p\mathcal{O}_g$ by Proposition~\ref{proposition 3.3}.3. By the claim it remains to show that if $(p\not=3$ or $d\not\equiv 2\mod 3$ or ${\rm v}_p(f)>2)$, then there is some odd $g\in\mathbb{N}$ such that ${\rm v}_p(g)={\rm v}_p(f)$ and ${\rm v}_p(p^4+g^2d)=4$.

Let $(p\not=3$ or $d\not\equiv 2\mod 3$ or ${\rm v}_p(f)>2)$. Furthermore, let ${\rm v}_p(p^4+f^2d)>4$. This implies that ${\rm v}_p(f)=2$ and $p\nmid d$. Without restriction we can assume that ${\rm v}_p(p^4+(p^2)^2d)>4$. We have ${\rm v}_p(1+d)>0$, and hence $p\not=3$. Set $g=(p-2)p^2$. Then ${\rm v}_p(g)={\rm v}_p(f)$. Assume that ${\rm v}_p(p^4+g^2d)>4$. Then $p^5\mid p^4+(p-2)^2p^4d-p^4(1+d)$, and thus $p\mid (p-2)^2-1=p^2-4p+3$. It follows that $p=3$, a contradiction.

\smallskip
2. Let $|{\rm Pic}(\mathcal{O}_f)|\leq 2$ and let $p\not=3$ or $d\not\equiv 2\mod 3$ or ${\rm v}_p(f)>2$. By 1. there are some $I,J,L\in\mathcal{A}(\mathcal{I}^*_p(\mathcal{O}_f))$ such that $I^2=p^2J$ and $J^2=p^2L$. We infer that $I^2$ is principal, and hence $J$ and $L$ are principal. Consequently, there are some $u,v\in\mathcal{A}(\mathcal{O}_f)$ such that $J=u\mathcal{O}_f$, $L=v\mathcal{O}_f$ and $u^2=p^2v$. Note that $u^2$ is primary. Since $p\in\mathcal{A}(\mathcal{O}_f)$, we have $2,3\in\mathsf{L}(u^2)$. Therefore, $1\in\Delta(\mathcal{O}_f)$.
\end{proof}

\begin{proposition}\label{proposition 4.4}
Let $p$ be a prime divisor of $f$ such that ${\rm v}_p(f)\geq 2$. Then there are $I,J\in\mathcal{A}(\mathcal{I}^*_p(\mathcal{O}_f))$ such that $\mathsf{L}(IJ)=\{2,4\}$ whence $2\in\Delta(\mathcal{I}^*_p(\mathcal{O}_f))$ and $4\in {\rm Ca}(\mathcal{I}^*_p(\mathcal{O}_f))$.
\end{proposition}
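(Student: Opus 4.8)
The plan is to exhibit, for every prime divisor $p$ of $f$ with ${\rm v}_p(f)\ge 2$, two atoms $I,J\in\mathcal{A}(\mathcal{I}^*_p(\mathcal{O}))$ with $IJ=(p\mathcal{O})^3L$ for some $L\in\mathcal{A}(\mathcal{I}^*_p(\mathcal{O}))$ and with $\mathsf{L}(IJ)=\{2,4\}$; granting the latter, $2\in\Delta(IJ)\subseteq\Delta(\mathcal{I}^*_p(\mathcal{O}))$ is immediate, and $4\in{\rm Ca}(\mathcal{I}^*_p(\mathcal{O}))$ follows because any two factorizations $z,z'$ of $IJ$ satisfy $\mathsf{d}(z,z')\le\max\{|z|,|z'|\}\le 4$, so $\mathsf{c}(IJ)\le 4$, while any factorization chain joining $I\cdot J$ to $(p\mathcal{O})^3\cdot L$ must, since $\mathsf{L}(IJ)=\{2,4\}$, contain a single step between a length-$2$ and a length-$4$ factorization, which by \eqref{equation 1} has distance $\ge 2+|4-2|=4$ because $\mathcal{I}^*_p(\mathcal{O})$ is cancellative. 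So everything reduces to the construction and to computing $\mathsf{L}(IJ)$, and throughout we use that $\mathcal{I}^*_p(\mathcal{O})$ has the shape required in Proposition~\ref{proposition 4.1} with cancellative atom $u=p\mathcal{O}$. By Proposition~\ref{proposition 3.3}.3 we may assume $f=p^{{\rm v}_p(f)}$.

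For the construction, write a nontrivial invertible $\mathfrak p$-primary ideal atom as $p^{v}\mathbb{Z}+(r+\tau)\mathbb{Z}$ with ${\rm v}_p(\mathcal{N}_{K/\mathbb{Q}}(r+\tau))=v$ (Theorem~\ref{theorem 3.6}). For two such atoms $I,J$ of norms $p^{v}$, $p^{y}$ the multiplication rule of Proposition~\ref{proposition 3.2}.1 simplifies (there the quantity ``$e$'' vanishes, as $I$ and $J$ are invertible) to
\[
IJ=(p\mathcal{O})^{g}\bigl(p^{v+y-2g}\mathbb{Z}+(c+\tau)\mathbb{Z}\bigr),\qquad g=\min\{v,\,y,\,{\rm v}_p(r_I+r_J+\varepsilon)\}.
\]
The goal is $g=3$ with $v+y-2g\ge 1$. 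Whenever an atom of norm $p^{4}$ exists — by Table~\ref{table1} this holds in all cases except $p=2$ split with ${\rm v}_2(f)=2$ — one takes $I,J$ of norm $p^{4}$ and, after working out the value of ${\rm v}_p(2r+\varepsilon)$ forced for a norm-$p^{4}$ atom, chooses $r_J$ by an elementary congruence so that ${\rm v}_p(r_I+r_J+\varepsilon)=3$ (for $p$ odd and unramified with ${\rm v}_p(f)=2$ one may take $I=J$). This gives $IJ=(p\mathcal{O})^3L$ with $L$ a nontrivial atom of norm $p^{2}$, so $\mathcal{N}(IJ)=p^{8}$. In the remaining case one takes $I$ of norm $p^{5}$ and $J$ of norm $p^{6}$ with ${\rm v}_p(r_I+r_J)=3$, which yields $IJ=(p\mathcal{O})^3L$ with $\mathcal{N}(L)=p^{5}$ and $\mathcal{N}(IJ)=p^{11}$.

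It remains to show $\mathsf{L}(IJ)=\{2,4\}$. The inclusion $\{2,4\}\subseteq\mathsf{L}(IJ)$ is clear ($I\cdot J$ and $(p\mathcal{O})^3\cdot L$), and $\max\mathsf{L}(IJ)=4$ by Proposition~\ref{proposition 4.1}.2, so one must rule out $3$. Assume $IJ=A_1A_2A_3$ with $A_i\in\mathcal{A}(\mathcal{I}^*_p(\mathcal{O}))$. Since $\mathcal{N}$ is multiplicative on invertible ideals, since every atom has norm $\ge p^{2}$, and since — as ${\rm v}_p(f)\ge 2$ — there is no atom of norm $p^{3}$, the multiset $\{\mathcal{N}(A_1),\mathcal{N}(A_2),\mathcal{N}(A_3)\}$ must equal $\{p^{2},p^{2},p^{4}\}$ (resp.\ $\{p^{2},p^{2},p^{7}\}$), say $\mathcal{N}(A_1)=\mathcal{N}(A_2)=p^{2}$ and $\mathcal{N}(A_3)=p^{4}$ (resp.\ $p^{7}$). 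Write $A_1A_2=(p\mathcal{O})^{g_1}L_1$ and $L_1A_3=(p\mathcal{O})^{g_2}L_2$ in the canonical $u^n\cdot(\text{atom})$ form; counting norms forces $g_1=1$, from $IJ=(p\mathcal{O})^{g_1+g_2}L_2=(p\mathcal{O})^3L$ and Proposition~\ref{proposition 4.1}.1 we get $g_1+g_2=3$, hence $g_2=2$, and also $L_1\ne p\mathcal{O}$ (else $IJ=(p\mathcal{O})^2A_3$, contradicting Proposition~\ref{proposition 4.1}.1). But then $L_1=p^{2}\mathbb{Z}+(r_{L_1}+\tau)\mathbb{Z}$ and $A_3=p^{v_{A_3}}\mathbb{Z}+(r_{A_3}+\tau)\mathbb{Z}$ are nontrivial atoms, so $g_2=\min\{2,\,v_{A_3},\,{\rm v}_p(r_{L_1}+r_{A_3}+\varepsilon)\}$, and comparing the $p$-adic valuations of $2r_{L_1}+\varepsilon$ (forced by $L_1$ being a norm-$p^{2}$ atom) and $2r_{A_3}+\varepsilon$ (forced by $A_3$) shows ${\rm v}_p(r_{L_1}+r_{A_3}+\varepsilon)=1$, whence $g_2=1$ — a contradiction. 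Therefore $3\notin\mathsf{L}(IJ)$ and $\mathsf{L}(IJ)=\{2,4\}$.

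The main obstacle is the construction: one must run the fairly intricate multiplication formula of Proposition~\ref{proposition 3.2} to confirm that the chosen $I,J$ really multiply to $(p\mathcal{O})^3L$ with $L$ of the claimed small norm, and one must carry out the case analysis needed to produce suitable atoms in every residue situation for $p$ and $d_K$ — in particular the separate treatment of $p=2$ split with ${\rm v}_2(f)=2$, where norm-$p^{4}$ atoms are absent. The valuation bookkeeping that excludes length-$3$ factorizations is essentially uniform across the cases, and it is there that the hypothesis ${\rm v}_p(f)\ge 2$ (equivalently: no atom of norm $p^{3}$) is essential.
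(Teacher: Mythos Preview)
Your approach is correct, and in the exceptional case $p=2$, ${\rm v}_2(f)=2$, $d_K\equiv 1\bmod 8$ it essentially coincides with the paper's: both take $I$ of norm $2^5$ and $J$ of norm $2^6$ with ${\rm v}_2(r_I+r_J)=3$, and the paper carries out the explicit split on $d\bmod 32$ that you rightly flag as the main obstacle. In the generic case, however, you miss a substantial shortcut. The paper simply sets $J=\overline{I}$, so that $IJ=I\overline{I}=\mathcal{N}(I)\mathcal{O}_f=(p\mathcal{O}_f)^4$ by Proposition~\ref{proposition 3.2}.5; this dispenses entirely with the congruence bookkeeping you describe for arranging ${\rm v}_p(r_I+r_J+\varepsilon)=3$, and it makes the exclusion of length $3$ a one-liner: if $(p\mathcal{O}_f)^4=ABC$ with norms $p^2,p^2,p^4$, two applications of Lemma~\ref{lemma 4.2}.2 give $ABC=p^2L$ for an atom $L$, forcing $L=p^2\mathcal{O}_f$, which is not an atom. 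Your exclusion via $A_1A_2=(p\mathcal{O})L_1$ followed by $L_1A_3=(p\mathcal{O})^{g_2}L_2$ reaches the same contradiction---your key valuation claim ${\rm v}_p(r_{L_1}+r_{A_3}+\varepsilon)=1$ is precisely the content of Lemma~\ref{lemma 4.2}.2---so the two arguments are equivalent there. What the conjugate trick buys is a uniform, case-free construction in the main branch; what your route buys is nothing extra here, though it would be the natural template if one ever needed $IJ=(p\mathcal{O})^3L$ with a prescribed nontrivial $L$.
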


\begin{proof}
CASE 1: $p\not=2$ or ${\rm v}_p(f)>2$ or $d\not\equiv 1\mod 8$. By Theorem~\ref{theorem 3.6} there is some $I\in\mathcal{A}(\mathcal{I}^*_p(\mathcal{O}_f))$ such that $\mathcal{N}(I)=p^4$. Set $J=\overline{I}$. We infer that $IJ=(p\mathcal{O}_f)^4$, and hence $\{2,4\}\subset\mathsf{L}(IJ)\subset\{2,3,4\}$. Assume that $3\in\mathsf{L}(IJ)$. Then there are some $A,B,C\in\mathcal{A}(\mathcal{I}^*_p(\mathcal{O}_f))$ such that $IJ=ABC$ and $\mathcal{N}(A)\leq\mathcal{N}(B)\leq\mathcal{N}(C)$. Again by Theorem~\ref{theorem 3.6} we have $\mathcal{N}(L)\in\{p^2\}\cup\{p^n\mid n\in\mathbb{N}_{\geq 4}\}$ for all $L\in\mathcal{A}(\mathcal{I}^*_p(\mathcal{O}_f))$. This implies that $\mathcal{N}(A)=\mathcal{N}(B)=p^2$ and $\mathcal{N}(C)=p^4$. It follows by Lemma~\ref{lemma 4.2}.2 that $ABC=p^2L$ for some $L\in\mathcal{A}(\mathcal{I}^*_p(\mathcal{O}_f))$. Consequently, $L=p^2\mathcal{O}_f$, a contradiction. We infer that $\mathsf{L}(IJ)=\{2,4\}$ whence $2\in\Delta(\mathcal{I}^*_2(\mathcal{O}_f))$ and $4\in {\rm Ca}(\mathcal{I}^*_2(\mathcal{O}_f))$.

\smallskip
CASE 2: $p=2$, ${\rm v}_p(f)=2$ and $d\equiv 1\mod 8$. Since $\mathcal{I}^*_2(\mathcal{O}_4)\cong\mathcal{I}^*_2(\mathcal{O}_f)$ by Proposition~\ref{proposition 3.3}.3, we can assume without restriction that $f=4$. We set
\[
w=\begin{cases}6 &\textnormal{if }d\equiv 1\mod\textnormal{ } 16\\2 &\textnormal{if }d\equiv 9\mod\textnormal{ } 16\end{cases}\quad\textnormal{and}\quad
z=\begin{cases}18 &\textnormal{if }d\equiv 1\mod\textnormal{ } 32\\22 &\textnormal{if }d\equiv 9\mod\textnormal{ } 32\\2 &\textnormal{if }d\equiv 17\mod\textnormal{ } 32\\6 &\textnormal{if }d\equiv 25\mod\textnormal{ } 32\end{cases}.
\]
In any case, we have ${\rm v}_2(\mathcal{N}_{K/\mathbb{Q}}(w+\tau))=5$ and ${\rm v}_2(\mathcal{N}_{K/\mathbb{Q}}(z+\tau))=6$. Set $I=32\mathbb{Z}+(w+\tau)\mathbb{Z}$ and $J=64\mathbb{Z}+(z+\tau)\mathbb{Z}$. Then $I,J\in\mathcal{A}(\mathcal{I}^*_2(\mathcal{O}_4))$ and Proposition~\ref{proposition 3.2}.1 implies that $IJ=2^a(2^b\mathbb{Z}+(c+\tau)\mathbb{Z})$ with $a=\min\{5,6,{\rm v}_2(w+z)\}$, $b=5+6-2a$ and $c\in\mathbb{N}_0$ such that $c<2^b$. Observe that ${\rm v}_2(w+z)=3$, and thus $a=3$ and $b=5$. Set $L=32\mathbb{Z}+(c+\tau)\mathbb{Z}$. Then $L\in\mathcal{A}(\mathcal{I}^*_2(\mathcal{O}_4))$ and $IJ=(2\mathcal{O}_4)^3L$. We infer that $\{2,4\}\subset\mathsf{L}(IJ)\subset\{2,3,4\}$, by Proposition~\ref{proposition 4.1}.2.

Assume that $3\in\mathsf{L}(IJ)$. Then there are some $A,B,C\in\mathcal{A}(\mathcal{I}^*_2(\mathcal{O}_4))$ such that $IJ=ABC$ and $\mathcal{N}(A)\leq\mathcal{N}(B)\leq\mathcal{N}(C)$. It follows by Theorem~\ref{theorem 3.6} that $\mathcal{N}(U)\in\{4\}\cup\{2^n\mid n\geq 5\}$ for all $U\in\mathcal{A}(\mathcal{I}^*_2(\mathcal{O}_4))$. Since $\mathcal{N}(A)\mathcal{N}(B)\mathcal{N}(C)=\mathcal{N}(I)\mathcal{N}(J)=2048$, we infer that $\mathcal{N}(A)=\mathcal{N}(B)=4$ and $\mathcal{N}(C)=128$. It follows by Lemma~\ref{lemma 4.2}.2 that $ABC=4D$ for some $D\in\mathcal{A}(\mathcal{I}^*_2(\mathcal{O}_4))$. This implies that $D=2L$, a contradiction. Consequently, $\mathsf{L}(IJ)=\{2,4\}$, and thus $2\in\Delta(\mathcal{I}^*_2(\mathcal{O}_4))$ and $4=\mathsf{c}(IJ)\in {\rm Ca}(\mathcal{I}^*_2(\mathcal{O}_4))$.
\end{proof}

\begin{proposition}\label{proposition 4.5}
Suppose that one of the following conditions hold{\rm \,:}
\begin{enumerate}
\item[\textnormal{(a)}] ${\rm v}_2(f)\geq 5$ or $({\rm v}_2(f)=4$ and $d\not\equiv 1\mod 4)$.
\item[\textnormal{(b)}] ${\rm v}_2(f)=3$ and $d\equiv 2\mod 4$.
\item[\textnormal{(c)}] ${\rm v}_2(f)=2$ and $d\equiv 1\mod 4$.
\end{enumerate}
Then there are $I,J\in\mathcal{A}(\mathcal{I}^*_2(\mathcal{O}_f))$ with $\mathsf{L}(IJ)=\{2,3\}$ whence $1\in\Delta(\mathcal{I}^*_2(\mathcal{O}_f))$ and $3\in {\rm Ca}(\mathcal{I}^*_2(\mathcal{O}_f))$. If $|{\rm Pic}(\mathcal{O}_f)|\leq 2$, then there is a nonzero primary $a\in\mathcal{O}_f$ with $2,3\in\mathsf{L}(a)$ whence $1\in\Delta(\mathcal{O}_f)$.
\end{proposition}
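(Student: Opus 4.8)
Throughout write $u=2\mathcal O_f$, which is a cancellative atom of the reduced atomic monoid $\mathcal I^*_2(\mathcal O_f)$ satisfying the hypothesis of Proposition~\ref{proposition 4.1} (by Theorem~\ref{theorem 3.6}); also recall $\varepsilon=0$, $\mathcal N_{K/\mathbb Q}(r+\tau)=r^2+\eta$ with $\eta=-f^2d_K/4$. The plan is to produce, in each of the three cases, invertible atoms $I,J\in\mathcal A(\mathcal I^*_2(\mathcal O_f))$ with $IJ=u^2L$ for some $L\in\mathcal A(\mathcal I^*_2(\mathcal O_f))$; everything then follows. Indeed, $\max\mathsf L(IJ)=3$ by Proposition~\ref{proposition 4.1}.2, while $I\cdot J$ and $u\cdot u\cdot L$ are factorizations of the non-atom $IJ$, so $\mathsf L(IJ)=\{2,3\}$ and $1=3-2\in\Delta(\mathcal I^*_2(\mathcal O_f))$. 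Moreover, by Equation~\eqref{equation 1} the distance between a length-$2$ and a length-$3$ factorization is at least $3$, so every chain in $\mathsf Z(IJ)$ joining $I\cdot J$ to $u\cdot u\cdot L$ has a step of distance $\ge 3$, while all factorizations of $IJ$ have length $\le 3$; hence $\mathsf c(IJ)=3\in{\rm Ca}(\mathcal I^*_2(\mathcal O_f))$ (compare the proof of Proposition~\ref{proposition 4.3}).

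To build $I,J$ we compute in $(\mathcal M_{f,2},\ast)$ via Proposition~\ref{proposition 3.2}. In cases (a) and (b) one checks ${\rm v}_2(\eta)\ge 7$, and then Theorem~\ref{theorem 3.6} shows that $I=2^4\mathbb Z+(4+\tau)\mathbb Z$ and $J=2^6\mathbb Z+(8+\tau)\mathbb Z$ are invertible atoms of norms $2^4$ and $2^6$. Since ${\rm v}_2(4+8)=2$ and ${\rm v}_2(4-8)=2$, the $\ast$-formula gives $(0,4,4)\ast(0,6,8)=(2,6,c)$ with $g=\min\{4,6,2\}=2$ and $e=0$, i.e. $IJ=\xi_{f,2}(2,6,c)=u^2L$ with $L=2^6\mathbb Z+(c+\tau)\mathbb Z\in\mathcal A(\mathcal I^*_2(\mathcal O_f))$. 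In case (c) one has ${\rm v}_2(\eta)=2$; here I take $I=J=2^k\mathbb Z+(2+\tau)\mathbb Z$ with $k={\rm v}_2(\mathcal N_{K/\mathbb Q}(2+\tau))={\rm v}_2(4+\eta)$, a short computation giving $k\ge 4$, so that $I\in\mathcal A(\mathcal I^*_2(\mathcal O_f))$; then $(0,k,2)\ast(0,k,2)=(2,2k-4,c)$, so $I^2=\xi_{f,2}(2,2k-4,c)=u^2L$ with $L\in\mathcal A(\mathcal I^*_2(\mathcal O_f))$. (If convenient one may first reduce to $f=2^{{\rm v}_2(f)}$ by Proposition~\ref{proposition 3.3}.3, but this is not needed.) This proves the first assertion.

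For the last assertion, assume $|{\rm Pic}(\mathcal O_f)|\le 2$, so that $A^2$ is a principal ideal for every $A\in\mathcal I^*(\mathcal O_f)$. It suffices to produce \emph{principal} invertible atoms $\alpha\mathcal O_f,\beta\mathcal O_f,\gamma\mathcal O_f$ with $(\alpha\mathcal O_f)(\beta\mathcal O_f)=u^2(\gamma\mathcal O_f)$: then $a:=\alpha\beta$, which equals $4\gamma=2\cdot 2\cdot\gamma$ up to a unit, is $P_{f,2}$-primary with $\{2,3\}\subseteq\mathsf L_{\mathcal O_f}(a)\subseteq\mathsf L_{\mathcal I^*_2(\mathcal O_f)}(u^2(\gamma\mathcal O_f))\subseteq[2,3]$, whence $1\in\Delta(\mathcal O_f)$. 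In case (c) this is immediate: iterating the construction of Step~2 gives $I^2=u^2J$ and $J^2=u^2L$ with $J,L$ atoms (one checks the relevant radical element has $2$-adic valuation $1$), and since $[J]=[I^2]-2[u]=0$ and $[L]=[J^2]-2[u]=0$ the atoms $J,L$ are principal and passing to generators yields $\alpha^2=4\gamma$, exactly as in Proposition~\ref{proposition 4.3}.2. In cases (a) and (b), squaring $J$ gives $J^2=u^4P$ with $P$ an invertible atom of norm $2^4$, which is principal because $[P]=2[J]=0$; a parallel computation (squaring a suitable norm-$2^8$ atom when one exists, and combining two suitable norm-$2^7$ atoms in case (b)) produces a principal invertible atom $P'$ of norm $2^6$, and then the relation $PP'=u^2L'$ (with $[L']=[P]+[P']=0$, so $L'$ principal) is of the required form.

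The genuinely delicate part is Step~3 in cases (a) and (b): one has to track ideal classes through the $\ast$-operation in order to decide which atoms become principal once $|{\rm Pic}(\mathcal O_f)|\le 2$, and in case (b)---where norm-$2^8$ atoms may be absent---to realize a principal norm-$2^6$ atom as a cofactor of a product of two norm-$2^7$ atoms. Everything else reduces to routine valuation computations with Proposition~\ref{proposition 3.2} and Theorem~\ref{theorem 3.6}.
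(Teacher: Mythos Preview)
Your treatment of the first assertion is correct and, in cases~(a) and~(b), pleasantly uniform: the single pair $I=2^4\mathbb Z+(4+\tau)\mathbb Z$, $J=2^6\mathbb Z+(8+\tau)\mathbb Z$ works simultaneously because ${\rm v}_2(\eta)\ge 7$, whereas the paper treats the two cases separately and realizes $I,J$ as cofactors of squares precisely in order to prepare the second assertion. Your argument for case~(c), including the iteration $I^2=u^2L$, $L^2=u^2M$, is essentially the paper's.

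The gap is in Step~3 for cases~(a) and~(b). Your plan is to obtain a principal atom $P'$ of norm $2^6$ by squaring a norm-$2^8$ atom, but this does not work in general. For instance, when ${\rm v}_2(f)=5$ and $d\equiv 1\bmod 4$ one has ${\rm v}_2(\eta)=8$; then every invertible atom of norm $2^8$ is $2^8\mathbb Z+(r+\tau)\mathbb Z$ with ${\rm v}_2(r)\ge 5$, and squaring gives $g=\min\{8,1+{\rm v}_2(r)\}\ge 6$, so the cofactor has norm at most $2^4$, never $2^6$. Even when principal atoms $P,P'$ of the intended norms exist, the relation $PP'=u^2L'$ requires ${\rm v}_2(c_P+c_{P'})=2$, a condition on the specific parameters that does not follow from the norms alone. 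The paper avoids both issues: it squares $A=2^k\mathbb Z+(16+\tau)\mathbb Z$ with $k={\rm v}_2(256+\eta)\ge 8$ to get a principal cofactor $I$ with ${\rm v}_2(c_I)\ge 3$ (no constraint that $\mathcal N(I)=2^6$), and then checks directly that $I\cdot\bigl(16\mathbb Z+(4+\tau)\mathbb Z\bigr)=4L$ because ${\rm v}_2(c_I+4)=2$. In case~(b) your phrase ``combining two suitable norm-$2^7$ atoms'' is not enough: the paper uses \emph{three} atoms $A,B,C$ and the pigeonhole observation that one of $AB,AC,BC$ is principal when $|{\rm Pic}|\le 2$, together with three parallel relations $IJ=4L$, $I'J=4L'$, $I''J=4L''$, to cover every distribution of ideal classes.
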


\begin{proof}
CASE 1: ${\rm v}_2(f)\geq 5$ or (${\rm v}_2(f)=4$ and $d\not\equiv 1\mod 4$). We show that there are some $A,B,I,J,L\in\mathcal{A}(\mathcal{I}^*_2(\mathcal{O}_f))$ such that $A^2=32I$, $B^2=16J$ and $IJ=4L$. Set $k={\rm v}_2(\mathcal{N}_{K/\mathbb{Q}}(16+\tau))$ and $A=2^k\mathbb{Z}+(16+\tau)\mathbb{Z}$. Then $k\geq 8$, $A\in\mathcal{A}(\mathcal{I}^*_2(\mathcal{O}_f))$ and $A^2=32(2^{2k-10}\mathbb{Z}+(c+\tau)\mathbb{Z})$ with $(5,2k-10,c)\in\mathcal{M}_{f,2}$ and ${\rm v}_2(c)\geq 3$. Set $I=2^{2k-10}\mathbb{Z}+(c+\tau)\mathbb{Z}$. Then $I\in\mathcal{A}(\mathcal{I}^*_2(\mathcal{O}_f))$. Set $B=64\mathbb{Z}+(8+\tau)\mathbb{Z}$. Then $B\in\mathcal{A}(\mathcal{I}^*_2(\mathcal{O}_f))$ and $B^2=16(16\mathbb{Z}+(4+\tau)\mathbb{Z})$. Set $J=16\mathbb{Z}+(4+\tau)\mathbb{Z}$. Then $B^2=16J$, $J\in\mathcal{A}(\mathcal{I}^*_2(\mathcal{O}_f))$ and $IJ=4L$ with $L\in\mathcal{A}(\mathcal{I}^*_2(\mathcal{O}_f))$.

\smallskip
CASE 2: ${\rm v}_2(f)=3$ and $d\equiv 2\mod 4$. We show that $AB=2I$, $AC=2I^{\prime}$, $BC=8I^{\prime\prime}$, $B^2=16J$, $IJ=4L$, $I^{\prime}J=4L^{\prime}$, $I^{\prime\prime}J=4L^{\prime\prime}$ for some $A,B,C,I,I^{\prime},$ $I^{\prime\prime},J,L,L^{\prime},L^{\prime\prime}\in\mathcal{A}(\mathcal{I}^*_2(\mathcal{O}_f))$. By Proposition~\ref{proposition 3.3}.3, we can assume without restriction that $f=8$. Set $A=4\mathbb{Z}+(2+\tau)\mathbb{Z}$, $B=64\mathbb{Z}+(8+\tau)\mathbb{Z}$ and $C=128\mathbb{Z}+\tau\mathbb{Z}$. Then $A,B,C\in\mathcal{A}(\mathcal{I}^*_2(\mathcal{O}_f))$, $AB=2(64\mathbb{Z}+(40+\tau)\mathbb{Z})$, $AC=2(128\mathbb{Z}+(64+\tau)\mathbb{Z})$, $B^2=16(16\mathbb{Z}+(12+\tau)\mathbb{Z})$ and $BC=8(128\mathbb{Z}+(c+\tau)\mathbb{Z})$ with $(3,7,c)\in\mathcal{M}_{f,2}$ and ${\rm v}_2(c)=4$. Furthermore, $(64\mathbb{Z}+(40+\tau)\mathbb{Z})(16\mathbb{Z}+(12+\tau)\mathbb{Z})=4(64\mathbb{Z}+(56+\tau)\mathbb{Z})$, $(128\mathbb{Z}+(64+\tau)\mathbb{Z})(16\mathbb{Z}+(12+\tau)\mathbb{Z})=4(128\mathbb{Z}+(r+\tau)\mathbb{Z})$ with $(2,7,r)\in\mathcal{M}_{f,2}$ and $(128\mathbb{Z}+(c+\tau)\mathbb{Z})(16\mathbb{Z}+(12+\tau)\mathbb{Z})=4(128\mathbb{Z}+(s+\tau)\mathbb{Z})$ with $(2,7,s)\in\mathcal{M}_{f,2}$. Set $J=16\mathbb{Z}+(12+\tau)\mathbb{Z}$. In particular, if $I\in\{64\mathbb{Z}+(40+\tau)\mathbb{Z},128\mathbb{Z}+(64+\tau)\mathbb{Z},128\mathbb{Z}+(c+\tau)\mathbb{Z}\}$, then $I,J\in\mathcal{A}(\mathcal{I}^*_2(\mathcal{O}_f))$ and $IJ=4L$ for some $L\in\mathcal{A}(\mathcal{I}^*_2(\mathcal{O}_f))$.

\smallskip
CASE 3: ${\rm v}_2(f)=2$ and $d\equiv 1\mod 4$. We show that $A^2=4I$ and $I^2=4L$ for some $A,I,L\in\mathcal{A}(\mathcal{I}^*_2(\mathcal{O}_f))$. By Proposition~\ref{proposition 3.3}.3, we can assume without restriction that $f=4$. First let $d\equiv 1\mod 8$. If $d\equiv 1\mod 16$, then set $A=32\mathbb{Z}+(6+\tau)\mathbb{Z}$ and if $d\equiv 9\mod 16$, then set $A=32\mathbb{Z}+(2+\tau)\mathbb{Z}$. In any case, we have $A\in\mathcal{A}(\mathcal{I}^*_2(\mathcal{O}_f))$ and $A^2=4(64\mathbb{Z}+(c+\tau)\mathbb{Z})$ with $(2,6,c)\in\mathcal{M}_{f,2}$ and ${\rm v}_2(c)=1$. Set $I=64\mathbb{Z}+(c+\tau)\mathbb{Z}$. Then $I\in\mathcal{A}(\mathcal{I}^*_2(\mathcal{O}_f))$, $A^2=4I$ and $I^2=4(256\mathbb{Z}+(r+\tau)\mathbb{Z})$ with $(2,8,r)\in\mathcal{M}_{f,2}$.

Now let $d\equiv 5\mod 8$. Set $A=16\mathbb{Z}+(2+\tau)\mathbb{Z}$. Then $A\in\mathcal{A}(\mathcal{I}^*_2(\mathcal{O}_f))$ and $A^2=4(16\mathbb{Z}+(c+\tau)\mathbb{Z})$ with $(2,4,c)\in\mathcal{M}_{f,2}$ and ${\rm v}_2(c)=1$. Set $I=16\mathbb{Z}+(c+\tau)\mathbb{Z}$. Then $A^2=4I$ and $I^2=4(16\mathbb{Z}+(z+\tau)\mathbb{Z})$ with $(2,4,z)\in\mathcal{M}_{f,2}$.

\smallskip
Using the case analysis above we can find $I,J,L\in\mathcal{A}(\mathcal{I}^*_2(\mathcal{O}_f))$ such that $IJ=4L$. In particular, $\mathsf{L}(IJ)=\{2,3\}$, $1\in\Delta(\mathcal{I}^*_p(\mathcal{O}_f))$ and $3=\mathsf{c}(IJ)\in {\rm Ca}(\mathcal{I}^*_p(\mathcal{O}_f))$. Now let $|{\rm Pic}(\mathcal{O}_f)|\leq 2$. Observe that if $A,B,C\in\mathcal{A}(\mathcal{I}^*_2(\mathcal{O}_f))$, then $A^2$ is principal and $\{AB,AC,BC\}$ contains a principal ideal of $\mathcal{O}_f$. In any case we can choose $I,J,L$ to be principal. There are some $u,v,w\in\mathcal{A}(\mathcal{O}_f)$ such that $I=u\mathcal{O}_f$, $J=v\mathcal{O}_f$, $L=w\mathcal{O}_f$ and $uv=4w$. Note that $uv$ is primary. Since $2\in\mathcal{A}(\mathcal{O}_f)$, we have $2,3\in\mathsf{L}(uv)$, and thus $1\in\Delta(\mathcal{O}_f)$.
\end{proof}

\begin{proposition}\label{proposition 4.6}
Let $p$ be a prime divisor of $f$. Then the following statements are equivalent{\rm \,:}
\begin{enumerate}
\item[\textnormal{(a)}] $\mathcal{I}^*_p(\mathcal{O}_f)$ is half-factorial.
\item[\textnormal{(b)}] $\mathcal{I}_p(\mathcal{O}_f)$ is half-factorial.
\item[\textnormal{(c)}] $\mathsf{c}(\mathcal{I}^*_p(\mathcal{O}_f))=2$.
\item[\textnormal{(d)}] $\mathsf{c}(\mathcal{I}_p(\mathcal{O}_f))=2$.
\item[\textnormal{(e)}] ${\rm v}_p(f)=1$ and $p$ is inert.
\end{enumerate}
\end{proposition}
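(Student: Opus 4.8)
The plan is to prove the five statements equivalent via the cycle $(e)\Rightarrow(b)\Rightarrow(a)\Rightarrow(c)\Rightarrow(e)$ together with $(b)\Rightarrow(d)\Rightarrow(e)$, where the implications $(c)\Rightarrow(e)$ and $(d)\Rightarrow(e)$ are obtained in contrapositive form. Write $u=p\mathcal{O}_f$. By the remark following Proposition~\ref{proposition 4.1}, both $\mathcal{I}^*_p(\mathcal{O}_f)$ and $\mathcal{I}_p(\mathcal{O}_f)$ are reduced atomic monoids to which Proposition~\ref{proposition 4.1} applies with this $u$, and $\mathcal{I}^*_p(\mathcal{O}_f)\subseteq\mathcal{I}_p(\mathcal{O}_f)$ is divisor-closed, so $\Delta(\mathcal{I}^*_p(\mathcal{O}_f))\subseteq\Delta(\mathcal{I}_p(\mathcal{O}_f))$ and ${\rm Ca}(\mathcal{I}^*_p(\mathcal{O}_f))\subseteq{\rm Ca}(\mathcal{I}_p(\mathcal{O}_f))$. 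The main computational device will be the following \emph{length bound}: since $\mathcal{N}(u)=p^2$ and $\mathcal{N}(I)\mathcal{N}(J)\mid\mathcal{N}(IJ)$ for all $I,J\in\mathcal{I}_p(\mathcal{O}_f)$ (Proposition~\ref{proposition 3.2}.3), an element $a$ with $\mathcal{N}(a)=p^k$ which equals $u^nv$ for an atom $v$ must satisfy $p^{2n}\mathcal{N}(v)\mid p^k$, so that together with $\max\mathsf{L}(a)=n+1$ (Proposition~\ref{proposition 4.1}.2) a lower bound on $\mathcal{N}(v)$ forces an upper bound on $\max\mathsf{L}(a)$. I will also use that every atom of $\mathcal{I}^*_p(\mathcal{O}_f)$ has norm at least $p^2$ (inspect the columns of Table~\ref{table1} in Theorem~\ref{theorem 3.6}.4, and note $\mathcal{N}(p\mathcal{O}_f)=p^2$) and that $P_{f,p}$ is the unique ideal of norm $p$.

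The intermediate implications are quick: $(b)\Rightarrow(a)$ is divisor-closedness; for $(a)\Rightarrow(c)$ and $(b)\Rightarrow(d)$ one applies Proposition~\ref{proposition 4.1}.4 to get $\mathsf{c}\le 2$ and Lemma~\ref{lemma 4.2}.3 (together with the inclusion of the sets of catenary degrees) to get $\mathsf{c}\ge 2$. So the substance is the implication $(e)\Rightarrow(b)$ and the assertion that if (e) fails then $\mathsf{c}(\mathcal{I}^*_p(\mathcal{O}_f))\ge 3$, which yields the negations of (c) and (whence $\mathsf{c}(\mathcal{I}_p(\mathcal{O}_f))\ge 3$) of (d).

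For $(e)\Rightarrow(b)$, assume ${\rm v}_p(f)=1$ and $p$ inert. The key observation is that then every atom of $\mathcal{I}_p(\mathcal{O}_f)$ has norm $p$ or $p^2$: the congruence information ${\rm v}_p(\mathcal{N}_{K/\mathbb{Q}}(r+\tau))\in\{0,2\}$ is precisely what is established in Cases~1 and~4 of the proof of Theorem~\ref{theorem 3.6}.4, so by Theorem~\ref{theorem 3.6}.3 every atom of the form $p^m\mathbb{Z}+(r+\tau)\mathbb{Z}$ has $m\in\{1,2\}$, the atoms with $m=1$ all equal $P_{f,p}$, and $p\mathcal{O}_f$ has norm $p^2$. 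Consequently any product $vw$ of two atoms has $\mathcal{N}(vw)\le p^4$: this is clear from Proposition~\ref{proposition 3.2}.3 if at least one of $v,w$ is invertible, and in the remaining case $v=w=P_{f,p}$ one has $P_{f,p}^2=pP_{f,p}$ of norm $p^3$ by Lemma~\ref{lemma 4.2}.1. Applying the length bound with $\mathcal{N}(v)\ge p$ then gives $\max\mathsf{L}(vw)\le 2$, hence $\mathsf{L}(vw)=\{2\}$ for all atoms $v,w$; by Proposition~\ref{proposition 4.1}.5 this gives $\sup\Delta(\mathcal{I}_p(\mathcal{O}_f))=0$, i.e.\ $\mathcal{I}_p(\mathcal{O}_f)$ is half-factorial.

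Finally suppose (e) fails. If ${\rm v}_p(f)\ge 2$, then Proposition~\ref{proposition 4.4} already supplies atoms $I,J$ of $\mathcal{I}^*_p(\mathcal{O}_f)$ with $\mathsf{L}(IJ)=\{2,4\}$, so $\mathsf{c}(\mathcal{I}^*_p(\mathcal{O}_f))\ge 4$. If ${\rm v}_p(f)=1$ and $p$ is ramified or splits, then Table~\ref{table1} provides an invertible ideal atom $I$ of norm $p^{2{\rm v}_p(f)+1}=p^3$; by Proposition~\ref{proposition 3.2}.5, $\overline{I}$ is again such an atom and $I\overline{I}=p^3\mathcal{O}_f=(p\mathcal{O}_f)^3$, whence $\{2,3\}\subseteq\mathsf{L}(I\overline{I})$, while the length bound (using $\mathcal{N}(v)\ge p^2$ for atoms $v$ of $\mathcal{I}^*_p(\mathcal{O}_f)$ and $\mathcal{N}(I\overline{I})=p^6$) gives $\max\mathsf{L}(I\overline{I})\le 3$; thus $\mathsf{L}(I\overline{I})=\{2,3\}$, so $1\in\Delta(\mathcal{I}^*_p(\mathcal{O}_f))$ and, since $\mathcal{I}^*_p(\mathcal{O}_f)$ is cancellative and not factorial, $\mathsf{c}(\mathcal{I}^*_p(\mathcal{O}_f))\ge 2+\sup\Delta(\mathcal{I}^*_p(\mathcal{O}_f))\ge 3$. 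Either way $\mathsf{c}(\mathcal{I}^*_p(\mathcal{O}_f))\ge 3$, and then $\mathsf{c}(\mathcal{I}_p(\mathcal{O}_f))\ge 3$ as well. I expect the main obstacle to be the bookkeeping in $(e)\Rightarrow(b)$: one must pin down exactly which $\mathfrak{p}$-primary ideal atoms occur when $p$ is inert with ${\rm v}_p(f)=1$ and exclude products of two atoms having a factorization of length $\ge 3$; everything else reduces to routine norm computations and citing the preparatory results.
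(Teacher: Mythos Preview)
Your proof is correct. The logical cycle and individual steps all work, but the organization differs from the paper's in two places worth noting.

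For $(e)\Rightarrow(b)$, you reduce via Proposition~\ref{proposition 4.1}.5 to checking that products of two atoms have length set $\{2\}$, using the norm bound $\mathcal{N}(vw)\le p^4$. The paper instead takes an arbitrary element $I=p^kJ$ with $J$ an atom, and shows by a direct norm count (splitting on whether $I$ is invertible) that every factorization has length exactly $k+1$. Both work; your reduction is arguably a bit slicker, though it relies on first pinning down that all atoms have norm $\le p^2$, which you extract from the proof of Theorem~\ref{theorem 3.6}.4 rather than from its statement.

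For the failure of $(e)$, the paper gives a single uniform argument in the direction $(a)\Rightarrow(e)$: if $(e)$ fails, Table~\ref{table1} supplies an invertible atom $I$ with $\mathcal{N}(I)=p^k$ for some $k\ge 3$, and then $I\overline{I}=(p\mathcal{O}_f)^k$ shows $\{2,k\}\subset\mathsf{L}(I\overline{I})$, so $\mathcal{I}^*_p(\mathcal{O}_f)$ is not half-factorial. You instead aim at $\mathsf{c}\ge 3$ and split cases, invoking Proposition~\ref{proposition 4.4} when ${\rm v}_p(f)\ge 2$. This is correct but heavier than necessary: the same $I\overline{I}$ trick (with $k\ge 3$) already yields $\mathsf{c}(\mathcal{I}^*_p(\mathcal{O}_f))\ge 2+(k-2)\ge 3$ in every case by the cancellative bound, so the case split and the appeal to Proposition~\ref{proposition 4.4} can be dropped. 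The paper then closes the cycle via $(c)\Rightarrow(a)$ using $2+\sup\Delta\le\mathsf{c}$, whereas you run $(a)\Rightarrow(c)$ and $(c)\Rightarrow(e)$ separately; both orderings are fine.
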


\begin{proof}
(a) $\Rightarrow$ (e) If ${\rm v}_p(f)>1$ or $p$ is not inert, then there is some $I\in\mathcal{A}(\mathcal{I}^*_p(\mathcal{O}_f))$ such that $\mathcal{N}(I)>p^2$ by Theorem~\ref{theorem 3.6}.4. Set $k={\rm v}_p(\mathcal{N}(I))$. Then $k\geq 3$ and $I\overline{I}=(p\mathcal{O}_f)^k$ by Proposition~\ref{proposition 3.2}.5. Since $\overline{I}\in\mathcal{A}(\mathcal{I}^*_p(\mathcal{O}_f))$, we have $2,k\in\mathsf{L}(I\overline{I})$.

\smallskip
(e) $\Rightarrow$ (b) Observe that $\mathcal{N}(A)\in\{p,p^2\}$ for each $A\in\mathcal{A}(\mathcal{I}_p(\mathcal{O}_f))$, and thus $\mathcal{A}(\mathcal{I}_p(\mathcal{O}_f))=\{P_{f,p}\}\cup\{A\in\mathcal{A}(\mathcal{I}^*_p(\mathcal{O}_f))\mid\mathcal{N}(A)=p^2\}$. Let $I\in\mathcal{I}_p(\mathcal{O}_f)\setminus\{\mathcal{O}_f\}$. There are some $k\in\mathbb{N}_0$ and $J\in\mathcal{A}(\mathcal{I}_p(\mathcal{O}_f))$ such that $I=p^kJ$. Let $z\in\mathsf{Z}(I)$. Then $z=(\prod_{i=1}^n I_i)\cdot P_{f,p}^{\ell}$ with $\ell,n\in\mathbb{N}_0$ and $I_i\in\mathcal{A}(\mathcal{I}^*_p(\mathcal{O}_f))$ for each $i\in [1,n]$. Note that $|z|=n+\ell$. It is sufficient to show that $n+\ell=k+1$.

\smallskip
CASE 1: $I$ is invertible. Then $J$ is invertible and $\ell=0$. It follows that $p^{2n}=\mathcal{N}(\prod_{i=1}^n I_i)=\mathcal{N}(I)=\mathcal{N}(p^kJ)=p^{2k+2}$ by Proposition~\ref{proposition 3.2}.3, and thus $n+\ell=n=k+1$.

\smallskip
CASE 2: $I$ is not invertible. Then $J=P_{f,p}$ and $\ell>0$. It follows from Lemma~\ref{lemma 4.2} that $P_{f,p}^{\ell}=p^{\ell-1}P_{f,p}$. Consequently,
\[
p^{2(n+\ell)-1}=\mathcal{N}(\prod_{i=1}^n I_i)\mathcal{N}(p^{\ell-1}P_{f,p})=\mathcal{N}(I)=\mathcal{N}(p^kP_{f,p})=p^{2k+1}
\]
by Proposition~\ref{proposition 3.2}.3, and hence $n+\ell=k+1$.

\smallskip
(b) $\Rightarrow$ (d) Since $\mathcal{I}^*_p(\mathcal{O}_f)$ is a cancellative divisor-closed submonoid of $\mathcal{I}_p(\mathcal{O}_f)$ and not factorial, we infer by Proposition~\ref{proposition 4.1}.4 that
\[
2\leq\mathsf{c}(\mathcal{I}^*_p(\mathcal{O}_f))\leq\mathsf{c}(\mathcal{I}_p(\mathcal{O}_f))\leq 2.
\]

\smallskip
(d) $\Rightarrow$ (c) Note that $\mathcal{I}^*_p(\mathcal{O}_f)$ is a divisor-closed submonoid of $\mathcal{I}_p(\mathcal{O}_f)$, and thus $\mathsf{c}(\mathcal{I}^*_p(\mathcal{O}_f))\leq\mathsf{c}(\mathcal{I}_p(\mathcal{O}_f))=2$. Since $\mathcal{I}^*_p(\mathcal{O}_f)$ is not factorial, we infer that $\mathsf{c}(\mathcal{I}^*_p(\mathcal{O}_f))=2$.

\smallskip
(c) $\Rightarrow$ (a) Since $\mathcal{I}^*_p(\mathcal{O}_f)$ is cancellative and not factorial, it follows that $2+\sup\Delta(\mathcal{I}^*_p(\mathcal{O}_f))\leq\mathsf{c}(\mathcal{I}^*_p(\mathcal{O}_f))=2$, and thus $\sup\Delta(\mathcal{I}^*_p(\mathcal{O}_f))=0$. Consequently, $\Delta(\mathcal{I}^*_p(\mathcal{O}_f))=\emptyset$, and hence $\mathcal{I}^*_p(\mathcal{O}_f)$ is half-factorial.
\end{proof}

\begin{lemma}\label{lemma 4.7}
Let $p$ be a prime divisor of $f$, $|{\rm Pic}(\mathcal{O}_f)|\leq 2$, $I,J,L\in\mathcal{A}(\mathcal{I}^*_p(\mathcal{O}_f))$.
\begin{enumerate}
\item[\textnormal{1.}] If $J$ is principal and $IJ=p^2L$, then $1\in\Delta(\mathcal{O}_f)$.
\item[\textnormal{2.}] If $I$ and $J$ are not principal and $IJ=pL$, then $1\in\Delta(\mathcal{O}_f)$.
\end{enumerate}
\end{lemma}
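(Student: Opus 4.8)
The plan is as follows. Since $1\in\Delta(\mathcal O_f)$ as soon as some $a\in\mathcal O_f\setminus\{0\}$ satisfies $\{2,3\}\subseteq\mathsf L(a)$, it suffices in each case to produce a nonzero primary $a$ with this property, and there are two convenient sources for such an $a$. \emph{Construction $(\alpha)$}: if $u,v,w\in\mathcal A(\mathcal O_f)$ and $uv=p^2w$, then $uv$ is primary (fact (iii) below), while $u\cdot v$ and $p\cdot p\cdot w$ are factorizations of lengths $2$ and $3$. \emph{Construction $(\beta)$}: if $A\in\mathcal A(\mathcal I^*_p(\mathcal O_f))$ is a principal ideal with $\mathsf L_{\mathcal I^*_p(\mathcal O_f)}(A^2)=\{2,3\}$, then $A^2=(p\mathcal O_f)^2B$ for some $B\in\mathcal A(\mathcal I^*_p(\mathcal O_f))$ (by Proposition~\ref{proposition 4.1}), the class $[B]=[A]^2$ is trivial ($|{\rm Pic}(\mathcal O_f)|\le 2$), and hence $A=u\mathcal O_f$, $B=w\mathcal O_f$ with $u,w\in\mathcal A(\mathcal O_f)$ (fact (iv) below) and $u^2=p^2w$ up to a unit, a special case of $(\alpha)$. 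I will use four standing facts: (i) $p\in\mathcal A(\mathcal O_f)$, since $p\mathcal O_f$ is an ideal atom (Theorem~\ref{theorem 3.6}.3) that is principal; (ii) $A^2$ is principal for every invertible ideal $A$ of $\mathcal O_f$, because $|{\rm Pic}(\mathcal O_f)|\le 2$; (iii) the $\mathfrak p$-primary ideals of $\mathcal O_f$ form a monoid, so any principal ideal that is a product of elements of $\mathcal A(\mathcal I^*_p(\mathcal O_f))$ is generated by a primary element; (iv) an atom of $\mathcal I^*_p(\mathcal O_f)$ that is a principal ideal is generated by an atom of $\mathcal O_f$, for otherwise its generator would split as a product of two nonunits and the atom as a product of two proper invertible ideals. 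Finally, $\overline{\mathfrak p}=\mathfrak p$, so conjugation preserves $\mathfrak p$-primarity, and $A\overline A=\mathcal N(A)\mathcal O_f=(p\mathcal O_f)^{{\rm v}_p(\mathcal N(A))}$ for every $A\in\mathcal A(\mathcal I^*_p(\mathcal O_f))$ by Proposition~\ref{proposition 3.2}.5.

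For Part 1, the hypothesis that $J$ is principal forces $[I]=[L]$ in ${\rm Pic}(\mathcal O_f)$. If this class is trivial -- automatic when $|{\rm Pic}(\mathcal O_f)|=1$ -- then $I,J,L$ are principal, say $I=u\mathcal O_f$, $J=v\mathcal O_f$, $L=w\mathcal O_f$ with $u,v,w\in\mathcal A(\mathcal O_f)$, and $IJ=p^2L$ reads $uv=p^2w$ after absorbing a unit into $w$: this is construction $(\alpha)$ and we are done. If the common class is nontrivial, then $|{\rm Pic}(\mathcal O_f)|=2$ and $I,L$ are non-principal; multiplying $IJ=p^2L$ by $I$ gives $I^2J=p^2(IL)$, where $I^2$ and $IL$ are principal since $[I]^2=[I][L]=1$, and one then has to locate a principal atom $A$ of $\mathcal I^*_p(\mathcal O_f)$ with $\mathsf L(A^2)=\{2,3\}$, i.e.\ to realize construction $(\beta)$. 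The available principal atoms, their norms, and hence the possible shapes of $A^2$, are pinned down by Theorem~\ref{theorem 3.6}.4 together with the identities $\mathcal N(A)\mathcal N(A')=\mathcal N(AA')$ and $A\overline A=(p\mathcal O_f)^{{\rm v}_p(\mathcal N(A))}$ of Proposition~\ref{proposition 3.2}.

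For Part 2, since $I$ and $J$ are non-principal we get $[L]=[I][J]=[I]^2=1$ (as $|{\rm Pic}(\mathcal O_f)|\le 2$), so $L=w\mathcal O_f$ is principal with $w\in\mathcal A(\mathcal O_f)$; hence $IJ=pL=(pw)\mathcal O_f$ is principal and, squaring, $I^2J^2=p^2L^2$ with $I^2,J^2,L^2$ all principal. From this one again extracts construction $(\alpha)$ or $(\beta)$ after resolving the generators of $I^2$ and $J^2$ through their factorizations of the form $(p\mathcal O_f)^mB$ afforded by Proposition~\ref{proposition 4.1}, in which $B$ is necessarily principal.

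The main obstacle is gap control. Squaring the defining relation alone only yields two lengths differing by $2$ -- from $uv=p^2w^2$ one reads off merely $\{2,4\}\subseteq\mathsf L(uv)$ -- so one cannot stop there; one must genuinely produce a \emph{principal} atom $A$ with $A^2=(p\mathcal O_f)^2B$ for an atom $B$ (construction $(\beta)$). Carrying this out from the hypotheses of Lemma~\ref{lemma 4.7} requires the explicit classification of invertible $\mathfrak p$-primary ideal atoms and their norms (Theorem~\ref{theorem 3.6}.4), the multiplicativity and conjugation identities of Proposition~\ref{proposition 3.2}, and a short case distinction according to $p$, ${\rm v}_p(f)$ and the splitting type of $p$ -- the same bookkeeping already performed in Propositions~\ref{proposition 4.3}--\ref{proposition 4.5}, of which Lemma~\ref{lemma 4.7} is the abstract distillation.
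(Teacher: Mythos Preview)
Your Construction $(\alpha)$ is exactly right and handles Part~1 in the case where $I$ is principal, just as the paper does. The gap is in the non-principal case and in Part~2: your move of multiplying by $I$ (or squaring) destroys atomicity. The ideals $I^2$, $IL$, $J^2$ are principal but in general are \emph{not} atoms of $\mathcal I^*_p(\mathcal O_f)$, so the relation $I^2J=p^2(IL)$ does not directly yield three atoms $u,v,w$ of $\mathcal O_f$ with $uv=p^2w$. You acknowledge this, and your proposed remedy---extract Construction $(\beta)$ via the case analysis of Propositions~\ref{proposition 4.3}--\ref{proposition 4.5}---is not a proof of the lemma from its stated hypotheses: those propositions have their own, more restrictive, assumptions on $p$, ${\rm v}_p(f)$, and the splitting type, and nothing in the bare hypotheses $IJ=p^2L$ or $IJ=pL$ tells you which case you are in or guarantees the existence of a principal atom $A$ with $\mathsf L(A^2)=\{2,3\}$. (Also, Lemma~\ref{lemma 4.7} is a tool \emph{for} Proposition~\ref{proposition 4.8}, not a distillation of Propositions~\ref{proposition 4.3}--\ref{proposition 4.5}.)

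The paper's trick, which you are missing, is to multiply not by $I$ but by a non-principal invertible \emph{prime} ideal $P$ of $\mathcal O_f$ (one exists whenever $|{\rm Pic}(\mathcal O_f)|>1$, by \cite[Corollary 2.11.16]{Ge-HK06a}). Because $P$ is prime, the product $PI$ has no nontrivial factorization in $\mathcal I^*(\mathcal O_f)$ and is therefore generated by an atom of $\mathcal O_f$; and since $[P]=[I]$ is the unique nontrivial class, $PI$ is principal. For Part~1 with $I$ non-principal one multiplies $IJ=p^2L$ by $P$ to get $(PI)J=p^2(PL)$, with $PI$, $J$, $PL$ all principal and generated by atoms---Construction $(\alpha)$ applies immediately. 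For Part~2 one uses $(PI)(PJ)=p\,(P^2)L$, where $PI$, $PJ$, $P^2$, $L$ are all principal and generated by atoms, giving $uv=pwy$ and hence $\{2,3\}\subseteq\mathsf L(uv)$. No case distinction on $p$ or ${\rm v}_p(f)$ is needed.
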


\begin{proof}
Note that if $|{\rm Pic}(\mathcal{O}_f)|>1$, then it follows from \cite[Corollary 2.11.16]{Ge-HK06a} that there is some invertible prime ideal $P$ of $\mathcal{O}_f$ that is not principal. Observe that $p\in\mathcal{A}(\mathcal{O}_f)$. Also note that if $I$ is not principal, then $PI$ is principal, and hence $PI$ is generated by an atom of $\mathcal{O}_f$, since $PI$ has no nontrivial factorizations in $\mathcal{I}^*(\mathcal{O}_f)$.

\smallskip
1. Let $J$ be principal and $IJ=p^2L$. There is some $v\in\mathcal{A}(\mathcal{O}_f)$ such that $J=v\mathcal{O}_f$.

\smallskip
CASE 1: $I$ is principal. Then $L$ is principal, and hence there are some $u,w\in\mathcal{A}(\mathcal{O}_f)$ such that $I=u\mathcal{O}_f$, $L=w\mathcal{O}_f$ and $uv=p^2w$. We infer that $2,3\in\mathsf{L}(uv)$, and thus $1\in\Delta(\mathcal{O}_f)$.

\smallskip
CASE 2: $I$ is not principal. Then $L$ is not principal and $|{\rm Pic}(\mathcal{O}_f)|>1$, and thus there are some $u,w\in\mathcal{A}(\mathcal{O}_f)$ such that $PI=u\mathcal{O}_f$, $PL=w\mathcal{O}_f$ and $uv=p^2w$. It follows that $2,3\in\mathsf{L}(uv)$, and thus $1\in\Delta(\mathcal{O}_f)$.

\smallskip
2. Let $I$ and $J$ not be principal and $IJ=pL$. Then $L$ is principal and $|{\rm Pic}(\mathcal{O}_f)|>1$, and hence there are some $u,v,w,y\in\mathcal{A}(\mathcal{O}_f)$ such that $PI=u\mathcal{O}_f$, $PJ=v\mathcal{O}_f$, $P^2=w\mathcal{O}_f$, $L=y\mathcal{O}_f$ and $uv=pwy$. Therefore, $2,3\in\mathsf{L}(uv)$, and hence $1\in\Delta(\mathcal{O}_f)$.
\end{proof}

\begin{proposition}\label{proposition 4.8}
Let $p$ be a prime divisor of $f$.
\begin{enumerate}
\item[\textnormal{1.}] If ${\rm v}_p(f)\geq 2$ or $p$ is not inert, then there are $I,J\in\mathcal{A}(\mathcal{I}^*_p(\mathcal{O}_f))$ such that $\mathsf{L}(IJ)=\{2,3\}$ whence $1\in\Delta(\mathcal{I}^*_p(\mathcal{O}_f))$ and $3\in {\rm Ca}(\mathcal{I}^*_p(\mathcal{O}_f))$.

\item[\textnormal{2.}] Suppose that $\mathcal{O}_f$ is not half-factorial and that one of the following conditions holds{\rm\,:}
\begin{enumerate}
\item[\textnormal{(i)}] $|{\rm Pic}(\mathcal{O}_f)|\geq 3$ or ${\rm v}_p(f)\geq 2$ or $p$ does split.
\item[\textnormal{(ii)}] $p$ is inert and there is some $C\in\mathcal{A}(\mathcal{I}^*_p(\mathcal{O}_f))$ that is not principal.
\item[\textnormal{(iii)}] $p$ is ramified and there is some principal $C\in\mathcal{A}(\mathcal{I}^*_p(\mathcal{O}_f))$ such that $\mathcal{N}(C)=p^3$.
\item[\textnormal{(iv)}] $f$ is a squarefree product of inert primes.
\end{enumerate}
Then $1\in\Delta(\mathcal{O}_f)$.
\end{enumerate}
\end{proposition}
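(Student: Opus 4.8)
Throughout I reduce everything to the explicit list of invertible ideal atoms in Theorem~\ref{theorem 3.6}, to Proposition~\ref{proposition 4.1}, and to Propositions~\ref{proposition 4.3}--\ref{proposition 4.5} and Lemma~\ref{lemma 4.7}.

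\textit{Part 1.} The plan is to produce atoms $I,J,L\in\mathcal{A}(\mathcal{I}^*_p(\mathcal{O}_f))$ with $IJ=(p\mathcal{O}_f)^2L$. Once this is done, Proposition~\ref{proposition 4.1}.2 applied with the cancellative atom $u=p\mathcal{O}_f$ gives $\max\mathsf{L}(IJ)=3$, hence $\mathsf{L}(IJ)=\{2,3\}$, since $I\cdot J$ and $(p\mathcal{O}_f)\cdot(p\mathcal{O}_f)\cdot L$ are factorizations of lengths $2$ and $3$; thus $1\in\Delta(IJ)$, and since two factorizations of $IJ$ always lie at distance at most $\max\mathsf{L}(IJ)=3$ whereas two of different length lie, by~\eqref{equation 1}, at distance at least $3$, also $\mathsf{c}(IJ)=3$. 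I would then split into three cases exhausting the hypothesis. If $p$ is odd and ${\rm v}_p(f)\geq 2$, then Proposition~\ref{proposition 4.3}.1 already supplies $I=J=C$ together with $L$. If ${\rm v}_p(f)=1$ and $p$ is not inert, then by Theorem~\ref{theorem 3.6}.4 there is an invertible atom $I\neq p\mathcal{O}_f$ of norm $p^3$, and by Proposition~\ref{proposition 3.2}.5, $I\overline{I}=\mathcal{N}(I)\mathcal{O}_f=(p\mathcal{O}_f)^3=(p\mathcal{O}_f)^2(p\mathcal{O}_f)$, so $J=\overline{I}$ and $L=p\mathcal{O}_f$ work. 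If $p=2$ and ${\rm v}_2(f)\geq 2$, then either one of conditions (a)--(c) of Proposition~\ref{proposition 4.5} holds (and we are done), or ${\rm v}_2(f)\in\{2,3\}$, resp.\ ${\rm v}_2(f)=4$, with $d$ in one of the residue classes mod $8$, resp.\ mod $4$, not covered there; for each of these finitely many sub-cases I would, after reducing to $f=2^{{\rm v}_2(f)}$ via Proposition~\ref{proposition 3.3}.3, exhibit concrete atoms $I,J$ of the form $2^m\mathbb{Z}+(r+\tau)\mathbb{Z}$ and check $IJ=(2\mathcal{O}_f)^2L$ directly from the multiplication formula of Proposition~\ref{proposition 3.2}.1, exactly in the style of the proof of Proposition~\ref{proposition 4.5}.

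\textit{Part 2.} If $|{\rm Pic}(\mathcal{O}_f)|\geq 3$, I would pass to the block monoid $\mathcal{B}({\rm Pic}(\mathcal{O}_f))$, which has minimal distance $1$ for every abelian group of order $\geq 3$, and, using that every class of ${\rm Pic}(\mathcal{O}_f)$ contains invertible prime ideals, lift a zero-sum sequence with factorizations of lengths $2$ and $3$ to a principal element $a\in\mathcal{O}_f$ with $2,3\in\mathsf{L}(a)$, whence $1\in\Delta(\mathcal{O}_f)$. Now assume $|{\rm Pic}(\mathcal{O}_f)|\leq 2$. If ${\rm v}_p(f)\geq 2$, the conclusion is given by Proposition~\ref{proposition 4.3}.2 (for $p$ odd outside the excluded configuration $p=3$, $d\equiv 2\bmod 3$, ${\rm v}_3(f)=2$) or by Proposition~\ref{proposition 4.5} (for $p=2$ in cases (a)--(c)), and the residual sub-cases follow from the atoms produced in Part~1 via Lemma~\ref{lemma 4.7}. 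If $p$ splits, then $\mathcal{I}^*_p(\mathcal{O}_f)$ has invertible atoms of every norm $p^m$, $m\geq 2$ (Theorem~\ref{theorem 3.6}.4), leaving enough room to invoke either Lemma~\ref{lemma 4.7}.1 with a principal atom in the role of ``$J$'' or Lemma~\ref{lemma 4.7}.2 with a pair of non-principal atoms $I,J$ such that $IJ=pL$. Under (ii) --- of interest only when ${\rm v}_p(f)=1$, since otherwise (i) applies, so that $\mathcal{N}(C)=p^2$ --- one takes the non-principal atom $C$ and a non-principal regular invertible prime ideal $P$; then $PC$, $P\overline{C}$ and $P^2$ are principal, generated by atoms $u,\overline{u},w$ of $\mathcal{O}_f$, and $C\overline{C}=\mathcal{N}(C)\mathcal{O}_f=p^2\mathcal{O}_f$ forces $u\overline{u}\mathcal{O}_f=wp^2\mathcal{O}_f$, so the factorizations $u\cdot\overline{u}$ and $w\cdot p\cdot p$ show $\{2,3\}\subseteq\mathsf{L}(u\overline{u})$. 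Under (iii), a principal atom $C=c\mathcal{O}_f$ of norm $p^3$ gives $c\overline{c}\mathcal{O}_f=C\overline{C}=(p\mathcal{O}_f)^3$, so $c\cdot\overline{c}$ and $p\cdot p\cdot p$ show $\{2,3\}\subseteq\mathsf{L}(c\overline{c})$. Finally, under (iv) every $\mathcal{I}^*_p(\mathcal{O}_f)$ with $p\mid f$ is half-factorial by Proposition~\ref{proposition 4.6}, hence so is $\mathcal{I}^*(\mathcal{O}_f)$, and the failure of half-factoriality of $\mathcal{O}_f$ then forces $|{\rm Pic}(\mathcal{O}_f)|\geq 3$, or else a non-principal atom in some $\mathcal{I}^*_p(\mathcal{O}_f)$ with $p$ inert, i.e.\ condition (ii); either way we are back to a case already handled.

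The main obstacles are, in Part~1, the case-by-case $2$-adic constructions for the small values of ${\rm v}_2(f)$, and in Part~2 the bookkeeping when $|{\rm Pic}(\mathcal{O}_f)|=2$ --- in particular the systematic use of a non-principal regular prime to turn non-principal atoms into principal ones, the treatment of the split case, and the exceptional configuration $p=3$, $d\equiv 2\bmod 3$, ${\rm v}_3(f)=2$ excluded from Proposition~\ref{proposition 4.3}.
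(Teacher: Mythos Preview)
Your outline follows essentially the same architecture as the paper's proof: the same reduction to $|{\rm Pic}(\mathcal{O}_f)|\leq 2$ via the block monoid, the same fallback to Propositions~\ref{proposition 4.3} and~\ref{proposition 4.5} for ${\rm v}_p(f)\geq 2$, and the same explicit case-by-case constructions (including the exceptional $p=3$, $d\equiv 2\bmod 3$, ${\rm v}_3(f)=2$ case and the small $2$-adic cases) feeding into Lemma~\ref{lemma 4.7}. Your treatment of (ii) via $C\overline{C}=p^2\mathcal{O}_f$ is a harmless variant of the paper's $C^2=pL$ (though note that $P\overline{C}$ is generated by some atom $v$, not necessarily by $\overline{u}$, since in general $\overline{P}\neq P$; the argument still goes through with $uv$ in place of $u\overline{u}$).

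The one genuine divergence is in (iv). The paper observes that a squarefree conductor makes $\mathcal{O}_f$ seminormal (Dobbs--Fontana) and then simply invokes \cite[Theorem~6.2.2(a)]{Ge-Ka-Re15a}, which gives $1\in\Delta(\mathcal{O}_f)$ directly. Your proposed reduction to (ii) instead rests on the claim that, when $|{\rm Pic}(\mathcal{O}_f)|=2$, $\mathcal{I}^*(\mathcal{O}_f)$ is half-factorial, and every atom of every $\mathcal{I}^*_p(\mathcal{O}_f)$ is principal, then $\mathcal{O}_f$ itself is half-factorial. This implication is true, but it is not available from anything you cite: it requires classifying the atoms of the principal-ideal monoid as $\mathcal{Q}_1\cup\mathcal{C}\cup\bigcup_{p\mid f}\mathcal{A}_p$ and then counting lengths via ${\rm v}_P$ for $P\in\mathcal{Q}_1$, $\tfrac{1}{2}\sum_{P\in\mathcal{Q}_2}{\rm v}_P$, and half-factoriality of each $\mathcal{I}^*_p$ --- exactly the type of bookkeeping the paper carries out later in Proposition~\ref{proposition 4.15}. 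So your route works, but you would need to insert that argument; the paper's citation of the seminormal theory is shorter precisely because it avoids this.
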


\begin{proof}
We prove 1. and 2. simultaneously. Set $G={\rm Pic}(\mathcal{O}_f)$. Let $\mathcal{B}(G)$ be the monoid of zero-sum sequences of $G$. It follows by \cite[Theorem 6.7.1.2]{Ge-HK06a} that if $|G|\geq 3$, then $1\in\Delta(\mathcal{B}(G))$. We infer by \cite[Proposition 3.4.7 and Theorems 3.4.10.3 and 3.7.1.1]{Ge-HK06a} that there exists an atomic monoid $\mathcal{B}(\mathcal{O}_f)$ such that $\Delta(\mathcal{B}(\mathcal{O}_f))=\Delta(\mathcal{O}_f)$ and $\mathcal{B}(G)$ is a divisor-closed submonoid of $\mathcal{B}(\mathcal{O}_f)$. In particular, if $|G|\geq 3$, then $1\in\Delta(\mathcal{O}_f)$. Thus, for the second assertion we only need to consider the case $|G|\leq 2$. By Propositions~\ref{proposition 4.3} and~\ref{proposition 4.5} we can restrict to the following cases.

\smallskip
CASE 1: $p=2$ and $(({\rm v}_2(f)\in\{3,4\}$ and $d\equiv 1\mod 4)$ or $({\rm v}_2(f)\in\{2,3\}$ and $d\equiv 3\mod 4))$. If $({\rm v}_2(f)=4$ and $d\equiv 1\mod 4)$ or $({\rm v}_2(f)=3$ and $d\equiv 3\mod 4)$, then set $I=16\mathbb{Z}+(4+\tau)\mathbb{Z}$. If ${\rm v}_2(f)=3$ and $d\equiv 1\mod 4$, then set $I=16\mathbb{Z}+\tau\mathbb{Z}$. Finally, if ${\rm v}_2(f)=2$ and $d\equiv 3\mod 4$, then there is some $I\in\mathcal{A}(\mathcal{I}^*_2(\mathcal{O}_f))$ such that $\mathcal{N}(I)=32$ by Theorem~\ref{theorem 3.6}. In any case, it follows that $I\in\mathcal{A}(\mathcal{I}^*_2(\mathcal{O}_f))$.

It is a consequence of Proposition~\ref{proposition 3.2}.1 and Theorem~\ref{theorem 3.6} that there are some $A,J\in\mathcal{A}(\mathcal{I}^*_2(\mathcal{O}_f))$ and $\ell\in\mathbb{N}$ such that $A^2=\ell J$ with values according to the following table. Let $k\in\{1,3,5,7\}$ be such that $d\equiv k\mod 8$. Note that $I=2^a\mathbb{Z}+(r+\tau)\mathbb{Z}$ and $J=2^b\mathbb{Z}+(s+\tau)\mathbb{Z}$ with $(0,a,r),(0,b,s)\in\mathcal{M}_{f,2}$.

\begin{table}[htbp]
\centering
\begin{tabular}{|c|c|c|c|c|c|c|}
\hline
${\rm v}_2(f)$ & $k$ & $\mathcal{N}(A)$ & $\ell$ & $\mathcal{N}(J)$ & ${\rm v}_2(r)$ & ${\rm v}_2(s)$\\
\hline
$4$ & $1$ & $512$ & $16$ & $1024$ & $2$ & $3$\\
\hline
$4$ & $5$ & $256$ & $16$ & $256$ & $2$ & $3$\\
\hline
$3$ & $1$ & $128$ & $8$ & $256$ & $\infty$ & $2$\\
\hline
$3$ & $5$ & $64$ & $8$ & $64$ & $\infty$ & $2$\\
\hline
$3$ & $3$ or $7$ & $128$ & $16$ & $64$ & $2$ & $\geq 4$\\
\hline
$2$ & $3$ or $7$ & $32$ & $8$ & $16$ & $2$ & $\geq 3$\\
\hline
\end{tabular}
\end{table}

Since ${\rm v}_2(r+s)=2$ in any case, we infer that $IJ=4L$ for some $L\in\mathcal{A}(\mathcal{I}^*_2(\mathcal{O}_f))$. Now let $|G|\leq 2$. We have $J$ is principal, and hence $1\in\Delta(\mathcal{O}_f)$ by Lemma~\ref{lemma 4.7}.1.

\smallskip
CASE 2: $p=2$, ${\rm v}_2(f)=2$ and $d\equiv 2\mod 4$. Set $A=32\mathbb{Z}+\tau\mathbb{Z}$ and $B=32\mathbb{Z}+(8+\tau)\mathbb{Z}$. Then $A,B\in\mathcal{A}(\mathcal{I}^*_2(\mathcal{O}_f))$ and $AB=8I$ for some $I\in\mathcal{A}(\mathcal{I}^*_2(\mathcal{O}_f))$ with $I=16\mathbb{Z}+(r+\tau)\mathbb{Z}$, $(0,4,r)\in\mathcal{M}_{f,2}$, and ${\rm v}_2(r)=2$. Therefore, we have $AI=4J$ and $BI=4L$ for some $J,L\in\mathcal{A}(\mathcal{I}^*_2(\mathcal{O}_f))$. Now let $|G|\leq 2$. Since $\{A,B,I\}$ contains a principal ideal of $\mathcal{O}_f$, we infer by Lemma~\ref{lemma 4.7}.1 that $1\in\Delta(\mathcal{O}_f)$.

\smallskip
CASE 3: $p=3$, ${\rm v}_3(f)=2$ and $d\equiv 2\mod 3$. First let $d\not\equiv 1\mod 4$. Set $I=81\mathbb{Z}+\tau\mathbb{Z}$ and $J=81\mathbb{Z}+(9+\tau)\mathbb{Z}$. Then $I,J\in\mathcal{A}(\mathcal{I}^*_3(\mathcal{O}_f))$ and $IJ=9L$ for some $L\in\mathcal{A}(\mathcal{I}^*_3(\mathcal{O}_f))$ with $L=81\mathbb{Z}+(r+\tau)\mathbb{Z}$, $(0,4,r)\in\mathcal{M}_{f,3}$, and ${\rm v}_3(r)=2$. It follows that $IL=9A$ for some $A\in\mathcal{A}(\mathcal{I}^*_3(\mathcal{O}_f))$.

Now let $d\equiv 1\mod 4$. By Proposition~\ref{proposition 3.3}.3 we can assume without restriction that $f$ is odd. Set $I=81\mathbb{Z}+(4+\tau)\mathbb{Z}$ and $J=81\mathbb{Z}+(13+\tau)\mathbb{Z}$. Then $I,J\in\mathcal{A}(\mathcal{I}^*_3(\mathcal{O}_f))$ and $IJ=9L$ for some $L\in\mathcal{A}(\mathcal{I}^*_3(\mathcal{O}_f))$. There is some $(0,4,r)\in\mathcal{M}_{f,3}$ such that $L=81\mathbb{Z}+(r+\tau)\mathbb{Z}$. Since ${\rm v}_3(2r+1)\geq 2$, we have $IL=9A$ for some $A\in\mathcal{A}(\mathcal{I}^*_3(\mathcal{O}_f))$ or $JL=9A$ for some $A\in\mathcal{A}(\mathcal{I}^*_3(\mathcal{O}_f))$.

In any case if $|G|\leq 2$, then $\{I,J,L\}$ contains a principal ideal of $\mathcal{O}_f$, and hence $1\in\Delta(\mathcal{O}_f)$ by Lemma~\ref{lemma 4.7}.1.

\smallskip
CASE 4: ${\rm v}_p(f)=1$ and $p$ splits. By Theorem~\ref{theorem 3.6} there is some $I\in\mathcal{A}(\mathcal{I}^*_p(\mathcal{O}_f))$ such that $\mathcal{N}(I)=p^3$. There is some $(0,3,r)\in\mathcal{M}_{f,p}$ such that $I=p^3\mathbb{Z}+(r+\tau)\mathbb{Z}$. Observe that ${\rm v}_p(2r+\varepsilon)=1$. We infer that $I^2=pJ$ for some $J\in\mathcal{A}(\mathcal{I}^*_p(\mathcal{O}_f))$ and $I\overline{I}=p^2L$ with $\overline{I}\in\mathcal{A}(\mathcal{I}^*_p(\mathcal{O}_f))$ and $L=p\mathcal{O}_f\in\mathcal{A}(\mathcal{I}^*_p(\mathcal{O}_f))$. Now let $|G|\leq 2$. We infer by Lemma~\ref{lemma 4.7} that $1\in\Delta(\mathcal{O}_f)$.

\smallskip
CASE 5: ${\rm v}_p(f)=1$ and $p$ is ramified. By Theorem~\ref{theorem 3.6} there is some $C\in\mathcal{A}(\mathcal{I}^*_p(\mathcal{O}_f))$ such that $\mathcal{N}(C)=p^3$. Note that $C\overline{C}=p^3\mathcal{O}_f$ and $\overline{C}\in\mathcal{A}(\mathcal{I}^*_p(\mathcal{O}_f))$. Now let $C$ be principal. It follows by Lemma~\ref{lemma 4.7}.1 that $1\in\Delta(\mathcal{O}_f)$.

\smallskip
Cases 1-5 show that there are some $I,J,L\in\mathcal{A}(\mathcal{I}^*_p(\mathcal{O}_f))$ such that $IJ=p^2L$. In particular, $\mathsf{L}(IJ)=\{2,3\}$, $1\in\Delta(\mathcal{I}^*_p(\mathcal{O}_f))$ and $3=\mathsf{c}(IJ)\in {\rm Ca}(\mathcal{I}^*_p(\mathcal{O}_f))$. This proves 1. For the rest of this proof let $\mathcal{O}_f$ be not half-factorial and $|G|\leq 2$.

\smallskip
CASE 6: ${\rm v}_p(f)=1$, $p$ is inert and there is some $C\in\mathcal{A}(\mathcal{I}^*_p(\mathcal{O}_f))$ that is not principal. We have $C^2=pL$ for some $L\in\mathcal{A}(\mathcal{I}^*_p(\mathcal{O}_f))$, and thus $1\in\Delta(\mathcal{O}_f)$ by Lemma~\ref{lemma 4.7}.2.

\smallskip
CASE 7: $f$ is a squarefree product of inert primes. Then $\mathcal{I}^*_p(\mathcal{O}_f)$ is half-factorial by Proposition~\ref{proposition 4.6}. If $G$ is trivial, then $\mathcal{O}_f$ is half-factorial, a contradiction. Note that $\mathcal{O}_f$ is seminormal by \cite[Corollary 4.5]{Do-Fo87}. It follows from \cite[Theorem 6.2.2.(a)]{Ge-Ka-Re15a} that $1\in\Delta(\mathcal{O}_f)$.
\end{proof}

\begin{lemma}\label{lemma 4.9}
Let $p$ be a prime divisor of $f$, $k\in\mathbb{N}_{\geq 2}$, and $N=\sup\{{\rm v}_p(\mathcal{N}(A))\mid A\in\mathcal{A}(\mathcal{I}^*_p(\mathcal{O}_f))\}$. If $\ell\in\mathbb{N}$ and $A\in\mathcal{I}_p(\mathcal{O}_f))$ is both a product of $k$ atoms and a product of $\ell$ atoms, then $\ell\leq\frac{kN}{2}$.
\end{lemma}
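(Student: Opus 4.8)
The plan is to bound $n:=\mathrm{v}_p(\mathcal N(A))$ from below using the factorization of $A$ into $\ell$ atoms, and from above using the factorization into $k$ atoms. We may assume $N<\infty$, since otherwise $\tfrac{kN}{2}=\infty$ and there is nothing to prove; thus every atom of $\mathcal I^*_p(\mathcal O_f)$ has norm dividing $p^N$. First I would record the structural input. By Theorem~\ref{theorem 3.6}, $P_{f,p}$ is the unique ideal atom of $\mathcal I_p(\mathcal O_f)$ of norm $p$, and it is not invertible (e.g.\ because $P_{f,p}^2=(p\mathcal O_f)P_{f,p}$ by Lemma~\ref{lemma 4.2}, so invertibility would force $P_{f,p}=p\mathcal O_f$ upon cancellation, contradicting the norms); hence every invertible atom has norm divisible by $p^2$ and $N\ge 2$. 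Moreover, by the second assertion of Proposition~\ref{proposition 3.2}.4 every non-invertible atom $I\in\mathcal A(\mathcal I_p(\mathcal O_f))$ satisfies $\mathrm{v}_p(\mathcal N(I))\le N-1$. Combining this with the first assertion of Proposition~\ref{proposition 3.2}.4 (replacing one factor of a product by an invertible atom) and with $\mathcal N(IJ)=\mathcal N(I)\mathcal N(J)$ whenever $I$ or $J$ is invertible (Proposition~\ref{proposition 3.2}.3), a short induction on $m$ gives the key estimate: \emph{any product of $m\ge 1$ non-invertible atoms of $\mathcal I_p(\mathcal O_f)$ has norm dividing $p^{mN-1}$}.

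For the lower bound I would write $A=B_1\cdots B_\ell$ with $B_i\in\mathcal A(\mathcal I_p(\mathcal O_f))$ and let $t$ be the number of indices with $B_i=P_{f,p}$. Since $P_{f,p}^t=(p\mathcal O_f)^{t-1}P_{f,p}$ (Lemma~\ref{lemma 4.2}) has norm $p^{2t-1}$, the remaining $\ell-t$ atoms have norm $\ge p^2$, and $\mathcal N(I)\mathcal N(J)\mid\mathcal N(IJ)$ (Proposition~\ref{proposition 3.2}.3), one obtains $n\ge 2\ell-1$ in all cases and $n\ge 2\ell$ when $t=0$. In addition, if $A$ is invertible then every $B_i$ is invertible (invertible ideals form a divisor-closed submonoid), so $t=0$ and $n\ge 2\ell$.

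For the upper bound I would write $A=A_1\cdots A_k$ and let $j$ be the number of non-invertible factors. If $j=0$, then $A$ is invertible and $n=\sum_i\mathrm{v}_p(\mathcal N(A_i))\le kN$; together with $n\ge 2\ell$ this yields $2\ell\le kN$. If $j\ge 1$, group $A=BC$ with $B$ the product of the $j$ non-invertible atoms and $C$ the (invertible) product of the remaining $k-j$ atoms; then $n=\mathrm{v}_p(\mathcal N(B))+\mathrm{v}_p(\mathcal N(C))\le(jN-1)+(k-j)N=kN-1$ by the key estimate, and together with $n\ge 2\ell-1$ this again gives $2\ell\le kN$, i.e.\ $\ell\le\tfrac{kN}{2}$.

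The delicate point is the case in which $N$ is odd, which happens precisely for ramified primes (there $N=2\mathrm{v}_p(f)+1$): the crude bounds $n\le kN$ and $n\ge 2\ell-1$ then give only $\ell\le\tfrac{kN+1}{2}$, and closing this parity gap is exactly what forces the sharper estimate $\mathrm{v}_p(\mathcal N)\le mN-1$ for products of non-invertible atoms, together with the bookkeeping of whether the length-$k$ factorization of $A$ actually contains a non-invertible atom.
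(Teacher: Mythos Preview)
Your proof is correct and follows essentially the same approach as the paper: both arguments bound $\mathrm{v}_p(\mathcal N(A))$ from below via the length-$\ell$ factorization (distinguishing whether the non-invertible atom $P_{f,p}$ occurs) and from above via the length-$k$ factorization using Proposition~\ref{proposition 3.2}.3--4, closing the parity gap by exploiting that a non-invertible factor contributes at most $p^{N-1}$ to the norm. The only cosmetic difference is that the paper splits cases according to whether $P_{f,p}$ appears in the $\ell$-factorization, whereas you split according to whether the $k$-factorization contains a non-invertible atom; your packaging of the ``key estimate'' $\mathcal N(\prod_{i=1}^m I_i)\mid p^{mN-1}$ for non-invertible atoms is a mild generalization of what the paper does inline.
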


\begin{proof}
Let $\ell\in\mathbb{N}$ and suppose that a product of $k$ atoms can be written as a product of $\ell$ atoms and set $P=P_{f,p}$. There are some $a,b\in\mathbb{N}_0$, $I_i\in\mathcal{A}(\mathcal{I}_p(\mathcal{O}_f))\setminus\{P\}$ for each $[1,b]$ and $J_j\in\mathcal{A}(\mathcal{I}_p(\mathcal{O}_f))$ for each $j\in [1,k]$ such that $\ell=a+b$ and $\prod_{j=1}^k J_j=P^a\prod_{i=1}^b I_i$. Note that $p^2\mid\mathcal{N}(I_i)$ for each $i\in [1,b]$.

\smallskip
CASE 1: $a=0$. Then $b=\ell$. It follows by induction from Proposition~\ref{proposition 3.2}.4 that there are $J^{\prime}_j\in\mathcal{A}(\mathcal{I}^*_p(\mathcal{O}_f))$ for each $j\in [1,k]$ such that $\mathcal{N}(\prod_{j=1}^k J_j)\mid\mathcal{N}(\prod_{j=1}^k J^{\prime}_j)$. Set $M={\rm lcm}\{\mathcal{N}(J^{\prime}_j)\mid j\in [1,k]\}$. Then $p^{2\ell}\mid\prod_{i=1}^{\ell}\mathcal{N}(I_i)\mid\mathcal{N}(\prod_{i=1}^{\ell} I_i)=\mathcal{N}(\prod_{j=1}^k J_j)\mid\mathcal{N}(\prod_{j=1}^k J^{\prime}_j)=\prod_{j=1}^k\mathcal{N}(J^{\prime}_j)\mid M^k$. This implies that $2\ell\leq k{\rm v}_p(M)\leq kN$, and thus $\ell\leq\frac{kN}{2}$.

\smallskip
CASE 2: $a>0$. By Lemma~\ref{lemma 4.2} we have $P^a=p^{a-1}P$, and thus $\mathcal{N}(P^a)=p^{2a-1}$. Note that $\prod_{j=1}^k J_j$ is not invertible, and hence one member of the product, say $J_1$, is not invertible. Observe that ${\rm v}_p(\mathcal{N}(J_1))\leq N-1$ by Proposition~\ref{proposition 3.2}.4. We infer by induction from Proposition~\ref{proposition 3.2}.4 that there are $J^{\prime}_j\in\mathcal{A}(\mathcal{I}^*_p(\mathcal{O}_f))$ for each $j\in [2,k]$ such that $\mathcal{N}(\prod_{j=1}^k J_j)\mid\mathcal{N}(J_1\prod_{j=2}^k J^{\prime}_j)$. Set $M={\rm lcm}\{\mathcal{N}(J^{\prime}_j)\mid j\in [2,k]\}$. Then $p^{2\ell-1}\mid\mathcal{N}(P^a)\prod_{i=1}^b\mathcal{N}(I_i)\mid\mathcal{N}(P^a\prod_{i=1}^b I_i)=\mathcal{N}(\prod_{j=1}^k J_j)\mid\mathcal{N}(J_1\prod_{j=2}^k J^{\prime}_j)=\mathcal{N}(J_1)\prod_{j=2}^k\mathcal{N}(J^{\prime}_j)\mid\mathcal{N}(J_1)M^{k-1}$. This implies that $2\ell-1\leq {\rm v}_p(\mathcal{N}(J_1))+(k-1){\rm v}_p(M)\leq kN-1$, and hence $\ell\leq\frac{kN}{2}$.
\end{proof}

\begin{lemma}\label{lemma 4.10}
Let $p$ be a prime divisor of $f$. For every $I\in\mathcal{A}(\mathcal{I}^*_p(\mathcal{O}_f))$, we set ${\rm v}_I={\rm v}_p(\mathcal{N}(I))$, and let $\mathcal{B}=\{{\rm v}_A\mid A\in\mathcal{A}(\mathcal{I}^*_p(\mathcal{O}_f))\}$.
\begin{enumerate}
\item[\textnormal{1.}] For all $I\in\mathcal{A}(\mathcal{I}^*_p(\mathcal{O}_f))$, we have $\mathsf{c}(I\cdot\overline{I},(p\mathcal{O}_f)^{{\rm v}_I})\leq 2+\sup\Delta(\mathcal{B})$.

\item[\textnormal{2.}] Let $p=2$, $d\equiv 1\mod 8$, and ${\rm v}_p(f)\geq 4$. Then $\mathsf{c}(I\cdot\overline{I},(p\mathcal{O}_f)^{{\rm v}_I})\leq 4$ for all $I\in\mathcal{A}(\mathcal{I}^*_p(\mathcal{O}_f))$.
\end{enumerate}
\end{lemma}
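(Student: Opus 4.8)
The plan is to derive both parts from Proposition~\ref{proposition 3.2}.5, which gives $I\cdot\overline{I}=\mathcal{N}(I)\mathcal{O}_f=(p\mathcal{O}_f)^{{\rm v}_I}$: writing $u=p\mathcal{O}_f\in\mathcal{A}(\mathcal{I}^*_p(\mathcal{O}_f))$, the factorizations $I\cdot\overline{I}$ (of length $2$) and $u^{{\rm v}_I}$ (of length ${\rm v}_I$) both lie in $\mathsf{Z}\big((p\mathcal{O}_f)^{{\rm v}_I}\big)$. For part 1 I would induct on ${\rm v}_I$. If ${\rm v}_I=2$, the two factorizations differ by distance at most $2\le 2+\sup\Delta(\mathcal{B})$. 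If ${\rm v}_I\ge 3$, then $2={\rm v}_u\in\mathcal{B}$ with $2<{\rm v}_I$, so there is a largest $m\in\mathcal{B}$ with $m<{\rm v}_I$; since $\mathcal{B}\cap[m,{\rm v}_I]=\{m,{\rm v}_I\}$ we get $0<{\rm v}_I-m\le\sup\Delta(\mathcal{B})$. Choosing an atom $A$ with ${\rm v}_A=m$, Proposition~\ref{proposition 3.2}.5 again shows that $A\cdot\overline{A}\cdot u^{{\rm v}_I-m}$ is a factorization of $u^m\cdot u^{{\rm v}_I-m}=u^{{\rm v}_I}=I\cdot\overline{I}$; as ${\rm v}_I\ge 3$ forces $I\ne u$ and ${\rm v}_A<{\rm v}_I$ forces $I,\overline{I}\notin\{A,\overline{A},u\}$, these factorizations share no atom, so $\mathsf{d}(I\cdot\overline{I},\,A\cdot\overline{A}\cdot u^{{\rm v}_I-m})={\rm v}_I-m+2\le 2+\sup\Delta(\mathcal{B})$. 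By induction $\mathsf{c}(A\cdot\overline{A},u^m)\le 2+\sup\Delta(\mathcal{B})$, and appending $u^{{\rm v}_I-m}$ to each term of a connecting chain yields a chain of equal step distances from $A\cdot\overline{A}\cdot u^{{\rm v}_I-m}$ to $u^{{\rm v}_I}$; concatenating the two chains proves part 1.

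For part 2, $p=2$ splits and, by Theorem~\ref{theorem 3.6}.4 with ${\rm v}_2(f)\ge 4$, $\mathcal{B}=\{2,4,\dots,2{\rm v}_2(f)-2\}\cup\{n\in\mathbb{N}\mid n\ge 2{\rm v}_2(f)+1\}$, so $\sup\Delta(\mathcal{B})=3$, the value $3$ arising only from the gap $2{\rm v}_2(f)-2\mapsto 2{\rm v}_2(f)+1$. Rerunning the induction of part 1 while tracking the bound, any step whose jump ${\rm v}_I-m$ is at most $2$ costs distance at most $4$; since the $\mathcal{B}$-predecessor of an even ${\rm v}_I\le 2{\rm v}_2(f)-2$ is ${\rm v}_I-2$, and that of ${\rm v}_I\ge 2{\rm v}_2(f)+2$ is ${\rm v}_I-1$, this gives $\mathsf{c}(I\cdot\overline{I},u^{{\rm v}_I})\le 4$ for all ${\rm v}_I\ne 2{\rm v}_2(f)+1$, once the bound is known for ${\rm v}_I=2{\rm v}_2(f)+1$. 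So the whole of part 2 reduces to showing $\mathsf{c}(I\cdot\overline{I},u^{2{\rm v}_2(f)+1})\le 4$ for atoms $I$ of norm $p^{2{\rm v}_2(f)+1}$; here the naive bridge of part 1 (through an atom of norm $p^{2{\rm v}_2(f)-2}$, then $u^3$) falls short, its first step having distance $5$.

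For this remaining case the plan is to bridge instead through a length-three factorization. I would show (after reducing to $f=2^{{\rm v}_2(f)}$ via Proposition~\ref{proposition 3.3}.3 and computing with explicit residues in $\mathcal{M}_{f,2}$ using Proposition~\ref{proposition 3.2}.1, in the spirit of the proofs of Propositions~\ref{proposition 4.4} and~\ref{proposition 4.5}) that there exist atoms $A,B,C$ with ${\rm v}_A={\rm v}_B=2{\rm v}_2(f)-2$, ${\rm v}_C=6$, $B\ne\overline{A}$, and $C\ne\overline{A}$ when ${\rm v}_2(f)=4$, for which $A\cdot B\cdot C$ is a factorization of $u^{2{\rm v}_2(f)+1}$ — the hypothesis ${\rm v}_2(f)\ge 4$ being exactly what forces $6\le 2{\rm v}_2(f)-2$ and supplies enough atoms of norm $p^{2{\rm v}_2(f)-2}$. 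Granting this, $\mathsf{d}(I\cdot\overline{I},A\cdot B\cdot C)=3$ (no shared atom, none of $A,B,C$ having norm $p^{2{\rm v}_2(f)+1}$), $\mathsf{d}(A\cdot B\cdot C,\,A\cdot\overline{A}\cdot u^3)=4$ (cancel the common atom $A$, leaving $B\cdot C$ and $\overline{A}\cdot u^3$ with no shared atom), and appending $u^3$ to a width-$\le 4$ chain from $A\cdot\overline{A}$ to $u^{2{\rm v}_2(f)-2}$ (available by the ${\rm v}_I\le 2{\rm v}_2(f)-2$ case just settled) connects $A\cdot\overline{A}\cdot u^3$ to $u^{2{\rm v}_2(f)+1}$; concatenating gives $\mathsf{c}(I\cdot\overline{I},u^{2{\rm v}_2(f)+1})\le 4$. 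The one substantial obstacle is the existence of this $A\cdot B\cdot C$ — equivalently, that for a suitable atom $C$ of norm $p^6$ the element $\overline{C}\,u^{2{\rm v}_2(f)-5}$ splits as a product of two atoms of norm $p^{2{\rm v}_2(f)-2}$ — which is not formal and must be checked by a concrete, if laborious, computation inside $\mathcal{M}_{f,2}$; everything else is bookkeeping with factorization distances.
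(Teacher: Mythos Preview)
Your proof of part~1 is correct and is exactly the paper's induction. For part~2 your reduction to the single case ${\rm v}_I=2{\rm v}_2(f)+1$, and the idea of inserting an explicit intermediate factorization there, is also the paper's strategy. The genuine gap is that your proposed length-$3$ bridge $A\cdot B\cdot C=u^{2{\rm v}_2(f)+1}$ with ${\rm v}_A={\rm v}_B=2{\rm v}_2(f)-2$ and ${\rm v}_C=6$ does not exist when ${\rm v}_2(f)=4$.

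Concretely, take $f=16$ and $d\equiv 1\mod 8$; then your three atoms all have norm $2^6$, and one checks from Proposition~\ref{proposition 3.1} that every atom of norm $64$ is $64\mathbb{Z}+(r+\tau)\mathbb{Z}$ with $r\in\{0,16,32,48\}$, so $16\mid r$. Hence ${\rm v}_2(r_A+r_B)\ge 4$ for any two such atoms, and a short case check with Proposition~\ref{proposition 3.2}.1 (according as this valuation is $4$, $5$, or $\ge 6$) shows that $ABC=u^6\cdot E$ for some atom $E$ of norm $64$, never $u^9$. In fact a full sweep over $\mathcal{B}=\{2,4,6\}\cup\mathbb{N}_{\ge 9}$ shows $3\notin\mathsf{L}(u^9)$, so no length-$3$ factorization of $u^{2{\rm v}_2(f)+1}$ exists here at all; the ``laborious computation'' you defer cannot succeed. (For ${\rm v}_2(f)\ge 5$ your plan \emph{does} work: e.g.\ $A=2^{2h-2}\mathbb{Z}+\tau\mathbb{Z}$ and $B=2^{2h-2}\mathbb{Z}+(2^{2h-5}+\tau)\mathbb{Z}$ give $AB=u^{2h-5}C'$ with ${\rm v}_{C'}=6$, so $A\cdot B\cdot\overline{C'}=u^{2h+1}$.) The paper avoids the obstruction by bridging through a length-$4$ factorization $u\cdot A\cdot B\cdot C$ of $u^{2{\rm v}_2(f)+1}$, where $ABC=u^{2{\rm v}_2(f)}$: explicitly $A=16\mathbb{Z}+(4+\tau)\mathbb{Z}$, $B=2^{n-3}\mathbb{Z}+(2^{n-5}+\tau)\mathbb{Z}$, $C=2^{n-3}\mathbb{Z}+(2^{n-4}+\tau)\mathbb{Z}$ with $n=2{\rm v}_2(f)+1$. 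Then $\mathsf{d}(I\cdot\overline{I},\,u\cdot A\cdot B\cdot C)\le 4$ and $\mathsf{d}(u\cdot A\cdot B\cdot C,\,u^3\cdot J\cdot\overline{J})\le 4$ for any atom $J$ with ${\rm v}_J=2{\rm v}_2(f)-2$, after which your induction finishes the chain to $u^n$. Your argument becomes correct simply by inserting this extra factor of $u$ in the bridge.
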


\begin{proof}
1. It is sufficient to show by induction that for all $n\in\mathbb{N}_{\geq 2}$ and $I\in\mathcal{A}(\mathcal{I}^*_p(\mathcal{O}_f))$ with ${\rm v}_I=n$, it follows that $\mathsf{c}(I\cdot\overline{I},(p\mathcal{O}_f)^n)\leq 2+\sup\Delta(\mathcal{B})$. Let $n\in\mathbb{N}_{\geq 2}$ and $I\in\mathcal{A}(\mathcal{I}^*_p(\mathcal{O}_f))$ be such that ${\rm v}_I=n$. If $n=2$, then $\mathsf{c}(I\cdot\overline{I},(p\mathcal{O}_f)^2)\leq\mathsf{d}(I\cdot\overline{I},(p\mathcal{O}_f)^2)\leq 2\leq 2+\sup\Delta(\mathcal{B})$. Now let $n>2$. Note that $2={\rm v}_{p\mathcal{O}_f}\in\mathcal{B}$, and hence there is some $k\in\mathcal{B}$ such that $2\leq k<n$ and $\mathcal{B}\cap [k,n]=\{k,n\}$. Observe that $n-k\in\Delta(\mathcal{B})$. Furthermore, there is some $J\in\mathcal{A}(\mathcal{I}^*_p(\mathcal{O}_f))$ such that $k={\rm v}_J$. Note that $J\overline{J}=(p\mathcal{O}_f)^k$, and thus $I\overline{I}=(p\mathcal{O}_f)^{n-k}J\overline{J}$. By the induction hypothesis, we infer that $c((p\mathcal{O}_f)^{n-k}\cdot J\cdot\overline{J},(p\mathcal{O}_f)^n)\leq c(J\cdot\overline{J},(p\mathcal{O}_f)^k)\leq 2+\sup\Delta(\mathcal{B})$. Since $\mathsf{d}(I\cdot\overline{I},(p\mathcal{O}_f)^{n-k}\cdot J\cdot\overline{J})\leq 2+(n-k)\leq 2+\sup\Delta(\mathcal{B})$, it follows that $\mathsf{c}(I\cdot\overline{I},(p\mathcal{O}_f)^n)\leq 2+\sup\Delta(\mathcal{B})$.

\smallskip
2. By Proposition~\ref{proposition 3.3}.3 we can assume without restriction that $f=2^{{\rm v}_2(f)}$. We show by induction that for all $n\in\mathbb{N}_{\geq 2}$ and $I\in\mathcal{A}(\mathcal{I}^*_2(\mathcal{O}_f))$ with ${\rm v}_I=n$, we have $\mathsf{c}(I\cdot\overline{I},(2\mathcal{O}_f)^n)\leq 4$. Let $n\in\mathbb{N}_{\geq 2}$ and $I\in\mathcal{A}(\mathcal{I}^*_2(\mathcal{O}_f))$ be such that ${\rm v}_I=n$. If $n=2$, then $\mathsf{c}(I\cdot\overline{I},(2\mathcal{O}_f)^2)\leq\mathsf{d}(I\cdot\overline{I},(2\mathcal{O}_f)^2)\leq 2\leq 2+\sup\Delta(\mathcal{B})$. Next let $n>2$. Observe that $2={\rm v}_{2\mathcal{O}_f}\in\mathcal{B}$, and hence there is some $k\in\mathcal{B}$ such that $2\leq k<n$ and $\mathcal{B}\cap [k,n]=\{k,n\}$. There is some $J\in\mathcal{A}(\mathcal{I}^*_2(\mathcal{O}_f))$ such that $k={\rm v}_J$. Note that $J\overline{J}=(2\mathcal{O}_f)^k$, and hence $I\overline{I}=(2\mathcal{O}_f)^{n-k}J\overline{J}$. By the induction hypothesis, we have $c((2\mathcal{O}_f)^{n-k}\cdot J\cdot\overline{J},(2\mathcal{O}_f)^n)\leq c(J\cdot\overline{J},(2\mathcal{O}_f)^k)\leq 4$.

\smallskip
CASE 1: $n\not=2{\rm v}_2(f)+1$. It follows from Theorem~\ref{theorem 3.6} that $n-k\leq 2$. Since $\mathsf{d}(I\cdot\overline{I},(2\mathcal{O}_f)^{n-k}\cdot J\cdot\overline{J})\leq 4$, we infer that $\mathsf{c}(I\cdot\overline{I},(2\mathcal{O}_f)^n)\leq 4$.

\smallskip
CASE 2: $n=2{\rm v}_2(f)+1$. By Theorem~\ref{theorem 3.6} we have $n-k=3$. Set $A=16\mathbb{Z}+(4+\tau)\mathbb{Z}$, $B=2^{n-3}\mathbb{Z}+(2^{n-5}+\tau)\mathbb{Z}$, and $C=2^{n-3}\mathbb{Z}+(2^{n-4}+\tau)\mathbb{Z}$. Then $A,B,C\in\mathcal{A}(\mathcal{I}^*_2(\mathcal{O}_f))$ and $ABC=2^{n-5}A(16\mathbb{Z}+(12+\tau)\mathbb{Z})=(2\mathcal{O}_f)^{n-1}$. Observe that $\mathsf{d}(I\cdot\overline{I},(2\mathcal{O}_f)\cdot A\cdot B\cdot C)\leq 4$ and $\mathsf{d}((2\mathcal{O}_f)\cdot A\cdot B\cdot C,(2\mathcal{O}_f)^{n-k}\cdot J\cdot\overline{J}))\leq 4$. Therefore, $\mathsf{c}(I\cdot\overline{I},(2\mathcal{O}_f)^n)\leq 4$.
\end{proof}

\begin{proposition}\label{proposition 4.11}
Let $p$ be a prime divisor of $f$ and set $\mathcal{B}=\{{\rm v}_p(\mathcal{N}(\mathcal{A}))\mid A\in\mathcal{A}(\mathcal{I}^*_p(\mathcal{O}_f))\}$.
\begin{enumerate}
\item[\textnormal{1.}] $\sup\Delta(\mathcal{I}_p(\mathcal{O}_f))\leq\sup\Delta(\mathcal{B})$ and $\mathsf{c}(\mathcal{I}_p(\mathcal{O}_f))\leq 2+\sup\Delta(\mathcal{B})$.

\item[\textnormal{2.}] Let $p=2$, $d\equiv 1\mod 8$, and ${\rm v}_p(f)\geq 4$. Then $\sup\Delta(\mathcal{I}_2(\mathcal{O}_f))\leq 2$ and $\mathsf{c}(\mathcal{I}_2(\mathcal{O}_f))\leq 4$.
\end{enumerate}
\end{proposition}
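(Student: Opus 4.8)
The plan is to apply Proposition~\ref{proposition 4.1} to the reduced atomic monoid $H=\mathcal{I}_p(\mathcal{O}_f)$ with the cancellative atom $u=p\mathcal{O}_f$ (the hypotheses of Proposition~\ref{proposition 4.1} are satisfied, as noted after that proposition). By Proposition~\ref{proposition 4.1}.3 we have $\mathsf{c}(\mathcal{I}_p(\mathcal{O}_f))=\sup\{\mathsf{c}(w\cdot y,u^n\cdot v)\mid n\in\mathbb{N},\ v,w,y\in\mathcal{A}(\mathcal{I}_p(\mathcal{O}_f)),\ wy=u^nv\}$, and by Proposition~\ref{proposition 4.1}.5 we have $\sup\Delta(\mathcal{I}_p(\mathcal{O}_f))=\sup\{\ell-2\mid\ell\in\mathbb{N}_{\ge 3},\ \mathsf{L}(vw)\cap[2,\ell]=\{2,\ell\}\text{ for some }v,w\in\mathcal{A}(\mathcal{I}_p(\mathcal{O}_f))\}$. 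Thus both bounds reduce to studying products of two atoms, together with the three-atom configurations $wy=u^nv$. A first step disposes of $P_{f,p}$: when $v=P_{f,p}$ or $w=P_{f,p}$ (or either equals $u$), Lemma~\ref{lemma 4.2} together with Proposition~\ref{proposition 3.2} shows that $vw$ has the shape $u^a\cdot J$ for an atom $J$ with tightly controlled factorization set (in particular $\mathsf{L}(pP_{f,p})=\{2\}$ by Lemma~\ref{lemma 4.2}.1), so the contribution of these products to the two suprema is at most $2$. Hence we may assume the atoms appearing lie in $\mathcal{A}(\mathcal{I}^*_p(\mathcal{O}_f))$, in which case the relevant products are invertible and of the form $u^n\cdot y$ with $y\in\mathcal{A}(\mathcal{I}^*_p(\mathcal{O}_f))$ and $n\ge 1$.

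The heart of the matter is the identity $I\overline I=\mathcal{N}(I)\mathcal{O}_f=u^{{\rm v}_I}$ for invertible atoms $I$ (Proposition~\ref{proposition 3.2}.5, with ${\rm v}_I={\rm v}_p(\mathcal{N}(I))$ as in Lemma~\ref{lemma 4.10}), combined with Lemma~\ref{lemma 4.10}. For the catenary bound I would show, by induction on $n$, that $\mathsf{c}(w\cdot y,u^n\cdot v)\le 2+\sup\Delta(\mathcal{B})$ for all invertible atoms $v,w,y$ with $wy=u^nv$; the base cases ($n$ small) hold because then $\mathsf{d}(w\cdot y,u^n\cdot v)=\max\{2,n+1\}$ is already below the bound, and the case $v=u$ is exactly Lemma~\ref{lemma 4.10}.1. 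For the inductive step one uses $\mathcal{N}(w)\mathcal{N}(y)=\mathcal{N}(u^nv)$, i.e.\ ${\rm v}_w+{\rm v}_y=2n+{\rm v}_v$, to rewrite a suitable sub-product appearing in $w\cdot y$ — after multiplying by a conjugate atom so that it becomes a conjugate pair — as a power of $u$, the point being that Lemma~\ref{lemma 4.10} performs exactly this rewriting with all intermediate distances at most $2+\sup\Delta(\mathcal{B})$; this yields a factorization of $u^nv$ with a strictly larger power of $u$, to which the induction hypothesis applies. For the distance bound one has $\mathsf{L}(vw)\subseteq[2,n+1]$ by Proposition~\ref{proposition 4.1}.2, and a length-$\ell$ factorization of $vw$ is $u^a\cdot C_1\cdots C_{\ell-a}$ with $C_j\in\mathcal{A}(\mathcal{I}^*_p(\mathcal{O}_f))\setminus\{u\}$ and $\sum_j{\rm v}_{C_j}=2(n-a)+{\rm v}_y$; using Proposition~\ref{proposition 3.2} and Theorem~\ref{theorem 3.6} one checks that replacing one factor by a product realizing the next element of $\mathcal{B}$ above its norm-valuation, or merging two factors down to the previous one, changes the length by the corresponding gap of $\mathcal{B}$, so $\mathsf{L}(vw)$ contains no gap exceeding $\sup\Delta(\mathcal{B})$. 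Together these give Part~1.

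For Part~2 the hypotheses $p=2$, $d\equiv 1\mod 8$, ${\rm v}_2(f)\ge 4$ place us in the split case, where Table~\ref{table1} gives $\mathcal{B}=\{2,4,\dots,2{\rm v}_2(f)-2\}\cup\{2{\rm v}_2(f)+1,2{\rm v}_2(f)+2,\dots\}$, hence $\sup\Delta(\mathcal{B})=3$, so Part~1 only yields $\le 3$ and $\le 5$. The sharpening comes from Lemma~\ref{lemma 4.10}.2, which for these orders improves the bound in the chains $\mathsf{c}(I\cdot\overline I,u^{{\rm v}_I})$ from $2+\sup\Delta(\mathcal{B})$ to $4$, by bridging the missing norm-valuation $2{\rm v}_2(f)$ through an explicit product of three atoms of small norm; feeding this into the induction of the previous paragraph gives $\mathsf{c}(\mathcal{I}_2(\mathcal{O}_f))\le 4$, and the same bridging shows that the unique gap of size $3$ in $\mathcal{B}$ forces only a gap of size $\le 2$ in $\mathsf{L}(vw)$, so $\sup\Delta(\mathcal{I}_2(\mathcal{O}_f))\le 2$.

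The step I expect to be the main obstacle is making the conjugate-rewriting chains rigorous: one must exhibit, at each stage, an actual factorization of the given product (not merely a formal norm identity) at distance $\le 2+\sup\Delta(\mathcal{B})$ (resp.\ $\le 4$) from the previous one, which forces a careful case analysis of how $P_{f,p}$ and the non-invertible atoms interact inside the non-cancellative monoid $\mathcal{I}_p(\mathcal{O}_f)$; and, on the side of the distance bound, one must verify that the ideal equations realizing consecutive elements of $\mathcal{B}$ can indeed be arranged. Both points rely essentially on the explicit multiplication formula of Proposition~\ref{proposition 3.2} and the atom classification of Theorem~\ref{theorem 3.6}, together with the concrete computations underlying Lemma~\ref{lemma 4.10}.
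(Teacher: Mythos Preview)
Your overall strategy matches the paper's: apply Proposition~\ref{proposition 4.1} with $u=p\mathcal{O}_f$ to reduce to configurations $IJ=u^nL$, then exploit conjugate pairs $A\overline{A}=u^k$ for $k\in\mathcal{B}$ together with Lemma~\ref{lemma 4.10}. However, your execution has a genuine gap and misses the key simplification.

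First, your reduction to invertible atoms is flawed: $P_{f,p}$ is \emph{not} the only non-invertible atom. Whenever ${\rm v}_p(f)\ge 2$ there exist non-invertible atoms $p^m\mathbb{Z}+(r+\tau)\mathbb{Z}$ with $m\ge 2$ (any $r$ with ${\rm v}_p(r^2+\varepsilon r+\eta)>m$ works; e.g.\ $r=0$ for $m<2{\rm v}_p(f)$ when $p$ is odd). So after ``disposing of $P_{f,p}$'' you cannot assume $w,y$ are invertible, and your subsequent use of the norm identity ${\rm v}_w+{\rm v}_y=2n+{\rm v}_v$ is unjustified, since the norm is not multiplicative on products of non-invertible ideals (Proposition~\ref{proposition 3.2}.3).

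Second, the paper's argument is far more direct than the induction you describe, and works uniformly for \emph{all} atoms $I,J\in\mathcal{A}(\mathcal{I}_p(\mathcal{O}_f))$ without any such reduction. Given $IJ=u^nL$ with $n\ge 2$, one simply observes that if $n\in\mathcal{B}$ then $u^n=A\overline{A}$ for some invertible atom $A$, so $A\cdot\overline{A}\cdot L$ \emph{is already} a factorization of $IJ$. This single intermediate factorization does all the work: it has length $3$, so $3\in\mathsf{L}(IJ)$ (forcing $\ell\le 3$ for the distance bound), and it lies at distance $\le 3$ from $I\cdot J$ while Lemma~\ref{lemma 4.10}.1 controls $\mathsf{c}(A\cdot\overline{A}\cdot L,\,u^n\cdot L)$. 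When $n\notin\mathcal{B}$ one takes $A$ with ${\rm v}_A=n-1$ (or $n-2$ in the exceptional split case at $p=2$) and uses $u\cdot A\cdot\overline{A}\cdot L$ (resp.\ $u^2\cdot A\cdot\overline{A}\cdot L$) instead. There is no induction on $n$ here, and no need to ``multiply by a conjugate atom'' to manufacture a pair---the pair $A,\overline{A}$ arises for free by rewriting the power of $u$. For Part~2 the same scheme applies verbatim with Lemma~\ref{lemma 4.10}.2 replacing Lemma~\ref{lemma 4.10}.1, plus one explicit three-atom factorization of $u^{2{\rm v}_2(f)}$ (built in the proof of Lemma~\ref{lemma 4.10}.2) to handle the problematic case $n=2{\rm v}_2(f)$.
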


\begin{proof}
1. First we consider the case that ${\rm v}_p(f)=1$ and $p$ is inert. It follows from Theorem~\ref{theorem 3.6} that $\sup\Delta(\mathcal{B})=0$. Proposition~\ref{proposition 4.6} implies that $\sup\Delta(\mathcal{I}_p(\mathcal{O}_f))=0$ and $\mathsf{c}(\mathcal{I}_p(\mathcal{O}_f))=2$. Now let ${\rm v}_p(f)\geq 2$ or $p$ not inert. Observe that $\sup\Delta(\mathcal{B})\geq 1$ by Theorem~\ref{theorem 3.6}. Let $I,J\in\mathcal{A}(\mathcal{I}_p(\mathcal{O}_f))$. There are some $n\in\mathbb{N}$ and $L\in\mathcal{A}(\mathcal{I}_p(\mathcal{O}_f))$ such that $IJ=p^nL$.

By Proposition~\ref{proposition 4.1}, it remains to show that $\mathsf{c}(I\cdot J,(p\mathcal{O}_f)^n\cdot L)\leq 2+\sup\Delta(\mathcal{B})$ and if $\ell\in\mathbb{N}_{\geq 3}$ is such that $\mathsf{L}(IJ)\cap [2,\ell]=\{2,\ell\}$, then $\ell-2\leq\sup\Delta(\mathcal{B})$. Set $N=\sup\mathcal{B}$. Since a product of two atoms of $\mathcal{I}_p(\mathcal{O}_f)$ can be written as a product of $n+1$ atoms, Lemma~\ref{lemma 4.9} implies that $n+1 \le N$. If $n=1$, then $\mathsf{d}(I\cdot J,(p\mathcal{O}_f)\cdot L)\leq 2\leq 2+\sup\Delta(\mathcal{B})$ and there is no $\ell\in\mathbb{N}_{\geq 3}$ with $\mathsf{L}(IJ)\cap [2,\ell]=\{2,\ell\}$. Now let $n\geq 2$ and $\ell\in\mathbb{N}_{\geq 3}$ be such that $\mathsf{L}(IJ)\cap [2,\ell]=\{2,\ell\}$.

\smallskip
CASE 1: $n\in\mathcal{B}$. Then $A\overline{A}=(p\mathcal{O}_f)^n$ for some $A\in\mathcal{A}(\mathcal{I}^*_p(\mathcal{O}_f))$. Therefore, $\mathsf{c}(A\cdot\overline{A}\cdot L,(p\mathcal{O}_f)^n\cdot L)\leq\mathsf{c}(A\cdot\overline{A},(p\mathcal{O}_f)^n)\leq 2+\sup\Delta(\mathcal{B})$ by Lemma~\ref{lemma 4.10}.1. Moreover, $\mathsf{d}(I\cdot J,A\cdot\overline{A}\cdot L)\leq 3\leq 2+\sup\Delta(\mathcal{B})$, and thus $\mathsf{c}(I\cdot J,(p\mathcal{O}_f)^n\cdot L)\leq 2+\sup\Delta(\mathcal{B})$ and $\ell-2=1\leq\sup\Delta(\mathcal{B})$.

\smallskip
CASE 2: $n\not\in\mathcal{B}$. Note that $n\geq 3$. It follows by Theorem~\ref{theorem 3.6} that ${\rm v}_p(f)\geq 2$ and $\sup\Delta(\mathcal{B})\geq 2$.

\smallskip
CASE 2.1: $p\not=2$ or $d\not\equiv 1\mod 8$ or $n\not=2{\rm v}_p(f)$. Since $n\leq N$, it follows from Theorem~\ref{theorem 3.6} that $n-1=\mathcal{N}(A)$ for some $A\in\mathcal{A}(\mathcal{I}^*_p(\mathcal{O}_f))$, and hence $A\overline{A}=(p\mathcal{O}_f)^{n-1}$. We infer that $\mathsf{c}((p\mathcal{O}_f)\cdot A\cdot\overline{A}\cdot L,(p\mathcal{O}_f)^n\cdot L)\leq\mathsf{c}(A\cdot\overline{A},(p\mathcal{O}_f)^{n-1})\leq 2+\sup\Delta(\mathcal{B})$ by Lemma~\ref{lemma 4.10}.1. Moreover, we have $\mathsf{d}(I\cdot J,A\cdot\overline{A}\cdot (p\mathcal{O}_f)\cdot L)\leq 4\leq 2+\sup\Delta(\mathcal{B})$, and thus $\mathsf{c}(I\cdot J,(p\mathcal{O}_f)^n\cdot L)\leq 2+\sup\Delta(\mathcal{B})$ and $\ell-2\leq 2\leq\sup\Delta(\mathcal{B})$.

\smallskip
CASE 2.2: $p=2$, $d\equiv 1\mod 8$ and $n=2{\rm v}_p(f)$. We infer by Theorem~\ref{theorem 3.6} that $\sup\Delta(\mathcal{B})=3$. By Theorem~\ref{theorem 3.6} there is some $A\in\mathcal{A}(\mathcal{I}^*_2(\mathcal{O}_f))$ such that $n-2=\mathcal{N}(A)$, and thus $A\overline{A}=(2\mathcal{O}_f)^{n-2}$. This implies that $\mathsf{c}((2\mathcal{O}_f)^2\cdot A\cdot\overline{A}\cdot L,(2\mathcal{O}_f)^n\cdot L)\leq\mathsf{c}(A\cdot\overline{A},(2\mathcal{O}_f)^{n-2})\leq 2+\sup\Delta(\mathcal{B})$ by Lemma~\ref{lemma 4.10}.1. Observe that $\mathsf{d}(I\cdot J,A\cdot\overline{A}\cdot (2\mathcal{O}_f)^2\cdot L)\leq 5=2+\sup\Delta(\mathcal{B})$, and hence $\mathsf{c}(I\cdot J,(2\mathcal{O}_f)^n\cdot L)\leq 2+\sup\Delta(\mathcal{B})$ and $\ell-2\leq 3=\sup\Delta(\mathcal{B})$.

\smallskip
2. By Proposition~\ref{proposition 3.3}.3 we can assume without restriction that $f=2^{{\rm v}_2(f)}$. Let $I,J\in\mathcal{A}(\mathcal{I}_2(\mathcal{O}_f))$. There are some $n\in\mathbb{N}$ and $L\in\mathcal{A}(\mathcal{I}_2(\mathcal{O}_f))$ such that $IJ=2^nL$. It follows from Lemma~\ref{lemma 4.9} that $n+1\leq\sup\mathcal{B}$. By Proposition~\ref{proposition 4.1}, it is sufficient to show that $\mathsf{c}(I\cdot J,(2\mathcal{O}_f)^n\cdot L)\leq 4$ and if $\ell\in\mathbb{N}_{\geq 3}$ is such that $\mathsf{L}(IJ)\cap [2,\ell]=\{2,\ell\}$, then $\ell-2\leq 2$. The assertion is trivially true for $n=1$. Let $n\geq 2$ and let $\ell\in\mathbb{N}_{\geq 3}$ be such that $\mathsf{L}(IJ)\cap [2,\ell]=\{2,\ell\}$.

\smallskip
CASE 1: $n\in\mathcal{B}$. There is some $A\in\mathcal{A}(\mathcal{I}^*_2(\mathcal{O}_f))$ such that $A\overline{A}=(2\mathcal{O}_f)^n$. It follows by Lemma~\ref{lemma 4.10}.2 that $\mathsf{c}(A\cdot\overline{A}\cdot L,(2\mathcal{O}_f)^n\cdot L)\leq\mathsf{c}(A\cdot\overline{A},(2\mathcal{O}_f)^n)\leq 4$. Furthermore, $\mathsf{d}(I\cdot J,A\cdot\overline{A}\cdot L)\leq 3$, and thus $\mathsf{c}(I\cdot J,(2\mathcal{O}_f)^n\cdot L)\leq 4$ and $\ell-2\leq 1$.

\smallskip
CASE 2: $n\not\in\mathcal{B}$ and $n\not=2{\rm v}_2(f)$. It follows by Theorem~\ref{theorem 3.6} that there is some $A\in\mathcal{A}(\mathcal{I}^*_2(\mathcal{O}_f))$ such that $A\overline{A}=(2\mathcal{O}_f)^{n-1}$. We infer by Lemma~\ref{lemma 4.10}.2 that $\mathsf{c}((2\mathcal{O}_f)\cdot A\cdot\overline{A}\cdot L,(2\mathcal{O}_f)^n\cdot L)\leq \mathsf{c}(\cdot A\cdot\overline{A},(2\mathcal{O}_f)^{n-1})\leq 4$. Furthermore, $\mathsf{d}(I\cdot J,(2\mathcal{O}_f)\cdot A\cdot\overline{A}\cdot L)\leq 4$, and thus $\mathsf{c}(I\cdot J,(2\mathcal{O}_f)^n\cdot L)\leq 4$ and $\ell-2\leq 2$.

\smallskip
CASE 3: $n=2{\rm v}_2(f)$. By Theorem~\ref{theorem 3.6} there is some $D\in\mathcal{A}(\mathcal{I}^*_2(\mathcal{O}_f))$ such that $D\overline{D}=(2\mathcal{O}_f)^{n-2}$. Set $A=16\mathbb{Z}+(4+\tau)\mathbb{Z}$, $B=2^{n-2}\mathbb{Z}+(2^{n-4}+\tau)\mathbb{Z}$ and $C=2^{n-2}\mathbb{Z}+(2^{n-3}+\tau)\mathbb{Z}$. Then $A,B,C\in\mathcal{A}(\mathcal{I}^*_2(\mathcal{O}_f))$ and $ABC=2^{n-4}A(16\mathbb{Z}+(12+\tau)\mathbb{Z})=(2\mathcal{O}_f)^n$. This implies that $\mathsf{c}((2\mathcal{O}_f)^2\cdot D\cdot\overline{D}\cdot L,(2\mathcal{O}_f)^n\cdot L)\leq\mathsf{c}(D\cdot\overline{D},(2\mathcal{O}_f)^{n-2})\leq 4$ by Lemma~\ref{lemma 4.10}.2. Moreover, $\mathsf{d}(A\cdot B\cdot C\cdot L,(2\mathcal{O}f)^2\cdot D\cdot\overline{D}\cdot L)\leq 4$ and $\mathsf{d}(I\cdot J,A\cdot B\cdot C\cdot L)\leq 4$. Consequently, $\mathsf{c}(I\cdot J,(2\mathcal{O}_f)^n\cdot L)\leq 4$ and $\ell-2\leq 2$.
\end{proof}

\begin{proposition}\label{proposition 4.12}
Let ${\rm v}_2(f)\in\{2,3\}$ and $d\equiv 1\mod 8$. Then $3\in\Delta(\mathcal{I}^*_2(\mathcal{O}_f))$ and $5\in {\rm Ca}(\mathcal{I}^*_2(\mathcal{O}_f))$.
\end{proposition}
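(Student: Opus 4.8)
The strategy is to exhibit, after reducing to $f\in\{4,8\}$, two invertible $\mathfrak{p}$-primary ideal atoms $I,J$ of $\mathcal{O}_f$ (where $\mathfrak{p}=P_{f,2}$) with $\mathsf{L}(IJ)\cap[2,5]=\{2,5\}$; this immediately forces $3\in\Delta(IJ)$ and, as we will see, $\mathsf{c}(IJ)=5$, so that $3\in\Delta(\mathcal{I}^*_2(\mathcal{O}_f))$ and $5\in{\rm Ca}(\mathcal{I}^*_2(\mathcal{O}_f))$. (That such $I,J$ must exist once $\sup\Delta(\mathcal{I}^*_2(\mathcal{O}_f))=3$ is exactly the content of Proposition~\ref{proposition 4.1}.5.) Since $\mathcal{I}^*_2(\mathcal{O}_f)\cong\mathcal{I}^*_2(\mathcal{O}_{2^{{\rm v}_2(f)}})$ by Proposition~\ref{proposition 3.3}.3, I may assume $f\in\{4,8\}$, and I note that $d\equiv 1\mod 8$ forces $d_K=d\equiv 1\mod 8$, so $2$ splits.

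First I record the global bounds. By Theorem~\ref{theorem 3.6}.4 the set $\mathcal{B}:=\{{\rm v}_2(\mathcal{N}(A))\mid A\in\mathcal{A}(\mathcal{I}^*_2(\mathcal{O}_f))\}$ equals $\{2\}\cup\mathbb{N}_{\geq 5}$ if $f=4$ and $\{2,4\}\cup\mathbb{N}_{\geq 7}$ if $f=8$, so in either case $\sup\Delta(\mathcal{B})=3$. Hence Proposition~\ref{proposition 4.11}.1 gives $\mathsf{c}(\mathcal{I}_2(\mathcal{O}_f))\leq 2+\sup\Delta(\mathcal{B})=5$, and since $\mathcal{I}^*_2(\mathcal{O}_f)$ is a divisor-closed submonoid, also $\mathsf{c}(\mathcal{I}^*_2(\mathcal{O}_f))\leq 5$.

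For the construction put $u=2\mathcal{O}_f$ and let $I$ be an invertible $\mathfrak{p}$-primary ideal atom of norm $2^5$ (if $f=4$) resp.\ $2^7$ (if $f=8$), which exists by Theorem~\ref{theorem 3.6}.4; set $J=\overline{I}$, so that $J$ is again such an atom and $IJ=\mathcal{N}(I)\mathcal{O}_f=u^5$ resp.\ $u^7$ by Proposition~\ref{proposition 3.2}.5. By Proposition~\ref{proposition 4.1}.2, $\max\mathsf{L}(IJ)$ equals $5$ if $f=4$ and $7$ if $f=8$, and $5\in\mathsf{L}(IJ)$; for $f=8$ the latter follows from $u^7=u^3B^2=u\cdot u\cdot u\cdot B\cdot B$, where $B$ is a $\mathfrak{p}$-primary ideal atom of norm $2^4$ and $B^2=u^4$ (a direct computation with the multiplication formula of Proposition~\ref{proposition 3.2}.1, or Proposition~\ref{proposition 3.2}.5). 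It remains to show that $IJ$ has no factorization of length $3$ or $4$. In any such factorization $A_1\cdots A_k$ one has $\sum_i{\rm v}_2(\mathcal{N}(A_i))={\rm v}_2(\mathcal{N}(IJ))$ with each ${\rm v}_2(\mathcal{N}(A_i))\in\mathcal{B}$, which leaves only a handful of possible multisets of norms: for $f=4$ only the triple $(2^2,2^2,2^6)$ occurs for $k=3$ and $k=4$ is impossible, while for $f=8$ one gets the triples $(2^2,2^2,2^{10})$, $(2^2,2^4,2^8)$ for $k=3$ and $(2^2,2^2,2^2,2^8)$, $(2^2,2^4,2^4,2^4)$ for $k=4$. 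Each of these is eliminated by combining Proposition~\ref{proposition 3.2}.1 and Lemma~\ref{lemma 4.2}.2 (the product of an atom of norm $p^2$ with an atom of larger norm is $p$ times an atom) with the relations $N_0^2=u^2$ for the unique nontrivial $\mathfrak{p}$-primary ideal atom $N_0$ of norm $p^2$, $B^2=u^4$ for atoms $B$ of norm $p^4$, and $BB'=u^3N_0$ for the two norm-$p^4$ atoms $B=16\mathbb{Z}+\tau\mathbb{Z}$, $B'=16\mathbb{Z}+(8+\tau)\mathbb{Z}$ (when $f=8$), together with the elementary observation that the $p$-exponent of a product of two ideal atoms of the form $p^v\mathbb{Z}+(\cdot+\tau)\mathbb{Z}$, $p^{v'}\mathbb{Z}+(\cdot+\tau)\mathbb{Z}$ is at most $\min\{v,v'\}$; in each case one is driven to the contradiction that some $u^j$ with $j\geq 2$ is an atom. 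Hence $\mathsf{L}(IJ)\cap[2,5]=\{2,5\}$.

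To finish, since $\mathcal{I}^*_2(\mathcal{O}_f)$ is cancellative and $\mathsf{L}(IJ)$ contains no integer strictly between $2$ and $5$, any chain of factorizations of $IJ$ joining a factorization of length $2$ to one of length $5$ must contain a step of distance at least $2+3=5$ by Equation~\eqref{equation 1}; thus $\mathsf{c}(IJ)\geq 5$, and with the bound of the second step $\mathsf{c}(IJ)=5\in{\rm Ca}(\mathcal{I}^*_2(\mathcal{O}_f))$, while $3\in\Delta(IJ)\subseteq\Delta(\mathcal{I}^*_2(\mathcal{O}_f))$. The main obstacle is the end of the third step, i.e.\ the finite but somewhat intricate verification that no factorization of length $3$ or $4$ exists: this is nearly immediate for $f=4$, but for $f=8$ one must know the two norm-$p^4$ atoms explicitly, because they both satisfy $B^2=u^4$ and hence each provide a potential short factorization of $u^7$ that has to be handled directly.
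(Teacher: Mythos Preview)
Your proposal is correct and follows essentially the same route as the paper: reduce to $f\in\{4,8\}$, take $I\overline{I}$ for an invertible atom $I$ of norm $2^5$ (resp.\ $2^7$), exhibit $2,5\in\mathsf{L}(I\overline{I})$ and exclude lengths $3$ and $4$ by enumerating the admissible norm profiles and killing each with Lemma~\ref{lemma 4.2}.2, then combine $3\in\Delta(I\overline{I})$ with the upper bound $\mathsf{c}\leq 5$ from Proposition~\ref{proposition 4.11}.1.

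Two small points where your write-up is looser than the paper's. First, when you justify $B^2=u^4$ by ``Proposition~\ref{proposition 3.2}.5'', that result gives $B\overline{B}=\mathcal{N}(B)\mathcal{O}_f$, so you are implicitly using $B=\overline{B}$ for the norm-$16$ atoms of $\mathcal{O}_8$; this is true (both $16\mathbb{Z}+\tau\mathbb{Z}$ and $16\mathbb{Z}+(8+\tau)\mathbb{Z}$ are self-conjugate), but it should be said. Second, your blanket claim that ``in each case one is driven to the contradiction that some $u^j$ with $j\geq 2$ is an atom'' needs one extra sentence for the profile $(2,4,8)$ when $f=8$: after applying Lemma~\ref{lemma 4.2}.2 you are left with a product of a norm-$16$ atom and a norm-$256$ atom equalling $u^6$, and it is precisely your ``elementary observation'' $g\leq\min\{v,v'\}\leq 4$ that forces the remaining factor to be $u^{6-g}$ with $6-g\geq 2$. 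The paper handles this profile differently (via $C=4\overline{D}$), and in the profile $(2,4,4,4)$ the paper argues with conjugates and a direct valuation estimate on $r,s$, whereas your pigeonhole use of $B^2=(B')^2=u^4$ is a clean alternative.
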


\begin{proof}
We distinguish two cases.

CASE 1: ${\rm v}_2(f)=2$. By Theorem~\ref{theorem 3.6} there is some $I\in\mathcal{A}(\mathcal{I}^*_2(\mathcal{O}_f))$ such that $\mathcal{N}(I)=32$. Set $J=\overline{I}$. Then $IJ=32\mathcal{O}_f$, and hence $\{2,5\}\subset\mathsf{L}(IJ)\subset [2,5]$. Again by Theorem~\ref{theorem 3.6} we have $\mathcal{N}(L)\in\{4\}\cup\{2^n\mid n\in\mathbb{N}_{\geq 5}\}$ for all $L\in\mathcal{A}(\mathcal{I}^*_2(\mathcal{O}_f))$. Note that if $A,B,C,D\in\mathcal{A}(\mathcal{I}^*_2(\mathcal{O}_f))$, then $\mathcal{N}(ABCD)\in\{256\}\cup\mathbb{N}_{\geq 2048}$. Since $\mathcal{N}(IJ)=1024$, we have $4\not\in\mathsf{L}(IJ)$. Assume that $3\in\mathsf{L}(IJ)$. Then there are some $A,B,C\in\mathcal{A}(\mathcal{I}^*_2(\mathcal{O}_f))$ such that $IJ=ABC$ and $\mathcal{N}(A)\leq\mathcal{N}(B)\leq\mathcal{N}(C)$. Therefore, $\mathcal{N}(A)=\mathcal{N}(B)=4$ and $\mathcal{N}(C)=64$. We infer by Lemma~\ref{lemma 4.2}.2 that $ABC=4L$ for some $L\in\mathcal{A}(\mathcal{I}^*_2(\mathcal{O}_f))$, and hence $L=8\mathcal{O}_f$, a contradiction. We have $\mathsf{L}(IJ)=\{2,5\}$, and thus $3\in\Delta(\mathcal{I}^*_2(\mathcal{O}_f))$ and $5=\mathsf{c}(IJ)\in {\rm Ca}(\mathcal{I}^*_2(\mathcal{O}_f))$.

\smallskip
CASE 2: ${\rm v}_2(f)=3$. By Proposition~\ref{proposition 3.3}.3 we can assume without restriction that $f=8$. By Theorem~\ref{theorem 3.6} there are some $I,J\in\mathcal{A}(\mathcal{I}^*_2(\mathcal{O}_f))$ such that $\mathcal{N}(I)=128$ and $\mathcal{N}(J)=16$. We have $I\overline{I}=128\mathcal{O}_f$ and $J\overline{J}=16\mathcal{O}_f$, and hence $I\overline{I}=8J\overline{J}$. This implies that $\{2,5\}\subset\mathsf{L}(I\overline{I})$. It follows from Theorem~\ref{theorem 3.6} that $\mathcal{N}(L)\in\{4,16\}\cup\{2^n\mid n\in\mathbb{N}_{\geq 7}\}$ for all $L\in\mathcal{A}(\mathcal{I}^*_2(\mathcal{O}_f))$.

First assume that $3\in\mathsf{L}(I\overline{I})$. Then there exist $A,B,C\in\mathcal{A}(\mathcal{I}^*_2(\mathcal{O}_f))$ such that $I\overline{I}=ABC$, and $\mathcal{N}(A)\leq\mathcal{N}(B)\leq\mathcal{N}(C)$. Therefore, $(\mathcal{N}(A),\mathcal{N}(B),\mathcal{N}(C))\in\{(4,16,256),(4,4,1024)\}$. If $(\mathcal{N}(A),\mathcal{N}(B),\mathcal{N}(C))=(4,16,256)$, then it follows by Lemma~\ref{lemma 4.2}.2 that $AB=2D$ for some $D\in\mathcal{A}(\mathcal{I}^*_2(\mathcal{O}_f))$ with $\mathcal{N}(D)=16$. We infer that $DC=64\mathcal{O}_f$, and hence $C=4\overline{D}$, a contradiction. Now let $(\mathcal{N}(A),\mathcal{N}(B),\mathcal{N}(C))=(4,4,1024)$. Then $ABC=4D$ for some $D\in\mathcal{A}(\mathcal{I}^*_2(\mathcal{O}_f))$ by Lemma~\ref{lemma 4.2}.2, and thus $D=32\mathcal{O}_f$, a contradiction. Consequently, $3\not\in\mathsf{L}(I\overline{I})$.

Next assume that $4\in\mathsf{L}(I\overline{I})$. Then there exist $A,B,C,D\in\mathcal{A}(\mathcal{I}^*_2(\mathcal{O}_f))$ such that $I\overline{I}=ABCD$, and $\mathcal{N}(A)\leq\mathcal{N}(B)\leq\mathcal{N}(C)\leq\mathcal{N}(D)$.

Then $(\mathcal{N}(A),\mathcal{N}(B),\mathcal{N}(C),\mathcal{N}(D))\in\{(4,4,4,256),(4,16,16,16)\}$.

If $(\mathcal{N}(A),\mathcal{N}(B),\mathcal{N}(C),\mathcal{N}(D))=(4,4,4,256)$, then $ABCD=8E$ for $E\in\mathcal{A}(\mathcal{I}^*_2(\mathcal{O}_f))$ by Lemma~\ref{lemma 4.2}.2, and hence $E=16\mathcal{O}_f$, a contradiction. Now let $(\mathcal{N}(A),\mathcal{N}(B),\mathcal{N}(C),\mathcal{N}(D))=(4,16,16,16)$. By Lemma~\ref{lemma 4.2}.2 there is some $E\in\mathcal{A}(\mathcal{I}^*_2(\mathcal{O}_f))$ with $\mathcal{N}(E)=16$ such that $AB=2E$. Therefore, $ECD=64\mathcal{O}_f$, and hence $CD=4\overline{E}$. There are some $(0,4,r),(0,4,s)\in\mathcal{M}_{f,2}$ such that $C=16\mathbb{Z}+(r+\tau)\mathbb{Z}$ and $D=16\mathbb{Z}+(s+\tau)\mathbb{Z}$. We have ${\rm v}_2(r^2-16d)={\rm v}_2(s^2-16d)=4$. Since $d\equiv 1\mod 8$, this implies that ${\rm v}_2(r),{\rm v}_2(s)\geq 3$. Therefore, $\min\{4,{\rm v}_2(r+s+\varepsilon)\}\in\{3,4\}$, and hence $CD=8F$ for some $F\in\mathcal{A}(\mathcal{I}^*_2(\mathcal{O}_f))$. We infer that $\overline{E}=2F$, a contradiction. Consequently, $4\not\in\mathsf{L}(I\overline{I})$.

Therefore, $2$ and $5$ are adjacent lengths of $I\overline{I}$, and hence $3\in\Delta(\mathcal{I}^*_2(\mathcal{O}_f))$. Note that $\mathsf{c}(\mathcal{I}^*_2(\mathcal{O}_f))\leq 5$ by Proposition~\ref{proposition 4.11}.1 and Theorem~\ref{theorem 3.6}. Moreover, since $\mathcal{I}^*_2(\mathcal{O}_f)$ is a cancellative monoid, we have $5\leq 2+\sup\Delta(\mathsf{L}(I\overline{I}))\leq\mathsf{c}(I\overline{I})\leq 5$, and thus $5=\mathsf{c}(I\overline{I})\in {\rm Ca}(\mathcal{I}^*_2(\mathcal{O}_f))$.
\end{proof}

\begin{lemma}\label{lemma 4.13}
Let $H\in\{\mathcal{I}(\mathcal{O}_f),\mathcal{I}^*(\mathcal{O}_f)\}$. For every prime divisor $p$ of $f$, we set $H_p=\mathcal{I}_p(\mathcal{O}_f)$ if $H=\mathcal{I}(\mathcal{O}_f)$ and $H_p=\mathcal{I}^*_p(\mathcal{O}_f)$ if $H=\mathcal{I}^*(\mathcal{O}_f)$.
\begin{enumerate}
\item[\textnormal{1.}] $H$ is half-factorial if and only if $H_p$ is half-factorial for every $p\in\mathbb{P}$ with $p\mid f$.

\item[\textnormal{2.}] If $H$ is not half-factorial, then $\sup\Delta(H)=\sup\{\sup\Delta(H_p)\mid p\in\mathbb{P}$ with $p\mid f\}$.

\item[\textnormal{3.}] $\mathsf{c}(H)=\sup\{\mathsf{c}(H_p)\mid p\in\mathbb{P}$ with $p\mid f\}$.
\end{enumerate}
\end{lemma}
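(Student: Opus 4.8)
The plan is to reduce the statement to the primary decomposition of $H$ together with one elementary fact about sumsets. By the isomorphisms \eqref{equation 3} and \eqref{equation 4}, $H\cong\coprod_{\mathfrak{p}\in\mathfrak{X}(\mathcal{O}_f)}H_{\mathfrak{p}}$, where $H_{\mathfrak{p}}=\mathcal{I}_{\mathfrak{p}}(\mathcal{O}_f)$ if $H=\mathcal{I}(\mathcal{O}_f)$ and $H_{\mathfrak{p}}=\mathcal{I}^*_{\mathfrak{p}}(\mathcal{O}_f)$ if $H=\mathcal{I}^*(\mathcal{O}_f)$; moreover each $H_{\mathfrak{p}}$ is a divisor-closed submonoid of $H$ (Subsection~\ref{Semigroups of ideals}), so $\Delta(H_{\mathfrak{p}})\subseteq\Delta(H)$ and $\mathsf{L}_{H_{\mathfrak{p}}}(a)=\mathsf{L}_H(a)$ for $a\in H_{\mathfrak{p}}$. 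For $\mathfrak{p}\in\mathcal{P}$ the monoid $H_{\mathfrak{p}}$ is free abelian, hence factorial, so $\Delta(H_{\mathfrak{p}})=\emptyset$ and $\mathsf{c}(H_{\mathfrak{p}})=0$; for $\mathfrak{p}\in\mathcal{P}^*$ one has $\mathfrak{p}=P_{f,p}$ for a unique prime $p\mid f$ and $H_{\mathfrak{p}}=H_p$.

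Given this, Part~3 is immediate from \eqref{equation 2}: $\mathsf{c}(H)=\sup\{\mathsf{c}(H_{\mathfrak{p}})\mid\mathfrak{p}\in\mathfrak{X}(\mathcal{O}_f)\}=\sup\{\mathsf{c}(H_p)\mid p\in\mathbb{P},\ p\mid f\}$, since the factors $H_{\mathfrak{p}}$ with $\mathfrak{p}\in\mathcal{P}$ are factorial.

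For Parts~1 and~2 I use that under the above isomorphism $\mathsf{L}_H(a)=\sum_{\mathfrak{p}}\mathsf{L}_{H_{\mathfrak{p}}}(a_{\mathfrak{p}})$ for $a=(a_{\mathfrak{p}})_{\mathfrak{p}}\in H$, a sumset in which all but finitely many summands equal $\{0\}$ and each summand is a finite subset of $\mathbb{N}_0$ (for $\mathfrak{p}\in\mathcal{P}^*$ this is Proposition~\ref{proposition 4.1}.2 applied to $H_p$; for $\mathfrak{p}\in\mathcal{P}$ it is a singleton). The combinatorial input is: if $L_1,L_2\subseteq\mathbb{Z}$ are finite and nonempty and $\Delta(L_1+L_2)\ne\emptyset$, then $\max\Delta(L_1+L_2)\le\max\big(\Delta(L_1)\cup\Delta(L_2)\big)$. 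This I prove directly: take consecutive elements $k<k+d$ of $L_1+L_2$ with $k=a_1+a_2$, $a_i\in L_i$; since $a_1$ and $a_2$ cannot both be maximal in $L_1$ resp.\ $L_2$, replacing the non-maximal one by its successor in the corresponding $L_i$ yields an element of $L_1+L_2$ strictly above $k$, hence at least $k+d$, which forces $d$ to be at most the gap to that successor, i.e.\ $d\le\max\Delta(L_i)$. An obvious induction extends this to finite sumsets, so for every $a\in H$ either $\Delta(\mathsf{L}_H(a))=\emptyset$ or $\max\Delta(\mathsf{L}_H(a))\le\max_{\mathfrak{p}}\sup\Delta(H_{\mathfrak{p}})=\max\{\sup\Delta(H_p)\mid p\mid f\}$.

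It remains to assemble the pieces. For Part~1: if every $H_p$ is half-factorial, then so is every $H_{\mathfrak{p}}$, and the bound just displayed forces $\Delta(\mathsf{L}_H(a))=\emptyset$ for all $a\in H$, i.e.\ $\Delta(H)=\emptyset$; conversely, if $H$ is half-factorial then $\Delta(H_p)\subseteq\Delta(H)=\emptyset$ for each $p\mid f$. For Part~2: from $\Delta(H_p)\subseteq\Delta(H)$ we get $\sup\Delta(H)\ge\sup\{\sup\Delta(H_p)\mid p\mid f\}$, while the displayed bound gives $\sup\Delta(H)=\sup_{a\in H}\sup\Delta(\mathsf{L}_H(a))\le\max\{\sup\Delta(H_p)\mid p\mid f\}$. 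I expect the only step requiring any genuine (though minor) argument to be the sumset inequality; everything else is bookkeeping with the coproduct decomposition, \eqref{equation 2}, and the divisor-closedness facts recorded in Section~\ref{2}.
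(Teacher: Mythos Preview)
Your proof is correct and follows the same route as the paper: both rest on the coproduct decomposition \eqref{equation 3}--\eqref{equation 4} and the fact that the factors at $\mathfrak{p}\in\mathcal{P}$ are factorial, then deduce Parts~1--3 from the general behaviour of $\Delta$ and $\mathsf{c}$ under coproducts. The only difference is cosmetic: the paper outsources the set-of-distances statement to \cite[Propositions~1.4.5.3 and~1.6.8.1]{Ge-HK06a}, whereas you prove the needed sumset inequality $\max\Delta(L_1+L_2)\le\max\big(\Delta(L_1)\cup\Delta(L_2)\big)$ directly, making your argument self-contained.
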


\begin{proof}
By Equations~\ref{equation 3} and ~\ref{equation 4}, we have
\[
\mathcal{I}^*(\mathcal{O}_f)\cong\coprod_{P\in\mathfrak{X}(\mathcal{O}_f)}\mathcal{I}^*_P(\mathcal{O}_f) \quad \text{ and } \quad \mathcal{I}(\mathcal{O}_f)\cong\coprod_{P\in\mathfrak{X}(\mathcal{O}_f)}\mathcal{I}_P(\mathcal{O}_f) \,.
\]
Thus the assertions are easy consequences (see \cite[Propositions 1.4.5.3 and 1.6.8.1]{Ge-HK06a}).
\end{proof}

\begin{proof}[Proof of Theorem~\ref{theorem 1.1}]
1. This is an immediate consequence of Proposition~\ref{proposition 4.6} and Lemma~\ref{lemma 4.13}.

\smallskip
2. First, suppose that $f$ is squarefree. By 1., we have $f$ is not a product of inert primes. It follows from Lemma~\ref{lemma 4.13}, Proposition~\ref{proposition 4.11}.1 and Theorem~\ref{theorem 3.6} that $\mathsf{c}(\mathcal{I}^*(\mathcal{O}))\leq\mathsf{c}(\mathcal{I}(\mathcal{O}))\leq 3$ and $\sup\Delta(\mathcal{I}^*(\mathcal{O}))\leq\sup\Delta(\mathcal{I}(\mathcal{O}))\leq 1$. By Lemma~\ref{lemma 4.2} and Proposition~\ref{proposition 4.8}.1, it follows that $1\in\Delta(\mathcal{I}^*(\mathcal{O}))$, $1\in {\rm Ca}(\mathcal{I}(\mathcal{O}))$ and $[2,3]\subset {\rm Ca}(\mathcal{I}^*(\mathcal{O}))$, and thus ${\rm Ca}(\mathcal{I}(\mathcal{O}))=[1,3]$, ${\rm Ca}(\mathcal{I}^*(\mathcal{O}))=[2,3]$, and $\Delta(\mathcal{I}(\mathcal{O}))=\Delta(\mathcal{I}^*(\mathcal{O}))=\{1\}$.

\smallskip
Now we suppose that $f$ is not squarefree and we distinguish two cases.

\smallskip
CASE 1: ${\rm v}_2\left(f\right)\not\in\{2,3\}$ or $d_K\not\equiv 1\mod 8$. By Lemma~\ref{lemma 4.13}, Proposition~\ref{proposition 4.11} and Theorem~\ref{theorem 3.6} it follows that $\mathsf{c}(\mathcal{I}^*(\mathcal{O}))\leq\mathsf{c}(\mathcal{I}(\mathcal{O}))\leq 4$ and $\sup\Delta(\mathcal{I}^*(\mathcal{O}))\leq\sup\Delta(\mathcal{I}(\mathcal{O}))\leq 2$. We infer by Lemma~\ref{lemma 4.2} and Propositions~\ref{proposition 4.4} and ~\ref{proposition 4.8} that $[1,2]\subset\Delta(\mathcal{I}^*(\mathcal{O}))$, $1\in {\rm Ca}(\mathcal{I}(\mathcal{O}))$, and $[2,4]\subset {\rm Ca}(\mathcal{I}^*(\mathcal{O}))$, and hence ${\rm Ca}(\mathcal{I}(\mathcal{O}))=[1,4]$, ${\rm Ca}(\mathcal{I}^*(\mathcal{O}))=[2,4]$, and $\Delta(\mathcal{I}(\mathcal{O}))=\Delta(\mathcal{I}^*(\mathcal{O}))=[1,2]$.

\smallskip
CASE 2: ${\rm v}_2\left(f\right)\in\{2,3\}$ and $d_K\equiv 1\mod 8$. We infer by Lemma~\ref{lemma 4.13}, Proposition~\ref{proposition 4.11}.1 and Theorem~\ref{theorem 3.6} that $\mathsf{c}(\mathcal{I}^*(\mathcal{O}))\leq\mathsf{c}(\mathcal{I}(\mathcal{O}))\leq 5$ and $\sup\Delta(\mathcal{I}^*(\mathcal{O}))\leq\sup\Delta(\mathcal{I}(\mathcal{O}))\leq 3$. Lemma~\ref{lemma 4.2} and Propositions~\ref{proposition 4.4},~\ref{proposition 4.8} and~\ref{proposition 4.12} imply that $[1,3]\subset\Delta(\mathcal{I}^*(\mathcal{O}))$, $1\in {\rm Ca}(\mathcal{I}(\mathcal{O}))$ and $[2,5]\subset {\rm Ca}(\mathcal{I}^*(\mathcal{O}))$. Consequently, ${\rm Ca}(\mathcal{I}(\mathcal{O}))=[1,5]$, ${\rm Ca}(\mathcal{I}^*(\mathcal{O}))=[2,5]$, and $\Delta(\mathcal{I}(\mathcal{O}))=\Delta(\mathcal{I}^*(\mathcal{O}))=[1,3]$.
\end{proof}

Based on the results of this section we derive a result on the set of distances of orders. Let $\mathcal{O}$ be a non-half-factorial order in a number field. Then the set of distances $\Delta(\mathcal{O})$ is finite. If $\mathcal{O}$ is a principal order, then it is easy to show that $\min\Delta(\mathcal{O})=1$ (indeed much stronger results are known, namely that sets of lengths of almost all elements -- in a sense of density -- are intervals, see \cite[Theorem 9.4.11]{Ge-HK06a}). The same is true if $|{\rm Pic}(\mathcal{O})|\ge 3$ or if $\mathcal{O}$ is seminormal (\cite[Theorem 1.1]{Ge-Zh16c}). However, it was unknown so far whether there exists an order $\mathcal{O}$ with $\min\Delta(\mathcal{O})>1$. In the next result of this section we characterize all non-half-factorial orders in quadratic number fields with $\min\Delta(\mathcal{O})>1$ which allows us to give the first explicit examples of orders $\mathcal{O}$ with $\min\Delta(\mathcal{O})>1$. A characterization of half-factorial orders in quadratic number fields is given in \cite[Theorem 3.7.15]{Ge-HK06a}.

\smallskip
Let $\mathcal{O}$ be an order in a quadratic number field $K$ with conductor $f\in\mathbb{N}_{\ge 2}$. Then the class numbers $|{\rm Pic}(\mathcal{O}_K)|$ and $|{\rm Pic}(\mathcal{O})|$ are linked by the formula (\cite[Corollary 5.9.8]{HK13a})
\begin{equation}\label{equation 6}
|{\rm Pic}(\mathcal{O})|=|{\rm Pic}(\mathcal{O}_K)|\frac{f}{(\mathcal{O}_K^{\times}:\mathcal{O}^{\times})}\prod_{p\in\mathbb{P},p\mid f}\left(1-\Big(\frac{d_K}{p}\Big) p^{-1}\right),
\end{equation}

and $|{\rm Pic}(\mathcal{O})|$ is a multiple of $|{\rm Pic}(\mathcal{O}_K)|$.

\smallskip
Since the number of imaginary quadratic number fields with class number at most two is finite (an explicit list of these fields can be found, for example, in \cite{Ri01a}), \eqref{equation 6} shows that the number of orders in imaginary quadratic number fields with $|{\rm Pic}(\mathcal{O})|=2$ is finite. The complete list of non-maximal orders in imaginary quadratic number fields with $|{\rm Pic}(\mathcal{O})|=2$ is given in \cite[page 16]{Kl12a}. We refer to \cite{HK13a} for more information on class groups and class numbers and end with explicit examples of non-half-factorial orders $\mathcal{O}$ satisfying $\min\Delta(\mathcal{O})>1$.

\smallskip
\begin{theorem}\label{theorem 4.14}
Let $\mathcal{O}$ be a non-half-factorial order in a quadratic number field $K$ with conductor $f\mathcal{O}_K$ for some $f\in\mathbb{N}_{\geq 2}$. Then the following statements are equivalent{\rm \,:}
\begin{itemize}
\item[\textnormal{(a)}] $\min\Delta(\mathcal{O})>1$.

\item[\textnormal{(b)}] $|{\rm Pic}(\mathcal{O})|=2$, $f$ is a nonempty squarefree product of ramified primes times a $($possibly empty$)$ squarefree product of inert primes, and for every prime divisor $p$ of $f$ and every $I\in\mathcal{A}(\mathcal{I}^*_p(\mathcal{O}))$, $I$ is principal if and only if $\mathcal{N}(I)=p^2$.
\end{itemize}
If these equivalent conditions are satisfied, then $K$ is a real quadratic number field and $\min\Delta(\mathcal{O})=2$.
\end{theorem}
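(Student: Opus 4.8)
The plan is to prove $(a)\Leftrightarrow(b)$ by establishing the two implications in turn, and then to derive $\min\Delta(\mathcal{O})=2$ and the reality of $K$ from the analysis behind $(b)\Rightarrow(a)$; throughout I use that $\min\Delta(\mathcal{O})=\gcd\Delta(\mathcal{O})$ and the identification $\mathcal{I}^*(\mathcal{O})=\coprod_{\mathfrak{p}\in\mathfrak{X}(\mathcal{O})}\mathcal{I}^*_{\mathfrak{p}}(\mathcal{O})$. For $(a)\Rightarrow(b)$ I would argue by contradiction against the violation of each clause of $(b)$. So assume $\min\Delta(\mathcal{O})>1$. Then $|{\rm Pic}(\mathcal{O})|\le 2$ by the argument given in the proof of Proposition~\ref{proposition 4.8} (which rests on \cite[Theorem~6.7.1.2]{Ge-HK06a}), and $|{\rm Pic}(\mathcal{O})|\neq 1$, since otherwise $\mathcal{I}^*(\mathcal{O})\cong\mathcal{O}_{\rm red}$, so $\Delta(\mathcal{O})=\Delta(\mathcal{I}^*(\mathcal{O}))$ is nonempty (as $\mathcal{O}$ is not half-factorial) with minimum $1$ by Theorem~\ref{theorem 1.1}; hence $|{\rm Pic}(\mathcal{O})|=2$. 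If some prime divisor $p$ of $f$ had ${\rm v}_p(f)\ge 2$ or split, or if $f$ were a squarefree product of inert primes, then $1\in\Delta(\mathcal{O})$ by Proposition~\ref{proposition 4.8}.2(i),(iv); so $f$ is squarefree, each of its prime divisors is inert or ramified, and at least one is ramified. Finally, for inert $p\mid f$ every atom of $\mathcal{I}^*_p(\mathcal{O})$ has norm $p^2$ by Theorem~\ref{theorem 3.6}.4 and none can be non-principal by Proposition~\ref{proposition 4.8}.2(ii); for ramified $p\mid f$ the atoms of $\mathcal{I}^*_p(\mathcal{O})$ have norm $p^2$ or $p^3$, those of norm $p^2$ including the principal ideal $p\mathcal{O}$, and no atom of norm $p^3$ can be principal by Proposition~\ref{proposition 4.8}.2(iii). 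This gives $(b)$.

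For $(b)\Rightarrow(a)$, assume $(b)$ and write ${\rm Pic}(\mathcal{O})=\{0,g\}$. By Theorem~\ref{theorem 3.6}.4 every atom of $\mathcal{I}^*_p(\mathcal{O})$ with $p\mid f$ has norm $p^2$ or $p^3$, the norm $p^3$ occurring only at ramified primes, and by the principality clause of $(b)$ such an atom lies in the class $0$ if and only if its norm is $p^2$. Now define a monoid homomorphism $\Phi\colon\mathcal{I}^*(\mathcal{O})\to\tfrac12\mathbb{Z}_{\ge 0}$ componentwise: on $\mathcal{I}^*_{\mathfrak{p}}(\mathcal{O})\cong(\mathbb{N}_0,+)$ for $\mathfrak{p}\in\mathcal{P}$ let $\Phi$ send the generator $\mathfrak{p}$ to $1$ if $[\mathfrak{p}]=0$ and to $\tfrac32$ if $[\mathfrak{p}]=g$, and on $\mathcal{I}^*_p(\mathcal{O})$ for $p\mid f$ let $\Phi(I)=\tfrac12{\rm v}_p(\mathcal{N}(I))$, which is additive since $\mathcal{N}$ is multiplicative on $\mathcal{I}^*_p(\mathcal{O})$ by Proposition~\ref{proposition 3.2}.3. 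From the description of atoms just given, every atom $A$ of $\mathcal{I}^*(\mathcal{O})$ satisfies $\Phi(A)=1$ if $[A]=0$ and $\Phi(A)=\tfrac32$ if $[A]=g$. The decisive step is then: for every irreducible $u\in\mathcal{O}$ one has $\Phi(u\mathcal{O})\in\{1,3\}$. To see this, write $u\mathcal{O}=A_1\cdots A_k$ with the $A_i$ atoms of $\mathcal{I}^*(\mathcal{O})$; if $k\ge 2$ and some $A_i$ lay in the class $0$, then $A_i$ and $\prod_{j\neq i}A_j$ would be proper principal ideals, so $u$ would be reducible; hence either $k=1$ with $[A_1]=0$ (giving $\Phi(u\mathcal{O})=1$), or all $A_i$ lie in the class $g$, which forces $k$ to be even, and if $k\ge 4$ then $A_1A_2$ and $A_3\cdots A_k$ are proper principal ideals and again $u$ is reducible, so $k=2$ and $\Phi(u\mathcal{O})=\tfrac32+\tfrac32=3$.

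Since the submonoid $\mathcal{H}(\mathcal{O})\subset\mathcal{I}^*(\mathcal{O})$ of principal ideals satisfies $\mathcal{H}(\mathcal{O})\cong\mathcal{O}_{\rm red}$, and hence $\Delta(\mathcal{H}(\mathcal{O}))=\Delta(\mathcal{O})$, the decisive step shows that for $a\in\mathcal{O}\setminus\{0\}$ and any factorization $a=u_1\cdots u_\ell$ into irreducibles, $\Phi(a\mathcal{O})=\sum_{i=1}^{\ell}\Phi(u_i\mathcal{O})\equiv\ell\pmod 2$; thus all lengths in $\mathsf{L}_{\mathcal{O}}(a)$ are congruent modulo $2$, so $\Delta(\mathcal{O})\subseteq 2\mathbb{N}$ and, as $\mathcal{O}$ is not half-factorial, $\min\Delta(\mathcal{O})>1$. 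This proves $(b)\Rightarrow(a)$. To get $\min\Delta(\mathcal{O})=2$, fix a ramified prime $p\mid f$, an atom $C$ of $\mathcal{I}^*_p(\mathcal{O})$ of norm $p^3$ (so $[C]=g$ and, by Proposition~\ref{proposition 3.2}.5, $C\overline{C}=(p\mathcal{O})^3$), and a prime $\mathfrak{q}\in\mathcal{P}$ with $[\mathfrak{q}]=g$ (which exists since every ideal class of $\mathcal{O}$ contains a prime of $\mathcal{P}$); then $C\mathfrak{q}$, $\overline{C}\mathfrak{q}$, $\mathfrak{q}^2$ are principal and generated by irreducibles $v,v',w$ of $\mathcal{O}$, and $vv'\mathcal{O}=C\overline{C}\mathfrak{q}^2=(p\mathcal{O})^3\mathfrak{q}^2=p^3w\mathcal{O}$, so $\{2,4\}\subseteq\mathsf{L}_{\mathcal{O}}(vv')$ while $3\notin\mathsf{L}_{\mathcal{O}}(vv')$ by the parity just proved; hence $2\in\Delta(\mathcal{O})$ and $\min\Delta(\mathcal{O})=\gcd\Delta(\mathcal{O})=2$. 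Finally, if $K$ were imaginary quadratic then $|{\rm Pic}(\mathcal{O})|=2$ would force $\mathcal{O}$ onto the finite explicit list of non-maximal orders in imaginary quadratic fields with Picard group of order $2$ (see \cite[page~16]{Kl12a} together with \eqref{equation 6}), and inspecting that list and its $P_{f,p}$-primary atoms via Theorem~\ref{theorem 3.6}.4 shows that the principality clause of $(b)$ fails in every case; so $K$ is real.

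I expect the main obstacle to be the decisive step -- the precise control over which products of two class-$g$ atoms of $\mathcal{I}^*(\mathcal{O})$ are irreducible in $\mathcal{O}$, and in particular the verification that the degenerate products $I\overline{I}=(p\mathcal{O})^3$ at ramified primes never arise in this way -- together with the finite but computational verification, in the imaginary case, that condition $(b)$ is never satisfied.
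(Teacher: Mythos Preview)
Your argument for $(b)\Rightarrow(a)$ via the homomorphism $\Phi$ is correct and considerably cleaner than the paper's proof. The paper assumes $\min\Delta(\mathcal{O})=1$, chooses a minimal relation $\prod_{i=1}^k U_i=\prod_{j=1}^{k+1}U_j'$ among atoms of the principal-ideal monoid, sorts the $U_i,U_j'$ into several types ($\mathcal{A}_p,\mathcal{B}_p,\mathcal{C},\mathcal{E}_z$), and extracts a system of linear equations in the multiplicities that collapses to $1=2(\text{integer})$. Your parity argument via $\Phi$ packages the same invariants but sidesteps all the bookkeeping; it also yields $\Delta(\mathcal{O})\subseteq 2\mathbb{N}$ directly, whereas the paper only obtains $1\notin\Delta(\mathcal{O})$ and must then exhibit $2\in\Delta(\mathcal{O})$ separately. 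Your construction of an element with $\{2,4\}\subseteq\mathsf{L}$ and your handling of the imaginary case are essentially the paper's (the paper reduces the list in \cite{Kl12a} to the single candidate $(f,d_K)=(2,-8)$ and then exhibits the principal atom $8\mathbb{Z}+2\sqrt{-2}\mathbb{Z}$ of norm $8$).

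There is, however, a real gap in your $(a)\Rightarrow(b)$. For a ramified prime $p\mid f$ you verify, via Proposition~\ref{proposition 4.8}.2(iii), that every atom of $\mathcal{I}^*_p(\mathcal{O})$ of norm $p^3$ is non-principal, but you never show that every atom of norm $p^2$ \emph{is} principal---and there are $p-1$ such atoms besides $p\mathcal{O}$. Condition $(b)$ requires both directions of the biconditional, and your $\Phi$-argument for $(b)\Rightarrow(a)$ genuinely uses the missing one: it is precisely what makes $\Phi(A)=1\Leftrightarrow[A]=0$ hold on $\mathcal{I}^*_p(\mathcal{O})$. The paper closes this gap with an explicit claim: given $|{\rm Pic}(\mathcal{O})|=2$ and all norm-$p^3$ atoms non-principal, one fixes such an atom $I$, notes that for every norm-$p^3$ atom $J$ there is a unique norm-$p^2$ atom $L$ with $IJ=p^2L$, checks via Theorem~\ref{theorem 3.6} that $J\mapsto L$ is a bijection between two sets of size $p$, and concludes that each $L$ is principal because $IJ$ is. You need to insert this (or an equivalent) step.
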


\begin{proof}
CLAIM: If $|{\rm Pic}(\mathcal{O})|=2$, $p$ is a ramified prime with ${\rm v}_p(f)=1$, and every $I\in\mathcal{A}(\mathcal{I}^*_p(\mathcal{O}))$ with $\mathcal{N}(I)=p^3$ is not principal, then every $L\in\mathcal{A}(\mathcal{I}^*_p(\mathcal{O}))$ with $\mathcal{N}(L)=p^2$ is principal.

\smallskip
Let $|{\rm Pic}(\mathcal{O})|=2$, let $p$ be a ramified prime with ${\rm v}_p(f)=1$, and suppose that every $I\in\mathcal{A}(\mathcal{I}^*_p(\mathcal{O}))$ with $\mathcal{N}(I)=p^3$ is not principal. By Theorem~\ref{theorem 3.6} we have $\{\mathcal{N}(J)\mid J\in\mathcal{A}(\mathcal{I}^*_p(\mathcal{O}))\}=\{p^2,p^3\}$. There is some $I\in\mathcal{A}(\mathcal{I}^*_p(\mathcal{O}))$ such that $\mathcal{N}(I)=p^3$. If $J\in\mathcal{A}(\mathcal{I}^*_p(\mathcal{O}))$ with $\mathcal{N}(J)=p^3$, then $IJ=p^2L$ for some $L\in\mathcal{A}(\mathcal{I}^*_p(\mathcal{O}))$ with $\mathcal{N}(L)=p^2$ (since there are no atoms with norm bigger than $p^3$). It follows by Theorem~\ref{theorem 3.6} that $|\{J\in\mathcal{A}(\mathcal{I}^*_p(\mathcal{O}))\mid\mathcal{N}(J)=p^3\}|=|\{L\in\mathcal{A}(\mathcal{I}^*_p(\mathcal{O}))\mid\mathcal{N}(L)=p^2\}|=p$ (note that $\mathcal{N}(p\mathcal{O})=p^2$). Let $g:\{J\in\mathcal{A}(\mathcal{I}^*_p(\mathcal{O}))\mid\mathcal{N}(J)=p^3\}\rightarrow\{L\in\mathcal{A}(\mathcal{I}^*_p(\mathcal{O}))\mid\mathcal{N}(L)=p^2\}$ be defined by $g(J)=L$ where $L\in\mathcal{A}(\mathcal{I}^*_p(\mathcal{O}))$ is such that $\mathcal{N}(L)=p^2$ and $IJ=p^2L$. Then $g$ is a well-defined bijection. Now let $L\in\mathcal{A}(\mathcal{I}^*_p(\mathcal{O}))$ with $\mathcal{N}(L)=p^2$. There is some $J\in\mathcal{A}(\mathcal{I}^*_p(\mathcal{O}))$ such that $\mathcal{N}(J)=p^3$ and $IJ=p^2L$. Since $|{\rm Pic}(\mathcal{O})|=2$ and $I$ and $J$ are not principal, we have $IJ$ is principal, and hence $L$ is principal. This proves the claim.

\smallskip
(a) $\Rightarrow$ (b) Observe that if $p$ is an inert prime such that ${\rm v}_p(f)=1$, then $\{\mathcal{N}(J)\mid J\in\mathcal{A}(\mathcal{I}^*_p(\mathcal{O}))\}=\{p^2\}$ by Theorem~\ref{theorem 3.6}. Also note that if $p$ is a ramified prime such that ${\rm v}_p(f)=1$, then $\{\mathcal{N}(J)\mid J\in\mathcal{A}(\mathcal{I}^*_p(\mathcal{O}))\}=\{p^2,p^3\}$ by Theorem~\ref{theorem 3.6}. The assertion now follows by the claim and Proposition~\ref{proposition 4.8}.2.

\smallskip
(b) $\Rightarrow$ (a) Assume to the contrary that $\min\Delta(\mathcal{O})=1$. Let $\mathcal{H}$ be the monoid of nonzero principal ideals of $\mathcal{O}$. There is some minimal $k\in\mathbb{N}$ such that $\prod_{i=1}^k U_i=\prod_{j=1}^{k+1} U^{\prime}_j$ with $U_i\in\mathcal{A}(\mathcal{H})$ for each $i\in [1,k]$ and $U^{\prime}_j\in\mathcal{A}(\mathcal{H})$ for each $j\in [1,k+1]$.

Set $\mathcal{Q}_1=\{P\in\mathfrak{X}(\mathcal{O})\mid P$ is principal$\}$, $\mathcal{Q}_2=\{P\in\mathfrak{X}(\mathcal{O})\mid P$ is invertible and not principal$\}$, $\mathcal{L}=\{p\in\mathbb{P}\mid p\mid f,p$ is ramified$\}$ and $\mathcal{K}=\{\{p,q\}\mid p,q\in\mathcal{L},p\not=q\}$. For every prime divisor $p$ of $f$ set $\mathcal{A}_p=\{V\in\mathcal{A}(\mathcal{I}^*_p(\mathcal{O}))\mid\mathcal{N}(V)=p^2\}$, $a_p=|\{i\in [1,k]\mid U_i\in\mathcal{A}_p\}|$ and $a^{\prime}_p=|\{j\in [1,k+1]\mid U^{\prime}_j\in\mathcal{A}_p\}|$. For $p\in\mathcal{L}$ set $\mathcal{D}_p=\{V\in\mathcal{A}(\mathcal{I}^*_p(\mathcal{O}))\mid\mathcal{N}(V)=p^3\}$, $\mathcal{B}_p=\{PV\mid P\in\mathcal{Q}_2$ and $V\in\mathcal{D}_p\}$, $b_p=|\{i\in [1,k]\mid U_i\in\mathcal{B}_p\}|$ and $b^{\prime}_p=|\{j\in [1,k+1]\mid U^{\prime}_j\in\mathcal{B}_p\}|$. Set $\mathcal{C}=\{PQ\mid P,Q\in\mathcal{Q}_2\}$, $c=|\{i\in [1,k]\mid U_i\in\mathcal{C}\}|$ and $c^{\prime}=|\{j\in [1,k+1]\mid U^{\prime}_j\in\mathcal{C}\}|$. If $z\in\mathcal{K}$ is such that $z=\{p,q\}$ with $p,q\in\mathcal{L}$ and $p\not=q$, then set $\mathcal{E}_z=\{VW\mid V\in\mathcal{D}_p,W\in\mathcal{D}_q\}$, $e_z=|\{i\in [1,k]\mid U_i\in\mathcal{E}_z\}|$ and $e^{\prime}_z=|\{j\in [1,k+1]\mid U^{\prime}_j\in\mathcal{E}_z\}|$.

Since $|{\rm Pic}(\mathcal{O})|=2$, we have $\mathcal{A}(\mathcal{H})\subset (\mathcal{A}(\mathcal{I}^*(\mathcal{O}))\cap\mathcal{H})\cup\{VW\mid V,W\in\mathcal{A}(\mathcal{I}^*(\mathcal{O})),V$ and $W$ are not principal$\}$. As shown in the proof of the claim, $VW\not\in\mathcal{A}(\mathcal{H})$ for all $p\in\mathcal{L}$ and $V,W\in\mathcal{D}_p$. We infer that $\mathcal{A}(\mathcal{H})=\mathcal{Q}_1\cup\bigcup_{p\in\mathbb{P},p\mid f}\mathcal{A}_p\cup\bigcup_{p\in\mathcal{L}}\mathcal{B}_p\cup\mathcal{C}\cup\bigcup_{z\in\mathcal{K}}\mathcal{E}_z$.

Since $k$ is minimal, we have $U_i,U^{\prime}_j\not\in\mathcal{Q}_1$ for all $i\in [1,k]$ and $j\in [1,k+1]$. Again since $k$ is minimal and $\mathcal{I}^*_p(\mathcal{O})$ is half-factorial for all inert prime divisors $p$ of $f$ by Proposition~\ref{proposition 4.6}, we have $a_p=a^{\prime}_p=0$ for all inert prime divisors $p$ of $f$. Therefore,

\[
k=\sum_{p\in\mathcal{L}} (a_p+b_p)+c+\sum_{z\in\mathcal{K}} e_z\textnormal{ and }k+1=\sum_{p\in\mathcal{L}} (a^{\prime}_p+b^{\prime}_p)+c^{\prime}+\sum_{z\in\mathcal{K}} e^{\prime}_z.
\]

If $i\in [1,k]$, then $\sum_{P\in\mathcal{Q}_2} {\rm v}_P(U_i)=\begin{cases} 2 & \textnormal{ if } U_i\in\mathcal{C}\\ 1 & \textnormal{ if } U_i\in\bigcup_{p\in\mathcal{L}}\mathcal{B}_p\\ 0 &\textnormal{ else}\end{cases}$. This implies that $\sum_{P\in\mathcal{Q}_2} {\rm v}_P(\prod_{i=1}^k U_i)=\sum_{i=1}^k\sum_{P\in\mathcal{Q}_2}{\rm v}_P(U_i)=\sum_{p\in\mathcal{L}} b_p+2c$. It follows by analogy that $\sum_{P\in\mathcal{Q}_2} {\rm v}_P(\prod_{j=1}^{k+1} U^{\prime}_j)=\sum_{p\in\mathcal{L}} b^{\prime}_p+2c^{\prime}$. Therefore, $\sum_{p\in\mathcal{L}} b_p+2c=\sum_{p\in\mathcal{L}} b^{\prime}_p+2c^{\prime}$. Let $r\in\mathcal{L}$.

If $i\in [1,k]$, then ${\rm v}_r(\mathcal{N}((U_i)_{P_{f,r}}\cap\mathcal{O}))=\begin{cases} 3 & \textnormal{ if } U_i\in\mathcal{B}_r\cup\bigcup_{q\in\mathcal{L}\setminus\{r\}}\mathcal{E}_{\{r,q\}}\\ 2 & \textnormal{ if } U_i\in\mathcal{A}_r\\ 0 &\textnormal{ else}\end{cases}$. Consequently,

\[
{\rm v}_r(\mathcal{N}((\prod_{i=1}^k U_i)_{P_{f,r}}\cap\mathcal{O}))=\sum_{i=1}^k {\rm v}_r(\mathcal{N}((U_i)_{P_{f,r}}\cap\mathcal{O}))=2a_r+3b_r+3\sum_{q\in\mathcal{L}\setminus\{r\}} e_{\{r,q\}}.
\]

By analogy we have ${\rm v}_r(\mathcal{N}((\prod_{j=1}^{k+1} U^{\prime}_j)_{P_{f,r}}\cap\mathcal{O}))=2a^{\prime}_r+3b^{\prime}_r+3\sum_{q\in\mathcal{L}\setminus\{r\}} e^{\prime}_{\{r,q\}}$. This implies that $2a_r+3b_r+3\sum_{q\in\mathcal{L}\setminus\{r\}} e_{\{r,q\}}=2a^{\prime}_r+3b^{\prime}_r+3\sum_{q\in\mathcal{L}\setminus\{r\}} e^{\prime}_{\{r,q\}}$. We infer that
\begin{align*}
&\sum_{p\in\mathcal{L}} (a^{\prime}_p-a_p+b^{\prime}_p-b_p)+c^{\prime}-c+\sum_{z\in\mathcal{K}} (e^{\prime}_z-e_z)=1,\textnormal{ }\sum_{p\in\mathcal{L}} (b^{\prime}_p-b_p)=2(c-c^{\prime})\\
&\textnormal{and }2\sum_{p\in\mathcal{L}} (a^{\prime}_p-a_p)+3\sum_{p\in\mathcal{L}} (b^{\prime}_p-b_p)+3\sum_{p\in\mathcal{L}}\sum_{q\in\mathcal{L}\setminus\{p\}} (e^{\prime}_{\{p,q\}}-e_{\{p,q\}})=0.
\end{align*}

Note that $\sum_{p\in\mathcal{L}}\sum_{q\in\mathcal{L}\setminus\{p\}} (e^{\prime}_{\{p,q\}}-e_{\{p,q\}})=2\sum_{z\in\mathcal{K}} (e^{\prime}_z-e_z)$, and hence $\sum_{p\in\mathcal{L}} (a^{\prime}_p-a_p)=3(c^{\prime}-c)-3\sum_{z\in\mathcal{K}} (e^{\prime}_z-e_z)$. Consequently,
\begin{align*}
1&=\sum_{p\in\mathcal{L}} (a^{\prime}_p-a_p+b^{\prime}_p-b_p)+c^{\prime}-c+\sum_{z\in\mathcal{K}} (e^{\prime}_z-e_z)\\
&=3(c^{\prime}-c)-3\sum_{z\in\mathcal{K}} (e^{\prime}_z-e_z)+2(c-c^{\prime})+c^{\prime}-c+\sum_{z\in\mathcal{K}} (e^{\prime}_z-e_z)\\
&=2(c^{\prime}-c-\sum_{z\in\mathcal{K}} (e^{\prime}_z-e_z)),
\end{align*}
a contradiction.

\smallskip
Now let the equivalent conditions be satisfied. Assume to the contrary that $K$ is an imaginary quadratic number field. Since $\mathcal{O}$ is a non-maximal order with $|{\rm Pic}(\mathcal{O})|=2$, it follows from \cite[page 16]{Kl12a} that $(f,d_K)\in\{(2,-8),(2,-15),(3,-4),(3,-8),(3,-11),(4,-3),(4,-4),(4,-7),(5,-3),(5,-4),(7,-3)\}$.

Since $f$ is squarefree and divisible by a ramified prime, we infer that $f=2$ and $d_K=-8$. Therefore, $\mathcal{O}=\mathbb{Z}+2\sqrt{-2}\mathbb{Z}$. Set $I=8\mathbb{Z}+2\sqrt{-2}\mathbb{Z}$. Observe that $I\in\mathcal{A}(\mathcal{I}^*_2(\mathcal{O}))$ and $\mathcal{N}(I)=8$. Moreover, $I=2\sqrt{-2}\mathcal{O}$ is principal, a contradiction. Consequently, $K$ is a real quadratic number field.

\smallskip
It remains to show that $\min\Delta(\mathcal{O})=2$. There is some ramified prime $p$ which divides $f$ and there is some $J\in\mathcal{A}(\mathcal{I}^*_p(\mathcal{O}))$ with $\mathcal{N}(J)=p^3$. As shown in the proof of the claim, $J^2=p^2L$ for some $L\in\mathcal{A}(\mathcal{I}^*_p(\mathcal{O}))$. By \cite[Corollary 2.11.16]{Ge-HK06a}, there is some invertible prime ideal $P$ of $\mathcal{O}$ that is not principal. Observe that $J$ is not principal. We have $PJ$, $P^2$ and $L$ are principal, and hence there are some $u,v,w\in\mathcal{A}(\mathcal{O})$ such that $PJ=u\mathcal{O}$, $P^2=v\mathcal{O}$, $L=w\mathcal{O}$, and $u^2=p^2vw$. Therefore, $\{2,4\}\subseteq\mathsf{L}(u^2)$, and since $\min\Delta(\mathcal{O})>1$, we infer that $\min\Delta(\mathcal{O})=2$.
\end{proof}

\begin{proposition}\label{proposition 4.15} Let $\mathcal{O}$ be an order in the quadratic number field $K$ with conductor $f\mathcal{O}_K$ for some $f\in\mathbb{N}_{\geq 2}$ such that $\min\Delta(\mathcal{O})>1$, let $g$ be the product of all inert prime divisors of $f$ and let $\mathcal{O}^{\prime}$ be the order in $K$ with conductor $g\mathcal{O}_K$. Then $\mathcal{O}^{\prime}$ is half-factorial and, in particular, $g\in\{1\}\cup\mathbb{P}\cup\{2p\mid p\in\mathbb{P}\setminus\{2\}\}$.
\end{proposition}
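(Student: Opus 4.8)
The plan is to read the structure of $\mathcal{O}$ off Theorem~\ref{theorem 4.14}, transfer the relevant part of it to $\mathcal{O}'$ via the fact that the conductor $g\mathcal{O}_K$ is a squarefree product of inert primes, and then feed everything into the class number formula~\eqref{equation 6}. First I would apply Theorem~\ref{theorem 4.14}: since $\min\Delta(\mathcal{O})>1$ we obtain $|{\rm Pic}(\mathcal{O})|=2$, the conductor $f$ is squarefree and factors as $f=f_1g$ with $f_1$ a nonempty product of ramified primes and $g$ the (squarefree) product of all inert prime divisors of $f$, the field $K$ is real quadratic, and for every prime divisor $p$ of $f$ an atom of $\mathcal{I}^*_p(\mathcal{O})$ is principal if and only if its norm is $p^2$; combining this with Theorem~\ref{theorem 3.6}.4 shows that for every inert $q\mid f$ \emph{every} atom of $\mathcal{I}^*_q(\mathcal{O})$ has norm $q^2$ and is therefore principal. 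Since $g\mid f$ we have $\mathcal{O}\subseteq\mathcal{O}'$, and extension of invertible ideals coprime to the conductor yields an epimorphism ${\rm Pic}(\mathcal{O})\rightarrow{\rm Pic}(\mathcal{O}')$, so $|{\rm Pic}(\mathcal{O}')|\in\{1,2\}$; moreover $\mathcal{I}^*(\mathcal{O}')$ is half-factorial by Theorem~\ref{theorem 1.1}.1.

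The decisive step---and, I expect, the main obstacle---is to show that ${\rm Pic}(\mathcal{O}')$ is trivial. For every inert prime divisor $q$ of $f$ one has ${\rm v}_q(g)={\rm v}_q(f)=1$, so Proposition~\ref{proposition 3.3} provides an isomorphism $\mathcal{I}^*_q(\mathcal{O})\cong\mathcal{I}^*_q(\mathcal{O}')$ obtained by extending an ideal to the common localization $\mathcal{O}_{P_{f,q}}=\mathcal{O}'_{P_{g,q}}$ and contracting it back; this isomorphism preserves norms and sends principal ideals to principal ideals, so by the first paragraph every atom of $\mathcal{I}^*_q(\mathcal{O}')$ is principal. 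What is left is to control the classes of the invertible prime ideals of $\mathcal{O}'$ that lie away from the conductor, and here I would argue indirectly. The order $\mathcal{O}'$ is seminormal (as $g$ is squarefree, by \cite[Corollary 4.5]{Do-Fo87}); if $|{\rm Pic}(\mathcal{O}')|=2$, then \cite[Theorem 6.2.2]{Ge-Ka-Re15a} gives $1\in\Delta(\mathcal{O}')$, so $\mathcal{O}'$ is not half-factorial, and one then derives a contradiction, either from the characterization of half-factorial quadratic orders \cite[Theorem 3.7.15]{Ge-HK06a} applied to $\mathcal{O}'$, or by combining the epimorphism ${\rm Pic}(\mathcal{O})\rightarrow{\rm Pic}(\mathcal{O}')$ with a careful bookkeeping of the unit indices appearing in~\eqref{equation 6} for $\mathcal{O}$ and $\mathcal{O}'$. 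Making this exclusion airtight is where I expect the real work to be.

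Granting that ${\rm Pic}(\mathcal{O}')$ is trivial, every invertible ideal of $\mathcal{O}'$ is principal, so the monoid of nonzero principal ideals of $\mathcal{O}'$ coincides with $\mathcal{I}^*(\mathcal{O}')$ and hence is half-factorial by the first paragraph; thus $\mathcal{O}'$ is half-factorial. For the last assertion I would use~\eqref{equation 6} once more: triviality of ${\rm Pic}(\mathcal{O}')$ yields $(\mathcal{O}_K^{\times}:{\mathcal{O}'}^{\times})=|{\rm Pic}(\mathcal{O}_K)|\prod_{q\mid g}(q+1)$. Since $K$ is real quadratic, $\mathcal{O}_K^{\times}/{\mathcal{O}'}^{\times}$ is cyclic, and it embeds into $(\mathcal{O}_K/g\mathcal{O}_K)^{\times}/(\mathbb{Z}/g\mathbb{Z})^{\times}\cong\prod_{q\mid g}\mathbb{F}_{q^2}^{\times}/\mathbb{F}_q^{\times}$, a group of order $\prod_{q\mid g}(q+1)$. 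Comparing orders forces $|{\rm Pic}(\mathcal{O}_K)|=1$ and the embedding to be an isomorphism, so $\prod_{q\mid g}\mathbb{F}_{q^2}^{\times}/\mathbb{F}_q^{\times}$ is cyclic; equivalently, the integers $q+1$ with $q\mid g$ are pairwise coprime. As $q+1$ is even for every odd prime $q$ while $2+1=3$, at most one odd prime can divide $g$, and therefore $g\in\{1\}\cup\mathbb{P}\cup\{2p\mid p\in\mathbb{P}\setminus\{2\}\}$.
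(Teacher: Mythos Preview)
Your decisive step---proving that ${\rm Pic}(\mathcal{O}')$ is trivial---is not just hard, it is false. Take $(f,d_K)=(2,60)$ from Example~\ref{example 4.22}: here $2$ is ramified, so $g=1$ and $\mathcal{O}'=\mathcal{O}_K$, which has $|{\rm Pic}(\mathcal{O}_K)|=2$. More generally, whenever $g=1$ the order $\mathcal{O}'$ is the maximal order, and nothing forces its class number to be $1$; half-factoriality of $\mathcal{O}_K$ with class number~$2$ is precisely Carlitz' theorem. Consequently your final paragraph also collapses: you cannot deduce $|{\rm Pic}(\mathcal{O}_K)|=1$, and the cyclicity argument for the shape of $g$ loses its hypothesis.

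The indirect argument you sketch is circular as stated. The result from \cite{Ge-Ka-Re15a} says that a seminormal order which is \emph{not} half-factorial has $1\in\Delta$; it does not say that $|{\rm Pic}|=2$ forces non-half-factoriality. So from $|{\rm Pic}(\mathcal{O}')|=2$ you cannot infer $1\in\Delta(\mathcal{O}')$, and there is no contradiction to extract. The paper proceeds differently: it never computes ${\rm Pic}(\mathcal{O}')$. Instead it shows directly that the monoid of nonzero principal ideals of $\mathcal{O}'$ is half-factorial by classifying its atoms---they are the principal primes, the atoms in $\mathcal{I}^*_q(\mathcal{O}')$ for $q\mid g$ (all principal, as you correctly argue), and products $PQ$ of two non-principal invertible primes---and then checking via a counting argument that any two factorizations have the same length. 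The key new ingredient you are missing is that for every pair $P,Q$ of non-principal invertible primes of $\mathcal{O}'$ the product $PQ$ is principal; this is obtained by pushing the information from Theorem~\ref{theorem 4.14} about ramified-prime atoms of norm $p^3$ in $\mathcal{O}$ up to $\mathcal{O}'$. The ``in particular'' clause about $g$ then follows from the explicit characterization of half-factorial quadratic orders in \cite[Theorem~3.7.15]{Ge-HK06a}, not from a class-number computation.
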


\begin{proof} Set $\mathcal{Q}_1=\{P\in\mathfrak{X}(\mathcal{O}^{\prime})\mid P$ is principal$\}$ and $\mathcal{Q}_2=\{P\in\mathfrak{X}(\mathcal{O}^{\prime})\mid P$ is invertible and not principal$\}$. Observe that $\mathcal{N}(I)=|\mathcal{O}/I|=|\mathcal{O}^{\prime}/I\mathcal{O}^{\prime}|=\mathcal{N}(I\mathcal{O}^{\prime})$ for all $I\in\mathcal{I}^*(\mathcal{O})$. Note that for all inert prime divisors $p$ of $f$ and all $I\in\mathcal{A}(\mathcal{I}^*_p(\mathcal{O}))$ and $J\in\mathcal{A}(\mathcal{I}^*_p(\mathcal{O}^{\prime}))$, we have $\mathcal{N}(I)=\mathcal{N}(J)=p^2$. Moreover, for all ramified prime divisors $p$ of $f$, we have $\{\mathcal{N}(I)\mid I\in\mathcal{A}(\mathcal{I}^*_p(\mathcal{O}))\}=\{p^2,p^3\}$. In this proof we will use Theorem~\ref{theorem 4.14} without further citation.

\smallskip
CLAIM 1: For all prime divisors $p$ of $g$ and all $I\in\mathcal{A}(\mathcal{I}^*_p(\mathcal{O}^{\prime}))$, it follows that $I$ is principal. Let $p$ be a prime divisor of $g$ and let $I\in\mathcal{A}(\mathcal{I}^*_p(\mathcal{O}^{\prime}))$. Set $P=P_{f,p}$ and $P^{\prime}=P_{g,p}$. It follows by Proposition~\ref{proposition 3.3} that $\mathcal{O}_P=\mathcal{O}^{\prime}_{P^{\prime}}$ and that $\delta:\mathcal{I}^*_p(\mathcal{O})\rightarrow\mathcal{I}^*_p(\mathcal{O}^{\prime})$ defined by $\delta(J)=J_P\cap\mathcal{O}^{\prime}$ for all $J\in\mathcal{I}^*_p(\mathcal{O})$ is a monoid isomorphism. In particular, we have $\mathcal{A}(\mathcal{I}^*_p(\mathcal{O}^{\prime}))=\{J_P\cap\mathcal{O}^{\prime}\mid J\in\mathcal{A}(\mathcal{I}^*_p(\mathcal{O}))\}$. Therefore, there is some $J\in\mathcal{A}(\mathcal{I}^*_p(\mathcal{O}))$ such that $J_P\cap\mathcal{O}^{\prime}=I$. Note that $\mathcal{N}(I)=p^2=\mathcal{N}(J)=\mathcal{N}(J\mathcal{O}^{\prime})$. Since $J\mathcal{O}^{\prime}\subseteq J\mathcal{O}^{\prime}_{P^{\prime}}\cap\mathcal{O}^{\prime}=J\mathcal{O}_P\cap\mathcal{O}^{\prime}=I$, we infer that $I=J\mathcal{O}^{\prime}$. Since $J$ is a principal ideal of $\mathcal{O}$, it follows that $I$ is principal. This proves Claim 1.

\smallskip
CLAIM 2: If $P\in\mathcal{Q}_2$, $p$ is a ramified prime divisor of $f$ such that $P\cap\mathbb{Z}=p\mathbb{Z}$ and $I\in\mathcal{A}(\mathcal{I}^*_p(\mathcal{O}))$ with $\mathcal{N}(I)=p^3$, then $P^2$ is principal and $I\mathcal{O}^{\prime}=P^3$. Let $P\in\mathcal{Q}_2$, $p$ a ramified prime divisor of $f$ such that $P\cap\mathbb{Z}=p\mathbb{Z}$ and $I\in\mathcal{A}(\mathcal{I}^*_p(\mathcal{O}))$ with $\mathcal{N}(I)=p^3$. Since $p$ is ramified, there is some $A\in\mathfrak{X}(\mathcal{O}_K)$ such that $p\mathcal{O}_K=A^2$. Observe that $\mathcal{N}(A^2)=p^2$, and thus $\mathcal{N}(A)=p$. We have $A\cap\mathcal{O}^{\prime}=P$, $P\mathcal{O}_K=A$ and $\mathcal{N}(P)=\mathcal{N}(A)=p$. Note that since $P$ is invertible, it follows that every $P$-primary ideal of $\mathcal{O}^{\prime}$ is a power of $P$. Therefore, $p\mathcal{O}^{\prime}=P^k$ for some $k\in\mathbb{N}$, and hence $p^k=\mathcal{N}(P^k)=\mathcal{N}(p\mathcal{O}^{\prime})=p^2$. Consequently, $k=2$ and $P^2$ is principal. Clearly, $I\mathcal{O}^{\prime}$ is a $P$-primary ideal of $\mathcal{O}^{\prime}$, and thus $I\mathcal{O}^{\prime}=P^m$ for some $m\in\mathbb{N}$. We infer that $p^m=\mathcal{N}(P^m)=\mathcal{N}(I\mathcal{O}^{\prime})=\mathcal{N}(I)=p^3$, and thus $m=3$ and $I\mathcal{O}^{\prime}=P^3$. This proves Claim 2.

\smallskip
CLAIM 3: $PQ$ is principal for all $P,Q\in\mathcal{Q}_2$. Let $P,Q\in\mathcal{Q}_2$.

\smallskip
CASE 1: $P\cap\mathcal{O}$ and $Q\cap\mathcal{O}$ are invertible. Note that $P=(P\cap\mathcal{O})\mathcal{O}^{\prime}$, $Q=(Q\cap\mathcal{O})\mathcal{O}^{\prime}$ and $P\cap\mathcal{O}$ and $Q\cap\mathcal{O}$ are not principal. Since $|{\rm Pic}(\mathcal{O})|=2$, we have $(P\cap\mathcal{O})(Q\cap\mathcal{O})$ is a principal ideal of $\mathcal{O}$, and thus $PQ=(P\cap\mathcal{O})(Q\cap\mathcal{O})\mathcal{O}^{\prime}$ is principal.

\smallskip
CASE 2: ($P\cap\mathcal{O}$ is invertible and $Q\cap\mathcal{O}$ is not invertible) or ($P\cap\mathcal{O}$ is not invertible and $Q\cap\mathcal{O}$ is invertible). Without restriction let $P\cap\mathcal{O}$ be invertible and let $Q\cap\mathcal{O}$ be not invertible. Observe that $P=(P\cap\mathcal{O})\mathcal{O}^{\prime}$. Moreover, there is some ramified prime $q$ that divides $f$ such that $Q\cap\mathbb{Z}=q\mathbb{Z}$ and there is some $J\in\mathcal{A}(\mathcal{I}^*_q(\mathcal{O}))$ with $\mathcal{N}(J)=q^3$. Observe that $P\cap\mathcal{O}$ and $J$ are not principal. Since $|{\rm Pic}(\mathcal{O})|=2$, it follows that $(P\cap\mathcal{O})J$ is a principal ideal of $\mathcal{O}$. Note that $PQ^3=(P\cap\mathcal{O})J\mathcal{O}^{\prime}$ by Claim 2, and thus $PQ^3$ is principal. Since $Q^2$ is principal by Claim 2, we infer that $PQ$ is principal.

\smallskip
CASE 3: $P\cap\mathcal{O}$ and $Q\cap\mathcal{O}$ are not invertible. There are ramified primes $p$ and $q$ that divide $f$ such that $P\cap\mathbb{Z}=p\mathbb{Z}$ and $Q\cap\mathbb{Z}=q\mathbb{Z}$. There are some $I\in\mathcal{A}(\mathcal{I}^*_p(\mathcal{O}))$ and $J\in\mathcal{A}(\mathcal{I}^*_q(\mathcal{O}))$ with $\mathcal{N}(I)=p^3$ and $\mathcal{N}(J)=q^3$. Since $|{\rm Pic}(\mathcal{O})|=2$ and $I$ and $J$ are not principal, we have $IJ$ is a principal ideal of $\mathcal{O}$. It follows that $P^3Q^3=IJ\mathcal{O}^{\prime}$ by Claim 2, and hence $P^3Q^3$ is principal. Since $P^2$ and $Q^2$ are principal by Claim 2, we have $PQ$ is principal. This proves Claim 3.

\smallskip
Finally, we show that $\mathcal{O}^{\prime}$ is half-factorial. Set $\mathcal{C}=\{PQ\mid P,Q\in\mathcal{Q}_2\}$ and let $\mathcal{H}$ denote the monoid of nonzero principal ideals of $\mathcal{O}^{\prime}$. It is an immediate consequence of Claim 1 and Claim 3 that $\mathcal{A}(\mathcal{H})=\mathcal{Q}_1\cup\mathcal{C}\cup\bigcup_{p\in\mathbb{P},p\mid g}\mathcal{A}(\mathcal{I}^*_p(\mathcal{O}^{\prime}))$.

\smallskip
Let $k,\ell\in\mathbb{N}$ and $I_i,I_j^{\prime}\in\mathcal{A}(\mathcal{H})$ for each $i\in [1,k]$ and $j\in [1,\ell]$ be such that $\prod_{i=1}^k I_i=\prod_{j=1}^{\ell} I_j^{\prime}$. It remains to show that $k=\ell$. Set $b=|\{i\in [1,k]\mid I_i\in\mathcal{Q}_1\}|$, $b^{\prime}=|\{j\in [1,\ell]\mid I_j^{\prime}\in\mathcal{Q}_1\}|$, $c=|\{i\in [1,k]\mid I_i\in\mathcal{C}\}|$, $c^{\prime}=|\{j\in [1,\ell]\mid I_j^{\prime}\in\mathcal{C}\}|$ and for each prime divisor $p$ of $g$ set $a_p=|\{i\in [1,k]\mid I_i\in\mathcal{A}(\mathcal{I}^*_p(\mathcal{O}^{\prime}))\}|$ and $a^{\prime}_p=|\{j\in [1,\ell]\mid I_j^{\prime}\in\mathcal{A}(\mathcal{I}^*_p(\mathcal{O}^{\prime}))\}|$. If $p$ is a prime divisor of $g$, then $\mathcal{I}^*_p(\mathcal{O}^{\prime})$ is half-factorial by Proposition~\ref{proposition 4.6}, and hence $a_p=a^{\prime}_p$ by Claim 1. We have $b=\sum_{i=1}^k\sum_{P\in\mathcal{Q}_1}{\rm v}_P(I_i)=\sum_{P\in\mathcal{Q}_1}{\rm v}_P(\prod_{i=1}^k I_i)=\sum_{P\in\mathcal{Q}_1}{\rm v}_P(\prod_{j=1}^{\ell} I_j^{\prime})=\sum_{j=1}^{\ell}\sum_{P\in\mathcal{Q}_1}{\rm v}_P(I_j^{\prime})=b^{\prime}$.

Moreover, $2c=\sum_{P\in\mathcal{Q}_2}{\rm v}_P(\prod_{i=1}^k I_i)=\sum_{P\in\mathcal{Q}_2}{\rm v}_P(\prod_{j=1}^{\ell} I_j^{\prime})=2c^{\prime}$. Therefore, $k=b+c+\sum_{p\in\mathbb{P},p\mid g} a_p=b^{\prime}+c^{\prime}+\sum_{p\in\mathbb{P},p\mid g} a^{\prime}_p=\ell$.

\smallskip
The remaining assertion follows from \cite[Theorem 3.7.15]{Ge-HK06a}.
\end{proof}

\begin{remark}\label{remark 4.16} Let $\mathcal{O}$ be an order in the quadratic number field $K$ with conductor $f\mathcal{O}_K$ for some $f\in\mathbb{N}$ such that $|{\rm Pic}(\mathcal{O})|=2$ and let $p$ be an odd ramified prime such that ${\rm v}_p(f)=1$ and $I\in\mathcal{A}(\mathcal{I}^*_p(\mathcal{O}))$ such that $\mathcal{N}(I)=p^3$ and $I$ not principal. Then every $J\in\mathcal{A}(\mathcal{I}^*_p(\mathcal{O}))$ with $\mathcal{N}(J)=p^3$ is not principal.
\end{remark}

\begin{proof} Set $\mathcal{L}=\{J\in\mathcal{A}(\mathcal{I}^*_p(\mathcal{O}))\mid\mathcal{N}(J)=p^3\}$ and $\mathcal{K}=\{L\in\mathcal{A}(\mathcal{I}^*_p(\mathcal{O}))\mid\mathcal{N}(L)=p^2\}$. It follows by the claim in the proof of Theorem~\ref{theorem 4.14} that for all $J\in\mathcal{L}$ and $L\in\mathcal{K}$, there is a unique $A\in\mathcal{L}$ such that $AJ=p^2L$. By Theorem~\ref{theorem 3.6} we have $|\mathcal{L}|=|\mathcal{K}|=p$, and hence $|\{(A,J)\in\mathcal{L}^2\mid AJ=p^2L\}|=p$ for all $L\in\mathcal{K}$. Since $p$ is odd, we infer that for each $L\in\mathcal{K}$ there is some $A\in\mathcal{L}$ such that $A^2=p^2L$. Consequently, every $L\in\mathcal{K}$ is principal. Now let $J\in\mathcal{L}$. There is some $B\in\mathcal{K}$ such that $IJ=p^2B$, and thus $IJ$ is principal. Therefore, $J$ is not principal.
\end{proof}

Next we show that the assumption that $p$ is odd in Remark~\ref{remark 4.16} is crucial.

\begin{example}\label{example 4.17} Let $\mathcal{O}=\mathbb{Z}+2\sqrt{-2}\mathbb{Z}$ be the order in the quadratic number field $K=\mathbb{Q}(\sqrt{-2})$ with conductor $2\mathcal{O}_K$. Let $I=8\mathbb{Z}+2\sqrt{-2}\mathbb{Z}$ and $J=8\mathbb{Z}+(4+2\sqrt{-2})\mathbb{Z}$. Then $2$ is ramified, $|{\rm Pic}(\mathcal{O})|=2$, $I,J\in\mathcal{A}(\mathcal{I}^*_2(\mathcal{O}))$, $\mathcal{N}(I)=\mathcal{N}(J)=8$, $I$ is principal and $J$ is not principal.
\end{example}

\begin{proof} It is clear that $J\in\mathcal{A}(\mathcal{I}^*_2(\mathcal{O}))$ and $\mathcal{N}(J)=8$. By the proof of Theorem~\ref{theorem 4.14}, it remains to show that $J$ is not principal. Assume that $J$ is principal. Then there are some $a,b\in\mathbb{Z}$ such that $J=(8a+4b+2\sqrt{-2}b)\mathcal{O}$, and hence $8=\mathcal{N}(J)=|\mathcal{N}_{K/\mathbb{Q}}(8a+4b+2\sqrt{-2}b)|=|(8a+4b)^2+8b^2|$. Therefore, $2(2a+b)^2+b^2=1$. It is clear that $|b|\leq 1$. If $b=0$, then $8a^2=1$, a contradiction. Therefore, $|b|=1$ and $2a+b=0$, a contradiction.
\end{proof}

\begin{lemma}\label{lemma 4.18} Let $d\in\mathbb{N}_{\geq 2}$ be squarefree, let $K=\mathbb{Q}(\sqrt{d})$, let $\mathcal{O}$ be the order in $K$ with conductor $f\mathcal{O}_K$ for some $f\in\mathbb{N}_{\geq 2}$, and let $p$ be a ramified prime with ${\rm v}_p(f)=1$. If $(p\equiv 1\mod 4$ and $(\frac{d/p}{p})=-1)$ or $((\frac{p}{q})=-1$ for some prime $q$ with $q\equiv 1\mod 4$ and $q\mid df)$, then each $I\in\mathcal{A}(\mathcal{I}^*_p(\mathcal{O}))$ with $\mathcal{N}(I)=p^3$ is not principal.
\end{lemma}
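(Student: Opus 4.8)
The plan is to prove the stronger statement that, under either hypothesis, no $\alpha\in\mathcal{O}\setminus\{0\}$ satisfies $|\mathcal{N}_{K/\mathbb{Q}}(\alpha)|=p^3$. This suffices: if $I\in\mathcal{A}(\mathcal{I}^*_p(\mathcal{O}))$ with $\mathcal{N}(I)=p^3$ were principal, say $I=\alpha\mathcal{O}$, then $|\mathcal{N}_{K/\mathbb{Q}}(\alpha)|=(\mathcal{O}\colon\alpha\mathcal{O})=\mathcal{N}(I)=p^3$, so $\mathcal{N}_{K/\mathbb{Q}}(\alpha)=sp^3$ for some $s\in\{1,-1\}$. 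Writing $\alpha=a+b\tau$ with $a,b\in\mathbb{Z}$ (using $\mathcal{O}=\mathbb{Z}\oplus\tau\mathbb{Z}$, and, since $\tau+\overline\tau=\varepsilon$, $\tau\overline\tau=\eta$, $\mathcal{N}_{K/\mathbb{Q}}(a+b\tau)=a^2+\varepsilon ab+\eta b^2$), putting $n=2a+\varepsilon b$, and multiplying by $4$ (note $\varepsilon^2-4\eta=f^2d_K$), this becomes the single relation $n^2-f^2d_Kb^2=4sp^3$, from which both cases follow by elementary congruences.

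For hypothesis (i), $p$ is odd and ramified, so $p\mid d$, ${\rm v}_p(d_K)=1$, and, since ${\rm v}_p(f)=1$, ${\rm v}_p(f^2d_K)=3$. Because $p^3$ divides both summands on the right of $n^2=4sp^3+f^2d_Kb^2$, it divides $n^2$, hence $p^2\mid n$; writing $n=p^2n_1$ and dividing by $p^3$ gives $pn_1^2-\mu b^2=4s$ with $\mu=f^2d_K/p^3=(f/p)^2(d_K/p)$ coprime to $p$. Reduction mod $p$ yields $\mu b^2\equiv-4s\mod p$; as $p\nmid 4s$ this forces $p\nmid b$, so $\left(\frac{\mu}{p}\right)=\left(\frac{-s}{p}\right)$. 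On the left, $\left(\frac{\mu}{p}\right)=\left(\frac{d_K/p}{p}\right)=\left(\frac{d/p}{p}\right)$ (a factor $4$ arising when $d_K=4d$ is a square mod $p$), which equals $-1$ by assumption; on the right, $\left(\frac{-s}{p}\right)=1$ because $p\equiv1\mod4$ makes $-1$ a square and $s=\pm1$. This contradiction settles (i).

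For hypothesis (ii), fix a prime $q\equiv1\mod4$ with $q\mid df$ and $\left(\frac{p}{q}\right)=-1$; then $q$ is odd and $q\ne p$ (otherwise $\left(\frac{p}{q}\right)=0$). In either case $q\mid df$ gives $q\mid f^2d_K$ (if $q\mid d$ then $q\mid d_K$; if $q\mid f$ then $q^2\mid f^2$), so reducing $n^2-f^2d_Kb^2=4sp^3$ mod $q$ gives $n^2\equiv4sp^3\mod q$; since $q\nmid4sp^3$ we obtain $q\nmid n$ and hence $\left(\frac{4sp^3}{q}\right)=1$. But $\left(\frac{4sp^3}{q}\right)=\left(\frac{s}{q}\right)\left(\frac{p}{q}\right)^3=\left(\frac{s}{q}\right)\left(\frac{p}{q}\right)$ when $p$ is odd, and $\left(\frac{4sp^3}{q}\right)=\left(\frac{2}{q}\right)^5\left(\frac{s}{q}\right)=\left(\frac{p}{q}\right)\left(\frac{s}{q}\right)$ when $p=2$; since $\left(\frac{s}{q}\right)=1$ ($q\equiv1\mod4$, $s=\pm1$), this is $\left(\frac{p}{q}\right)=-1$, a contradiction.

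Most of the argument is bookkeeping. The points that need attention are: verifying that the $p$-adic descent in case (i) is legitimate, which relies precisely on ${\rm v}_p(f)=1$ and $p$ ramified forcing ${\rm v}_p(f^2d_K)=3$, together with the harmless alternative $d_K\in\{d,4d\}$ (the extra $4$ being a square modulo every odd prime involved); and carrying the two sign possibilities $s=\pm1$ in parallel throughout, since over a real quadratic field both can a priori occur. Note that $p=2$ is reached only through hypothesis (ii), where $4sp^3=32s$ and the equality $\left(\frac{2}{q}\right)=\left(\frac{p}{q}\right)=-1$ is exactly what makes $32s$ a nonsquare mod $q$.
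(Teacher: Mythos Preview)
Your proof is correct and follows essentially the same approach as the paper: assume a principal generator, write down the norm equation, and reduce modulo $p$ (hypothesis (i)) or modulo $q$ (hypothesis (ii)) to obtain a contradiction via Legendre symbols. The only minor difference is that the paper first writes down the explicit $\mathbb{Z}$-basis of each atom $I$ with $\mathcal{N}(I)=p^3$ and expresses a putative generator in that basis, whereas you bypass this by proving directly that no $\alpha\in\mathcal{O}$ has $|\mathcal{N}_{K/\mathbb{Q}}(\alpha)|=p^3$; the resulting arithmetic is identical.
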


\begin{proof} Note that if $p$ is odd, then $\{I\in\mathcal{A}(\mathcal{I}^*_p(\mathcal{O}))\mid\mathcal{N}(I)=p^3\}=\{p^3\mathbb{Z}+(p^2k+\frac{\varepsilon p^2+f\sqrt{d_K}}{2})\mathbb{Z}\mid k\in [0,p-1]\}$. Moreover, if $p=2$ and $d$ is odd, then $\{I\in\mathcal{A}(\mathcal{I}^*_p(\mathcal{O}))\mid\mathcal{N}(I)=p^3\}=\{8\mathbb{Z}+(2k+f\sqrt{d})\mathbb{Z}\mid k\in\{1,3\}\}$. Furthermore, if $p=2$ and $d$ is even, then $\{I\in\mathcal{A}(\mathcal{I}^*_p(\mathcal{O}))\mid\mathcal{N}(I)=p^3\}=\{8\mathbb{Z}+(2k+f\sqrt{d})\mathbb{Z}\mid k\in\{0,2\}\}$.

\smallskip
CASE 1: $p\equiv 1\mod 4$ and $(\frac{d/p}{p})=-1$. Let $I\in\mathcal{A}(\mathcal{I}^*_p(\mathcal{O}))$ be such that $\mathcal{N}(I)=p^3$. Since $p$ is odd, we have $I=p^3\mathbb{Z}+(p^2k+\frac{\varepsilon p^2+f\sqrt{d_K}}{2})\mathbb{Z}$ for some $k\in [0,p-1]$. Assume that $I$ is principal. Then there are some $a,b\in\mathbb{Z}$ such that $I=(p^3a+p^2bk+\frac{\varepsilon p^2+f\sqrt{d_K}}{2}b)\mathcal{O}$. We infer that $p^3=\mathcal{N}(I)=|\mathcal{N}_{K/\mathbb{Q}}(p^3a+p^2bk+\frac{\varepsilon p^2+f\sqrt{d_K}}{2}b)|=\frac{1}{4}|p^4(2pa+2bk+\varepsilon b)^2-f^2b^2d_K|$, and hence $\frac{f^2}{p^2}b^2\frac{d_K}{p}\equiv 4\beta\mod p$ for some $\beta\in\{-1,1\}$. Since $p\equiv 1\mod 4$, we have $(\frac{-1}{p})=1$, and thus $(\frac{d/p}{p})=(\frac{d_K/p}{p})=(\frac{f^2b^2d_K/p^3}{p})=(\frac{4\beta}{p})=1$, a contradiction.

\smallskip
CASE 2: There is some prime $q$ such that $q\equiv 1\mod 4$, $q\mid df$ and $(\frac{p}{q})=-1$. Let $I\in\mathcal{A}(\mathcal{I}^*_p(\mathcal{O}))$ be such that $\mathcal{N}(I)=p^3$. First let $p$ be odd. Then $I=p^3\mathbb{Z}+(p^2k+\frac{\varepsilon p^2+f\sqrt{d_K}}{2})\mathbb{Z}$ for some $k\in [0,p-1]$. Assume that $I$ is principal. Then there are some $a,b\in\mathbb{Z}$ such that $I=(p^3a+p^2bk+\frac{\varepsilon p^2+f\sqrt{d_K}}{2}b)\mathcal{O}$. This implies that $p^3=\mathcal{N}(I)=|\mathcal{N}_{K/\mathbb{Q}}(p^3a+p^2bk+\frac{\varepsilon p^2+f\sqrt{d_K}}{2}b)|=\frac{1}{4}|p^4(2pa+2bk+\varepsilon b)^2-f^2b^2d_K|$, and thus $\ell^2\equiv 4\beta p^3\mod q$ for some $\ell\in\mathbb{Z}$ and $\beta\in\{-1,1\}$. Since $q\equiv 1\mod 4$, we have $(\frac{-1}{q})=1$, and hence $(\frac{p}{q})^3=(\frac{4\beta p^3}{q})=1$. Therefore, $(\frac{p}{q})=1$, a contradiction.

Now let $p=2$. Then $I=8\mathbb{Z}+(2k+f\sqrt{d})\mathbb{Z}$ for some $k\in [0,3]$. Assume that $I$ is principal. Then there are some $a,b\in\mathbb{Z}$ such that $I=(8a+2bk+bf\sqrt{d})\mathcal{O}$. Consequently, $8=\mathcal{N}(I)=|(8a+2bk)^2-b^2f^2d|$, and thus $\ell^2\equiv 8\beta\mod q$ for some $\ell\in\mathbb{Z}$ and $\beta\in\{-1,1\}$. This implies that $(\frac{2}{q})^3=(\frac{8\beta}{q})=1$. Therefore, $(\frac{2}{q})=1$, a contradiction.
\end{proof}

\begin{proposition}\label{proposition 4.19} Let $d\in\mathbb{N}_{\geq 2}$ be squarefree, let $K=\mathbb{Q}(\sqrt{d})$, and let $\mathcal{O}$ be the order in $K$ with conductor $f\mathcal{O}_K$ such that $f$ is a nonempty squarefree product of ramified primes times a squarefree product of inert primes and $|{\rm Pic}(\mathcal{O})|=|{\rm Pic}(\mathcal{O}_K)|=2$. If for every ramified prime divisor $p$ of $f$, we have $(p\equiv 1\mod 4$ and $(\frac{d/p}{p})=-1)$ or $((\frac{p}{q})=-1$ for some prime $q$ with $q\equiv 1\mod 4$ and $q\mid df)$, then $\min\Delta(\mathcal{O})=2$.
\end{proposition}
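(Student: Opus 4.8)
The plan is to reduce everything to Theorem~\ref{theorem 4.14}: I will verify that $\mathcal{O}$ is not half-factorial and that condition~(b) of that theorem holds, and then read off $\min\Delta(\mathcal{O})=2$. That $\mathcal{O}$ is not half-factorial is immediate: $f$ is divisible by a ramified prime, so it is not a squarefree product of inert primes, hence $\mathcal{I}^*(\mathcal{O})$ is not half-factorial by Theorem~\ref{theorem 1.1}.1, and therefore $\mathcal{O}$ is not half-factorial by \cite[Corollary 4.6]{Ph12b}; also $K$ is real quadratic since $d\geq 2$, in accordance with the conclusion of Theorem~\ref{theorem 4.14}. In condition~(b), the assertions $|{\rm Pic}(\mathcal{O})|=2$ and the prescribed shape of $f$ are part of the hypothesis, and since $f$ is squarefree we have ${\rm v}_p(f)=1$ for every prime divisor $p$ of $f$. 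So the remaining task is to show that for every prime divisor $p$ of $f$ and every $I\in\mathcal{A}(\mathcal{I}^*_p(\mathcal{O}))$, the ideal $I$ is principal if and only if $\mathcal{N}(I)=p^2$.

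For a ramified prime divisor $p$ of $f$, Theorem~\ref{theorem 3.6} gives that the norms of the atoms of $\mathcal{I}^*_p(\mathcal{O})$ are exactly $p^2$ and $p^3$. By hypothesis the condition of Lemma~\ref{lemma 4.18} is satisfied for $p$, so every $I\in\mathcal{A}(\mathcal{I}^*_p(\mathcal{O}))$ with $\mathcal{N}(I)=p^3$ is not principal. The Claim proved inside the proof of Theorem~\ref{theorem 4.14} — whose hypotheses are precisely $|{\rm Pic}(\mathcal{O})|=2$, $p$ ramified, ${\rm v}_p(f)=1$, and non-principality of the norm-$p^3$ atoms — then shows that every $L\in\mathcal{A}(\mathcal{I}^*_p(\mathcal{O}))$ with $\mathcal{N}(L)=p^2$ is principal. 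Hence the desired equivalence holds at every ramified $p\mid f$.

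For an inert prime divisor $p$ of $f$, Theorem~\ref{theorem 3.6} shows that every atom of $\mathcal{I}^*_p(\mathcal{O})$ has norm $p^2$, so only the implication ``$\mathcal{N}(I)=p^2\Rightarrow I$ principal'' requires an argument. Here I would pass to the maximal order. Since $I$ is an invertible $P_{f,p}$-primary ideal of $\mathcal{O}$, it contains a power of $P_{f,p}$, and because $p$ is inert one has $P_{f,p}\mathcal{O}_K=p\mathcal{O}_K$; thus $I\mathcal{O}_K$ lies between a power of $p\mathcal{O}_K$ and $\mathcal{O}_K$ and is therefore a power of the prime ideal $p\mathcal{O}_K$. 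Comparing norms, $\mathcal{N}(I\mathcal{O}_K)=(\mathcal{O}_K:I\mathcal{O}_K)=(\mathcal{O}:I)=\mathcal{N}(I)=p^2=\mathcal{N}(p\mathcal{O}_K)$, forces $I\mathcal{O}_K=p\mathcal{O}_K$, a principal ideal of $\mathcal{O}_K$. Now the canonical surjection ${\rm Pic}(\mathcal{O})\to{\rm Pic}(\mathcal{O}_K)$ (see \cite[Corollary 5.9.8]{HK13a} and the remark after \eqref{equation 6}) is an isomorphism, since both groups have order $2$; hence the class of $I\mathcal{O}_K$ being trivial forces the class of $I$ to be trivial, i.e.\ $I$ is principal.

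Combining the two cases, condition~(b) of Theorem~\ref{theorem 4.14} is verified, and that theorem yields $\min\Delta(\mathcal{O})=2$. The only genuinely new ingredient is the inert-prime step, and its delicate point is exactly the use of the hypothesis $|{\rm Pic}(\mathcal{O})|=|{\rm Pic}(\mathcal{O}_K)|=2$ to upgrade the canonical surjection ${\rm Pic}(\mathcal{O})\twoheadrightarrow{\rm Pic}(\mathcal{O}_K)$ to an isomorphism: without equality of the two class numbers a non-principal inert atom of $\mathcal{I}^*_p(\mathcal{O})$ could exist, and then condition~(b) would fail. Everything else is a direct assembly of Theorems~\ref{theorem 1.1}, \ref{theorem 3.6}, \ref{theorem 4.14} and Lemma~\ref{lemma 4.18}.
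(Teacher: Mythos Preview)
Your proof is correct and follows essentially the same route as the paper's: verify condition~(b) of Theorem~\ref{theorem 4.14} by applying Lemma~\ref{lemma 4.18} plus the Claim in Theorem~\ref{theorem 4.14} at ramified primes, and by using the isomorphism ${\rm Pic}(\mathcal{O})\to{\rm Pic}(\mathcal{O}_K)$ (from $|{\rm Pic}(\mathcal{O})|=|{\rm Pic}(\mathcal{O}_K)|$) to see that each inert atom extends to a principal ideal of $\mathcal{O}_K$ and hence is already principal in $\mathcal{O}$. Your extra verification that $\mathcal{O}$ is not half-factorial and your norm computation forcing $I\mathcal{O}_K=p\mathcal{O}_K$ are harmless additions; the paper simply observes that $J\mathcal{O}_K$ is a power of $p\mathcal{O}_K$ and hence principal, which suffices.
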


\begin{proof} It follows by Lemma~\ref{lemma 4.18} that for every ramified prime divisor $p$ of $f$ and every $I\in\mathcal{A}(\mathcal{I}^*_p(\mathcal{O}))$ with $\mathcal{N}(I)=p^3$, we have $I$ is not principal. It follows by the claim in the proof of Theorem~\ref{theorem 4.14} that $I\in\mathcal{A}(\mathcal{I}^*_p(\mathcal{O}))$ is principal if and only if $\mathcal{N}(I)=p^2$. Now let $p$ be an inert prime divisor of $f$ and let $J\in\mathcal{A}(\mathcal{I}^*_p(\mathcal{O}))$. Since $|{\rm Pic}(\mathcal{O})|=|{\rm Pic}(\mathcal{O}_K)|$, it follows that the group epimorphism $\theta:{\rm Pic}(\mathcal{O})\rightarrow {\rm Pic}(\mathcal{O}_K)$ defined by $\theta([L])=[L\mathcal{O}_K]$ for all $L\in\mathcal{I}^*(\mathcal{O})$ is a group isomorphism. Set $P=p\mathcal{O}_K$. Then $J\mathcal{O}_K$ is a $P$-primary ideal of $\mathcal{O}_K$, and hence $J\mathcal{O}_K$ is a principal ideal of $\mathcal{O}_K$. Since $\theta$ is an isomorphism, we infer that $J$ is a principal ideal of $\mathcal{O}$. Now it follows by Theorem~\ref{theorem 4.14} that $\min\Delta(\mathcal{O})=2$.
\end{proof}

Next we provide two counterexamples that show that the additional assumption on the ramified prime divisors of $f$ in Proposition~\ref{proposition 4.19} is important.

\begin{example}\label{example 4.20} There is some real quadratic number field $K$ and some order $\mathcal{O}$ in $K$ with conductor $p\mathcal{O}_K$ for some ramified prime $p$ such that $p\equiv 1\mod 4$, $|{\rm Pic}(\mathcal{O})|=|{\rm Pic}(\mathcal{O}_K)|=2$, and $\min\Delta(\mathcal{O})=1$.
\end{example}

\begin{proof} Let $\mathcal{O}=\mathbb{Z}+5\sqrt{30}\mathbb{Z}$ be the order in the real quadratic number field $K=\mathbb{Q}(\sqrt{30})$ with conductor $5\mathcal{O}_K$. Observe that $5$ is ramified, $5\equiv 1\mod 4$, $|{\rm Pic}(\mathcal{O}_K)|=2$ and $\alpha=11+2\sqrt{30}$ is a fundamental unit of $\mathcal{O}_K$. Since $\alpha\not\in\mathcal{O}$ and $(\mathcal{O}_K^{\times}:\mathcal{O}^{\times})\mid 5$, we infer that $(\mathcal{O}_K^{\times}:\mathcal{O}^{\times})=5$, and hence $|{\rm Pic}(\mathcal{O})|=|{\rm Pic}(\mathcal{O}_K)|\frac{5}{(\mathcal{O}_K^{\times}:\mathcal{O}^{\times})}=2$. Let $I=125\mathbb{Z}+5\sqrt{30}\mathbb{Z}$. Then $I\in\mathcal{A}(\mathcal{I}^*_5(\mathcal{O}))$ with $\mathcal{N}(I)=125$. Since $I=(12625+2305\sqrt{30})\mathcal{O}$ is principal, we infer by Theorem~\ref{theorem 4.14} that $\min\Delta(\mathcal{O})=1$.
\end{proof}

\begin{example}\label{example 4.21} There is some real quadratic number field $K=\mathbb{Q}(\sqrt{d})$ with $d\in\mathbb{N}_{\geq 2}$ squarefree and some order $\mathcal{O}$ in $K$ with conductor $p\mathcal{O}_K$ for some odd ramified prime $p$ such that $(\frac{d/p}{p})=-1$, $|{\rm Pic}(\mathcal{O})|=|{\rm Pic}(\mathcal{O}_K)|=2$, and $\min\Delta(\mathcal{O})=1$.
\end{example}

\begin{proof} Let $\mathcal{O}=\mathbb{Z}+7\sqrt{42}\mathbb{Z}$ be the order in the real quadratic number field $K=\mathbb{Q}(\sqrt{42})$ with conductor $7\mathcal{O}_K$. Note that $7$ is an odd ramified prime, $(\frac{42/7}{7})=-1$, $|{\rm Pic}(\mathcal{O}_K)|=2$ and $\alpha=13+2\sqrt{42}$ is a fundamental unit of $\mathcal{O}_K$. We have $\alpha\not\in\mathcal{O}$ and $(\mathcal{O}_K^{\times}:\mathcal{O}^{\times})\mid 7$. Therefore, $(\mathcal{O}_K^{\times}:\mathcal{O}^{\times})=7$, and thus $|{\rm Pic}(\mathcal{O})|=|{\rm Pic}(\mathcal{O}_K)|\frac{7}{(\mathcal{O}_K^{\times}:\mathcal{O}^{\times})}=2$. Set $I=343\mathbb{Z}+7\sqrt{42}\mathbb{Z}$. Then $I\in\mathcal{A}(\mathcal{I}^*_7(\mathcal{O}))$, $\mathcal{N}(I)=343$, and $I=(825601+127393\sqrt{42})\mathcal{O}$ is principal. Consequently, $\min\Delta(\mathcal{O})=1$ by Theorem~\ref{theorem 4.14}.
\end{proof}

Finally, we provide the examples of orders $\mathcal{O}$ in quadratic number fields with $\min\Delta(\mathcal{O})=2$.

\begin{example}\label{example 4.22} Let $K$ be a quadratic number field and $\mathcal{O}$ the order in $K$ with conductor $f\mathcal{O}_K$ such that $(f,d_K)\in\{(2,60),(3,60),(5,60),(6,60),(10,60),(15,60),(30,60),(10,85),(35,40),(195,65),(30,365)\}$.
\begin{enumerate}
\item[\textnormal{1.}] If $(f,d_K)\in\{(2,60),(3,60),(5,60)\}$, then $f$ is a ramified prime.
\item[\textnormal{2.}] If $(f,d_K)\in\{(6,60),(10,60),(15,60)\}$, then $f$ is the product of two distinct ramified primes.
\item[\textnormal{3.}] If $(f,d_K)=(30,60)$, then $f$ is the product of three distinct ramified primes.
\item[\textnormal{4.}] If $(f,d_K)\in\{(10,85),(35,40)\}$, then $f$ is the product of an inert prime and a ramified prime.
\item[\textnormal{5.}] If $(f,d_K)=(195,65)$, then $f$ is the product of an inert prime and two distinct ramified primes.
\item[\textnormal{6.}] If $(f,d_K)=(30,365)$, then $f$ is the product of two distinct inert primes and a ramified prime.
\item[\textnormal{7.}] $\min\Delta(\mathcal{O})=2$.
\end{enumerate}
\end{example}

\begin{proof} It is straightforward to prove the first six assertions. We prove the last assertion in the case that $d_K=60$ and $f\in\mathbb{N}_{\geq 2}$ is a divisor of $30$. The remaining cases can be proved in analogy by using Proposition~\ref{proposition 4.19}. It is clear that $2$, $3$, and $5$ are ramified primes. Note that $|{\rm Pic}(\mathcal{O}_K)|=2$ (e.g., \cite[page 22]{HK13a}) and $\alpha=4+\sqrt{15}$ is a fundamental unit of $\mathcal{O}_K$.

We have $\alpha^2=31+8\sqrt{15}$, $\alpha^3=244+63\sqrt{15}$, and $\alpha^5=15124+3905\sqrt{15}$. Moreover, $\alpha^6=119071+30744\sqrt{15}$, $\alpha^{10}=457470751+118118440\sqrt{15}$, and $\alpha^{15}=13837575261124+3572846569215\sqrt{15}$. Set $k=(\mathcal{O}_K^{\times}:\mathcal{O}^{\times})$. Then $k$ is a divisor of $f$ by \eqref{equation 6}. Observe that $\alpha\not\in\mathbb{Z}+2\sqrt{15}\mathbb{Z}$, $\alpha\not\in\mathbb{Z}+3\sqrt{15}\mathbb{Z}$, $\alpha\not\in\mathbb{Z}+5\sqrt{15}\mathbb{Z}$, $\alpha^2,\alpha^3\not\in\mathbb{Z}+6\sqrt{15}\mathbb{Z}$, $\alpha^2,\alpha^5\not\in\mathbb{Z}+10\sqrt{15}\mathbb{Z}$, $\alpha^3,\alpha^5\not\in\mathbb{Z}+15\sqrt{15}\mathbb{Z}$, and $\alpha^6,\alpha^{10},\alpha^{15}\not\in\mathbb{Z}+30\sqrt{15}\mathbb{Z}$. This implies that $k=f$, and hence $|{\rm Pic}(\mathcal{O})|=\frac{f}{k}|{\rm Pic}(\mathcal{O}_K)|=|{\rm Pic}(\mathcal{O}_K)|=2$ by \eqref{equation 6}. We have $5\equiv 1\mod 4$ and $(\frac{15/5}{5})=(\frac{3}{5})=(\frac{2}{5})=-1$. We infer by Proposition~\ref{proposition 4.19} that $\min\Delta(\mathcal{O})=2$.
\end{proof}

\smallskip
\section{Unions of sets of lengths}\label{5}
\smallskip

The goal of this section is to show that all unions of sets of lengths of the monoid of (invertible) ideals in orders of quadratic number fields are intervals (Theorem~\ref{theorem 5.2}).
To gather the background on unions of sets of lengths, let $H$ be an atomic monoid with $H\ne H^{\times}$ and $k\in\mathbb{N}_0$. Then
\[
\begin{aligned}
\mathcal{U}_k (H) & =\bigcup_{k\in L\in\mathcal{L}(H)} L \qquad\text{denotes the {\it union of sets of lengths} containing $k$ and } \\
\rho_k (H) & = \sup \mathcal U_k (H) \qquad \text{is the {\it $k$th elasticity} of $H$} \,.
\end{aligned}
\]
Then, for the {\it elasticity} $\rho (H)$ of $H$, we have (\cite[Proposition 2.7]{F-G-K-T17}),
\[
\rho(H)=\sup\{\rho (L) \mid L\in\mathcal{L}(H)\} = \lim_{k\to\infty}\frac{\rho_k (H)}{k}\,.
\]
Clearly, $\mathcal{U}_0(H)=\{0\}$, $\mathcal{U}_1(H)=\{1\}$ and $\mathcal{U}_k(H)$ is the set of all $\ell\in\mathbb{N}_0$ with the following property:
\begin{itemize}
\item[] There are atoms $u_1,\ldots,u_k,v_1,\ldots,v_{\ell}$ in $H$ such that $u_1\cdot\ldots\cdot u_k= v_1\cdot\ldots\cdot v_{\ell}$.
\end{itemize}
Let $d\in\mathbb{N}$ and $M\in\mathbb{N}_0$. A subset $L\subset\mathbb{Z}$ is called an AAP (with difference $d$ and bound $M$) if
\[
L=y+\big( L'\cup L^*\cup L''\big)\subset y+d\mathbb{Z}\,,
\]
where $y\in\mathbb{Z}$, $L^*$ is a non-empty arithmetical progression with difference $d$ and $\min L^*=0$, $L'\subset [-M,-1]$, and $L''\subset\sup L^*+[1,M]$ (with the convention that $L''=\emptyset$ if $L^*$ is infinite).
We say that $H$ satisfies the {\it Structure Theorem for Unions} if there are $d\in\mathbb{N}$ and $M\in\mathbb{N}_0$ such that $\mathcal U_k (H)$ is an AAP with difference $d$ and bound $M$ for all sufficiently large $k\in\mathbb{N}$. If $\Delta (H)$ is finite and the Structure Theorem for Unions holds for some parameter $d\in\mathbb{N}$, then $d = \min \Delta (H)$ (\cite[Lemma 2.12]{F-G-K-T17}).

The Structure Theorem for Unions holds for a wealth of monoids and domains (see \cite{Ba-Sm18,Fa-Tr18a,Tr18a} for recent contributions and see \cite[Theorem 4.2]{F-G-K-T17} for an example where it does not hold). Since it holds for C-monoids (\cite{Ga-Ge09b}), it holds for the monoid of invertible ideals of orders in number fields. In some special cases (including Krull monoids having prime divisors in all classes) all unions of sets of lengths are intervals, in other words the Structure Theorem for Unions holds with $d=1$ and $M=0$ (\cite[Theorem 3.1.3]{Ge09a}, \cite[Theorem 5.8]{Ge-Ka-Re15a}, \cite{Sm13a}). In Theorem~\ref{theorem 5.2} we show that the same is true for the monoids of (invertible) ideals of orders in quadratic number fields.

\medskip
\begin{proposition}\label{proposition 5.1}
Let $p$ be a prime divisor of $f$ and let $N=\sup\{{\rm v}_p(\mathcal{N}(A))\mid A\in\mathcal{A}(\mathcal{I}^*_p(\mathcal{O}_f))\}$.
\begin{enumerate}
\item[\textnormal{1.}] If $p$ splits, then $\mathcal{U}_{\ell}(\mathcal{I}_p(\mathcal{O}_f))=\mathcal{U}_{\ell}(\mathcal{I}^*_p(\mathcal{O}_f))=\mathbb{N}_{\geq 2}$ for all $\ell\in\mathbb{N}_{\geq 2}$.

\item[\textnormal{2.}] If $p$ does not split, then $\mathcal{U}_{\ell}(\mathcal{I}_p(\mathcal{O}_f))\cap\mathbb{N}_{\geq\ell}=\mathcal{U}_{\ell}(\mathcal{I}^*_p(\mathcal{O}_f))\cap\mathbb{N}_{\geq\ell}=[\ell,\lfloor\frac{\ell N}{2}\rfloor]$ for all $\ell\in\mathbb{N}_{\geq 2}$.
\end{enumerate}
\end{proposition}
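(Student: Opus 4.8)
The plan is to reduce everything to the structural results already available for $\mathcal{I}_p(\mathcal{O}_f)$ and $\mathcal{I}^*_p(\mathcal{O}_f)$, namely Proposition~\ref{proposition 4.1} (the ``$a = u^n v$'' structure with $u = p\mathcal{O}_f$), Theorem~\ref{theorem 3.6}.4 (the precise list of norms of invertible ideal atoms), Lemma~\ref{lemma 4.2} (multiplication by a norm-$p^2$ atom), and Lemma~\ref{lemma 4.9} (the $\ell \le kN/2$ bound). First I would record the easy inclusions. Since $\mathcal{I}^*_p(\mathcal{O}_f)$ is a divisor-closed submonoid of $\mathcal{I}_p(\mathcal{O}_f)$, every equation $u_1\cdots u_\ell = v_1\cdots v_k$ among atoms of the smaller monoid is also such an equation in the larger one, so $\mathcal{U}_\ell(\mathcal{I}^*_p(\mathcal{O}_f)) \subseteq \mathcal{U}_\ell(\mathcal{I}_p(\mathcal{O}_f))$; the reverse inclusion, restricted to lengths $\ge \ell$ in part~2, will come out of the explicit description. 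For the lower bound $[\ell, \lfloor \ell N/2\rfloor] \subseteq \mathcal{U}_\ell(\cdot)$ (resp.\ $\mathbb{N}_{\ge 2} \subseteq \mathcal{U}_\ell(\cdot)$), I would exhibit concrete factorizations: starting from an atom $A$ with ${\rm v}_p(\mathcal{N}(A))=N$ (or, in the split case, atoms of arbitrarily large norm, which exist by Theorem~\ref{theorem 3.6}.4 and .5), the product $A\overline A = (p\mathcal{O}_f)^N$ by Proposition~\ref{proposition 3.2}.5, so $2 \in \mathsf{L}(A\overline A)$ and $N \in \mathsf{L}(A\overline A)$; tensoring with powers of $p\mathcal{O}_f$ and with products of norm-$p^2$ atoms, and using Lemma~\ref{lemma 4.2}.2 to ``absorb'' a $p^2$-atom into a larger atom at the cost of one factor of $p$, lets one realize every intermediate length and every length from $\ell$ up to $\lfloor \ell N/2\rfloor$.

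The main work is the upper bound in part~2: I must show that if $a \in \mathcal{I}_p(\mathcal{O}_f)$ is a product of $\ell$ atoms, then no factorization of $a$ has length exceeding $\lfloor \ell N/2\rfloor$, and (for the ``$\cap\,\mathbb{N}_{\ge \ell}$'' part) that the set of attainable lengths $\ge \ell$ is exactly the full interval $[\ell,\lfloor\ell N/2\rfloor]$ with no gaps. The bound $\max \le \lfloor \ell N/2\rfloor$ is precisely Lemma~\ref{lemma 4.9} applied with $k=\ell$ (reading it in the direction: a product of $\ell$ atoms that is also a product of $k'$ atoms satisfies $k' \le \ell N/2$), so this is essentially immediate. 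Gaplessness is the delicate point. Here I would argue that if $a = u^nv$ with $v$ an atom (Proposition~\ref{proposition 4.1}, noting $N \ge 3$ when $p$ does not split but is not inert, and the inert case being trivial since then $N=2$ and $\mathcal{U}_\ell = \{\ell\}$), then $\mathsf{L}(a)$ is an interval: one moves between factorizations of $a$ by repeatedly replacing a pair of atoms $w\cdot y$ dividing a given factorization by the factorization of $wy = p^{n_0}z$, which changes the length by $|\mathsf{L}(wy)|$-many controlled steps. The combinatorial content is that for any two atoms $w,y$ the set $\mathsf{L}(wy) \subseteq [2, \lfloor {\rm v}_p(\mathcal{N}(w)\mathcal{N}(y))/2\rfloor]$ is itself an interval; I expect to prove this by the same norm-bookkeeping used in Lemma~\ref{lemma 4.2} and Proposition~\ref{proposition 4.4}, splitting on whether $wy$ is of the form $p^{n_0} L$ with $L$ an atom and inducting on ${\rm v}_p(\mathcal{N}(wy))$.

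The hardest step, therefore, will be verifying that there are no gaps in $\mathcal{U}_\ell$ between $\ell$ and $\lfloor \ell N/2\rfloor$ in the non-split, non-inert case, i.e.\ assembling the local interval results $\mathsf{L}(wy)=[\,2,\cdot\,]$ for products of two atoms into a global statement about products of $\ell$ atoms. I would handle this by a transfer/chain argument: fix $k$ with $\ell \le k \le \lfloor \ell N/2 \rfloor$, pick a length-$\ell$ factorization whose ``$p\mathcal{O}_f$-content'' is as large as possible, and show by Proposition~\ref{proposition 4.1}.2 and .5 that if $k$ has not yet been reached then there is a pair of non-$p\mathcal{O}_f$ atoms in the current factorization whose product has a strictly longer factorization, incrementing the achievable length by exactly one after possibly rebalancing with $p\mathcal{O}_f$ factors (this is the mechanism already used in the proof of Proposition~\ref{proposition 4.1}.5). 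Once the non-split case is settled, part~1 follows by the same construction without the upper bound, since Theorem~\ref{theorem 3.6} gives invertible atoms of every norm $p^m$ with $m = 2{\rm v}_p(f)$ or $m \ge 2{\rm v}_p(f)+1$ when $p$ splits, making $N = \infty$ and hence $\mathcal{U}_\ell = \mathbb{N}_{\ge 2}$ for every $\ell \ge 2$; here one checks separately that lengths $2,\dots,\ell-1$ below $\ell$ are also attained, which follows from $A\overline A = (p\mathcal{O}_f)^m$ together with $A^2 = p\cdot(\text{atom})$ for suitable split atoms $A$ as in Case~4 of the proof of Proposition~\ref{proposition 4.8}.
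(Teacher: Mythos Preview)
Your upper bound via Lemma~\ref{lemma 4.9} is correct, and the easy inclusion $\mathcal{U}_\ell(\mathcal{I}^*_p(\mathcal{O}_f))\subseteq\mathcal{U}_\ell(\mathcal{I}_p(\mathcal{O}_f))$ is fine. The gap is in your gaplessness argument. You state that ``for any two atoms $w,y$ the set $\mathsf{L}(wy)$ is itself an interval'' and that you will ``prove this by the same norm-bookkeeping used in Lemma~\ref{lemma 4.2} and Proposition~\ref{proposition 4.4}''. But Proposition~\ref{proposition 4.4} proves precisely the opposite: it produces $I,J\in\mathcal{A}(\mathcal{I}^*_p(\mathcal{O}_f))$ with $\mathsf{L}(IJ)=\{2,4\}$, and Proposition~\ref{proposition 4.12} even gives $\mathsf{L}(I\overline{I})=\{2,5\}$. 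Hence $\mathsf{L}(wy)$ need not be an interval, and neither is $\mathsf{L}(a)$ for general $a$ (this is exactly the content of $\Delta(\mathcal{I}^*_p(\mathcal{O}_f))\supsetneq\{1\}$ in Theorem~\ref{theorem 1.1}.2(b)). Your fallback ``transfer/chain'' argument inherits the same problem: replacing a pair $w\cdot y$ by a longer factorization of $wy$ can jump by $2$ or $3$, so you cannot guarantee an increment of exactly one. (There is also a small slip: the inert case is not trivial with $N=2$ unless ${\rm v}_p(f)=1$; for ${\rm v}_p(f)\ge 2$ inert, Table~\ref{table1} gives $N=2{\rm v}_p(f)$.)

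The paper does not attempt to make individual sets of lengths intervals. Instead it builds $\mathcal{U}_2\cap\mathbb{N}_{\ge 2}=[2,N]$ and $\mathcal{U}_3\cap\mathbb{N}_{\ge 3}=[3,\lfloor 3N/2\rfloor]$ by explicit, case-dependent constructions of pairs (resp.\ triples) of invertible atoms: for each target $k$ it writes down concrete $I,J$ (depending on $p$, $d\bmod 4$ or $d\bmod 8$, and the position of $k$ relative to $2{\rm v}_p(f)$) such that $IJ=p^{k-1}L$ with $L$ an atom, so that $\{2,k\}\subset\mathsf{L}(IJ)$; for $\ell=3$ it further produces $I,J,L$ with $IJ=p^mL$ and $\mathcal{N}(L)=p^N$ so that $IJ\overline{L}=p^{N+m}\mathcal{O}_f$ realizes $N+m\in\mathcal{U}_3$. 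Once $\ell=2,3$ are in hand, the induction step is the sumset identity
\[
(\mathcal{U}_{\ell-2}\cap\mathbb{N}_{\ge\ell-2})+\mathcal{U}_2=[\ell-2,\lfloor(\ell-2)N/2\rfloor]+[2,N]=[\ell,\lfloor\ell N/2\rfloor],
\]
together with $(\mathcal{U}_{\ell-2}\cap\mathbb{N}_{\ge\ell-2})+\mathcal{U}_2\subset\mathcal{U}_\ell\cap\mathbb{N}_{\ge\ell}$. What you are missing is exactly these explicit base-case constructions; the structural shortcut you proposed does not exist.
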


\begin{proof}
We prove 1. and 2. simultaneously. By Proposition~\ref{proposition 3.3}.3 we can assume without restriction that $f=p^{{\rm v}_p(f)}$. First we show that both assertions are true for $\ell=2$. It follows from Theorem~\ref{theorem 3.6} that $[2, N]=[2,2{\rm v}_p(f)]\cup\{{\rm v}_p(\mathcal{N}(A))\mid A\in\mathcal{A}(\mathcal{I}^*_p(\mathcal{O}_f))\}$. It is obvious that $\mathcal{U}_2(\mathcal{I}^*_p(\mathcal{O}_f))\subset\mathcal{U}_2(\mathcal{I}_p(\mathcal{O}_f))$. It follows from Lemma~\ref{lemma 4.9} that $\mathcal{U}_2(\mathcal{I}_p(\mathcal{O}_f))\subset [2, N]$.

Let $k\in [2, N]$. It remains to show that $k\in\mathcal{U}_2(\mathcal{I}^*_p(\mathcal{O}_f))$. If $k>2{\rm v}_p(f)$, then there is some $I\in\mathcal{A}(\mathcal{I}^*_p(\mathcal{O}_f))$ such that $\mathcal{N}(I)=p^k$. It follows by Proposition~\ref{proposition 3.2}.5 that $I\overline{I}=(p\mathcal{O}_f)^k$, and hence $k\in\mathcal{U}_2(\mathcal{I}^*_p(\mathcal{O}_f))$. Now let $k\leq 2{\rm v}_p(f)$. By Proposition~\ref{proposition 4.8}.1 we can assume without restriction that ${\rm v}_p(f)\geq 2$ and $k\geq 4$.

\smallskip
CASE 1: $d\not\equiv 1\mod 4$ or $(d\equiv 1\mod 4$, $p=2$ and $k\leq 2({\rm v}_2(f)-1))$. We set $a={\rm v}_p(\mathcal{N}_{K/\mathbb{Q}}(p^{k-2}+\tau))$ and $b={\rm v}_p(\mathcal{N}_{K/\mathbb{Q}}(p^{k-2}(p-1)+\tau))$. Observe that if $d\not\equiv 1\mod 4$, then $a,b\geq\min\{2k-4,2{\rm v}_p(f)\}\geq k$. Moreover, if $d\equiv 1\mod 4$, $p=2$ and $k\leq 2({\rm v}_2(f)-1)$, then $a,b\geq\min\{2k-4,2({\rm v}_2(f)-1)\}\geq k$. Set $I=p^a\mathbb{Z}+(p^{k-2}+\tau)\mathbb{Z}$ and $J=p^b\mathbb{Z}+(p^{k-2}(p-1)+\tau)\mathbb{Z}$. Then $I,J\in\mathcal{A}(\mathcal{I}^*_p(\mathcal{O}_f))$, $\min\{a,b,{\rm v}_p(p^{k-2}+p^{k-2}(p-1)+\varepsilon)\}=k-1$, and $a+b-2(k-1)>0$. Therefore, there is some $L\in\mathcal{A}(\mathcal{I}^*_p(\mathcal{O}_f))$ such that $IJ=p^{k-1}L$, and hence $k\in\mathsf{L}(IJ)\subset\mathcal{U}_2(\mathcal{I}^*_p(\mathcal{O}_f))$.

\smallskip
CASE 2: $d\equiv 1\mod 4$ and $p\not=2$. We set $a={\rm v}_p(\mathcal{N}_{K/\mathbb{Q}}(\frac{p^{k-2}-1}{2}+\tau))$ and $b={\rm v}_p(\mathcal{N}_{K/\mathbb{Q}}(\frac{p^{k-2}(p^2+p-1)-1}{2}+\tau))$. Note that $a,b\geq\min\{2k-4,2{\rm v}_p(f)\}\geq k$. Set $I=p^a\mathbb{Z}+(\frac{p^{k-2}-1}{2}+\tau)\mathbb{Z}$ and $J=p^b\mathbb{Z}+(\frac{p^{k-2}(p^2+p-1)-1}{2}+\tau)\mathbb{Z}$. Then $I,J\in\mathcal{A}(\mathcal{I}^*_p(\mathcal{O}_f))$, $\min\{a,b,{\rm v}_p(\frac{p^{k-2}-1}{2}+\frac{p^{k-2}(p^2+p-1)-1}{2}+\varepsilon)\}=k-1$, and $a+b-2(k-1)>0$. Consequently, there is some $L\in\mathcal{A}(\mathcal{I}^*_p(\mathcal{O}_f))$ such that $IJ=p^{k-1}L$, and thus $k\in\mathsf{L}(IJ)\subset\mathcal{U}_2(\mathcal{I}^*_p(\mathcal{O}_f))$.

\smallskip
CASE 3: $d\equiv 1\mod 8$, $p=2$ and $k\in\{2{\rm v}_2(f)-1,2{\rm v}_2(f)\}$. Set $h={\rm v}_2(f)$. If $h=2$, then $k=4$, and hence $k\in\mathcal{U}_2(\mathcal{I}^*_2(\mathcal{O}_f))$ by Proposition~\ref{proposition 4.4}. Now let $h\geq 3$. Note that $2$ splits. By Theorem~\ref{theorem 3.6} there are some $I,J,L\in\mathcal{A}(\mathcal{I}^*_2(\mathcal{O}_f))$ such that $\mathcal{N}(I)=2^{2h+1}$, $\mathcal{N}(J)=2^{2h+2}$ and $\mathcal{N}(L)=16$. By Proposition~\ref{proposition 3.2}.5 we have $L\overline{L}=16\mathcal{O}_f$, $I\overline{I}=2^{2h+1}\mathcal{O}_f=2^{2h-3}L\overline{L}$ and $J\overline{J}=2^{2h+2}\mathcal{O}_f=2^{2h-2}L\overline{L}$. We infer that $k\in\{2h-1,2h\}\subset\mathcal{U}_2(\mathcal{I}^*_2(\mathcal{O}_f))$.

\smallskip
CASE 4: $d\equiv 5\mod 8$, $p=2$ and $k\in\{2{\rm v}_2(f)-1,2{\rm v}_2(f)\}$. Set $h={\rm v}_2(f)$. If $h=2$, then $k=4$, and thus $k\in\mathcal{U}_2(\mathcal{I}^*_2(\mathcal{O}_f))$ by Proposition~\ref{proposition 4.4}. Now let $h\geq 3$. Set $A=2^{2h}\mathbb{Z}+(2^{h-1}+\tau)\mathbb{Z}$, $B=2^{2h}\mathbb{Z}+(2^{2h-2}-2^{h-1}+\tau)\mathbb{Z}$, and $C=2^{2h}\mathbb{Z}+(2^{2h-1}-2^{h-1}+\tau)\mathbb{Z}$. Then $A,B,C\in\mathcal{A}(\mathcal{I}^*_2(\mathcal{O}_f))$, $AB=2^{2h-2}I$ and $AC=2^{2h-1}J$ for some $I,J\in\mathcal{A}(\mathcal{I}^*_2(\mathcal{O}_f))$. Therefore, $k\in\{2h-1,2h\}\subset\mathcal{U}_2(\mathcal{I}^*_2(\mathcal{O}_f))$.

\smallskip
So far we have proved that both assertions are true for $\ell=2$. If $p$ splits, then we have $N=\infty$ by Theorem~\ref{theorem 3.6}, and hence $\mathcal{U}_2(\mathcal{I}_p(\mathcal{O}_f))=\mathcal{U}_2(\mathcal{I}^*_p(\mathcal{O}_f))=\mathbb{N}_{\geq 2}$. The first assertion now follows easily by induction on $\ell$. Now let $p$ not split. Then $N<\infty$. Next we show that 2. is true for $\ell=3$.

Since $[3,N+1]=\{1\}+\mathcal{U}_2(\mathcal{I}^*_p(\mathcal{O}_f))\subset\mathcal{U}_3(\mathcal{I}^*_p(\mathcal{O}_f))\cap\mathbb{N}_{\geq 3}\subset\mathcal{U}_3(\mathcal{I}_p(\mathcal{O}_f))\cap\mathbb{N}_{\geq 3}\subset [3,\lfloor\frac{3N}{2}\rfloor]$ by Lemma~\ref{lemma 4.9} and $N\in\{2{\rm v}_p(f),2{\rm v}_p(f)+1\}$, it remains to show that $N+m\in\mathcal{U}_3(\mathcal{I}^*_p(\mathcal{O}_f))$ for all $m\in [2,{\rm v}_p(f)]$. Let $m\in [2,{\rm v}_p(f)]$. It is sufficient to show that there are some $I,J,L\in\mathcal{A}(\mathcal{I}^*_p(\mathcal{O}_f))$ such that $IJ=p^mL$ and $\mathcal{N}(L)=p^N$, since then $IJ\overline{L}=p^{N+m}\mathcal{O}_f$ by Proposition~\ref{proposition 3.2}.5, and thus $N+m\in\mathcal{U}_3(\mathcal{I}^*_p(\mathcal{O}_f))$.

\smallskip
CASE 1: $p$ is inert. Observe that $N=2{\rm v}_p(f)$ by Theorem~\ref{theorem 3.6}. Let $m\in [2,{\rm v}_p(f)]$. First let $p\not=2$. If $d\not\equiv 1\mod 4$, then set $I=p^{2m}\mathbb{Z}+(p^m+\tau)\mathbb{Z}$ and $J=p^{2{\rm v}_p(f)}\mathbb{Z}+(p^{2{\rm v}_p(f)-m}+\tau)\mathbb{Z}$. If $d\equiv 1\mod 4$, then set $I=p^{2m}\mathbb{Z}+(\frac{p^m-1}{2}+\tau)\mathbb{Z}$ and $J=p^{2{\rm v}_p(f)}\mathbb{Z}+(\frac{p^{2{\rm v}_p(f)-m}-1}{2}+\tau)\mathbb{Z}$. In any case we have $I,J\in\mathcal{A}(\mathcal{I}^*_p(\mathcal{O}_f))$ and $IJ=p^mL$ for some $L\in\mathcal{A}(\mathcal{I}^*_p(\mathcal{O}_f))$ with $\mathcal{N}(L)=p^N$.

Next let $p=2$. Since $2$ is inert, it follows that $d\equiv 5\mod 8$. If $m<{\rm v}_2(f)-1$, then set $I=2^{2m}\mathbb{Z}+(2^m+\tau)\mathbb{Z}$. If $m={\rm v}_2(f)-1$, then set $I=2^{2m}\mathbb{Z}+\tau\mathbb{Z}$. Finally, if $m={\rm v}_2(f)$, then set $I=2^{2m}\mathbb{Z}+(2^{m-1}+\tau)\mathbb{Z}$. Set $J=2^{2{\rm v}_2(f)}\mathbb{Z}+(2^{{\rm v}_2(f)-1}+\tau)\mathbb{Z}$. Observe that $I,J\in\mathcal{A}(\mathcal{I}^*_2(\mathcal{O}_f))$ and $IJ=2^mL$ for some $L\in\mathcal{A}(\mathcal{I}^*_2(\mathcal{O}_f))$ with $\mathcal{N}(L)=2^N$.

\smallskip
CASE 2: $p$ is ramified. It follows that $N=2{\rm v}_p(f)+1$ by Theorem~\ref{theorem 3.6}. Let $m\in [2,{\rm v}_p(f)]$. First let $p\not=2$. Since $p$ is ramified, we have $p\mid d$. If $d\not\equiv 1\mod 4$, then set $I=p^{2m}\mathbb{Z}+(p^m+\tau)\mathbb{Z}$ and $J=p^{2{\rm v}_p(f)+1}\mathbb{Z}+(p^{{\rm v}_p(f)+1}+\tau)\mathbb{Z}$. If $d\equiv 1\mod 4$, then set $I=p^{2m}\mathbb{Z}+(\frac{p^m-1}{2}+\tau)\mathbb{Z}$ and $J=p^{2{\rm v}_p(f)+1}\mathbb{Z}+(\frac{p^{{\rm v}_p(f)+1}-1}{2}+\tau)\mathbb{Z}$. We infer that $I,J\in\mathcal{A}(\mathcal{I}^*_p(\mathcal{O}_f))$ and $IJ=p^mL$ for some $L\in\mathcal{A}(\mathcal{I}^*_p(\mathcal{O}_f))$ with $\mathcal{N}(L)=p^N$ in any case.

Now let $p=2$. Since $2$ is ramified, we have $d\not\equiv 1\mod 4$. If $d$ is even or $m<{\rm v}_2(f)$, then set $I=2^{2m}\mathbb{Z}+(2^m+\tau)\mathbb{Z}$. If $d$ is odd and $m={\rm v}_2(f)$, then set $I=2^{2m}\mathbb{Z}+\tau\mathbb{Z}$. If $d$ is even, then set $J=2^{2{\rm v}_2(f)+1}\mathbb{Z}+\tau\mathbb{Z}$. If $d$ is odd, then set $J=2^{2{\rm v}_2(f)+1}\mathbb{Z}+(2^{{\rm v}_2(f)}+\tau)\mathbb{Z}$. In any case we have $I,J\in\mathcal{A}(\mathcal{I}^*_2(\mathcal{O}_f))$ and $IJ=2^mL$ for some $L\in\mathcal{A}(\mathcal{I}^*_2(\mathcal{O}_f))$ with $\mathcal{N}(L)=2^N$.

\smallskip
Finally, we prove the second assertion by induction on $\ell$. Let $\ell\in\mathbb{N}_{\geq 2}$ and let $H\in\{\mathcal{I}_p(\mathcal{O}_f),\mathcal{I}^*_p(\mathcal{O}_f)\}$. Without restriction we can assume that $\ell\geq 4$. We infer by the induction hypothesis that $(\mathcal{U}_{\ell-2}(H)\cap\mathbb{N}_{\geq\ell-2})+\mathcal{U}_2(H)=[\ell-2,\lfloor\frac{(\ell-2)N}{2}\rfloor]+[2,N]=[\ell,\lfloor\frac{\ell N}{2}\rfloor]$. Observe that $(\mathcal{U}_{\ell-2}(H)\cap\mathbb{N}_{\geq\ell-2})+\mathcal{U}_2(H)\subset\mathcal{U}_{\ell}(H)\cap\mathbb{N}_{\geq\ell}$. It follows by Lemma~\ref{lemma 4.9} that $\mathcal{U}_{\ell}(H)\cap\mathbb{N}_{\geq\ell}\subset [\ell,\lfloor\frac{\ell N}{2}\rfloor]$, and thus $\mathcal{U}_{\ell}(H)\cap\mathbb{N}_{\geq\ell}=[\ell,\lfloor\frac{\ell N}{2}\rfloor]$.
\end{proof}

\smallskip
\begin{theorem}\label{theorem 5.2}
Let $\mathcal{O}$ be an order in a quadratic number field $K$ with conductor $f\mathcal{O}_K$ for some $f\in\mathbb{N}_{\geq 2}$.
\begin{enumerate}
\item[\textnormal{1.}] If $f$ is divisible by a split prime, then $\mathcal{U}_k(\mathcal{I}(\mathcal{O}))=\mathcal{U}_k(\mathcal{I}^*(\mathcal{O}))=\mathbb{N}_{\geq 2}$ for all $k\in\mathbb{N}_{\geq 2}$.

\item[\textnormal{2.}] Suppose that $f$ is not divisible by a split prime and set $M=\max\{{\rm v}_p(f)\mid p\in\mathbb{P}\}$. Then $\mathcal{U}_k(\mathcal{I}(\mathcal{O}))=\mathcal{U}_k(\mathcal{I}^*(\mathcal{O}))$ is a finite interval for all $k\in\mathbb{N}_{\geq 2}$, and for their maxima we have{\rm \,:}
\begin{enumerate}
\item[\textnormal{(a)}] If ${\rm v}_q(f)=M$ for a ramified prime $q$, then $\rho_k(\mathcal{I}(\mathcal{O}))=\rho_k(\mathcal{I}^*(\mathcal{O}))=kM+\lfloor\frac{k}{2}\rfloor$ for all $k\in\mathbb{N}_{\geq 2}$ and $\rho(\mathcal{I}(\mathcal{O}))=\rho(\mathcal{I}^*(\mathcal{O}))=M+\frac{1}{2}$.

\item[\textnormal{(b)}] If ${\rm v}_q(f)<M$ for all ramified primes $q$, then $\rho_k(\mathcal{I}(\mathcal{O}))=\rho_k(\mathcal{I}^*(\mathcal{O}))=kM$ for all $k\in\mathbb{N}_{\geq 2}$ and $\rho(\mathcal{I}(\mathcal{O}))=\rho(\mathcal{I}^*(\mathcal{O}))=M$.
\end{enumerate}
\end{enumerate}
\end{theorem}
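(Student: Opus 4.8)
The plan is to reduce everything to the local monoids $\mathcal{I}_p(\mathcal{O}_f)$, $\mathcal{I}^*_p(\mathcal{O}_f)$ treated in Proposition~\ref{proposition 5.1}, via the coproduct decompositions of Equations~\eqref{equation 3} and~\eqref{equation 4}. Writing $H\in\{\mathcal{I}(\mathcal{O}),\mathcal{I}^*(\mathcal{O})\}$ and using that for a coproduct $H\cong\coprod_i H_i$ one has $\mathsf{L}_H\big((a_i)_i\big)=\sum_i\mathsf{L}_{H_i}(a_i)$ (whence $\mathcal{U}_k(H)=\bigcup_{\sum_i k_i=k}\sum_i\mathcal{U}_{k_i}(H_i)$ and $\rho_k(H)=\max_{\sum_i k_i=k}\sum_i\rho_{k_i}(H_i)$, cf.\ \cite[Propositions 1.4.5 and 1.6.8]{Ge-HK06a}), it suffices to control the factors. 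The factors indexed by $\mathfrak p\in\mathcal P$ are free abelian, hence factorial, and contribute only $\mathcal{U}_j=\{j\}$; the remaining factors are $\mathcal{I}_p(\mathcal{O}_f)$ resp.\ $\mathcal{I}^*_p(\mathcal{O}_f)$ for $p\mid f$, and for $j\ge 2$ Proposition~\ref{proposition 5.1} gives $\mathcal{U}_j(\mathcal{I}_p(\mathcal{O}_f))=\mathcal{U}_j(\mathcal{I}^*_p(\mathcal{O}_f))$ (the parts in $\mathbb{N}_{\ge j}$ agree by the proposition, and a length $m<j$ lies in $\mathcal{U}_j$ iff $j\in\mathcal{U}_m\cap\mathbb{N}_{\ge m}$). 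In particular $\mathcal{U}_k(\mathcal{I}(\mathcal{O}))=\mathcal{U}_k(\mathcal{I}^*(\mathcal{O}))$ throughout, so it is enough to prove the remaining assertions for a single $H$, treated uniformly below.

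For Part~1, suppose a split prime $p$ divides $f$. Since $\mathcal{I}^*_p(\mathcal{O}_f)$ (resp.\ $\mathcal{I}_p(\mathcal{O}_f)$) is a divisor-closed submonoid of $H$, Proposition~\ref{proposition 5.1}.1 gives $\mathbb{N}_{\ge 2}=\mathcal{U}_k(\mathcal{I}^*_p(\mathcal{O}_f))\subseteq\mathcal{U}_k(H)$ for every $k\ge 2$; conversely $\mathcal{U}_k(H)\subseteq\mathbb{N}_{\ge 2}$, because an element admitting a factorization of length $k\ge 2$ is neither a unit nor an atom, so all its factorization lengths are $\ge 2$. Hence $\mathcal{U}_k(H)=\mathbb{N}_{\ge 2}$.

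For Part~2, assume $f$ has no split prime divisor. From Table~\ref{table1} in Theorem~\ref{theorem 3.6}.4 I read off $N_p:=\sup\{{\rm v}_p(\mathcal{N}(A))\mid A\in\mathcal{A}(\mathcal{I}^*_p(\mathcal{O}_f))\}=2{\rm v}_p(f)$ when $p$ is inert and $N_p=2{\rm v}_p(f)+1$ when $p$ is ramified; thus $N_{\max}:=\max_{p\mid f}N_p$ equals $2M+1$ in case~(a) and $2M$ in case~(b), and in either case it is attained by one prime $q\mid f$. Using $\rho_j(\mathcal{I}^*_p(\mathcal{O}_f))=\lfloor jN_p/2\rfloor$ for $j\ge 2$ (Proposition~\ref{proposition 5.1}.2), $\rho_1=1$, $\rho_0=0$, $\rho_j=j$ on the factorial factors, together with $\lfloor a/2\rfloor+\lfloor b/2\rfloor\le\lfloor(a+b)/2\rfloor$ and $N_p\le N_{\max}$, a short optimization of the distribution $k=\sum k_i$ shows the maximum is attained by putting all $k$ atoms into the factor $q$, so $\rho_k(H)=\lfloor kN_{\max}/2\rfloor$; this is $kM+\lfloor k/2\rfloor$ in case~(a) and $kM$ in case~(b), and dividing by $k$ and letting $k\to\infty$ (using $\rho(H)=\lim_k\rho_k(H)/k$) gives $\rho(H)=M+\tfrac12$ resp.\ $M$.

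It remains to show that $\mathcal{U}_k(H)$ is a finite interval. Since $\mathcal{I}^*_q(\mathcal{O}_f)$ is divisor-closed in $H$, Proposition~\ref{proposition 5.1}.2 gives $[k,\rho_k]=\mathcal{U}_k(\mathcal{I}^*_q(\mathcal{O}_f))\cap\mathbb{N}_{\ge k}\subseteq\mathcal{U}_k(H)$, while trivially $\mathcal{U}_k(H)\subseteq[2,\rho_k]$; hence $\mathcal{U}_k(H)\cap\mathbb{N}_{\ge k}=[k,\rho_k]$. For the lengths below $k$, use the symmetry $m\in\mathcal{U}_k(H)\iff k\in\mathcal{U}_m(H)$: for $2\le m<k$ this says $k\in\mathcal{U}_m(H)\cap\mathbb{N}_{\ge m}=[m,\rho_m]$, i.e.\ $\rho_m\ge k$. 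Since $\rho_m=\lfloor mN_{\max}/2\rfloor$ is strictly increasing in $m$ (as $N_{\max}\ge 2$), the set $\{m\in[2,k-1]\mid\rho_m\ge k\}$ is upward closed in $[2,k-1]$, hence an interval $[\lambda_k,k-1]$ (possibly empty); combined with $\mathcal{U}_k(H)\cap\mathbb{N}_{\ge k}=[k,\rho_k]$ this gives $\mathcal{U}_k(H)=[\lambda_k,\rho_k]$, a finite interval, the same for $\mathcal{I}(\mathcal{O})$ and $\mathcal{I}^*(\mathcal{O})$. The only real work lies in these last two steps — reading off the correct values $N_p$ from Table~\ref{table1}, carrying out the distribution optimization with the floor functions, and handling the symmetry $m\in\mathcal{U}_k\iff k\in\mathcal{U}_m$ carefully — but no input beyond Proposition~\ref{proposition 5.1} and Theorem~\ref{theorem 3.6} is needed.
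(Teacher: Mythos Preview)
Your proof is correct and follows essentially the same route as the paper: both reduce to the local monoids $\mathcal{I}_p(\mathcal{O}_f)$ and $\mathcal{I}^*_p(\mathcal{O}_f)$ via the coproduct decompositions~\eqref{equation 3}--\eqref{equation 4}, invoke Proposition~\ref{proposition 5.1} for the local unions, read off $N_p$ from Theorem~\ref{theorem 3.6}, and then use the symmetry $m\in\mathcal{U}_k\Leftrightarrow k\in\mathcal{U}_m$ to pass from $\mathcal{U}_k\cap\mathbb{N}_{\ge k}$ to the full $\mathcal{U}_k$. Your write-up is in fact a bit more explicit than the paper's in two places: you spell out the coproduct formula $\mathcal{U}_k(H)=\bigcup_{\sum k_i=k}\sum_i\mathcal{U}_{k_i}(H_i)$ and carry out the resulting floor-function optimization cleanly (using $\lfloor a/2\rfloor+\lfloor b/2\rfloor\le\lfloor(a+b)/2\rfloor$ together with $f_i(1)=1\le\lfloor N_{\max}/2\rfloor$), whereas the paper argues the same bound by directly decomposing a factorization into its primary components; and you make the monotonicity of $m\mapsto\rho_m$ explicit to conclude that the set of small lengths is an interval, where the paper just asserts that knowing $\mathcal{U}_k\cap\mathbb{N}_{\ge k}$ is an interval for every $k$ yields the result. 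Your early observation that $\mathcal{U}_j(\mathcal{I}_p(\mathcal{O}_f))=\mathcal{U}_j(\mathcal{I}^*_p(\mathcal{O}_f))$ for all $j$ (via Proposition~\ref{proposition 5.1} plus the symmetry) gives the equality $\mathcal{U}_k(\mathcal{I}(\mathcal{O}))=\mathcal{U}_k(\mathcal{I}^*(\mathcal{O}))$ up front and lets you treat both monoids uniformly, which is a tidy organizational gain over the paper's order of exposition.
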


\begin{proof}
1. Let $f$ be divisible by a split prime $p$ and let $k\in\mathbb{N}_{\geq 2}$. Since $\mathcal{I}^*_p(\mathcal{O})$ is a divisor-closed submonoid of $\mathcal{I}^*(\mathcal{O})$ and $\mathcal{I}_p(\mathcal{O})$ is a divisor-closed submonoid of $\mathcal{I}(\mathcal{O})$, it follows from Proposition~\ref{proposition 5.1}.1 that $\mathcal{U}_k(\mathcal{I}(\mathcal{O}))=\mathcal{U}_k(\mathcal{I}^*(\mathcal{O}))=\mathbb{N}_{\geq 2}$.

\smallskip
2. Let $k\in\mathbb{N}_{\geq 2}$ and $\ell\in\mathcal{U}_k(\mathcal{I}(\mathcal{O}))$. There are $I_i\in\mathcal{A}(\mathcal{I}(\mathcal{O}))$ for each $i\in [1,k]$ and $J_j\in\mathcal{A}(\mathcal{I}(\mathcal{O}))$ for each $j\in [1,\ell]$ such that $\prod_{i=1}^k I_i=\prod_{j=1}^{\ell} J_j$. Note that $\sqrt{I_i},\sqrt{J_j}\in\mathfrak{X}(\mathcal{O})$ for all $i\in [1,k]$ and $j\in [1,\ell]$. For $P\in\mathfrak{X}(\mathcal{O})$ set $k_P=|\{i\in [1,k]\mid\sqrt{I_i}=P\}|$ and $\ell_P=|\{j\in [1,\ell]\mid\sqrt{J_j}=P\}|$. If $p$ is a prime divisor of $f$, then set $k_p=k_{P_{f,p}}$ and $\ell_p=\ell_{P_{f,p}}$. Observe that $k=\sum_{P\in\mathfrak{X}(\mathcal{O})} k_P$ and $\ell=\sum_{P\in\mathfrak{X}(\mathcal{O})}\ell_P$. Recall that the $P$-primary components of $\prod_{i=1}^k I_i$ are uniquely determined, and thus $\ell_P\in\mathcal{U}_{k_P}(\mathcal{I}_P(\mathcal{O}))$ for all $P\in\mathfrak{X}(\mathcal{O})$. If $P\in\mathfrak{X}(\mathcal{O})$ does not contain the conductor, then $\mathcal{I}_P(\mathcal{O})$ is factorial, and hence $\ell_P=k_P$. Also note that if $P\in\mathfrak{X}(\mathcal{O})$ and $k_P\leq 1$, then $\ell_P=k_P$. If $p$ is an inert prime that divides $f$, then it follows from Proposition~\ref{proposition 5.1}.2 and Theorem~\ref{theorem 3.6} that $\rho_r(\mathcal{I}_p(\mathcal{O}))=\rho_r(\mathcal{I}^*_p(\mathcal{O}))=r{\rm v}_p(f)$ for all $r\in\mathbb{N}_{\geq 2}$. We infer again by Proposition~\ref{proposition 5.1}.2 and Theorem~\ref{theorem 3.6} that $\rho_r(\mathcal{I}_p(\mathcal{O}))=\rho_r(\mathcal{I}^*_p(\mathcal{O}))=r{\rm v}_p(f)+\lfloor\frac{r}{2}\rfloor$ for all ramified primes $p$ that divide $f$ and all $r\in\mathbb{N}_{\geq 2}$.

\smallskip
CASE 1: ${\rm v}_q(f)=M$ for some ramified prime $q$. If $P\in\mathfrak{X}(\mathcal{O})$, then $\ell_P\leq k_PM+\lfloor\frac{k_P}{2}\rfloor$.

Consequently, $\ell=\sum_{P\in\mathfrak{X}(\mathcal{O})}\ell_P\leq (\sum_{P\in\mathfrak{X}(\mathcal{O})} k_P)M+\sum_{P\in\mathfrak{X}(\mathcal{O})}\lfloor\frac{k_P}{2}\rfloor\leq kM+\lfloor\frac{k}{2}\rfloor$. In particular, $\rho_k(\mathcal{I}(\mathcal{O}))\leq kM+\lfloor\frac{k}{2}\rfloor=\max\{\rho_k(\mathcal{I}^*_p(\mathcal{O}))\mid p\in\mathbb{P},p\mid f\}\leq\rho_k(\mathcal{I}^*(\mathcal{O}))\leq\rho_k(\mathcal{I}(\mathcal{O}))$. This implies that $\rho_k(\mathcal{I}(\mathcal{O}))=\rho_k(\mathcal{I}^*(\mathcal{O}))=\max\{\rho_k(\mathcal{I}^*_p(\mathcal{O}))\mid p\in\mathbb{P},p\mid f\}=kM+\lfloor\frac{k}{2}\rfloor$.

\smallskip
CASE 2: ${\rm v}_q(f)<M$ for all ramified primes $q$. Note that $\ell_p\leq k_p {\rm v}_p(f)+\lfloor\frac{k_p}{2}\rfloor\leq k_pM$ for all ramified primes $p$ that divide $f$. Therefore, $\ell_P\leq k_PM$ for all $P\in\mathfrak{X}(\mathcal{O})$. This implies that $\ell=\sum_{P\in\mathfrak{X}(\mathcal{O})}\ell_P\leq (\sum_{P\in\mathfrak{X}(\mathcal{O})} k_P)M=kM$. We infer that $\rho_k(\mathcal{I}(\mathcal{O}))\leq kM=\max\{\rho_k(\mathcal{I}^*_p(\mathcal{O}))\mid p\in\mathbb{P},p\mid f\}\leq\rho_k(\mathcal{I}^*(\mathcal{O}))\leq\rho_k(\mathcal{I}(\mathcal{O}))$, and thus $\rho_k(\mathcal{I}(\mathcal{O}))=\rho_k(\mathcal{I}^*(\mathcal{O}))=\max\{\rho_k(\mathcal{I}^*_p(\mathcal{O}))\mid p\in\mathbb{P},p\mid f\}=kM$.

\smallskip
By Proposition~\ref{proposition 5.1}.2, we obtain that $\mathcal{U}_k(\mathcal{I}(\mathcal{O}))\cap\mathbb{N}_{\geq k}=\mathcal{U}_k(\mathcal{I}^*(\mathcal{O}))\cap\mathbb{N}_{\geq k}$ is a finite interval. Since the last assertion holds for every $k\in\mathbb{N}_{\geq 2}$, we infer that $\mathcal{U}_k(\mathcal{I}(\mathcal{O}))=\mathcal{U}_k(\mathcal{I}^*(\mathcal{O}))$ is a finite interval for all $k\in\mathbb{N}_{\geq 2}$. If ${\rm v}_q(f)=M$ for some ramified prime $q$, then
\[
\rho(\mathcal{I}(\mathcal{O}))=\rho(\mathcal{I}^*(\mathcal{O}))=\lim_{k\rightarrow\infty}\frac{\rho_k(\mathcal{I}(\mathcal{O}))}{k}=\lim_{k\rightarrow\infty} M+\frac{1}{k}\left\lfloor\frac{k}{2}\right\rfloor=M+\frac{1}{2}.
\]
Finally, let ${\rm v}_q(f)<M$ for all ramified primes $q$. Then
\[
\rho(\mathcal{I}(\mathcal{O}))=\rho(\mathcal{I}^*(\mathcal{O}))=\lim_{k\rightarrow\infty}\frac{\rho_k(\mathcal{I}(\mathcal{O}))}{k}=\lim_{k\rightarrow\infty}\frac{kM}{k}=M. \qedhere\]
\end{proof}

In a final remark we gather what is known on further arithmetical invariants of monoids of ideals of orders in quadratic number fields.

\smallskip
\begin{remark}\label{remark 5.3}
Let $\mathcal{O}$ be an order in a quadratic number field $K$ with conductor $f\mathcal{O}_K$ for some $f\in\mathbb{N}_{\geq 2}$.

1. The monotone catenary degree of $\mathcal I^*(\mathcal{O})$ is finite by \cite[Corollary 5.14]{Ge-Re18d}. Precise values for the monotone catenary degree are available so far only in the seminormal case (\cite[Theorem 5.8]{Ge-Ka-Re15a}).

2. The tame degree of $\mathcal I^*(\mathcal{O})$ is finite if and only if the elasticity is finite if and only if $f$ is not divisible by a split prime. This follows from Equations~\ref{equation 3} and~\ref{equation 4}, Theorem~\ref{theorem 5.2}, and from \cite[Theorem 3.1.5]{Ge-HK06a}. Precise values for the tame degree are not known so far.

3. For an atomic monoid $H$, the set $\{\rho(L)\mid L\in\mathcal{L}(H)\}\subset\mathbb{Q}_{\ge 1}$ of all elasticities was first studied by Chapman et al. and then it found further attention by several authors (e.g., \cite{Ba-Ne-Pe17a,Ch-Ho-Mo06}, \cite[Theorem 5.5]{Ge-Sc-Zh17b}, \cite{Ge-Zh19a,Zh19a}). We say that $H$ is {\it fully elastic} if for every rational number $q$ with $1 < q < \rho (H)$ there is an $L\in\mathcal{L}(H)$ with $\rho(L)=q$. Since $\mathcal I^*(\mathcal{O})$ is cancellative and has a prime element, it is fully elastic by \cite[Lemma 2.1]{B-C-C-K-W06}. Since $\mathcal I^*(\mathcal{O})\subset\mathcal I(\mathcal{O})$ is divisor-closed and $\rho(\mathcal{I}(\mathcal{O}))=\rho(\mathcal{I}^*(\mathcal{O}))$ by Theorem \ref{theorem 5.2}, it follows that $\mathcal I(\mathcal{O})$ is fully elastic.

4. For an atomic monoid $H$, let
\[
\daleth^*(H)=\{\min(L\setminus\{2\})\mid 2\in L\in\mathcal{L}(H)\ \text{with}\ |L|>1\}\subset\mathbb{N}_{\ge 3}\,.
\]
By definition, we have $\daleth^*(H)\subset 2+\Delta (H)$ and in \cite{Fa-Ge17a,Ge-Zh19a} the invariant $\daleth^*(H)$ was used as a tool to study $\Delta(H)$. Proposition~\ref{proposition 4.1}.4 shows that, both for $H=\mathcal I(\mathcal{O})$ and for $H=\mathcal I^*(\mathcal{O})$, we have $\max\daleth^*(H)=2+\max\Delta(H)$.
\end{remark}

\medskip
\noindent
{\bf Acknowledgements.} We would like to thank the referee for carefully reading the manuscript and for many suggestions and comments that improved the quality of this paper and simplified the proof of Theorem~\ref{theorem 3.6}.

\providecommand{\bysame}{\leavevmode\hbox to3em{\hrulefill}\thinspace}

\end{document}